\def\titlepaper{Minimal nondegenerate extensions}
\def\authorpaper{Theo Johnson-Freyd \& David Reutter}
\renewcommand{\to}[1][]{\ensuremath{\xrightarrow{#1}}}
\newcommand{\To}[1][]{\ensuremath{\xRightarrow{#1}}}
\theoremstyle{plain} 
\newtheorem*{mainthm}{Main Theorem}
\newtheorem{theorem}{Theorem}[section]
\newtheorem{lemma}[theorem]{Lemma}
\newtheorem{corollary}[theorem]{Corollary}          
\newtheorem{proposition}[theorem]{Proposition}              
\newtheorem{cor}[theorem]{Corollary}          
\newtheorem{prop}[theorem]{Proposition}   
\newtheorem{conjecture}[theorem]{Conjecture}
\theoremstyle{definition} 
\newtheorem{definition}[theorem]{Definition}
\theoremstyle{remark}  
\newtheorem{remark}[theorem]{Remark}
\newtheorem{example}[theorem]{Example}
\newtheorem*{guide*}{Guide}
\newtheorem*{outline*}{Outline}
\newtheorem*{remarkohc*}{Remark on higher categories and the cobordism hypothesis}
\newtheorem*{assumpfield*}{Assumptions on the base field}
\newcommand\MainTheorem{\hyperlink{maintheorem}{Main Theorem}}
\newcommand\one{I}
\newcommand\bk{\mathbbm{k}}
\newcommand\fl{\mathrm{fl}}
\DeclareMathOperator\tr{tr}
\DeclareMathOperator\Mod{Mod}
\DeclareMathOperator\sHBA{sHBA}
\DeclareMathOperator\HBA{HBA}
\DeclareMathOperator\cusp{cusp}
\DeclareMathOperator\rot{rot}
\DeclareMathOperator\coeq{coeq}
\newcommand\longto\longrightarrow
\newcommand\isom{\overset\sim\rightarrow}
\newcommand\Isom{\overset\sim\Rightarrow}
\newcommand\Inv{\kappa}
\DeclareMathOperator{\Hom}{Hom}
\DeclareMathOperator{\End}{End}
\newcommand{\id}{\mathrm{id}}
\newcommand{\Aut}{\mathrm{Aut}}
\renewcommand{\Vec}{\mathrm{Vec}}
\newcommand{\ev}{\mathrm{ev}}
\newcommand{\coev}{\mathrm{coev}}
\newcommand{\op}{\mathrm{op}}
\newcommand\st{\text{ s.t.\ }}
\renewcommand\Vec{\cat{Vec}}
\newcommand\sVec{\cat{sVec}}
\newcommand\Rep{\cat{Rep}}
\DeclareMathOperator\Sq{Sq}
\newcommand\coh{\mathrm{coh}}
\newcommand\br{\mathrm{br}}
\newcommand\rev{\mathrm{rev}}
\DeclareMathOperator\SH{SH}
\newcommand\sRep{\cat{sRep}}
\newcommand\quot{/\! /}
\DeclareMathOperator\Witt{Witt}
\DeclareMathOperator\coker{coker}
\DeclareMathOperator\Inn{Inn}
\DeclareMathOperator\Mext{Mext}
\DeclareMathOperator\NBFC{BFC}
\DeclareMathOperator\Obs{Obstr}
\newcommand\rh{\mathrm{h}}
\def\cA{\mathcal A}\def\cB{\mathcal B}\def\cC{\mathcal C}\def\cD{\mathcal D}
\def\cE{\mathcal E}\def\cF{\mathcal F}\def\cG{\mathcal G}
\def\cL{\mathcal L}
\def\cM{\mathcal M}\def\cN{\mathcal N}
\def\cS{\mathcal S}\def\cT{\mathcal T}
\def\cX{\mathcal X}
\def\cY{\mathcal Y}\def\cZ{\mathcal Z}
\newcommand\bC{\mathbb C}
\newcommand\bQ{\mathbb Q}
\newcommand\bR{\mathbb R}
\newcommand\bS{\mathbb S}
\newcommand\bZ{\mathbb Z}
\newcommand\rB{\mathrm B}
\newcommand\rO{\mathrm O}
\renewcommand\d{\mathrm{d}}
\newcommand\define[1]{\emph{#1}}
\newcommand\cat[1]{\mathbf{#1}}
\newcommand\ignore[1]{}
\newcommand\Modl[2][]{{#2\text{-}\mathrm{Mod}_{#1}}}
\newcommand\Modr[2][]{{_{#1}\mathrm{Mod}\text{-}#2}}
\newcommand\BrModl[1]{#1\text{-}\mathrm{BrMod}}
\newcommand\BrModr[2][]{{_{#1}}\mathrm{BrMod}\text{-}#2}
\newcommand\Bim[1]{\mathrm{Bim}\text{-}#1}
\newcommand\arXiv[1]{\href{http://arxiv.org/abs/#1}{\nolinkurl{arXiv:#1}}}
\newcommand\MRnumber[2]{}
\newcommand\DOI[1]{\href{http://dx.doi.org/#1}{\nolinkurl{DOI:#1}}}
\newcommand\MAILTO[1]{\href{mailto:#1}{\nolinkurl{#1}}}
\newenvironment{tz}[1][]{%
			\begin{tikzpicture}[baseline={([yshift=-.8ex]current bounding box.center)}, #1]%
				}{%
				\end{tikzpicture}%
				}%
\tikzset{arrow data/.style 2 args={
      decoration={
         markings,
         mark=at position #1 with {\arrow[scale=0.7]{#2}}}, 
         postaction=decorate}
}
     \gdef\node@@on@layer{%
      \setbox\tikz@tempbox=\hbox\bgroup\pgfonlayer{#1}\unhbox\tikz@tempbox\endpgfonlayer\egroup}
\def\node@on@layer{\aftergroup\node@@on@layer}
\def\black{black}
\tikzset{label/.style = {scale=0.5}}
\tikzset{braid/.style={ preaction={draw=white, line width =3pt, on layer=#1}, draw =black, line width=0.7pt}}
\tikzset{braid/.default=main}
\tikzset{box/.style={line width =0.3pt}}
\tikzset{string/.style={draw=black, line width=0.7pt}}
\tikzset{dinat/.style={draw=red, line width=0.7pt}}
\tikzset{hb/.style={draw=blue,  line width=0.7pt}}
 \def\dotscale{0.3}
\tikzset{dot/.style n args={1}{draw,fill=#1,circle,line width=0.7pt,scale=\dotscale}}
\tikzset{smalldot/.style ={node on layer=front, fill = black, rectangle, line width =0.7pt, scale=1.5*\dotscale}}
\tikzset{smalldotgreen/.style ={node on layer=front, fill = black!20!green,, circle, line width =0.7pt, scale=0.75*\dotscale}}
\tikzset{std/.style = {string, scale=0.55}}
\tikzset{box/.style={rectangle, draw, fill=white, minimum width =#1, label}}
\def\shift{0.075cm}
\tikzset{upshift/.style={yshift=\shift}}
\tikzset{downshift/.style={yshift=-\shift}}
\newcommand\mult[3]{ 
\draw[string] (#1.center) to [out=up, in=-135] +(0.5*#2,#3) to [out=-45, in=up] +(0.5*#2,-#3);
\node[dot=\black,downshift] at ($(#1)+(0.5*#2,#3)$){};
}
\newcommand\braidmult[3]{ 
\draw[braid,on layer = back] (#1.center) to [out=up, in=-135] +(0.5*#2,#3);
\draw[braid = front, on layer=front] ($(#1)+(0.5*#2,#3)$) to [out=-45, in=up] +(0.5*#2,-#3);
\node[node on layer=front,dot=\black,downshift] at ($(#1)+(0.5*#2,#3)$){};
}
\newcommand\braidlact[3]{ 
\draw[braid,on layer = back] (#1.center) to [out=up, in=-135] +(0.5*#2,#3);
\node[node on layer=front,dot=\black,downshift] at ($(#1)+(0.5*#2,#3)$){};
}
\newcommand\braidract[3]{ 
\draw[braid, on layer=front] ($(#1)+(0.5*#2,#3)$) to [out=-45, in=up] +(0.5*#2,-#3);
\node[node on layer=front,dot=\black,downshift] at ($(#1)+(0.5*#2,#3)$){};
}
\tikzset{xzplane/.style={canvas is xz plane at y=#1}}
\tikzset{td/.style={
					y={(0.6cm, 0.4cm)},
					x={(1cm, 0cm)},					
                    z  = {(0cm,1cm)},
                    scale = 0.55}}
\tikzset{slice/.style={draw = gray!50, line width = 0.7pt}}              
  \tikzset{obj/.style={scale=0.6, color=gray}}
\tikzset{21braid/.style = {draw=black, line width =0.3pt}}
  \tikzset{surface/.style={ fill=gray!60, opacity=0.8}}
\tikzset{hb3d/.style={smalldot}}
\tikzset{1mor/.style= {string}}
\def\maskpointheight{5pt}
\tikzset{mask point/.style={ transform shape, sloped, 
minimum width=15pt, minimum height=\maskpointheight, inner sep=0pt, ultra thin, font=\tiny, node on layer=#1}}
\tikzset{mask point/.default=main}
\tikzset{
clip even odd rule/.code={\pgfseteorule},
invclip/.style={clip,insert path=[clip even odd rule]{
   [reset cm](-\maxdimen,-\maxdimen)rectangle(\maxdimen,\maxdimen)
    }}} 
\newcommand\clipintersection[1]{(#1.north west) -- (#1.north east) -- (#1.south east) -- (#1.south west) -- (#1.north west)}
\newcommand\cliparoundone[3][main]{\begin{pgfscope}\begin{scope}[overlay]
\path[invclip, on layer=#1] \clipintersection{#2};#3
\end{scope}\end{pgfscope}}
\def\circclip{2pt}
\tikzset{circ mask point/.style={ circle,
minimum width=\circclip, inner sep=0pt, ultra thin, font=\tiny, node on layer=#1}}
\tikzset{circ mask point/.default=main}
\newcommand\circclipintersection[1]{(#1.center) circle (\circclip) }
\newcommand\circcliparoundone[3][main]{\begin{pgfscope}\begin{scope}[overlay]
\path[invclip, on layer=#1] \circclipintersection{#2};#3
\draw[21braid] (#2.center) circle (\circclip);
\end{scope}\end{pgfscope}}
\def\boxmaskpointheight{6pt}
\tikzset{box mask point/.style={ transform shape, sloped, 
minimum width=3.5pt, minimum height=\boxmaskpointheight, inner sep=0pt, ultra thin, font=\tiny}}
\newcommand\boxclipintersection[1]{
(#1.north west)  .. controls ($(#1.west)!0.7!(#1.center)$) and ($(#1.west)!0.7!(#1.center)$) .. (#1.south west) to (#1.south east) .. controls ($(#1.east)!0.7!(#1.center)$) and ($(#1.east)!0.7!(#1.center)$) .. (#1.north east) to (#1.north west)}
\newcommand\boxclipshadow[1]{
\draw[21braid](#1.north west)  .. controls
($(#1.west)!0.7!(#1.center)$) and ($(#1.west)!0.7!(#1.center)$) ..(#1.south west);
\draw[21braid](#1.north east)  .. controls ($(#1.east)!0.7!(#1.center)$) and ($(#1.east)!0.7!(#1.center)$) .. (#1.south east);
}
\newcommand\boxcliparoundone[3][main]{\begin{pgfscope}\begin{scope}[overlay]
\path[invclip, on layer=#1] \boxclipintersection{#2};#3
\end{scope}\end{pgfscope}}
\DeclareMathOperator\Homology{H}
\renewcommand\H{\Homology}
\DeclareMathOperator\rad{rad}
\title{Minimal nondegenerate extensions}
\author{Theo Johnson-Freyd}
\address{Department of Mathematics, Dalhousie University \& \newline \indent Perimeter Institute for Theoretical Physics}
\email{theojf@dal.ca}
\urladdr{http://categorified.net}
\author{David Reutter}
\address{Fachbereich Mathematik, Universit\"at Hamburg}
\email{david.reutter@uni-hamburg.de}
\urladdr{https://www.davidreutter.com}
\begin{document}

\begin{abstract}
We prove that every slightly degenerate braided fusion category admits a minimal nondegenerate extension, and hence that every pseudo-unitary super modular tensor category admits a minimal modular extension.
This completes the program of characterizing minimal nondegenerate extensions of braided fusion categories.

Our proof relies on the new subject of fusion $2$-categories. We study in detail the Drinfel'd centre $\cZ(\Modr{\cB})$ of the fusion $2$-category $\Modr{\cB}$ of module categories of a braided fusion $1$-category $\cB$. 
We show that minimal nondegenerate extensions of $\cB$ correspond to certain trivializations of $\cZ(\Modr{\cB})$. In the slightly degenerate case, such trivializations are obstructed by a class in $\H^5(K(\bZ_2, 2); \bk^\times)$ and we use a numerical invariant --- defined by evaluating a certain two-dimensional topological field theory on a Klein bottle --- to prove that this obstruction always vanishes.

Along the way, we develop techniques to explicitly compute in braided fusion $2$-categories which we expect will be of independent interest.
In addition to the 
model of $\cZ(\Modr{\cB})$ in terms of braided $\cB$-module categories, we develop a 
 computationally useful model in terms of certain algebra objects in~$\cB$. 
We construct an $S$-matrix pairing for any braided fusion $2$-category, and show that it is nondegenerate for $\cZ(\Modr{\cB})$. As a corollary, we identify components of $\cZ(\Modr{\cB})$ with blocks in the annular category of $\cB$ and with the homomorphisms from the Grothendieck ring of the M\"uger centre of $\cB$ to the ground field.
\end{abstract}

\maketitle	

\tableofcontents

\section{Introduction} \label{sec.intro}

\subsection{Motivation and statement of the main result}

This paper completes the long-standing program of classifying minimal nondegenerate extensions of braided fusion categories, a question first raised in \cite{MR1990929} and which has occupied many papers in recent years \cite{MR3039775,MR3022755,foldway,1712.07097,1703.05787,1908.07487,MR3613518,MR3775361,DN,2105.01814}: we prove that there is no obstruction to finding a minimal nondegenerate extension for any slightly degenerate braided fusion category. Throughout this paper we work over a fixed algebraically-closed characteristic-zero field $\bk$.

Braided fusion categories play a key role in the representation theory of quantum groups, Hopf algebras, and vertex algebras~\cite{MR1300632,EGNO,MR2023933}, and in the construction of three-manifold invariants~\cite{RT,MR1186845}. Moreover, braided fusion categories organize the particle excitations in topological 2+1D quantum systems \cite{MR1002038,MR990772,MR1043302}, leading to their great importance in the study of quantum matter and quantum computation~\cite{MR1951039,MR2640343}.
Braided fusion categories are a ``noninvertible'' generalization of finite abelian groups equipped with quadratic forms. Indeed, a braided fusion category in which every simple object is invertible is uniquely determined by its finite abelian group $A$ of invertible objects and  their self-braidings $\beta_{x,x} : x \otimes x \to x \otimes x$ assembled into a quadratic form $q:A \to \bk^\times$~\cite{MR1250465}. The associated symmetric bilinear form $b(x,y):= \frac{q(x+y)}{q(x) q(y)}$ records the full braiding $\beta_{y, x} \circ \beta_{x,y}:x \otimes y \to x \otimes y$ of invertibles.

Suppose that $B \subset C$ is a subgroup of an abelian group $C$ with a symmetric bilinear form $b$. The \define{orthogonal} of this inclusion is the subgroup $\{y \in C~|~ b(x,y) = 1~\forall x \in B\}$. The \define{radical} $\rad(A,b)$ of a symmetric bilinear form $b$ on $A$ is the orthogonal of the identity inclusion $A \subset A$, and $(A,b)$ is \define{nondegenerate} when its radical is the trivial group. When $b$ is the associated bilinear form of a quadratic form $q$, then there is a finer notion of ``radical'' of the quadratic form $q$. Specifically, the restriction $q|_{\rad(A,b)} : \rad(A,b) \to \bk^\times$ is a homomorphism which takes values in $\{\pm 1\}\subset \bk^\times$, and the radical $\rad(A,q)$ is defined as the kernel of $q|_{\rad(A,b)}$. The pair $(A,q)$ is \define{slightly degenerate} when $\rad(A,q)$ is trivial but $\rad(A,b)$ is not: equivalently, when $\rad(A,b) \cong \bZ_2 = \{1,e\}$ and $q(e) = -1$.

In the context of braided fusion categories, the ``orthogonal'' of a fully-faithful inclusion $\cB \subset \cC$ is called the \define{centralizer}, defined to be the full braided subcategory of $\cC$ of objects transparent to those objects of $\cB$:
$$ \cZ_{2}(\cB \subset \cC) := \{y \in \cC \st \beta_{y,x}\circ\beta_{x,y} = \id_{x \otimes y} \; \forall x \in \cB\}.$$
The ``radical'' of a braided fusion category $\cB$ is the centralizer of the identity inclusion, and is called the \define{M\"uger centre} $\cZ_{(2)}(\cB)$ of $\cB$. It is automatically a symmetric monoidal category and is contained as a full subcategory in the centralizer $\cZ_{2}(\cB \subset \cC)$ of any inclusion. A braided fusion category $\cB$ is called \define{nondegenerate} when its M\"uger centre is the ``trivial symmetric fusion category'', i.e.\ when $\cZ_{(2)}(\cB)$ is equivalent to the category $\Vec$ of finite-dimensional vector spaces. A braided fusion category $\cB$ is \define{slightly degenerate} when $\cZ_{(2)}(\cB)$ is the categorical version of $(\bZ_2, q : e \mapsto -1)$ from above, in other words when $\cZ_{(2)}(\cB)$ is equivalent as a symmetric fusion category to the category $\sVec$ of finite-dimensional super vector spaces. 

Let $(A,q)$ be a finite abelian group with a quadratic form $q$ and associated bilinear form $b$. It is natural to try to extend $(A,q)$ to a larger group $A'$ with a quadratic form $q'$ which is nondegenerate. Such an extension is \define{minimal} when the orthogonal of $A \subset A'$ contains nothing more than $\operatorname{rad}(A,b)$. For groups, it turns out that such minimal extensions always exist. The same question makes sense for braided fusion categories: a \define{minimal nondegenerate extension} of a braided fusion category $\cB$ is an inclusion $\cB \subset \cM$ such that $\cM$ is nondegenerate and the canonical inclusion $\cZ_{2}(\cB) \subset \cZ_{2}(\cB \subset \cM)$ is an equivalence. Generalizing the group case, M\"uger asked in \cite{MR1990929} whether every braided fusion category admits a minimal nondegenerate extension.

Somewhat surprisingly, given the group case, the answer to M\"uger's question is ``No,'' as first demonstrated by an explicit counterexample constructed by Drinfel'd in unpublished work. 
Specifically, there is a series of cohomological obstructions, detailed in \cite{1712.07097}, which reduce the minimal nondegenerate extension theory for general braided fusion categories 
to the problem of finding minimal nondegenerate extensions for slightly degenerate braided fusion categories. 
This last problem remained open for many years and appeared for example as Question 5.15 in~\cite{MR3022755}, Conjecture~3.9 in~\cite{foldway}, and Conjecture 1.1 in~\cite{2105.01814}. Our main result is a solution to this problem based on the new theory of fusion 2-categories introduced in~\cite{DR}:
\begin{mainthm}\hypertarget{maintheorem}{}
  Every slightly degenerate braided fusion category admits a minimal nondegenerate extension.
\end{mainthm}
Ultimately, our proof of this theorem is constructive: In Remark~\ref{rem:Z2ext}, we describe how to construct a minimal nondegenerate extension of a slightly degenerate  braided fusion category from certain choices of data. 

Already in the group case, there are multiple minimal nondegenerate extensions, and there can fail to be a canonical choice. This is a hallmark of obstruction theory, and so in hindsight does predict the presence of possible obstructions. For a braided fusion category $\cB$ whose M\"uger centre is \define{Tannakian}, i.e.\ equivalent to the category $\Rep(G)$ of finite-dimensional representations of some finite group $G$, the obstruction to having a minimal nondegenerate extension lives in $\H^4_{\mathrm{gp}}(G; \bk^\times)$~\cite{1712.07097}. Moreover, as we explain in Section~\ref{subsec.completeobstructions}, every class in $\H^4_{\mathrm{gp}}(G; \bk^\times)$ can be realized as this obstruction for some braided fusion category.

In that same section, we also explain a complete ``supercohomological'' obstruction theory for the general non-Tannakian case, thereby answering a question asked by~\cite{OstrikMCA}. One outcome of our paper overall is that when $\cB$ is slightly degenerate, its obstruction to admitting a minimal nondegenerate extension is naturally a class in $\H^5(K(\bZ_2, 2); \bk^\times) \cong \bZ_2$.  The proof of the \MainTheorem\ proceeds by providing a universal calculation of this obstruction and showing that, while its ambient group is nontrivial, the obstruction itself vanishes for all slightly degenerate braided fusion categories.  From this perspective, the slightly degenerate case is in stark contrast to the Tannakian case where all obstruction classes are realizable.

\begin{remark}
  The nontriviality of the obstruction group $\H^5(K(\bZ_2, 2); \bk^\times) \cong \bZ_2$, but the triviality of the obstruction itself, has physical significance. As explained further in \cite{2011.11165}, it implies the existence of a 3+1D gapped topological phase of matter which is ``chiral'' in the sense that it does not admit a gapped topological boundary condition. The extended operators of this phase are described by the braided fusion $2$-category $\cT$ of Theorem~\ref{thm:SandT}.
  The existence of such a phase 
  was predicted in~\cite{MR3321301}, which analyzes a 3+1D gapped quantum field theory with gravitational 't Hooft anomaly $(-1)^{w_2w_3}$.
   However, that anomaly trivializes on framed manifolds, and hence trivializes if one allows Lorentz symmetry to break. Our \MainTheorem\ implies that even if one is willing to break Lorentz symmetry (which can e.g. happen in lattice systems with an ambient magnetic field) then still this phase does not admit a gapped topological boundary condition. \end{remark}

\subsection{Structure of the proof}

Section~\ref{sec.2cats} records a number of results about braided fusion 1- and 2-categories. 
These both set up our proof and may be of independent interest. The first step is to translate the question of classifying minimal nondegenerate extensions into 2-categorical language. Some ideas of this translation appeared in the lecture \cite{DmitriMSRI} and the companion paper \cite{DN}. The version we will use builds on the recent work \cite{DR,GJFcond,2003.06663} setting out the basic theory of fusion 2-categories. 
We review the main elements in \S\ref{subsec.defnitions} and \S\ref{subsec.Zabstract}. Specifically, each braided fusion $1$-category $\cB$ can be ``suspended'' to a fusion $2$-category, whose Drinfel'd centre $\cZ(\Sigma\cB)$ is a braided fusion $2$-category, equivalent to the braided fusion $2$-category $\BrModr{\cB}$ of braided $\cB$-module categories from \cite{MR3248737,MR3874702,DN}. Our first main result is Theorem~\ref{thm.BCs} in \S\ref{subsec.BCthm}, where we show that to build a minimal nondegenerate extension of $\cB$, it suffices to identify $\cZ(\Sigma\cB)$ with $\cZ(\Sigma\cZ_{2}(\cB))$. We then given an explicit model of $\cZ(\Sigma\cB)$ in terms of ``half-braided algebras'' in \S\ref{subsec.halfbraidedalgs}, and use it to conclude a number of facts about $\cZ(\Sigma\cB)$: in particular, in Corollary~\ref{cor.Lgenerates} in \S\ref{subsec.canonicalLagHBA} we show that $\cZ(\Sigma\cB)$ is generated under direct sums and idempotent completions by the 1-categorical Drinfeld centre $\cZ(\cB)$ thought of as a braided $\cB$-module; using this, Theorem~\ref{thm:componentcount} in \S\ref{subsec.compZSB} and Theorem~\ref{thm:invertibleSmatrix} in \S\ref{sec:Smatrix} first count and then enumerate the ``components'' of $\cZ(\Sigma\cB)$, identifying them via a 2-categorical ``$S$-matrix'' with the set of ring homomorphisms $K_0(\cZ_{2}(\cB)) \to \bk$ from the Grothendieck ring of $\cZ_{2}(\cB)$.

With these general results established, we prove our \MainTheorem\ in Section~\ref{sec.proof}. The results in Section~\ref{sec.2cats} significantly constrain the braided monoidal equivalence class of $\cZ(\Sigma\cB)$ for a slightly degenerate braided fusion 1-category $\cB$, and reduce the problem to showing that  $\cZ(\Sigma\cB)$ is in fact independent, up to noncanonical equivalence, of the choice of $\cB$. Theorem~\ref{thm:SandT} in \S\ref{subsec:SandT} classifies the two possible options for $\cZ(\Sigma\cB)$ consistent with the results of Section~\ref{sec.2cats}, and in~\S\ref{subsec:eta} and~\S\ref{subsec:inv} we construct an invariant $\kappa$ of self-dual objects, which is reminiscent of a 2-categorical version of the Frobenius--Schur indicator. We show in \S\ref{subsec:KleinST} that our invariant $\kappa$ can distinguish the two possible options for $\cZ(\Sigma\cB)$.
In~\S\ref{subsec:twistedtrace} and \S\ref{subsec:etaL} we develop the computational techniques needed to calculate $\kappa$ in the half-braided algebra model of $\cZ(\Sigma\cB)$. These pieces are assembled and the proof completed in~\S\ref{subsec:laststep}.

\begin{remark}\label{rem:Z2ext}
Building on a result of Nikshych and Davydov on $\bZ_2$-extensions of braided fusion categories~\cite[Theorem 8.18]{DN}, and upcoming work of Douglas--Schommer-Pries--Snyder~\cite{DSPS-SO3} on a minimal CW decomposition of the $4$-type of $\mathrm{BSO}(3)$, we now give an alternative proof of our \MainTheorem\ leading to an explicit construction of the non-degenerate extension of any braided fusion $1$-category. This alternative proof avoids Theorems~\ref{thm.BCs} and~\ref{thm:SandT} (along with the rest of \S\ref{subsec.BCthm}, \S\ref{subsec:SandT} and \S\ref{subsec:KleinST}), but still relies on Theorems~\ref{thm:componentcount},~\ref{thm:invertibleSmatrix} and the explicit computations in \S\ref{subsec:twistedtrace}--\ref{subsec:laststep}.

Suppose that $\cB$ is a slightly-degenerate braided fusion $1$-category. As is true for any braided fusion $1$-category, $\cB$ determines a braided monoidal 2-category $\BrModr{\cB}$ of braided $\cB$-module categories \cite{MR3248737,MR3874702,DN}. Theorems~\ref{thm:componentcount} and \ref{thm:invertibleSmatrix} describe this 2-category in detail.  In particular, if $\cB$ is slightly degenerate, then there are four equivalence classes of indecomposable braided $\cB$-module categories.
  Two of them are ``magnetically neutral'': the transparent simple ``electron'' $e \in \cZ_{2}(\cB)$ braids with them trivially. The other two are ``magnetically charged'': the braiding with $e$ is $-1$. 
  
  Arbitrarily select one of the two magnetically charged indecomposable braided $\cB$-modules $\cX$.   One way to do this is to decompose the Drinfel'd centre $\cZ(\cB)$ as a sum of indecomposable braided $\cB$-modules: by Corollary~\ref{cor.Lgenerates}, both neutral and magnetically charged summands appear. The fusion $\cX \boxtimes_\cB \cX$ turns out to be equivalent as a braided $\cB$-module to $\cB$ itself; arbitrarily choose such an equivalence $r: \cX \boxtimes_{\cB} \cX \isom \cB$. Up to isomorphism, there are precisely two such choices of $r$.  
  
  These choices determine a bifunctor on the category $\cM := \cB \boxplus \cX$. 
By~\cite[Theorem 8.18]{DN}, extensions of this bifunctor to a full braided monoidal structure  (which restricts to the braided monoidal structure on $\cB$ and to the braided module structure on $\cX$) correspond to extensions of the data $(\cX, r)$ to a braided monoidal $2$-functor $\bZ_2 \to \BrModr{\cB}$ where we think of the commutative group~$\bZ_2$ as a braided monoidal 2-category with only identity 1- and 2-morphisms. Using upcoming results of Douglas--Schommer-Pries--Snyder \cite{DSPS-SO3} to build a minimal CW decomposition of the $4$-type of  $K(\bZ_2,2)$, it follows from Corollary~\ref{cor:CW} that such extensions exist if and only if the self-braiding $\br_{\cX, \cX}$ is isomorphic to the identity on $\cX \boxtimes_{\cB} \cX$ and a certain ``Klein bottle'' obstruction $\Inv(\cX, r) \in \bk^\times $ is trivial. In this case,  there are exactly four such choices of extension corresponding to four admissible choices of isomorphism $\phi: \br_{\cX, \cX} \cong \id_{\cX \boxtimes_{\cB} \cX}$ (see Corollary~\ref{cor:CW} for more details). 

Trivializability of $\br_{\cX, \cX}$ holds for any choice of magnetic simple object $\cX$ and is explained in the first part of the proof of Theorem~\ref{thm:SandT}, while triviality of the Klein obstruction $\Inv(\cX, r)$ follows from the computations in \S\ref{subsec:twistedtrace}--\ref{subsec:laststep}. 

Since $\cX$ is magnetically charged, the braided extension $\cM$ is non-degenerate, resulting in the minimal nondegenerate extension of $\cB$ that we are after. The 16-fold way emphasized in \cite{foldway} arises from the $2 \times 2 \times 4$ choices of $\cX$, $r$ and $\phi$, respectively,  needed in the construction.
\end{remark}

\subsection{Minimal modular extensions for pseudo-unitary categories}

Our \MainTheorem\ may be extended to account for ribbon structures and modularity, provided these structures are (pseudo-)unitary. Recall that, when $\bk = \bC$, a braided fusion category is \define{pseudo-unitary} if it admits a (necessarily unique) ``positive'' ribbon structure for which all quantum dimensions are positive. (For comparison, a \define{unitary} braided fusion category is a braided fusion category equipped with a positive $*$-, or dagger-structure. The underlying braided fusion category of such a category is pseudo-unitary and has a unique positive $*$-structure~\cite{MR3239112, 1906.09710}, but it remains open whether every pseudo-unitary braided fusion category admits a positive $*$-structure.) 
 A pseudo-unitary \emph{super-modular category}, respectively \define{modular category}, is a slightly degenerate, respectively nondegenerate, pseudo-unitary braided fusion category equipped with its unique positive ribbon structure \cite{foldway}. The term \define{spin-modular} is used in \cite{MR3692909,MR2163566,foldway} for a stronger notion than super-modularity: a modular tensor category $\cM$ equipped with a chosen ribbon embedding $\sVec \subset \cM$. The centralizer $\cB = \cZ_{2}(\sVec \subset \cM)$ is therefore super-modular, but, as emphasized in \cite{foldway}, a super-modular category can extend to a spin-modular one in multiple ways.

\begin{corollary} \label{cor.unitary}
Every pseudo-unitary super-modular category admits a pseudo-unitary minimal modular extension. 
\end{corollary}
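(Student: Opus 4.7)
The plan is to apply the \MainTheorem\ to the underlying slightly degenerate braided fusion $1$-category of the given pseudo-unitary super-modular category $\cB$, producing a minimal nondegenerate extension $\cB \subset \cM$, and then to show that $\cM$ is automatically pseudo-unitary so that its unique positive ribbon structure extends the given one on $\cB$.

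First I would verify that $\cM$ is pseudo-unitary by comparing its Frobenius--Perron and canonical categorical dimensions. By minimality, $\cZ_2(\cB \subset \cM) = \cZ_2(\cB) = \sVec$, and by the standard double-centralizer formula for a fully-faithful braided inclusion into a nondegenerate braided fusion category, both dimensions are multiplicative along $\cB \subset \cM$:
\[
  \mathrm{FPdim}(\cM) = 2\,\mathrm{FPdim}(\cB), \qquad \dim(\cM) = 2\,\dim(\cB),
\]
using $\mathrm{FPdim}(\sVec) = \dim(\sVec) = 2$. Since $\cB$ is pseudo-unitary, $\dim(\cB) = \mathrm{FPdim}(\cB)$, so $\dim(\cM) = \mathrm{FPdim}(\cM)$; by the Etingof--Nikshych--Ostrik characterization of pseudo-unitarity, this equality forces $\cM$ to be pseudo-unitary.

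Finally I would equip $\cM$ with its unique positive ribbon structure. Its restriction to $\cB$ is a ribbon structure making all quantum dimensions of simples of $\cB$ positive, so by uniqueness of the positive ribbon structure on $\cB$ it coincides with the originally given one, and $\cB \subset \cM$ is a pseudo-unitary minimal modular extension. The only subtle point is invoking the multiplicativity formula for the canonical categorical dimension (rather than the Frobenius--Perron version, which is standard); this is the main place where one must be careful, but it holds here because $\sVec$ is integral with $\dim(\sVec) = 2$, so the categorical and Frobenius--Perron dimension counts align exactly and the pseudo-unitarity of $\cB$ propagates cleanly to $\cM$.
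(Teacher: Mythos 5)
Your proof is correct and follows essentially the same route as the paper's: apply the \MainTheorem, then use the multiplicativity of both $\dim$ and $\operatorname{FPdim}$ along nondegenerate extensions (DGNO Theorems 3.10 and 3.14) together with $\dim(\sVec) = \operatorname{FPdim}(\sVec) = 2$ to transfer the equality $\dim = \operatorname{FPdim}$ from $\cB$ to $\cM$, and conclude via the Etingof--Nikshych--Ostrik characterization of pseudo-unitarity. (A small caveat: the multiplicativity of the global dimension $\dim$ is a general fact for nondegenerate extensions and does not hinge on $\sVec$ being integral, so that remark at the end is unnecessary, though harmless.)
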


\begin{proof}  By our \MainTheorem, any slightly degenerate braided fusion category $\cB$ admits a minimal nondegenerate extension $\cM$. By~\cite[Propoposition 8.23]{MR2183279}, a braided fusion category $\cB$ is pseudo-unitary if and only if its global dimension agrees with its Frobenius-Perron dimension.  By~\cite[Theorem~3.10]{DGNO} and~\cite[Theorem~3.14]{DGNO}, respectively, any nondegenerate extension $\cB \subset \cM$ of a braided fusion category $\cB$ has the following global dimension and Frobenius-Perron dimension:
\begin{gather*}
\dim(\cM) =  \dim(\cB)\dim(\cZ_{2}(\cB \subset \cM)), \\ \operatorname{FPdim}(\cM) = \operatorname{FPdim}(\cB) \operatorname{FPdim}(\cZ_{2}(\cB \subset \cM)).\end{gather*}
Hence, if $\cM$ is a nondegenerate extension of a slightly degenerate pseudo-unitary $\cB$, it follows that \[\dim(\cM) = 2 \dim(\cB) = 2 \operatorname{FPdim}(\cB) = \operatorname{FPdim}(\cM)\] and therefore that $\cM$ is also pseudo-unitary. In particular, the unique positive ribbon structure on $\cB$ extends to the unique positive ribbon structure on $\cM$. 
\end{proof}

The more general question (also asked in \cite{MR1990929}) of whether every super-modular category admits a minimal modular extension remains open. We expect it can be settled with similar techniques.

\begin{remark}
  We announced a proof of the special case Corollary~\ref{cor.unitary} of our \MainTheorem\ in November 2020, and one of us presented an outline at the lecture \cite{MPPMlecture}. Our proof at the time was closely inspired by ideas from condensed matter and topological quantum field theory, and involved calculating what in physics would be considered ``anomalies'' of  ``higher symmetries'' and the ``coupling to gravity.''  Our proof did require some positivity statements in order to rule out certain anomalies. Our interpretation of this positivity requirement was that we were using some higher-categorical version of a ``spin-statistics theorem'' (and physicists know well that spin-statistics theorems can fail if unitarity and positivity requirements are dropped).
  
  Our proof in this paper is completely algebraic. We found it by studying our anomaly calculations and discovering that they required less ``orientability'' or ``unitarity'' than we expected on physical grounds. Since we do not have a satisfactory physical/topological explanation of this, we have chosen to present our story emphasizing the role played by (braided) fusion 2-categories, and the parallels between that theory and the more familiar theory of (braided) fusion 1-categories,
 and not to elaborate on the topological quantum field theoretic interpretation.
\end{remark}

\subsection{Acknowledgments}

We thank Corey Jones and Dmitri Nikshych for many helpful conversations related to this work. We are grateful to Noah Snyder for pointing out that the Klein invariant $\Inv$ appears as the attaching map of the $5$-cell in a minimal cell decomposition of $\mathrm{BSO}(3)$, leading to Proposition~\ref{prop:CW} and Corollary~\ref{cor:CW} and the alternative proof in Remark~\ref{rem:Z2ext},
{and to Pavel Etingof, for his interest and his useful comments on an early version of this paper.}
 We furthermore thank the anonymous referees for their careful reading and valuable comments, and for suggesting Remark~\ref{rem:refrequest}.

Parts of this project were completed during the workshop ``Fusion categories and tensor networks'' at the American Institute for Mathematics, and we thank AIM for the generous support. 
This work is further supported by the NSERC Discovery Grant RGPIN-2021-02424, by the Simons Collaboration on Global Categorical Symmetries, and by the Deutsche Forschungsgemeinschaft (DFG, German Research Foundation) – 493608176.
TJF's research at the Perimeter Institute is supported in part by the Government of Canada through the Department of Innovation, Science and Economic Development Canada and by the Province of Ontario through the Ministry of Colleges and Universities. DR is grateful for the hospitality and financial support of the Max Planck Institute for Mathematics where much of this work was carried out.

\section{Fusion 2-categories and their Drinfel'd centres} \label{sec.2cats}

\subsection{Multifusion 2-categories} \label{subsec.defnitions}

Fusion 2-categories were first axiomatized in \cite{DR}, although variants had earlier been considered nonrigorously in the theoretical physics literature \cite{1405.5858,KWZ1,KWZ2,1704.04221,PhysRevX.9.021005}. The theory was then further developed by \cite{GJFcond,2003.06663}. Those latter papers study (fusion) $n$-categories for arbitrary $n$, and when $n$ is large they make  some explicit assumptions about the theory of colimits in enriched $(\infty,1)$-categories which have not been verified in the literature. Although we will lean on those papers for some intuition and notation, we will not require those unverified assumptions: in particular, we will not need fusion $n$-categories for $n>2$, and the theory of fusion $2$-categories of \cite{DR} is fully rigorized within the framework of  monoidal bicategories. Monoidal bicategories date back as far as \cite{MR1402727}, and  a good reference is the appendix to~\cite{SchommerPries}. 

\begin{remark}\label{conv:catno}
  By ``1-category'' we mean the usual strict notion, and by ``2-category'' we mean ``weak 2-category'' or equivalently ``bicategory.'' That said, we will generally suppress associator and unitor data for clarity; this is justified by the appropriate (semi)-strictification theorems for (braided) monoidal bicategories~\cite{MR1261589,MR2717302, MR2770448}. We will use the words ``equivalent'' and ``isomorphic'' interchangeably. 
  We will generally use the following notation to remind the ``category number'' of various constructions. Capital letters usually refer to objects of 2-categories, lower case letters to 1-morphisms in 2-categories and objects of 1-categories, and Greek letters to 2-morphisms in 2-categories and 1-morphisms in 1-categories. 
  Two exceptions to this convention are: unit objects in tensor $n$-categories are called $I$ and unit $k$-morphisms are called $\id$; the braiding natural transformation in a braided monoidal $1$-category is called $\beta$ and the braiding natural transformation in a braided monoidal $2$-category is called $\br$, even when it is evaluated on $1$-morphisms (where it is valued in $2$-morphisms).
  Composition of top-morphisms is generally $\cdot$, and composition of 1-morphisms in 2-categories is $\circ$. Tensor products are $\otimes$ for 1-categories and $\boxtimes$ for 2-categories. We do not use the conventions of \cite{DSPS}: rather, a monoidal 1-category $\cC$ determines a 2-category $\rB\cC$ with one object~$*$ such that $\Hom_{\rB\cC}(*,*) = \cC$ and $x \circ y := x \otimes y$. For braided monoidal categories $\cB$, we write $\cB^{\rev}$ for the same monoidal category $\cB$ with inverse braiding. \end{remark}

Recall that a semisimple $1$-category over our fixed characteristic-zero algebraically-closed field $\bk$ is a 1-category $\cC$ such that:
\begin{itemize}
  \item For every object $x \in \cC$, $\End_\cC(x)$ is a finite-dimensional semisimple $\bk$-algebra. We will henceforth write $\Omega_x\cC := \End_\cC(x)$, thinking of this algebra as the ``based loop space of $\cC$ at $x\in\cC$.'' In many cases there will be a distinguished object $I \in \cC$, in which case we will write simply ``$\Omega\cC$'' for its endomorphisms.
  \item $\cC$ is additively and Karoubi complete. In other words, $\cC$ has direct sums and all idempotents (endomorphisms $\rho \in \Omega_x \cC$ such that $\rho^2 = \rho$) split (factor as $\rho = \gamma\phi$ for some morphisms $\phi : x \to y$ and $\gamma : y \to x$ such that $\phi\gamma = \id_{y}$).
\end{itemize}
The set of \define{components} of a semisimple 1-category $\cC$, denoted $\pi_0 \cC$, is the set of isomorphism classes of simple objects. This is consistent with the notation $\pi_0 \cC$ of components of a semisimple 2-category described in Theorem~\ref{thm.Schur}.

A semisimple $1$-category equipped with a monoidal structure
 is called \define{multifusion} if it has finitely many isomorphism classes of simple objects and 
 all objects have (both left and right) duals, and \define{fusion} if additionally the unit object $I$ is simple, i.e.\ if $\Omega\cC \cong \bk$.

\begin{remark}\label{rem:separabilityrepeatedadvantage}
We will take repeated advantage of the fact that in characteristic zero, semisimple algebraic objects are automatically separable. This statement holds both for associative algebras, for which one needs only that  characteristic-zero fields are perfect, and for multifusion categories, for which it is a subtle theorem of \cite{MR2183279,DSPS}. For comparison, the assumption that $\bk$ is algebraically closed is a convenient simplification --- for example, it allows us to characterize simple objects as those with endomorphism ring $\bk$, and not have to mention division rings --- but it is less vital. For the proof of the \MainTheorem, the characteristic zero assumption will also appear in Proposition~\ref{prop:fundamental}, and is central to the final steps of the proof in~\S\ref{subsec:laststep}.
\end{remark}

The idea of fusion 2-categories is that they are to fusion 1-categories as fusion 1-categories are to semisimple algebras. Specifically, a \define{semisimple 2-category} over $\bk$ is a linear 2-category $\cC$ such that:
\begin{itemize}
  \item For every $X \in \cC$, $\Omega_X\cC := \End_\cC(X)$ is a multifusion 1-category.
  \item $\cC$ is additively and Karoubi complete.
\end{itemize}
Before giving the definition of (multi)fusion 2-category, it is worth elaborating a bit on the second bullet point. 

It is straightforward to define direct sums in a linear 2-category: given objects $X,Y \in \cC$, the \define{direct sum} $X \boxplus Y$, if it exists, is the unique (up to isomorphism which is unique up to 2-isomorphism) object  equipped with 1-morphisms $p_X : X \boxplus Y \to X$, $p_Y : X \boxplus Y \to Y$, $i_X : X \to X \boxplus Y$, $i_Y : Y \to X \boxplus Y$, and a direct sum decomposition $\id_{X \boxplus Y} \cong (i_X \circ p_X) \oplus (i_Y \circ p_Y)$ such that $p_X \circ i_X \cong \id_X$, $p_Y \circ i_Y \cong \id_Y$, $p_X \circ i_Y \cong 0$, $p_Y \circ i_X \cong 0$~\cite[Definition~1.1.2]{DR}. Note that this latter direct sum decomposition takes place in the additive 1-category of endomorphisms of $X \boxplus Y$. (There is an equivalent definition which also chooses the isomorphisms $p_X \circ i_X \cong \id_X$ etc.\ as data, subject to certain coherence conditions.)

In a 2-category with direct sums, if all endomorphism categories are multifusion, then also all 1-morphisms admit both adjoints.

The more interesting part is to define Karoubi completeness of 2-categories. We will give the definition from \cite{DR}, which is tuned to the case of 2-categories in which all 1-morphisms admit adjoints. (A more general definition without this assumption is given in~\cite{GJFcond}; Theorem~3.3.3 therein confirms that the two notions give equivalent definitions of Karoubi completeness when all 1-morphisms have adjoints.) Recall that a \define{monad} on an object $X \in \cC$ is a unital and associative algebra object $p \in \Omega_X\cC$. A monad is \define{separable} when the multiplication map $\mu : p \circ p \to p$, which is a map of $p$-bimodules by associativity, splits $p$-bilinearly. 
Separable monads will be our categorification of idempotents. 

As an example, suppose that $f : X \to Y$ is a 1-morphism with right adjoint $f^* : Y \to X$, and suppose that the counit $\ev_f : f \circ f^* \to \id_Y$ admits a section (i.e. a $2$-morphism $\nu: \id_Y \to f \circ f^*$ such that $\ev_f \cdot \nu = \id_{\id_Y}$). The corresponding monad $p := f^* \circ f$, with multiplication map
$$ p \circ p \cong f^* \circ (f\circ f^*) \circ f \to[\ev_f] f^* \circ \id_Y \circ f \cong p$$
is automatically separable. A separable monad \define{splits} when it factors in this way (see~\cite[Definition 1.3.3]{DR}), and a 2-category with adjoints is \define{Karoubi complete} when its hom-categories are Karoubi complete and all separable monads split.

\begin{example}\label{eg.SigmaB}
  A \define{pointing} of a 2-category $\cC$ is a choice of object $X \in \cC$. The assignment $(\cC, X) \mapsto \Omega_X\cC$ from pointed semisimple 2-categories to multifusion 1-categories has a left adjoint, which we will call $\Sigma$ and think of as a ``suspension'' construction. In other words, for any multifusion $1$-category $\cX$, there is a pointed semisimple 2-category $\Sigma\cX$, constructed by first taking the ``one-object delooping'' $\rB \cX$ (i.e.\ the 2-category with one object $*$ and $\End(*) = \cX$) and then Karoubi-completing $\rB \cX$ by formally splitting all separable monads.
  (In \cite{DR}, this latter Karoubi-completion step is denoted $(-)^\triangledown$, so that $\Sigma\cX = (\rB\cX)^\triangledown$.)
  
   This formal construction can be made much more explicit \cite[\S A.5]{DR}: $\Sigma\cX$ is the ``Morita 2-category'' of separable algebra objects in $\cX$ and their bimodule objects. Theorem~1 of \cite{MR1976459} provides yet another model of $\Sigma\cX$: it is (equivalent to) the 2-category $\Modr{\cX}$ of finite semisimple right $\cX$-module categories \cite[1.3.13]{DR}, pointed by the rank-one free module.
  From this (or any other) explicit description, one finds that the unit $\Omega\Sigma\cX \to \cX$ of the adjunction $\Sigma \dashv \Omega$ is an equivalence of fusion 1-categories.
  
  Note that if $A \in \cX$ is a separable algebra object, then it is the category $\Modl[\cX]{A}$ of \emph{left} $A$-module objects in $\cX$ which is a \emph{right} $\cX$-module, i.e.\ an object of $\Sigma\cX = \Modr{\cX}$. The notation is selected to remind who is acting on which side. When we model $\Sigma\cX$ in terms of Morita categories, we will occasionally write $[A]$ for the Morita class represented by the algebra $A$.
\end{example}

\begin{example}\label{eg.2vec}
  A vitally important example of a finite semisimple 2-category is $2\Vec := \Sigma\Vec$. It has many equivalent descriptions, including as the 2-category of finite semisimple 1-categories and as the 2-category of finite dimensional semisimple algebras and bimodules. Further models can be found in the appendix to \cite{III}.
\end{example}

\begin{remark}
\label{notation:langlerangle} 
By definition, an object $X \in \cC$ is called \define{simple} if $\Omega_X\cC$ is fusion, i.e.\ when $\Omega^2_X\cC := \End_{\Omega_X\cC}(\id_X) \cong \bk$. (There is a more general notion of simplicity of an object in a 2-category developed in \cite{DR}, but it is equivalent to this notion in the case of semisimple 2-categories.) 
Analogous to semisimple $1$-categories, any object $X$ in a semisimple $2$-category decomposes into a finite direct sum of such simples,  and an object is simple if and only if it is indecomposable \cite[Proposition 1.2.17]{DR}.

For a $2$-endomorphism $\alpha \in \End(f)$ of a simple $1$-morphism $f$ in a semisimple $2$-category $\cC$, we define $\langle \alpha \rangle \in \bk$ to be the proportionality factor such that $\langle \alpha \rangle \id_f = \alpha$. In particular, this will appear for $2$-endomorphisms $\alpha \in  \Omega^2_X\cC$ of 
a simple object $X \in \cC$. 
\end{remark}

\begin{remark}
\label{remark.2catDirectSums}
A direct sum decomposition $X\cong \boxplus_{i \in I} X_i$ of an object (into not necessarily simple summands) induces a family of (not necessarily irreducible) ``mutually orthogonal'' idempotents in the semisimple commutative algebra $\Omega^2_X\cC$, i.e.\ idempotents $\{p_i\}_{i \in I}$ such that $p_i p_j =0$ for $i \neq j$ and $\sum_{i \in I} p_i = \id_{\id_X}$. Conversely,  any such family of idempotents in $\Omega^2_X \cC$ induces a direct sum decomposition of $X$ (by splitting the idempotents and then the induced idempotent monads $P_i \in  \Omega_X \cC$, see~\cite[Proposition 1.3.16]{DR}). A summand $X_i$ is simple if and only if the corresponding idempotent $p_i$ is irreducible.

A particular consequence is that the decomposition of an object into simples is unique in a very strong sense (see the proof of~\cite[Proposition 1.2.21]{DR}): For any pair of direct sum decompositions  $\{( X_j, p_j, i_j, \ldots)\}_{j \in J} $ and $\{(X'_j, p'_j, i'_j, \ldots)\}_{j \in J'}$ of $X$ into (not necessarily inequivalent) simples there is a unique bijection $f:J \to J'$ such that $p'_{f(j)}\circ i_j: X_j \to X'_{f(j)}$ is an equivalence and $p'_{k} \circ i_j\cong 0$ for all $k \neq f(j)$. 
In particular, fixing a direct sum decomposition of $X$ into simples $\boxplus_i X_i$, it follows that with respect to this decomposition any $1$-automorphism $f:X \to X$ is the composite of a permutation of summands followed by a ``diagonal matrix of automorphisms'' $\boxplus_i (f_i: X_i \to X_i)$.
\end{remark}

A semisimple 1-category has finitely many isomorphism classes of simple objects exactly when it is equivalent to the category of modules of a finite-dimensional semisimple algebra. By the same token, a semisimple 2-category $\cC$ will be ``finite'' when it is equivalent to $\Sigma\cX$ for some multifusion 1-category $\cX$. Since our ground field $\bk$ is algebraically closed and of characteristic zero, Theorems~1.4.8 and~1.4.9 of \cite{DR} (which rely in turn on the finiteness result of \cite[Corollary~9.1.6]{EGNO}) assert that this finiteness happens exactly when $\cC$ has finitely many isomorphism classes of simple objects.

However, ``finitely many isomorphism classes of simple objects'' is not the morally correct definition of ``finiteness'' for semisimple 2-categories. This is because, if $\bk$ is not algebraically closed, demanding finitely many isomorphism classes of simple objects conflicts with demanding Karoubi completeness. (Indeed, in the simplest case $\Sigma\Vec$, there is an isomorphism class of simple objects for every division ring over $\bk$.) The underlying reason for this is that the usual Schur's lemma, which says that a nonzero map between simple objects is an isomorphism, fails in 2-categories. Rather, we have the following weaker statement \cite[Proposition 1.2.19]{DR}:
\begin{theorem}[2-categorical Schur's Lemma]\label{thm.Schur}
  In a semisimple 2-category $\cC$, the relation ``$X \sim Y$ if $\Hom(X,Y) \neq 0$'' on the set of simple objects is an equivalence relation. The equivalence classes of this relation are the \define{components} of $\cC$, denoted $\pi_0 \cC$. 
  \qed
\end{theorem}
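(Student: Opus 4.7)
The plan is to verify reflexivity, symmetry, and transitivity in turn, using two key structural inputs: every 1-morphism in a semisimple 2-category admits both adjoints (by the multifusion hypothesis on hom-categories, as noted in the text), and whenever $Y$ is simple the endomorphism category $\Omega_Y\cC$ is a \emph{fusion} 1-category, not merely multifusion.

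Reflexivity is immediate: for simple $X$ the 1-morphism $\id_X \in \Omega_X\cC$ is nonzero since $\Omega_X^2\cC \cong \bk$. For symmetry, given a nonzero 1-morphism $f: X \to Y$, I would take a right adjoint $f^*: Y \to X$ and show it is nonzero. In fact I would prove the slightly stronger claim that the endo-objects $f^* \circ f \in \Omega_X\cC$ and $f \circ f^* \in \Omega_Y\cC$ are both nonzero, since this will be needed for transitivity anyway. The argument is by contradiction on the triangle identities: if $f^* \circ f = 0$, then the unit $\coev_f : \id_X \Rightarrow f^* \circ f$ is forced to be the zero 2-morphism (since the target 1-morphism is zero), whence $(\ev_f \circ f) \cdot (f \circ \coev_f) = \id_f$ collapses to $\id_f = 0$, contradicting $f \neq 0$. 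The analogous argument with the roles of unit and counit reversed shows $f \circ f^* \neq 0$, and a fortiori $f^* \neq 0$.

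The main obstacle is transitivity, because in a 2-category a composition of nonzero 1-morphisms can in principle vanish. Given nonzero $f: X \to Y$ and $g: Y \to Z$ between simples, I would show directly that $g \circ f$ itself is already nonzero. By the previous step, $f \circ f^*$ and $g^* \circ g$ are nonzero objects of $\Omega_Y\cC$. Here I would invoke the classical fact that in any fusion 1-category the tensor product of two nonzero objects is nonzero: for a nonzero simple $a$, rigidity forces $I$ to appear as a summand of $a^* \otimes a$, so if $a \otimes b = 0$ then $b \hookrightarrow a^* \otimes a \otimes b = 0$. Applying this inside the fusion category $\Omega_Y\cC$ yields
\[ (g^* \circ g) \otimes (f \circ f^*) \;=\; g^* \circ (g \circ f) \circ f^* \;\neq\; 0, \]
which would be impossible if $g \circ f = 0$. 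Hence $g \circ f \neq 0$, and $X \sim Z$, completing the proof.
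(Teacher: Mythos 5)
Your proof is correct, and it is essentially the standard argument: the paper itself does not reproduce a proof but merely cites \cite[Proposition 1.2.19]{DR}, and the argument there is in the same spirit as yours (existence of adjoints for all 1-morphisms plus nonvanishing of composites derived from the triangle identities and properties of fusion categories). All three steps are sound: reflexivity from $\Omega_X^2\cC \cong \bk$; symmetry and the nonvanishing of $f^* \circ f$ and $f \circ f^*$ from the triangle identities, using that in an additive (in particular semisimple) hom-category a 1-morphism $f$ is nonzero iff $\id_f \neq 0$; and transitivity by passing to the fusion category $\Omega_Y\cC$ and observing that $(g^*\circ g)\circ(f\circ f^*) = g^*\circ(g\circ f)\circ f^*$ would vanish if $g\circ f = 0$, contradicting the fact that the tensor (i.e.\ composition) of nonzero objects in a fusion category is nonzero. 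Two minor points worth tightening for a polished write-up: the tensor-nonvanishing lemma should be stated for arbitrary nonzero objects, not just simples (the reduction is immediate by passing to a simple summand, but as stated you invoke it for $g^*\circ g$ and $f\circ f^*$ which need not be simple); and it is worth flagging explicitly where you use that $\one \in \Omega_Y\cC$ is simple --- this is precisely what requires $Y$ to be a \emph{simple} object, and it is the reason the transitivity argument must be run through the middle object $Y$ rather than $X$ or $Z$.
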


For semisimple 2-categories, the definition of \define{finiteness} that works without assuming algebraic closedness is to demand that there be finitely many components.

\begin{definition}\label{defn:generator}
  A \define{generator} of a finite semisimple 2-category $\cC$ is an object $X \in \cC$ satisfying any, hence all, of the following equivalent conditions:
  \begin{enumerate}
  \item For every non-zero object $Y$ in $\cC$, there exists a non-zero $1$-morphisms $Y\to X$.
\item A direct sum decomposition of $X$ contains indecomposable summands from all components of $\cC$. 
\item The canonical $2$-functor $\Modr{(\End_{\cC}(X))}=\Sigma \Omega_X \cC \to \cC$ is an equivalence. \label{condition:2functor}
  \end{enumerate}
\end{definition}

\begin{example}
  Given a multifusion 1-category $\cX$, the semisimple 2-category $\Sigma\cX \cong \Modr{\cX}$ is generated by the rank-1 free module ``$\cX_\cX$''; condition~\ref{condition:2functor} says that any choice of generator of $\cC$ identifies it with a 2-category of this type  \cite[Theorem 1.4.9]{DR}. In the literature on multifusion 1-categories, the components of $\Sigma\cX$ are called the \define{blocks} of $\cX$. 
\end{example}

\begin{definition}
  A \define{multifusion 2-category} is a finite semisimple 2-category $\cA$ equipped with a monoidal structure for which all objects have duals.
  (There are various ways to present the notion of duals in a monoidal 2-category. We will review one coherent presentation in Definition~\ref{defn:duals} in \S\ref{subsec:inv}, as we will need the details then.)
   A multifusion 2-category is \define{fusion} when the unit object $\one \in \cA$ is simple. A \define{connected fusion 2-category} is a fusion 2-category with only one component.
\end{definition}

Any fusion 2-category $\cA$ has living inside of it a braided fusion 1-category $\Omega\cA$.

\begin{proposition}[{\cite[Construction 2.1.19 and Remark 2.1.22]{DR}}]\label{prop.connectedfusion}
  If $\cB$ is a braided fusion 1-category, then $\Sigma\cB$ is a connected fusion 2-category, and all connected fusion 2-categories are of this form: if $\cA$ is connected fusion, then the counit $\Sigma\Omega\cA \to \cA$ of the adjunction $\Sigma \dashv \Omega$ is an equivalence of fusion 2-categories. \qed
\end{proposition}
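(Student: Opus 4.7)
The plan is to verify the two assertions separately, leveraging Example~\ref{eg.SigmaB}, the 2-categorical Schur's Lemma (Theorem~\ref{thm.Schur}), and the folklore Eckmann--Hilton fact that $\Omega$ of any monoidal 2-category is canonically braided.

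For the first assertion, fix a braided fusion 1-category $\cB$. By Example~\ref{eg.SigmaB}, $\Sigma\cB \simeq \Modr{\cB}$ is a finite semisimple 2-category with $\Omega\Sigma\cB \simeq \cB$, so the rank-one free module $\cB_\cB$ is a simple object (its endomorphisms form a fusion 1-category). The braiding on $\cB$ is exactly the datum needed to upgrade a right $\cB$-module to a $\cB$-$\cB$-bimodule, so the relative Deligne tensor product $\boxtimes_\cB$ equips $\Sigma\cB$ with a monoidal structure with unit $\cB_\cB$. Equivalently, in the Morita model, the monoidal product is $[A]\boxtimes [B] = [A \otimes B]$, where the braiding supplies the algebra structure on $A \otimes B$, and duals are given Morita-theoretically by $[A]^\vee = [A^{\mathrm{op}}]$. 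Connectedness then follows from the observation that $\Hom_{\Sigma\cB}(\cB_\cB, \Modl[\cB]{A}) \simeq \Modl[\cB]{A}$ is nonzero for every nonzero separable algebra $A \in \cB$, so every simple object lies in the same component as the unit.

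For the second assertion, let $\cA$ be a connected fusion 2-category, pointed by its (by hypothesis simple) unit~$\one$. Since $\cA$ has exactly one component, $\one$ is a generator in the sense of Definition~\ref{defn:generator}: it is simple (and hence indecomposable) and lies in the unique component, verifying condition~(2). Condition~(3) then supplies a canonical equivalence of 2-categories $\Sigma\Omega\cA = \Sigma\End_\cA(\one) \to \cA$, which is exactly the counit of the $\Sigma \dashv \Omega$ adjunction with $\cA$ pointed by $\one$. To upgrade this to an equivalence of fusion 2-categories, note that the Eckmann--Hilton argument applied to the coherent monoid structure on $\one \in \cA$ endows $\Omega\cA$ with a braided monoidal structure; by the first part, this braiding yields a monoidal structure on $\Sigma\Omega\cA$ whose unit is the free module, which matches that on $\cA$ along the generator $\one \mapsto \one$. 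The counit is monoidal with respect to these structures by construction.

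The main obstacle I anticipate is the verification of duals in the first part, together with the coherence required to upgrade the pointed 2-categorical equivalence of the second part to a monoidal equivalence. Both are essentially formal: the former reduces to the standard Morita-theoretic statement that opposite-algebra formation makes separable algebras in any braided fusion category into a group up to Morita equivalence, and the latter follows because the monoidal structure on $\Sigma\Omega\cA$ is manufactured precisely from the Eckmann--Hilton braiding that $\Omega$ extracts from the ambient structure on $\cA$. Neither step requires us to leave the framework of \cite{DR}.
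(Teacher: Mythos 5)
The paper gives no proof of its own here---the ``\qed'' after the citation to [DR, Construction 2.1.19 and Remark 2.1.22] signals that the result is imported wholesale. Your reconstruction is correct and matches, at the level of detail appropriate for a sketch, what those references actually do: the first half is precisely Construction 2.1.19 (Deligne relative tensor product $\boxtimes_\cB$ using the braiding to make right modules into bimodules; duals via the Morita-theoretic opposite algebra), and the second half is Remark 2.1.22 (the unit is simple and, by connectedness, a generator; Definition~\ref{defn:generator}(3) then gives the equivalence $\Sigma\Omega\cA\to\cA$, upgraded to a monoidal one by the Eckmann--Hilton braiding on $\Omega\cA$). Two small points worth flagging: you should be explicit that the duality $[A]^\vee=[A^{\op}]$ requires distinguishing the two opposites $A^{\op}$ and ${}^{\op}A$ (built with $\beta^{-1}$ vs.\ $\beta$) to get left and right duals separately, as the paper itself does in \S\ref{subsec:twistedtrace}; and the claim that ``the counit is monoidal by construction'' is exactly the content one would have to verify carefully---the monoidal structure on $\Sigma\Omega\cA$ is built from the Eckmann--Hilton braiding on $\Omega\cA$ and must be shown to agree with the one transported from $\cA$, which is the real work of [DR, Remark 2.1.22] and is more than a one-line coherence check. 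But as an outline the approach is sound and is the same as the cited source.
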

In other words, the collection of connected fusion 2-categories, which is a priori a 3-category, is in fact canonically equivalent to the 2-category of braided fusion 1-categories.

\begin{remark}\label{rem:tensorconventions}
We use Proposition~\ref{prop.connectedfusion} to fix our convention on the monoidal structure of $\Modr{\cB} \cong \Sigma \cB$: The tensor product of two separable algebras $A$ and $B$ in $\cB$ is the algebra $A\otimes B$ with multiplication $(\mu_A \otimes \mu_B)\cdot(\id_A \otimes \beta_{A,B}\otimes \id_B)$ (also see~\eqref{eq:tensor}).
\end{remark}

\begin{example} \label{eg:linearization}

An important source of examples of semisimple and fusion 2-categories comes from linearizing (higher) groupoids. Given a finite 1-groupoid $\cG$, we produce a semisimple 2-category $2\Vec[\cG]$ as follows: Consider $\cG$ as a ``discrete'' $2$-groupoid with only identity $2$-morphisms and $\bk$-linearize the $2$-morphisms sets, i.e.\ allow $\bk$-linear multiples of identities and set the $2$-Hom spaces between non-isomorphic $1$-morphisms to be zero. 
Now (1- and 2-categorically) Karoubi and direct sum complete this 2-category. Equivalently, $2\Vec[\cG]$ is the 2-category of $2$-functors $\cG \to 2\Vec$ (where we consider $\cG$ as a discrete $2$-groupoid as above).

  This construction can be ``twisted'' by Postnikov classes. Suppose that $\alpha$ is (a cocycle for) a class in $\H^3(\cG; \bk^\times)$. This choice selects a 2-groupoid $\widetilde\cG^\alpha$ which is built as an extension $K(\bk^\times,2) \to \widetilde\cG^\alpha \to \cG$. Take this 2-groupoid, and produce a $\bk$-linear 2-category by extending the 2-morphisms from $\bk^\times$ to $\bk$ (see~\cite[Construction 2.1.16]{DR} for details); then (1- and 2-categorically) Karoubi and direct sum complete. We will call the result $2\Vec^\alpha[\cG]$.
   Up to a sign convention on how to handle Postnikov extension data, $2\Vec^\alpha[\cG]$ can also be described as the 2-category of functors $F : \widetilde\cG^\alpha \to 2\Vec$ which are linear for the action of $\bk^\times$ on 2-morphisms.
  As an example, take $\cG = \rB G$ for a finite group $G$. Then  the cocycle~$\alpha$ selects a fusion 1-category $\Vec^\alpha[G]$, and $2\Vec^\alpha[\rB G] = \Sigma(\Vec^\alpha[G])$.
  
  Suppose now that $\cG$ is a monoidal groupoid. Then the 2-category $2\Vec[\cG]$ will naturally inherit a monoidal structure. It will be fusion provided $\cG$ is \define{groupal}: the monoid structure on $\pi_0 \cG$ should be a group. Groupal monoidal structures on $\cG$ are the same as choices of delooping $\rB \cG$: i.e.\ $\rB \cG$ should be a pointed connected homotopy 3-type equipped with an equivalence $\Omega\rB \cG \isom \cG$.
  
  For the twisted version, recall that the functor $\Omega$ of taking based loops induces a desuspension map $\Omega : \H^\bullet(\rB\cG; \bk^\times) \to \H^{\bullet - 1}(\cG; \bk^\times)$. A \define{delooping} of a Postnikov class is a choice of preimage for it along this desuspension map. Specifically, suppose that we have chosen a delooping $\rB \cG$ of $\cG$ and a class $\alpha \in \H^4(\rB\cG; \bk^\times)$. Then there is a multifusion 2-category $2\Vec^\alpha[\cG]$ whose underlying linear 2-category is $2\Vec^{\Omega\alpha}[\cG]$. (Note the mild abuse of notation: we will write the full delooped Postnikov class in the superscript, but write plain $\cG$ rather than $\rB\cG$ in brackets.)
  
  To enhance to a braided monoidal structure requires simply delooping further: one must choose a pointed connected simply-connected homotopy 4-type $\rB^2\cG$ such that $\Omega^2\rB^2\cG \cong \cG$, and a class $\alpha \in \H^5(\rB^2\cG; \bk^\times)$.

  It is worth emphasizing that linearization does nothing to components: the canonical map $\pi_0 \cG \to \pi_0 2\Vec^\alpha[\cG]$ is a bijection. However,  typically $2\Vec[\cG]$ will have more isomorphism classes of simple objects than just the objects of $\cG$ (and in particular will have non-invertible simple $1$-morphisms between these objects) \cite[Remark 1.4.23]{DR}. Moreover, $\cG$ is usually not canonically determined by its linearization. To give a fusion 2-category $\cC$ a groupal presentation, every component of $\cC$ must contain at least one invertible object (inducing a canonical group structure on $\pi_0 \cC$), $\pi_0 \Omega\cC$ must be a group, and moreover one must choose a splitting of the group homomorphism $\{$invertible objects up to equivalence$\} \to \pi_0 \cC$ \cite[Remark 2.1.17]{DR}.

  If $\cC$ is braided and admits a groupal presentation monoidally, then it admits one braided monoidally, because the
 inclusion $\widetilde\cG^\alpha \to 2\Vec^\alpha[\cG]$ is fully faithful on \emph{invertible} 1- and 2-morphisms. 
  \end{example}

\begin{example}\label{eg:sVeclin}
  The symmetric fusion 1-category $\sVec$ arises from linearization since its simple objects $I$ and $e$ are both invertible. Indeed, $\sVec = \Vec^\alpha[\bZ_2]$, where the twisting~$\alpha$ is the stable cohomology class $(-1)^{\Sq^2} \in \H^{\bullet + 2}(K(\bZ_2,\bullet); \bk^\times)$. The stability of $\alpha$ reflects the symmetry of $\sVec$.
  It follows that $\Sigma\sVec = 2\Vec^\alpha[\rB\bZ_2]$ for the same twisting $\alpha = (-1)^{\Sq^2}$. 
  
  Although $\Sigma\sVec$ is connected, it contains two simple objects up to equivalence. 
  To study them, we model $\Sigma\sVec$ as the 2-category of finite-dimensional separable superalgebras and their bimodules (see Example~\ref{eg.SigmaB}). Then the simple objects are the unit object $I = [\bk]$ and the Clifford algebra $C := [\operatorname{Cliff}(1,\bk)]$. (Since $\bk$ is algebraically closed and of characteristic zero, these are the only two simple super algebras up to Morita equivalence.) The 1-categories $\Hom_{\Sigma\sVec}(I,C)$ and $\Hom_{\Sigma\sVec}(C,I)$ are both equivalent to $\Vec$, and we will, slightly abusively, write $v$ for the simple object in both categories; it corresponds to the unique simple (left or right) $\operatorname{Cliff}(1,\bk)$-supermodule.
    
  The fusion rules in $\Sigma\sVec$ are:
  \begin{equation*}
    C^2 \cong I, \qquad
    e^2 \cong \id, \qquad ev \cong ve \cong v, \qquad v^2 \cong \id \oplus e.
  \end{equation*}
  To choose the first of these isomorphisms requires selecting $\sqrt{-1} \in \bk$. In fact, in the Morita category of real superalgebras, $[\operatorname{Cliff}(1,\bR)]$ famously has order~$8$, not~$2$. (The two choices of $\sqrt{-1}$ provide isomorphisms $C^2 \cong I$ which differ by a factor of $e$. )
  
  We will not list choices for all of the associator and higher coherence data, but we will record the self-braidings of the object $C$ and the $1$-morphism $e$:
   \begin{align}
   \label{eqn.halfbraide} \beta_{e,e} =  -1~\id_{e\otimes e} &: e \otimes e \to e \otimes e ,\\
   \label{eqn.halfbraidc} \br_{C,C} \cong e \boxtimes\id_{C\boxtimes C}  &: C \boxtimes C \to C \boxtimes C .
  \end{align}
  The braiding \eqref{eqn.halfbraide} is a restatement that the twisting is $\alpha = (-1)^{\Sq^2}$ in the linearization presentation of $\sVec$. The braiding \eqref{eqn.halfbraidc} is a fun exercise in superalgebra (a solution can be found in \cite[\S5.5]{MR3978827}). 
\end{example}

\subsection{Drinfel'd centres of fusion 2-categories}\label{subsec.Zabstract}

As in the 1-categorical case, an important tool when analyzing a fusion 2-category, and hence, by Proposition~\ref{prop.connectedfusion}, a braided fusion 1-category, is the Drinfel'd centre. Drinfel'd centres of monoidal 2-categories were first introduced in \cite{MR1402727}. An excellent summary appears in Section~2.3 of~\cite{DN}.

Let us briefly outline the abstract definition. Suppose that $\cA$ is a monoidal 2-category. The \define{Drinfel'd centre} $\cZ(\cA)$ is the braided monoidal 2-category of endomorphisms of $\cA$ thought of as an $\cA$-bimodule. It has the following explicit description. An object of $\cZ(\cA)$ consists of an object $A \in \cA$ together with a \define{half-braiding} isomorphism $h_A|_X : A \boxtimes X \isom X \boxtimes A$ which depends naturally and monoidally on $X \in \cA$. Note that both naturality and monoidality are data: for example, naturality is the data of an iso-2-morphism $h_A|_X \circ (f \boxtimes \id_A) \cong (\id_A \boxtimes f) \circ h_A|_Y$ for every 1-morphism $f : X \to Y$, and this data must be compatible for composition of 1-morphisms; after suppressing associator and unitor data, monoidality consists of, for every pair of objects $X,Y \in \cA$, an iso-2-morphism $h_A|_{X \boxtimes Y} \cong (\id_X \boxtimes h_A|_Y) \circ (h_A|_X \boxtimes \id_Y)$, which must satisfy its own monoidality and naturality conditions. A 1-morphism $(A,h_A) \to (B,h_B)$ consists of a 1-morphism $f : A \to B$ in $\cA$ together with a natural and monoidal iso-2-morphism $\gamma_f|_X : (\id_X \boxtimes f) \circ h_A|_X \cong h_B|X \circ (f \boxtimes  \id_X)$, and a 2-morphism $(f,\gamma_f) \Rightarrow (g,\gamma_g)$ is a 2-morphism $f \Rightarrow g$ satisfying the obvious compatibility condition. (For 1-morphisms $(f,\gamma_f)$, naturality and monoidality of $\gamma_f$ are properties, not data, and for 2-morphisms they are no conditions at all.)

We are primarily interested in the case when the monoidal 2-category $\cA = \Sigma\cB$ is connected fusion, corresponding to a braided fusion 1-category $\cB$.
In this case, Davydov and Nikshych \cite{DN} provide a rather explicit description of $\cZ(\Sigma\cB)$ (and we will give an even more hands-on description in \S\ref{subsec.halfbraidedalgs}). 
Specifically, \cite{MR3248737,MR3874702} introduce, up to  left/right conventions, a $2$-category 
$\BrModr{\cB}$
 of \define{(right) braided module categories} over $\cB$: 
\begin{itemize}
\item A braided right module category of $\cB$ is a finite semisimple right $\cB$-module category $(\cM, \;* : \cM \boxtimes \cB \to \cM, \;\dots)$ (the $\dots$ stands for unitor and associator data that we henceforth suppress) equipped with a \define{$\cB$-module braiding}: a natural isomorphism $\{\sigma_{m, x}: m * x \to m * x\}_{x\in \cB, m \in \cM}$ such that $\sigma_{m, I}$ is trivialized by the unitor data and satisfying the following coherence conditions:
\[\begin{tikzcd}
	{(m*x)*y \cong m*(x\otimes y)} && {m* (y \otimes x) \cong (m*y) *x} \\
	{ (m*x) *y \cong m*(x\otimes y)} && {m*(y \otimes x) \cong (m*y) *x }
	\arrow["{m*\br_{x,y}}", from=1-1, to=1-3]
	\arrow["{\sigma_{m,y}*x}", shift left=10, from=1-3, to=2-3]
	\arrow["{\sigma_{m*x, y}}"',shift right=10, from=1-1, to=2-1]
	\arrow["{m*\br^{-1}_{y,x}}"', from=2-1, to=2-3]
\end{tikzcd}\]
\[\begin{tikzcd}[%
    ,/tikz/column 3/.append style={anchor=base east}
    ]
	m*(x\otimes y)  &m*(y\otimes x) \cong (m*y)*x& (m*y)*x \\
	{ m*(x\otimes y)} && {m*(y \otimes x) \cong (m*y) *x }
	\arrow["{m*\br^{-1}_{y,x}}", from=1-1, to=1-2]
	\arrow["\sigma_{m,y}*x", from= 1-2, to=1-3]
	\arrow["{\sigma_{m*y,x}}", from=1-3, to=2-3, shift right=10,start anchor =south east, end anchor=north east]
	\arrow["{\sigma_{m,x\otimes y}}"', from=1-1, to=2-1]
	\arrow["{m*\br_{x,y}}"', from=2-1, to=2-3]
\end{tikzcd}\]
We refer the reader to \cite[Section 5.2]{MR3248737} for graphical interpretation of these data.
\item A \emph{braided module functor} $(\cM, *, \sigma) \to (\cM' , *', \sigma')$ is a $\cB$-module functor $(f:\cM \to \cM', \phi_{m,x}: f(m * x) \cong f(m) *' x)$ such that $\phi_{m,x} \cdot f(\sigma_{m,x}) = \sigma'_{f(m),x}\cdot \phi_{m,x} $.
\item A transformation of braided module functors is simply a natural transformation of the underlying $\cB$-module functors.   
\end{itemize}
For example, any braided monoidal functor of braided fusion 1-categories $F:\cB \to \cC$ equips $\cC$ with the structure of a braided $\cB$-module category with module braiding \[\sigma_{c, b} = \beta^\cC_{F(b), c} \cdot \beta^\cC_{c, F(b)}:  c\otimes F(b) \to c\otimes F(b),\] where $\beta^\cC$ denotes the braiding of $\cC$.

The relationship between braided modules and the 2-categorical Drinfel'd centre is:
\begin{theorem}[{\cite[Theorem 4.10]{DN}}]\label{thm.zmod}
  $\cZ(\Sigma\cB) \cong \BrModr\cB$.
\end{theorem}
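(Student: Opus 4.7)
The plan is to construct a $2$-equivalence $F : \cZ(\Sigma\cB) \to \BrModr{\cB}$ by ``restriction to the monoidal unit.''  Given $(\cM, h) \in \cZ(\Sigma\cB)$, the underlying $2$-cell $\cM$ is already a right $\cB$-module via the equivalence $\Sigma\cB \simeq \Modr{\cB}$ of Example~\ref{eg.SigmaB}.  The monoidal unit of $\Sigma\cB$ is $\cB$ itself (as a free right $\cB$-module), and its endomorphism $1$-category is $\Omega\Sigma\cB \simeq \cB$.  For each object $x \in \cB$, regarded as a $1$-endomorphism of this unit, the naturality data of the half-braiding $h_\cM$ supplies an iso-$2$-morphism between the two endofunctors of $\cM$ obtained by whiskering $h_\cM|_\cB$ with $x$ on either side.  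The unit axiom of the half-braiding trivializes $h_\cM|_\cB$ itself, identifying both of these endofunctors with $m \mapsto m * x$, and the resulting natural automorphism is the desired module braiding $\sigma_{m,x} : m * x \to m * x$.  Unpacking the monoidality of $h_\cM$, together with the fact that the embedding $\cB \cong \Omega\Sigma\cB \subset \Sigma\cB$ is braided monoidal (cf.\ Proposition~\ref{prop.connectedfusion}), recovers the two hexagon-type coherence diagrams defining a braided $\cB$-module, and the unit axiom of $h_\cM$ supplies $\sigma_{m,I} = \id$.  The action of $F$ on $1$- and $2$-morphisms is analogous.

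To construct a quasi-inverse $G : \BrModr{\cB} \to \cZ(\Sigma\cB)$, I would extend a braided $\cB$-module $(\cM, \sigma)$ to a full half-braiding.  Every object $\cN \in \Sigma\cB$ is equivalent to $\Modl[\cB]{A}$ for some separable algebra $A \in \cB$, and the relative tensor product $\cM \boxtimes_\cB \Modl[\cB]{A}$ is equivalent to the $1$-category of right $A$-modules in $\cM$.  Define
$$ h_\cM|_{\Modl[\cB]{A}} : \bigl(m \in \cM, \; \rho : m * A \to m\bigr) \;\longmapsto\; \bigl(m, \; \rho \cdot \sigma_{m, A}\bigr), $$
regarding the output as an element of $\Modl[\cB]{A} \boxtimes_\cB \cM$ by transporting the $A$-action across the braiding of $\cB$.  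Associativity of the transported action reduces to the first hexagon for $\sigma$ and unitality to $\sigma_{m,I} = \id$, so this is well defined; naturality of $h_\cM$ in $\cN$ follows from the universal property of the relative tensor product, and the monoidality constraint $h_\cM|_{\cN \boxtimes_\cB \cN'} \cong (h_\cM|_\cN) \circ (h_\cM|_{\cN'})$ reduces to the second hexagon.

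The main obstacle is the detailed bookkeeping of coherence $2$-morphisms needed to verify that $F$ and $G$ are mutually quasi-inverse as $2$-functors.  At a conceptual level, the theorem holds because $\Sigma\cB$ is generated under passage to separable monads by its monoidal unit $\cB$, so a half-braiding --- which can be viewed as a ``twisted monoidal'' datum on $\cM$ --- is uniquely determined by its restriction to the unit, provided that restriction is compatible with the monoidal product on $\Omega\Sigma\cB \simeq \cB$; by Proposition~\ref{prop.connectedfusion}, this compatibility is encoded exactly by the braiding of $\cB$.  This ``restriction to the unit'' principle is what makes the Drinfel'd centre of a connected fusion $2$-category computable in terms of purely $1$-categorical data on the corresponding braided fusion $1$-category.
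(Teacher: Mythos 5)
Your proposal takes a genuinely different route from the paper's. The paper's proof of Theorem~\ref{thm.zmod} is essentially a citation: it invokes \cite[Theorem 4.10]{DN}, which proves $\cZ(\Modl{\cB}) \cong \BrModl{\cB}$ for \emph{left} $\cB$-modules, and then observes that one can either mirror the argument or use the braiding of $\cB$ to pass between the left- and right-module conventions. What you have written instead is a from-scratch sketch of the Davydov--Nikshych argument itself: restriction of a half-braiding to naturality data over the monoidal unit $\cB \in \Sigma\cB$ produces the module braiding $\sigma$, the monoidality constraint of $h$ encodes the two hexagon axioms, and the quasi-inverse is built using the fact that every object of $\Sigma\cB$ is a category of modules for a separable algebra.

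Conceptually your construction of $F$ is sound and is the same mechanism that drives \cite{DN}. Your construction of $G$ is closely parallel to the paper's Lemma~\ref{lem:hbvsBmodbraiding} (and the subsequent Theorem~\ref{thm:HBADrinfeld}), which establishes the bijection between half-braidings on a separable algebra $A$ and module braidings on $\Modl[\cB]{A}$; the paper uses that lemma for the half-braided-algebra model rather than to re-prove Theorem~\ref{thm.zmod}, but the content is the same idea. One caveat: the precise formula $h_\cM|_{\Modl[\cB]{A}} : (m,\rho) \mapsto (m, \rho \cdot \sigma_{m,A})$ needs care about which side the $A$-action lives on and which braiding of $\cB$ is used to move it --- these choices are exactly what distinguishes the left/right and $\cB$-versus-$\cB^\rev$ conventions that the paper is careful to reconcile --- and, as you acknowledge, the full coherence bookkeeping (that $F$ and $G$ are mutually quasi-inverse as $2$-functors, including all naturality and monoidality $2$-cells) is left unverified. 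In trade-off terms: your route is self-contained and conceptually illuminating but incomplete at the level of detail required, while the paper's route is complete but defers the real work to \cite{DN}. Your sketch is essentially a correct outline of that deferred work.
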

For another perspective on this equivalence, see Theorem~\ref{thm:HBADrinfeld}.
\begin{proof}
  The paper \cite{DN} works with left $\cB$-module categories, and proves that $\cZ(\Modl \cB) = \BrModl\cB$, the braided monoidal 2-category of braided left $\cB$-modules, whereas for our conventions, $\Sigma\cB$ is equivalent to the 2-category $\Modr\cB$ of finite semisimple right $\cB$-module categories.
  Hence, Theorem~\ref{thm.zmod} either follows by mirroring Davydov-Nikshych's proof for right modules, or by using the braiding to produce a monoidal equivalence $\Modr{\cB}\cong \Modl{\cB}$ which lifts to a braided monoidal equivalence $\BrModr{\cB} \cong \BrModl{\cB}$ (where we use our conventions from Remark~\ref{rem:tensorconventions} for the monoidal structure of $\Modr{\cB}$ and Davydov-Nikshych's conventions~\cite[\S 3.3]{DN} for the monoidal structure of $\Modl{\cB}$). \end{proof}

As an example, the unit object in $\cZ(\Sigma\cB)$ corresponds to the ``rank-1 free module'' $\cB$ as a braided $\cB$-module. The action is the tensor product $m * x = m \otimes x$, and the module braiding is the full braiding $\sigma_{m,x} = \beta_{x,m} \circ \beta_{m,x} : m \otimes x \to x \otimes m \to m \otimes x$. In particular, an endomorphism of $\cB$-as-a-braided-$\cB$-module is an element of $\cB$ (i.e.\ an endomorphism of $\cB$ as an unbraided $\cB$-module) which is invisible to the full-braiding. In other words:

\begin{lemma} \label{lemma.centres}
  If $\cB$ is a braided fusion 1-category, then $\Omega\cZ(\Sigma\cB) = \cZ_{2}(\cB)$. \qed
\end{lemma}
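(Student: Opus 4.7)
The plan is to combine Theorem~\ref{thm.zmod} with an explicit computation of the endomorphisms of the unit object in $\BrModr{\cB}$. By Theorem~\ref{thm.zmod}, $\cZ(\Sigma\cB)\cong\BrModr{\cB}$, and as explained just before the lemma, the unit object of this braided 2-category is the ``rank-1 free module'' $\cB$ acting on itself by $m * x := m\otimes x$ with module braiding given by the full braiding $\sigma_{m,x} = \beta_{x,m}\circ\beta_{m,x}$. Thus $\Omega\cZ(\Sigma\cB)$ is the category of braided $\cB$-module endofunctors $\cB\to \cB$, with composition and the induced braided monoidal structure.

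First I would identify the underlying category. Any right $\cB$-module endofunctor of the rank-1 free module is, up to canonical isomorphism, of the form $F_a(m) := a\otimes m$ for a unique object $a\in\cB$ (with module structure $\phi_{m,x}: a\otimes(m\otimes x)\cong(a\otimes m)\otimes x$ given by the associator), and morphisms of such functors are morphisms in $\cB$ between the underlying objects. This reduces the problem to characterizing which $a\in\cB$ give functors $F_a$ that are compatible with the module braiding.

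Next, I would unpack the braided module functor condition $\sigma_{F_a(m),x}\circ\phi_{m,x}=\phi_{m,x}\circ F_a(\sigma_{m,x})$. Using the hexagon axioms to expand $\beta_{a\otimes m,x}=(\beta_{a,x}\otimes\id_m)\circ(\id_a\otimes\beta_{m,x})$ and $\beta_{x,a\otimes m}=(\id_a\otimes\beta_{x,m})\circ(\beta_{x,a}\otimes\id_m)$, the full-braiding on $a\otimes m$ becomes
\[ \sigma_{a\otimes m,x} = \bigl((\beta_{x,a}\circ\beta_{a,x})\otimes\id_m\bigr)\circ\bigl(\id_a\otimes\sigma_{m,x}\bigr). \]
Comparing this with the desired value $\id_a\otimes\sigma_{m,x}$ (and specializing to $m=I$, where $\sigma_{I,x}=\id_x$) forces $\beta_{x,a}\circ\beta_{a,x}=\id_{a\otimes x}$ for every $x\in\cB$; that is, $a\in\cZ_2(\cB)$. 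Conversely, any $a\in\cZ_2(\cB)$ makes the displayed equation hold automatically. This gives an equivalence of 1-categories $\Omega\cZ(\Sigma\cB)\simeq\cZ_2(\cB)$.

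Finally I would check that this equivalence is compatible with the additional structure. Composition of the endofunctors $F_a$ and $F_b$ is $F_{a\otimes b}$ with structure isomorphisms built from associators, so the equivalence is monoidal. For the braiding, one observes that the induced braided structure on $\Omega$ of (the unit of) a braided monoidal 2-category is computed by the interchange of horizontal and vertical composition, and, restricted to $\cZ_2(\cB)\subset\cB$, this tautologically recovers the restriction of the braiding of $\cB$ (which on the Müger centre is symmetric). The main obstacle, if any, is bookkeeping the coherence data in the hexagon expansion cleanly enough that the reduction to $\beta_{x,a}\circ\beta_{a,x}=\id$ is manifest; everything else is formal.
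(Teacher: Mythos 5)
Your proposal is correct and matches the paper's approach: the paper proves this by the observation (in the sentence immediately preceding the lemma) that an endomorphism of $\cB$-as-a-braided-$\cB$-module is an endomorphism of $\cB$ as a plain $\cB$-module --- hence an object $a \in \cB$ acting by $a \otimes (-)$ --- which is invisible to the module full-braiding, and that this last condition is precisely $a \in \cZ_2(\cB)$. You have simply spelled out the hexagon bookkeeping that the paper leaves implicit. (One small remark: your displayed rewriting of $\sigma_{a\otimes m, x}$ has the full braid $(\beta_{x,a}\circ\beta_{a,x})\otimes\id_m$ sitting in the wrong position relative to the $\id_a\otimes\beta_{m,x}$-conjugation, so it does not type-check as written; but the specialization to $m=I$ that you actually use sidesteps this, so the conclusion is unaffected.)
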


\begin{remark}\label{rem:brmodfullyfaithful}
  The forgetful functor $\cZ(\Sigma\cB) \cong \BrModr{\cB} \to \Modr \cB \cong \Sigma\cB$ is fully faithful on $2$-morphisms: being a braided module functor is a property of a module functor and every module natural transformation between these is allowed. 
  In particular, any $\cB$-module summand of a braided $\cB$-module category is also a braided $\cB$-module category and a  braided $\cB$-module category is indecomposable iff it is indecomposable as a $\cB$-module category.
  
  For general non-connected fusion $2$-categories $\cA$, the canonical map $\cZ(\cA) \to \cA$ is faithful on $2$-morphisms but not full.
  The additional connectedness of the map $\cZ(\Sigma\cB) \to \Sigma\cB$ comes from connectedness of $\Sigma \cB$ itself. 
\end{remark}

For fusion 1-categories $\cC$, $\cZ(\cC)$ being fusion is equivalent to separability of $\cC$. Reiterating Remark~\ref{rem:separabilityrepeatedadvantage}, we will repeatedly use that separability for fusion 1-categories is automatic in characteristic zero. In particular, we expect that $\cZ(\cA)$ will be a fusion 2-category whenever $\cA$ is. For the purposes of this paper we will need only:

\begin{lemma} $\cZ(\Sigma \cB)$ is a (braided) fusion $2$-category. 
\end{lemma}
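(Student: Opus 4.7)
The plan is to verify each defining property of a fusion $2$-category for $\cZ(\Sigma\cB)$, working through the explicit model $\BrModr{\cB}$ provided by Theorem~\ref{thm.zmod}. The braided monoidal structure is automatic from the Drinfel'd centre construction. Simplicity of the unit is immediate from Lemma~\ref{lemma.centres}: $\Omega^2\cZ(\Sigma\cB) = \End_{\cZ_{2}(\cB)}(I) = \bk$ since $\cZ_{2}(\cB)$ is a (symmetric) fusion $1$-category. Duals of objects are inherited from $\Sigma\cB$ by equipping a dual $A^\vee$ with the half-braiding obtained by conjugating $h_A$ with the evaluation and coevaluation $1$-morphisms, and verifying the required coherences.

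For finite semisimplicity, the key input is that $\Sigma\cB$ is already a fusion $2$-category. By Remark~\ref{rem:brmodfullyfaithful}, the forgetful functor $\BrModr{\cB} \to \Modr{\cB}$ is fully faithful on $2$-morphisms and reflects indecomposability. Karoubi completeness of $\BrModr{\cB}$ therefore follows from that of $\Modr{\cB}$: a separable monad on a braided $\cB$-module $(\cM,\sigma)$ splits in $\Modr{\cB}$, and the splitting inherits a unique compatible module braiding from $\sigma$ by commuting it through the monad data. The hom $1$-categories of $\BrModr{\cB}$ are semisimple full subcategories of the multifusion hom categories of $\Modr{\cB}$ closed under direct sums; to upgrade semisimplicity to multifusion, I would verify that the right and left adjoints of a braided $\cB$-module functor in $\Modr{\cB}$ acquire canonical braided-module coherences by conjugating the original coherence with the unit and counit of the adjunction and checking the compatibility with the module braiding axioms.

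The main obstacle is establishing that $\pi_0\BrModr{\cB}$ is finite. Since an indecomposable braided $\cB$-module is in particular an indecomposable $\cB$-module, and $\pi_0\Sigma\cB$ is finite, the problem reduces to bounding the number of isomorphism classes of braided-module structures on each of the finitely many indecomposable $\cB$-modules. I would prove this by translating a braided module structure on a given indecomposable $\cB$-module $\cM$ into algebraic data internal to $\cB$, namely a half-braiding on the internal endomorphism algebra $\underline{\End}(\cM) \in \cB$ satisfying the appropriate hexagon, and then arguing that on a fixed finite-dimensional algebra such half-braidings form a finite set up to isomorphism. This is precisely the half-braided algebra model that will be developed in~\S\ref{subsec.halfbraidedalgs}, so the cleanest way to close the argument is to forward-reference that model and the associated enumeration of simple objects.
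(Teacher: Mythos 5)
Your handling of the ``local'' properties --- additivity, Karoubi completeness, semisimplicity of hom-categories, adjunctibility of $1$-morphisms, duals of objects, simplicity of the unit --- matches the paper's argument, which also deduces all of these from the corresponding properties of $\Sigma\cB$ via Remark~\ref{rem:brmodfullyfaithful}. The interesting divergence is in the finiteness of $\pi_0$, and here your argument has a gap.

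You reduce finiteness of $\pi_0\BrModr{\cB}$ to the claim that for each of the finitely many simple $\cB$-module categories $\cM$, the set of braided-module structures on $\cM$ up to equivalence is finite, which you rephrase (correctly) as finiteness of the set of half-braidings $\gamma$ on $\underline{\End}(\cM)$ satisfying~\eqref{eq:hb}, taken up to the relevant isomorphism. This finiteness is \emph{asserted}, not proven, and it is not as innocuous as you suggest: the set of half-braidings is an affine variety, and the quotient by which you would like to take orbits (algebra automorphisms, or Morita equivalence of half-braided algebras) is not a priori large enough to leave only finitely many orbits. (What \emph{is} easy is finiteness up to isomorphism in $\cZ(\cB)$, because $\cZ(\cB)$ is fusion; but that is the quotient by all of $\Aut_{\cB}(A)$, a coarser equivalence than the one you need, and one orbit there can in principle split into infinitely many orbits for the finer relation.) Moreover, your proposed forward reference to \S\ref{subsec.halfbraidedalgs} ``and the associated enumeration of simple objects'' does not point to a proof: that section constructs the half-braided algebra model and proves Corollary~\ref{cor.Lgenerates}, but it does not enumerate simple objects, and the actual count of components (Theorem~\ref{thm:componentcount}) appears later \emph{and itself relies on} Corollary~\ref{cor.Lgenerates}.

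The paper closes the gap by a different mechanism: Corollary~\ref{cor.Lgenerates} shows that the braided $\cB$-module $\cZ(\cB)$ is a \emph{generator} of $\BrModr{\cB}$. Since $\cZ(\cB)$ is itself a finite semisimple $1$-category, it decomposes (as a $\cB$-module, hence by Remark~\ref{rem:brmodfullyfaithful} as a braided $\cB$-module) into finitely many indecomposable summands; every nonzero braided $\cB$-module receives a nonzero morphism from $\cZ(\cB)$, hence from one of these summands, so every component contains one of them, and $\pi_0$ is finite. This sidesteps the need to count braided-module structures on each $\cM$ individually. I would recommend replacing your final paragraph with this generator argument: it requires essentially the same forward reference (to Corollary~\ref{cor.Lgenerates} rather than to an enumeration that does not exist), but unlike your version it actually closes the argument. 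If you do want to keep your counting approach, you would need to supply a genuine proof that the relevant moduli of half-braidings is finite up to the correct notion of equivalence, and I do not see a short route to that which does not pass through something equivalent to the generator argument.
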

\begin{proof}  Additivity and Karoubi completeness, as well as dualizability of objects and $1$-morphisms, follows directly from the respective properties of $\Sigma \cB$. Remark~\ref{rem:brmodfullyfaithful} shows that $\cZ(\Sigma \cB)$ is locally finite semisimple.
 It therefore suffices to show that $\pi_0\cZ(\Sigma \cB)$ is finite. This follows from Corollary~\ref{cor.Lgenerates} which shows that the braided module category $\cZ(\cB)$ is a generator for $\cZ(\Sigma \cB)$.
\end{proof}

We will study $\pi_0 \cZ(\Sigma\cB)$ in more detail in \S\ref{subsec.compZSB}--\ref{sec:Smatrix}, which occur after the proof of Corollary~\ref{cor.Lgenerates}. We will not use finiteness of $\pi_0\cZ(\Sigma \cB)$ until then.

\subsection{Higher Drinfel'd centres control nondegenerate extensions}\label{subsec.BCthm}

We henceforth adopt the following language. Let $\cB$ be a braided monoidal (1-)category. A braided monoidal category \define{under} $\cB$ is a braided monoidal category $\cM$ equipped with a braided monoidal functor $i: \cB \to \cM$. 
Of course, it is an \define{extension} when $i$ is fully faithful, and a \define{nondegenerate extension} when additionally $\cM$ is nondegenerate. An isomorphism of braided monoidal categories $i: \cB \to \cM$ and $i': \cB \to \cM'$ under $\cB$  is a braided monoidal equivalence $F:\cM \to \cM'$ and a monoidal natural isomorphism $F\circ i  \cong i' $. An isomorphism of nondegenerate extensions is an isomorphism between the underlying braided monoidal categories under $\cB$. 

The connection between 2-categorical Drinfel'd centres and (minimal nondegenerate) extensions is explained by the following theorem, which forms the main result of this section. Versions of this connection were proposed in~\cite{Kong2020}, in the lecture \cite{DmitriMSRI}, and motivated \cite{DN}.

\begin{theorem} \label{thm.BCs}
  Let $\cB$ and $\cC$ be braided fusion 1-categories. If $\cZ(\Sigma\cB) \cong \cZ(\Sigma\cC)$, then there is a nondegenerate extension $i : \cB \hookrightarrow \cM$ and an equivalence $\cC \cong \cZ_{2}(\cB \subset \cM)^{\rev}$ (in which case there is also an equivalence $\cB \cong \cZ_{2}(\cC\subset \cM)^{\rev}$ by \cite[Theorem~8.21.1]{EGNO}).
\end{theorem}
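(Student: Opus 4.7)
The plan is to use Theorem~\ref{thm.zmod} to recast the hypothesis as a braided monoidal equivalence $\Phi : \BrModr{\cB} \isom \BrModr{\cC}$ of braided fusion $2$-categories, and then produce $\cM$ as the image under $\Phi^{-1}$ of a canonical algebra object in $\BrModr{\cC}$. The canonical object is the $1$-categorical Drinfel'd centre $\cZ(\cC)$, which is itself a nondegenerate braided fusion $1$-category containing $\cC$ fully faithfully via the standard embedding $c \mapsto (c,\beta_{c,-}\beta_{-,c})$. Via this embedding, $\cZ(\cC)$ acquires the structure of a braided $\cC$-module category (with module braiding given by the restriction of the braiding of $\cZ(\cC)$), making it an object of $\BrModr{\cC}$. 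More importantly, the monoidal structure of $\cZ(\cC)$ enriches this to a commutative (or ``half-braided,'' in the language of~\S\ref{subsec.halfbraidedalgs}) algebra object in the braided fusion $2$-category $\BrModr{\cC}$, with the braiding of $\cZ(\cC)$ providing the commutativity datum. Conversely, such commutative algebras in $\BrModr{\cD}$ should correspond to braided fusion $1$-categories equipped with a braided monoidal functor from~$\cD$.

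Set $\cM := \Phi^{-1}(\cZ(\cC))$. Since $\Phi$ is a braided monoidal equivalence of $2$-categories, it preserves all of this commutative algebra structure, so $\cM$ becomes a braided fusion $1$-category equipped with a braided monoidal functor $i : \cB \to \cM$. To complete the proof I would verify three properties: (i) $i$ is fully faithful, which reflects a ``connectedness'' condition on the algebra $\cZ(\cC) \in \BrModr{\cC}$ preserved by $\Phi$; (ii) $\cM$ is nondegenerate, which similarly reflects a ``Lagrangian'' condition on $\cZ(\cC)$ (namely its modularity) preserved by $\Phi$; and (iii) the centralizer $\cZ_2(\cB \subset \cM)$ is equivalent to $\cC^\rev$. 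Property (iii) follows from functoriality of $\Phi$ on endomorphism $1$-categories: one has the chain
$$\cZ_{2}(\cB \subset \cM)^{\rev} \;\cong\; \Omega_{\cM}\BrModr{\cB} \;\cong\; \Omega_{\cZ(\cC)}\BrModr{\cC} \;\cong\; \cZ_{2}(\cC \subset \cZ(\cC))^{\rev} \;\cong\; \cC,$$
where the outer isomorphisms come from the standard identification of braided-module endofunctors of an extension $\cD \subset \cE$ with (the reverse of) its relative M\"uger centre $\cZ_2(\cD \subset \cE)$, and the last isomorphism is the classical fact that the two canonical embeddings $\cC, \cC^{\rev} \hookrightarrow \cZ(\cC)$ are each other's centralizers.

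The main obstacle, and the part that presumably requires real work in~\S\ref{subsec.BCthm}, is making the above dictionary precise: namely, showing that commutative algebra objects in the braided monoidal bicategory $\BrModr{\cD}$ are the same data as braided fusion $1$-categories under~$\cD$, and that under this correspondence ``connected plus Lagrangian'' matches ``fully faithful plus nondegenerate.'' Unpacking commutativity of an algebra in a braided monoidal bicategory involves higher-coherence $2$-morphisms compatible with the ambient braiding, and extracting the $1$-categorical braided fusion data requires careful calculation; this presumably rests on the explicit half-braided algebra model of $\cZ(\Sigma\cB)$ developed in~\S\ref{subsec.halfbraidedalgs}, which may even need to be partially anticipated in the proof.
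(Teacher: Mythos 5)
Your overall strategy --- transport the canonical Lagrangian braided monoidal object $\cZ(\cC)$ along the given equivalence, extract $\cM$ from the resulting algebra in $\BrModr{\cB}$, and verify fully-faithfulness, nondegeneracy, and the centralizer identification --- is exactly the paper's argument, resting on the dictionary (Proposition~\ref{prop.undercategories}) between rigid braided monoidal objects in $\BrModr{\cD}$ and braided multifusion categories under $\cD$, with ``strongly connected'' matching fully-faithful and ``Lagrangian'' matching nondegenerate.

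However, your step (iii) contains a genuine error. The chain $\cZ_2(\cB \subset \cM)^\rev \cong \Omega_{\cM}\BrModr{\cB} = \End_{\BrModr{\cB}}(\cM)$ is wrong; the correct identification (Proposition~\ref{prop.undercategories}) is $\cZ_2(\cB \subset \cM) \cong \Hom_{\BrModr{\cB}}(I, \cM)$, with $I$ the tensor unit (the rank-one free braided module $\cB$). These are genuinely different. For $\cM = \cZ(\cB)$ the paper computes $\End_{\BrModr{\cB}}(\cZ(\cB)) \cong (\cZ(\cB), \otimes_{\mathrm{ann}})$ (Proposition~\ref{prop:annular}), a multifusion category with $|\pi_0 \cZ_2(\cB)|$-many blocks, generally not even fusion; whereas $\cZ_2(\cB \subset \cZ(\cB))^\rev \cong \cB$ is fusion. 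The discrepancy comes from $\cZ(\cC)$ being a generator, hence decomposable as a braided $\cC$-module (Corollary~\ref{cor.Lgenerates}); its endomorphism category contains the idempotent projections onto all the summands. Replacing $\Omega_{\cM}$ with $\Hom(I, -)$ gives the corrected chain $\cZ_2(\cB \subset \cM) \cong \Hom_{\BrModr{\cB}}(I, \cM) \cong \Hom_{\BrModr{\cC}}(I, \cZ(\cC)) \cong \cZ_2(\cC \subset \cZ(\cC)) \cong \cC^\rev$, which is precisely what the paper writes. A further minor point: what you call a commutative algebra in $\BrModr{\cC}$ is the paper's ``braided monoidal object''; the term ``half-braided algebra'' (Definition~\ref{defn.HBA}) is a different notion --- it represents the \emph{underlying object} of $\cZ(\Sigma\cB)$ rather than the algebra structure one puts on top of it.
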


\begin{remark} \label{rem:BCs}
A higher Morita categorical explanation of Theorem~\ref{thm.BCs} is given in~\cite[\S2.3]{2011.11165}. Indeed, by working with  Morita 5-categories, \cite{2011.11165} argues that the 2-groupoid consisting of the equivalences $\cZ(\Sigma\cB) \cong \cZ(\Sigma\cC)$ is equivalent to the 2-groupoid consisting of the nondegenerate extensions $\cB \subset \cM$ with $\cC \cong \cZ_{2}(i : \cB \to \cM)^{\rev}$ as in the Theorem. In particular, the Theorem is if and only if.
For our \MainTheorem\ we only use the ``if'' direction of Theorem~\ref{thm.BCs}, and we will give an elementary proof which only uses braided fusion $2$-category theory.
\end{remark}

To set up the proof, recall that  (braided) monoidal objects, aka $E_1$ ($E_2$)-algebras, may be defined in any (braided) monoidal $2$-category $\cC$. 
Unpacked, this amounts to an object $A$ in $\cC$ equipped with multiplication and unit $1$-morphisms $m:A\boxtimes A \to A$  and $u:I \to A$ and associator, unitor and braiding $2$-isomorphisms 
\begin{gather*}
 \alpha: m\circ (m\boxtimes \id_A) \To m \circ (\id_A \boxtimes m) ,\qquad \lambda: m \circ (u \boxtimes \id_A) \To m ,\\ \rho: m\circ(\id_A \boxtimes u) \To m ,\qquad  \beta: m\circ \br_{A,A} \To m ,
\end{gather*}
fulfilling the familiar coherence conditions. Note that any (braided) monoidal structure on an object $A \in \cC$ induces a (braided) monoidal structure on the $1$-category $\Hom_{\cC}(I, A)$. 

\begin{definition} \label{defn:rigidobject}
For a monoidal object $A$ in a monoidal $2$-category, the associator $2$-isomorphism equips the multiplication $1$-morphism $m:A\boxtimes A\to A$ with the structure of an $A$--$A$ bimodule $1$-morphism. 
We say that $A$ is \define{rigid} if $m$ has a right adjoint $\Delta:A \to A \boxtimes A$ as an $A$--$A$ bimodule $1$-morphism. This name is suggested by \cite[Appendix D]{MR3381473} and \cite[Definition-Proposition 1.3]{BJS}; see also \cite[Proposition 4.8]{III}, which explains the relation to the usual notion of rigidity.
\end{definition}
\begin{remark} There is also a notion of \emph{separability} of a monoidal object $A$ in a monoidal $2$-category: $A$ is rigid and the counit $\ev_{m} : m\circ \Delta \To \id_A$ of the adjunction $m\dashv \Delta$  admits a section as an $A$--$A$ bimodule $2$-morphism. In characteristic zero, we expect that every rigid monoidal object in a fusion $2$-category is automatically separable, generalizing the fact that every multifusion $1$-category is automatically separable. Hence, in our context, ``rigidity'' and ``separability'' should be treated as synonymous. 
\end{remark}
\begin{lemma}\label{lemma:rigidextensions}
Consider the case where $\cC = \Bim{\cX} \cong \Modr{(\cX \boxtimes \cX^{\mathrm{mp}})}$ (where $\cX^{\mathrm{mp}}$ denotes $\cX$ with the opposite monoidal structure) is the multifusion $2$-category of finite semisimple $\cX$-bimodules for some multifusion $1$-category $\cX$. (That this semisimple 2-category is multifusion follows from \cite[Theorem 3.4.3]{DSPS}.) Monoidal objects $A \in \Bim{\cX}$ correspond to finite semisimple monoidal categories $\cY$ under $\cX$. 
  We claim that $A$ is rigid in the sense of Definition~\ref{defn:rigidobject} if and only if $\cY$ is rigid as a monoidal category.
\end{lemma}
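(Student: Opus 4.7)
\emph{Plan.} The strategy is to transport Definition~\ref{defn:rigidobject} across the explicit correspondence between monoidal objects of $\Bim{\cX}$ and finite semisimple monoidal categories under $\cX$, extracting classical duals of $\cY$ from a bimodule right adjoint $\Delta\dashv m$ and vice versa.

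First I would unpack the dictionary: a monoidal object $A$ in $\Bim{\cX}$ is a finite semisimple $\cX$-bimodule with a monoidal structure whose multiplication is $\cX$-bilinear, equivalently a finite semisimple monoidal $1$-category $\cY$ together with a monoidal functor $\cX \to \cY$. The multiplication $1$-morphism $m: A\boxtimes A \to A$ becomes the relative-Deligne tensor product functor $\cY\boxtimes_\cX \cY \to \cY$, and its $A$--$A$ bimodule structure is the canonical $\cY$-bilinearity furnished by the associator. Hence $A$ is rigid in the sense of Definition~\ref{defn:rigidobject} exactly when $m$ admits a $\cY$-bilinear right adjoint $\Delta\dashv m$.

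For the direction ``$\cY$ rigid $\Rightarrow$ $A$ rigid'': since $\cY$ is finite semisimple, $m$ has a $\bk$-linear (in fact $\cX$-bilinear) right adjoint $\Delta$ at the $1$-category level. I would upgrade $\Delta$ to a $\cY$-bilinear functor by using duality in $\cY$ to build the bilinearity constraints $\Delta(y\otimes -) \cong y\cdot \Delta(-)$ and $\Delta(-\otimes y) \cong \Delta(-)\cdot y$, inserting $\coev_y$ and contracting with $\ev_y$ in the appropriate places; the zig-zag identities furnish the needed coherence. The unit and counit of $\Delta\dashv m$ then automatically promote to $\cY$-bimodule natural transformations, yielding the required bimodule adjunction.

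For the direction ``$A$ rigid $\Rightarrow$ $\cY$ rigid'': given a $\cY$-bilinear $\Delta\dashv m$ with unit $\eta$ and counit $\varepsilon$, I would decompose $\Delta(\one_\cY)\in\cY\boxtimes_\cX \cY$ by semisimplicity as $\bigoplus_i L_i \boxtimes_\cX R_i$. For each simple $y\in\cY$, bilinearity gives $\Delta(y)\cong \bigoplus_i L_i \boxtimes_\cX (R_i\otimes y)$; the components of $\eta_y : y \to m\Delta(y)$ and the corresponding components of $\varepsilon$ provide coevaluation and evaluation maps for $y$, and the triangle identities for $\Delta\dashv m$ translate directly into the snake identities exhibiting one of the $L_i$ (resp.\ $R_i$) as a right (resp.\ left) dual of $y$. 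This is enough to conclude rigidity of every object of $\cY$.

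The main obstacle is the coherence bookkeeping in the presence of the relative Deligne product $\boxtimes_\cX$ when $\cX$ is nontrivial: the interchange between the $\cY$-bilinearity data on $\Delta$ and the zig-zag data for classical duals has to be tracked carefully. The cleanest way to organize the bimodule adjunction is via the Frobenius-type machinery for rigid algebras in monoidal $2$-categories developed in Appendix~D of~\cite{MR3381473} and in~\cite{BJS}, whose dictionary applies directly to our setting.
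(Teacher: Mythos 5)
Your direction ``$\cY$ rigid $\Rightarrow A$ rigid'' is essentially a hands-on version of a step the paper dispatches abstractly: the paper notes that a rigid finite semisimple $\cY$ is multifusion, so $\Bim\cY$ is a multifusion $2$-category and hence \emph{every} $1$-morphism in it --- in particular the $\cY$-bilinear $m_\cX$ --- automatically admits adjoints. Your proposal to start from a $\bk$-linear right adjoint and manually install $\cY$-bilinearity via $\coev$/$\ev$ insertions is the classical ``projection-formula'' argument; it works, but it re-proves in coordinates what the multifusion-$2$-category fact gives for free. That part is fine, just heavier.

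The direction ``$A$ rigid $\Rightarrow \cY$ rigid'' is where your route diverges significantly and where I see a genuine gap. The paper's key maneuver, which you do not use, is the factorization $m \cong m_\cX \circ q$ of the \emph{non-relative} multiplication $m : \cY \boxtimes \cY \to \cY$ through the quotient $q : \cY \boxtimes \cY \to \cY \boxtimes_\cX \cY$. Since \cite[Proposition~3.4.1]{DSPS} provides a right adjoint to $q$ that is $\End(\cY_\cX)$-$\End({}_\cX\cY)$-bilinear, restriction along $\cY \to \End(\cY_\cX)$, resp.\ $\cY \to \End({}_\cX\cY)$, yields a $\cY$-bilinear $q^R$, and then $m^R := q^R \circ m_\cX^R$ is a $\cY$-bilinear right adjoint for $m$. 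At this point the paper invokes the established equivalence ``$\cY$ is rigid iff $m : \cY \boxtimes \cY \to \cY$ admits a $\cY$-bilinear right adjoint'' --- a theorem, not something to be re-derived.

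Your proposed substitute --- decomposing $\Delta(\one)$ as $\bigoplus_i L_i \boxtimes_\cX R_i$ and reading off $\coev$/$\ev$ from the triangle identities of $\Delta \dashv m_\cX$ in the \emph{relative} Deligne product --- does not survive scrutiny as written. First, simple objects of $\cY \boxtimes_\cX \cY$ need not split as $L \boxtimes_\cX R$, so the claimed decomposition requires justification. Second, the component of $\varepsilon_{\one\boxtimes_\cX\one} : \Delta(\one) \to \one \boxtimes_\cX \one$ lives in $\Hom_{\cY \boxtimes_\cX \cY}(L_i \boxtimes_\cX R_i, \one \boxtimes_\cX \one)$, and there is no canonical way to extract from it a morphism $R_i \otimes L_i \to \one$ in $\cY$; the $\cX$-balancing scrambles precisely the factorization you want to use. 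Third, asserting that a single $L_i$ is a right dual for an arbitrary simple $y$ is not what one expects; the usual argument produces $\Delta(\one) \cong \bigoplus_{y} y^{*} \boxtimes y$ in the \emph{non}-relative case. The step you are missing is exactly the reduction through $q^R$, which lands you back in $\cY \boxtimes \cY$ where the duality data can be read off cleanly and the relevant rigidity criterion is available as a citation rather than something to prove.
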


\begin{proof}
Unpacked, we are claiming the following. Consider the balanced tensor product $\cY \boxtimes_\cX \cY$. It is an $\cX$-bimodule, and the multiplication bifunctor $m : \cY \boxtimes \cY \to \cY$ factors through the quotient map $q : \cY \boxtimes \cY \to \cY \boxtimes_\cX \cY$ and
   an ``$\cX$-balanced multiplication'' $m_\cX : \cY \boxtimes_\cX \cY \to \cY$. The claim is that $m$ has a $\cY$-bilinear right adjoint if and only if $m_\cX$ does.
   
  In one direction, suppose that $m$ has a right adjoint as a $\cY$-bimodule map, so that $\cY$ is rigid and hence multifusion. Then $\Bim{\cY}$ is a multifusion $2$-category, and the 1-morphism $m_\cX$ therein must be adjunctible. 
  
  In the other direction, for any modules $\cM_\cX \in \Modr{\cX}$ and $_\cX\cN \in \Modl{\cX}$, \cite[Proposition 3.4.1]{DSPS} provides a right adjoint to the quotient map $\cM \boxtimes \cN \to \cM \boxtimes_\cX \cN$ which is bilinear for the left and right actions by the multifusion categories $\End(\cM_\cX)$ and $\End(_\cX\cN)$. In particular, by restricting this bilinearity data along $\cY \to \End(\cY_\cX)$ or $\End(_\cX\cY)$, we find a $\cY$-bilinear right adjoint for $q : \cY \boxtimes \cY \to \cY \boxtimes_\cX \cY$. But $m \cong m_\cX \circ q$, and so if $m_\cX$ is $\cY$-bilinearly right-adjunctible, then $m$ is as well with $m^R \cong q^R \circ m_\cX^R$.
\end{proof}

\begin{definition}\label{def:internalmueger}
Let $A$ be a braided monoidal object in a braided monoidal $2$-category $\cC$. Each $1$-morphism $f:I \to A $ determines a $1$-endomorphism of $A$ by left multiplication, namely $m\circ (f \boxtimes \id_A) : A\to A$. 
Then there are two interesting 2-isomorphisms $m\circ (f \boxtimes \id_A) \cong m \circ \br_{A,A} \circ \br_{A,A} \circ (f \boxtimes \id_A)$:
\begin{itemize}
 \item The (unitor data for the) ambient braided monoidal structure on $\cC$ provides an isomorphism $f \boxtimes \id_A \cong \br_{A,A} \circ (\id_A \boxtimes f) \cong \br_{A,A} \circ \br_{A,A} \circ (f \boxtimes \id_A)$. \item The braiding on $A$ provides an isomorphism $m \cong m \circ \br_{A,A} \cong m \circ \br_{A,A} \circ \br_{A,A}$. \end{itemize} We will say that $f$ is \define{transparent} if these two isomorphisms are equal.
We define the \emph{M\"uger centre} of a braided monoidal object $A$ in $\cC$ to be the full subcategory of $\Hom_{\cC}(I, A)$ on the transparent $1$-morphisms. 
\end{definition}

\begin{remark}\label{rem:braidedmoduleobject}
In fact, the M\"uger centre of $A$ is the endomorphisms of the tensor unit in the braided monoidal $2$-category $\BrModr[\cC]{A}$ of \emph{braided right module objects} of $A$, i.e.\ module objects $\mathrm{act}: M \boxtimes A \to M$ in $\cC$ equipped with a $2$-isomorphism $ \sigma: \mathrm{act} \circ \br_{A,M} \circ \br_{M , A} \To \mathrm{act}$ fulfilling coherence conditions analogous to \S\ref{subsec.Zabstract}. Braided module objects categorify the \define{dyslectic modules} (sometimes also called ``local modules'') of a commutative algebra in a braided monoidal $1$-category~\cite{MR1315904}.\end{remark}

Note that the M\"uger centre of the braided monoidal object $A$ (in the sense of Definition~\ref{def:internalmueger}) is a full subcategory of the M\"uger centre of the braided monoidal $1$-category $\Hom_{\cC}(I, A)$. The latter M\"uger centre typically has more objects than the former, because the latter tests transparency only against 1-morphisms $I \to A$, whereas the former tests against all 1-morphisms with codomain $A$.

\begin{definition}\label{defn.Lag2alg}
A \emph{Lagrangian braided monoidal object} in a braided fusion $2$-category $\cC$ is a rigid braided monoidal object $A$ in $\cC$, fulfilling:
\begin{enumerate}
\item \label{Lag2alg:connectivity} it is \emph{strongly connected}: its unit $u:I \to A$ is a fully faithful $1$-morphism. 
(A 1-morphism $f : X \to Y$ in a 2-category is \define{fully faithful} if the induced functor $f\circ : \Hom(W,X) \to \Hom(W,Y)$ is fully faithful for all objects $W$.)
Equivalently, $u: I\to A$ is a simple summand of $A$ in a direct sum decomposition. 
\item it is \emph{nondegenerate}: the M\"uger centre of $A$ is trivial, or equivalently, any simple transparent $1$-morphism $f:I \to A$ is isomorphic to the unit $u$.  \label{Lag2alg:conditionnondeg}
\end{enumerate} \end{definition}
An isomorphism of Lagrangian braided monoidal objects is a braided monoidal isomorphism of the 
 braided monoidal objects. 
Note that Lagrangian braided monoidal objects are preserved under braided monoidal equivalences.

\begin{remark}\label{rem:Lagrangian}
The notion of a ``Lagrangian braided monoidal object'' in a braided fusion $2$-category is a categorification of the notion of a ``Lagrangian algebra'' in a braided fusion 1-category~\cite[Section 4.2]{MR3039775}. In fact, as in the 1-categorical setting, a braided monoidal object as in Definition~\ref{defn.Lag2alg} should only  be called ``Lagrangian'' if the braided fusion $2$-category $\cC$ is \emph{nondegenerate} in the sense of Theorem~\ref{thm:invertibleSmatrix}. We expect nondegeneracy of $\cC$ to be equivalent to triviality of its M\"uger sylleptic centre. In other words, $\cC$ should present a ``3+1D topological order'' as axiomatized in~\cite{1405.5858,KWZ1,2003.06663}. In this nondegenerate case, we expect condition~\ref{Lag2alg:conditionnondeg} of Definition~\ref{defn.Lag2alg} to be equivalent to the stronger condition of triviality of the $2$-category of braided $A$-module objects of Remark~\ref{rem:braidedmoduleobject}. 

(The term ``Lagrangian'' comes from analogy with symplectic geometry: The braiding on $\cC$ is analogous to a presymplectic form on a vector space $V$, its nondegeneracy  is analogous to nondegeneracy of the presymplectic form, hence defining a symplectic form. In this analogy, algebra objects correspond to subspaces and braided algebra objects to isotropic subspaces $I \subset V$. The $2$-category of braided module objects is analogous to the isotropic reduction $V \sslash I := I^\perp/I$, and an isotropic subspace is Lagrangian if this reduction is trivial.)

For general Lagrangian objects, condition~\ref{Lag2alg:connectivity} is too strong: There is a filtration of connectivity conditions that one can place on a braided monoidal object $A \in \cC$. For example, condition~\ref{Lag2alg:connectivity} is strictly stronger than the request that $u : I \to A$ be simple as a 1-morphism. (In Proposition~\ref{prop.undercategories}, this weaker condition corresponds to asking that $\cM$ be fusion but without the request that $\cB \to \cM$ be injective.) In Remark~\ref{rem:strongerversionLag}, we will further address such more general Lagrangian objects with simple but not necessarily fully faithful unit.
\end{remark}

\begin{prop} \label{prop.undercategories}
Let $\cB$ be a braided fusion $1$-category.
There is a bijective correspondence between isomorphism classes of rigid braided monoidal objects $A \in \cZ(\Sigma\cB)$ and braided multifusion categories $i: \cB \to \cM$ under $\cB$. Moreover:
\begin{enumerate} 
\item The functor $i: \cB \to\cM$ is fully faithful if and only if the unit $1$-morphism $u:I \to A$ is fully faithful in $\cZ(\Sigma\cB)$ (in which case $\cM$ is fusion).
\item The centralizer $\cZ_{2}(i : \cB \to \cM)$ is equivalent to $\Hom_{\cZ(\Sigma\cB)}(I,A)$, and the M\"uger centre of $\cM$ is the M\"uger centre of $A$ in the sense of Definition~\ref{def:internalmueger}.
\end{enumerate}
In particular, Lagrangian braided monoidal objects in $\BrModr{\cB}$ correspond to nondegenerate braided extensions $\cB \subseteq \cM$.
\end{prop}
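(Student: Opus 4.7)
The plan is to work in the braided $\cB$-module model $\BrModr{\cB} \cong \cZ(\Sigma\cB)$ afforded by Theorem~\ref{thm.zmod}, where the tensor unit $I$ is the rank-one free module $\cB$. My first task is to unpack what a (braided) monoidal object in $\BrModr{\cB}$ amounts to, reducing the statement to a braided variant of Lemma~\ref{lemma:rigidextensions}. Starting from a rigid braided monoidal object $A \in \cZ(\Sigma\cB)$, I set $\cM := A$ as a braided $\cB$-module; the multiplication $m:\cM\boxtimes\cM\to\cM$ is then a braided-$\cB$-bilinear functor and the unit $u:\cB \to \cM$ a braided module functor. The associator, unitor, and monoidal-object braiding 2-isomorphisms should assemble $\cM$ into a braided monoidal category equipped with a braided monoidal functor $i:\cB \to \cM$ defined by applying $u$; the original braided $\cB$-module structure on $\cM$ must then coincide with the canonical one induced by $i$ described after Theorem~\ref{thm.zmod}. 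Rigidity of $A$ in the sense of Definition~\ref{defn:rigidobject} translates to rigidity of $\cM$ by the same balanced-tensor-product argument as in Lemma~\ref{lemma:rigidextensions}, using Remark~\ref{rem:brmodfullyfaithful} so that bimodule adjunctions in $\BrModr{\cB}$ are detected in $\Modr{\cB}$.

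For claim (1), I again invoke Remark~\ref{rem:brmodfullyfaithful}: full faithfulness of a 1-morphism in $\BrModr{\cB}$ is equivalent to full faithfulness of its image in $\Modr{\cB}$. Applied to the unit $u$, the Lemma~\ref{lemma:rigidextensions} dictionary identifies this with full faithfulness of $i:\cB \to \cM$ as a functor of 1-categories. When $i$ is fully faithful, the unit object of $\cM$ is forced to be simple (inheriting simplicity from the unit of $\cB$), so $\cM$ is fusion rather than merely multifusion.

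For claim (2), I directly compute $\Hom_{\cZ(\Sigma\cB)}(I, A) = \Hom_{\BrModr{\cB}}(\cB, \cM)$. A braided $\cB$-module functor $F:\cB \to \cM$ is determined up to canonical isomorphism by $x := F(I_\cB)$ via $b \mapsto x \otimes i(b)$, and the braided-module compatibility condition forces $\beta^{\cM}_{x, i(b)}\circ \beta^{\cM}_{i(b), x} = \id$ for all $b$, i.e.\ $x \in \cZ_{2}(i:\cB\to \cM)$. Conversely every such $x$ yields a braided module functor, and this identification is compatible with natural transformations, giving the desired equivalence. The transparency condition of Definition~\ref{def:internalmueger} will then be a diagram-chase showing that the two 2-isomorphisms coincide precisely when the double braiding of $x$ with every object of $\cM$ is trivial, not just with those in $i(\cB)$; this identifies the M\"uger centre of $A$ with the M\"uger centre of $\cM$. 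The final ``In particular'' clause then follows by combining (1) and (2).

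The hardest step is the first paragraph: rigorously showing that the data of a braided monoidal object in the braided fusion 2-category $\BrModr{\cB}$ packages exactly into a braided multifusion category under $\cB$. The subtlety is reconciling the \emph{ambient} braiding in $\BrModr{\cB}$ (which enters via the braided $\cB$-module structure on $A$) with the \emph{internal} braiding on $A$ as a monoidal object, and checking that these two pieces of data combine consistently into a single braiding on $\cM$ with respect to which $i$ is braided monoidal. Once this coherence bookkeeping is completed, the rigidity transfer, the Hom-category computation, and the M\"uger centre identification should all follow by elementary categorical reasoning.
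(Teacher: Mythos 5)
Your proposal follows essentially the same route as the paper's proof: pass to the model $\BrModr{\cB}$, interpret the underlying $\cB$-module category of $A$ as $\cM$, transfer rigidity via the bimodule picture and Lemma~\ref{lemma:rigidextensions}, detect full faithfulness of $u$ via Remark~\ref{rem:brmodfullyfaithful}, and identify $\Hom_{\cZ(\Sigma\cB)}(I,A)$ with the objects of $\cM$ whose module braiding with $\cB$ is trivial. The one step you correctly flag as hardest --- that braided monoidal objects in $\BrModr{\cB}$ unpack precisely to braided monoidal 1-categories equipped with a braided monoidal functor from $\cB$ --- is exactly what the paper resolves by citing \cite[Table 1]{DN}, and your rigidity transfer would, once tightened, use the monoidal functor $\Modr{\cB}\to\Bim{\cB}$ for one direction and the $\cM$-bilinearity of the relevant quotient map (as in the proof of Lemma~\ref{lemma:rigidextensions}) for the converse, rather than Remark~\ref{rem:brmodfullyfaithful} alone.
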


\begin{proof} 
Using the equivalence $\cZ(\Sigma\cB) \cong \BrModr{\cB}$ from Theorem~\ref{thm.zmod} and unpacking definitions,
 (isomorphism classes of) braided monoidal objects $A$ correspond to finite semisimple braided monoidal categories $\cM$ equipped with braided monoidal functors $\cB \to \cM$ (up to isomorphism of braided monoidal categories under $\cB$), cf.~\cite[Table 1]{DN}. Specifically, the underlying category of $\cM$ is the underlying category of $A$ interpreted as a braided $\cB$-module, which is to say $\cM = \Hom_{\Modr{\cB}}(U(I), U(A))$, where $U: \BrModr{\cB} \to \Modr{\cB}$ forgets the braidings, and $I \in \BrModr{\cB}$ is $\cB$ as a braided $\cB$-module. This forgetful functor $U$ also has a universal description as the canonical map $\cZ(\Sigma\cB) \to \Sigma\cB$.
 
Using the braiding on $\cB$, any right $\cB$-module is naturally a $\cB$-bimodule; the corresponding functor $\Modr{\cB} \to \Bim{\cB}$ is monoidal. Since rigidity is preserved by monoidal functors, Lemma~\ref{lemma:rigidextensions} implies that if $A$ is rigid, then so is $\cM$, and hence $\cM$ is multifusion. But if $\cM$ is multifusion, then $A$ is rigid, because (just like in the proof of Lemma~\ref{lemma:rigidextensions}) the rigidity of $A$ unpacks to the adjunctibility of some $\cM$-bilinear map.

Fully faithfulness of the unit $u:I \to A$ is equivalent to $u:I \to A$ being the inclusion of a simple summand into $A$, and hence $i: \cB \to \cM $ being the inclusion of an indecomposable braided module category into the braided module category $\cM$, or equivalently, $i: \cB \to \cM$ being a fully faithful braided monoidal functor.

Lastly, note that for any braided module category $\cN$ with module braiding $\{\sigma_{n,b}:  n*b \to n*b\}_{ n \in \cN, b \in \cB}$, braided module functors $F: \cB \to \cN$ correspond to objects $n:= F(I_{\cB})$ in $\cN$ such that $\sigma_{n,b}$ is the identity for all $b\in \cB$. In particular, the braided monoidal $1$-category $\Hom_{\BrModr{\cB}}(I, A)$ of  $1$-morphism into our braided monoidal object $A$ is equivalent to $\cZ_{2}(\cB \subseteq \cM)$.
Unpacking Definition~\ref{def:internalmueger}, the M\"uger centre of $A$ can then be seen to be the further full subcategory $\cZ_{2}(\cM)$ of $\cZ_{2}( \cB \subseteq \cM)$. 
\end{proof}

\begin{example} 
\label{eg:canonicalLag}
The braided monoidal $2$-category $\cZ(\Sigma\cB)$ has a canonical Lagrangian braided monoidal object $\cL$ given by the nondegenerate extension $\cB \hookrightarrow \cZ(\cB)$. 
\end{example}

\begin{proof}[Proof of Theorem~\ref{thm.BCs}] 
Let $\cL$ denote the canonical Lagrangian object in $\cZ(\Sigma\cC)$ corresponding to the nondegenerate extension $\cC \hookrightarrow \cZ(\cC)$.
An equivalence $F : \cZ(\Sigma\cC) \cong \cZ(\Sigma\cB)$ supplies a Lagrangian object $F(\cL) \in \cZ(\Sigma\cB)$ and hence a nondegenerate extension $\cB \hookrightarrow \cM$. Moreover, $F$ induces an equivalence 
\[\cZ_{2}( \cB \subseteq \cM) = \Hom_{\cZ(\Sigma \cB)}(I, F(\cL)) \stackrel{F^{-1}}{ \cong} \Hom_{\cZ(\Sigma \cC)} (I, \cL) = \cZ_{2}( \cC \subseteq \cZ(\cC)) \cong \cC^{\rev}. \qedhere\]
\end{proof}

\begin{remark} \label{rem:strongerversionLag}
Following Remark~\ref{rem:Lagrangian}, we expect the following stronger version of Theorem~\ref{thm.BCs} to be true: For any nondegenerate braided fusion $2$-category $\mathcal{C}$, there is an equivalence between the $2$-groupoid of fusion $2$-categories $\cA$ equipped with an equivalence $\cZ(\cA) \cong \cC$ and the $2$-groupoid of ``general'' Lagrangian braided monoidal objects in $\cC$ (i.e.\ 
rigid braided monoidal objects $A$ fulfilling condition~\ref{Lag2alg:conditionnondeg} of Definition~\ref{defn.Lag2alg} but instead of condition~\ref{Lag2alg:connectivity} only requiring the unit $u:I \to A$ to be simple). The sub-groupoid of braided fusion 1-categories $\cB$ with equivalences $\cZ(\Sigma \cB) \cong \cC$ should be equivalent to the sub-groupoid of the ``strongly connected'' Lagrangian objects whose unit $u:I \to A$ is fully faithful. 

In particular, if $\cC$ is itself connected, i.e.\ of the form $\Sigma \cB$ for a braided fusion $1$-category $\cB$, then this stronger statement would more generally imply that there is an equivalence between the $2$-groupoid of braided fusion categories $\cM$ equipped with a (not necessarily fully faithful) braided monoidal functor $i: \cB \to \cM$ and the $2$-groupoid of fusion $2$-categories $\cA$ equipped with an equivalence $\cZ(\Sigma \cB) \cong \cZ(\cA)$. 
\end{remark}

\subsection{Half-braided algebras and bimodules}
\label{subsec.halfbraidedalgs}

Theorem~\ref{thm.zmod} provides a model for the 2-categorical Drinfel'd centre $\cZ(\cC)$ in the case when $\cC = \Sigma\cB$ is connected fusion. In this section we present an even more explicit model. Throughout this and the next section (and also \S~\ref{subsec:twistedtrace} and~\ref{subsec:etaL}), we use string diagrams for computations in braided monoidal $1$-categories, in which morphisms go up the page and the braiding $\beta_{x, y}:x \otimes y \to y \otimes x$ is drawn as: 
  \[\begin{tz}[std]
 \draw[braid, arrow data={0.8}{>}] (0,0) to [out=up, in=down]  (1,2);
 \draw[braid, arrow data={0.8}{>}] (1,0) to [out=up, in=down]  (0,2);
 \node[label, below] at (0,0) {$x$};
 \node[label, below] at (1,0) {$y$};
 \node[ below] at (0.5, -0.25) {$\beta_{x,y}$};
 \end{tz} \]
  
Recall that the Drinfel'd centre $\cZ(\Sigma\cB)$ consists of objects $A \in \Sigma\cB$ equipped with a half-braiding $\{h_X : X \boxtimes A \isom A \boxtimes X\}_{X \in \Sigma \cB}$, which is natural and monoidal in $X$. 
As explained in Example~\ref{eg.SigmaB}, objects of $\Sigma\cB$ are represented by separable algebras in $\cB$. Moreover, since $\Sigma\cB$ is connected, a natural transformation like $h_{-} : (-)\boxtimes A \isom A \boxtimes (-)$ is determined by its values on the unit object $I_{\Sigma\cB}$ and its endomorphisms, or in other words on the objects of~$\cB$. This motivates:

\begin{definition} \label{defn.HBA}
A \emph{half-braided algebra} $(A, \gamma)$ in a braided fusion $1$-category $\cB$ is a unital associative algebra object $A$ (depicted as a black line, with multiplication $A \otimes A \to A$ depicted as a black dot), whose underlying object is equipped with a half-braiding $\left(\gamma_{b}: b\otimes A \to A\otimes b\right)_{b\in \cB}$ (depicted as a black square) fulfilling:
\begin{equation}\label{eq:hb}
\begin{tz}[std]
\node (base) at (0,0.5){};
\draw[string, hb, on layer= back] (-0.5,0) to (2.3,2);
\braidmult{base}{1}{1}
\draw[string] (0,0) to (0,0.5); 
\draw[string] (0.5, 1.5) to (0.5, 2);
\draw[string]  (1,0) to (1,0.5) ;
\node[smalldot] at (0,0.35){};
\node[label, below] at (-0.5,0) {$b$};
\node[label, below] at (0., 0) {$A$};
\node[label, below] at (1,0) {$A$};
\end{tz}
=
\begin{tz}[std]
\node (base) at (0,0.5){};
\braidmult{base}{1}{1}
\draw[string, on layer=back] (0,0) to (0,0.5); 
\draw[string]  (1,0) to (1,0.5) ;
\draw[braid=back,hb, on layer=front] (-0.5,0) to (0.89,1) to (2.3,2) ;
\node[smalldot] at (0.9,1){};
\draw[string] (0.5, 1.5) to (0.5, 2);
\node[label, below] at (-0.5,0) {$b$};
\node[label, below] at (0., 0) {$A$};
\node[label, below] at (1,0) {$A$};
\end{tz}
=
\begin{tz}[std]
\node (base) at (1.25,0){};
\braidmult{base}{1}{1}
\draw[string] (1.75, 1) to (1.75,2); 
\draw[braid=back, hb] (-0.5,0) to (0.89,1) to node[smalldot, pos=0.61]{} (2.3,2) ;
\node[label, below] at (-0.5,0) {$b$};
\node[label, below] at (1.25, 0) {$A$};
\node[label, below] at (2.25,0) {$A$};
\end{tz}
\end{equation}
Here and in the rest of the paper, when working with string diagrams for natural transformations,
we will generally colour objects in which a morphism is natural in blue. This is merely a visual guidance and has no mathematical content.

A \emph{homomorphism of half-braided algebras} $(A, \gamma) \to (B, \zeta)$ is a unital algebra homomorphism $f:A\to B$ which intertwines the half-braiding: $\mu_b \cdot (\id_b \otimes f) = (f\otimes \id_b) \cdot \gamma_b$. We will need this notion only for isomorphisms.

We say that a half-braided algebra is \emph{separable} if the underlying algebra object in $\cB$ is separable. 
\end{definition}

Half-braided algebra can equivalently be expressed as algebra objects in $\cB$ equipped with a \emph{quantum moment map} in the sense of~\cite{MR3874702, Safronov}, see Remark~\ref{rem:quantummomentmap} for more details. 

\begin{example} The braiding of $\cB$ defines a half-braided algebra structure on any algebra object in the M\"uger centre $\cZ_{2}(\cB)$  of $\cB$. 

On the other hand, half-braided algebra structures on the trivial algebra $I$ correspond to monoidal natural automorphisms of $\cB$. 
\end{example}

In Proposition~\ref{prop:annular} and the surrounding discussion, we will show that (separable) half-braided algebras are nothing but (separable) algebra objects in the Drinfel'd center $\cZ(\cB)$ of the braided fusion $1$-category $\cB$, but equipped with an unusual monoidal structure. 

\begin{remark} \label{remark.pointedmoritaobject}
The algebra object $A \in \cB$ encodes both an object $[A] \in \Sigma \cB$ (given by the algebra as an object of the Morita category of algebras in $\cB$; recall from Example~\ref{eg.SigmaB} that $\Sigma\cB$ is the Morita category of separable algebra objects in $\cB$) and a $1$-morphism $_{A}A: I \to[]  [A]$ (the algebra as a left module for itself) with right adjoint $A_{A}: [A] \to I$. The algebra object $A$ in $\cB$ may be recovered as the composite $A_A \otimes_A {_A A} : I \to I$ with its induced ``pair of pants'' algebra structure.  A half-braiding~$h$ (in the sense of $\cZ(\Sigma \cB)$) on the object $[A]$ induces natural $2$-isomorphisms $h_b : b \otimes \id_{[A]} \to \id_{[A]} \otimes b$. In the graphical calculus of the monoidal $2$-category $\Sigma \cB$ (see~\cite[\S~2.1.2]{DR}), this can be depicted as follows:
   \[
     \begin{tz}[td]
  \begin{scope}[xzplane=1.]
 \path[surface] (0,0) rectangle (3,3);
 \end{scope}
 \draw[string,hb ] (0.5,0,0) to (1.5, 1, 1.5);
 \draw[string,hb,on layer =back] (1.5, 1, 1.5) to  (2.5,2, 3);
 \node[hb3d] at (1.5, 1, 1.5){};
 \node[label, right] at (3,1,0) {$[A]$};
 \node[label, below] at (0.5,0,0) {$b$};
 \end{tz}
 \]
The half-braiding $\gamma$ on $A$ from Definition~\ref{defn.HBA} arises as the composite
  \[\begin{tz}[td]
  \begin{scope}[xzplane=1.]
 \path[surface] (0.25,0) rectangle (1.25,3);
 \end{scope}
  \draw[string, hb] (-0.25,0,0) to node[mask point,  pos=0.7](A){} (0.75, 1, 1.5);
\draw[1mor] (1.25, 1,0) to node[mask point, pos=0.7](B){}(1.25, 1,3);
  \cliparoundone{A}{\draw[1mor] (0.25, 1,0) to (0.25, 1,3);}
 \cliparoundone[back]{B}{\draw[string, hb,on layer =back] (0.75, 1, 1.5) to  (1.75,2, 3);}
 \node[hb3d] at (0.75, 1, 1.5){};
  \node[label, below] at (-0.25,0,0) {$b$};
  \node[label, below] at (0.25, 1,0) {$A_{A}$};
  \node[label, below] at (1.25,1,0) {$_{A}A$};
 \end{tz}
 \]
which makes equations~\eqref{eq:hb} evident. Even though we will not explicitly use this translation, for most computations and proofs in this section, this geometric intuition is very helpful. \end{remark}

\begin{lemma}\label{lem:inducedhb}
Let $m$ be a left, respectively right, 
 module of an algebra $A \in \cB$. Then, any half-braiding $\{\gamma_b : b\otimes A \to A\otimes b\}_{b \in \cB}$ on $A$ fulfilling~\eqref{eq:hb} equips the object $m \in \cB$ with a half-braiding $\{\gamma_{b, m}: b\otimes m \to m \otimes b\}_{b \in \cB}$ defined as follows, respectively:
\[
\begin{tz}[std]
\node (base) at (0,0.5){};
\draw[braid, hb, on layer=back] (-1.2 , -0.5)to (-0.5,0) to (2.3,2); 
\braidlact{base}{1}{1}
\draw[braid] (0.5,-0.5) to (0.5,1.5);
\draw[string] (0,0) to (0,0.5); 
\draw[string] (0.5, 1.5) to (0.5, 2);
\node[smalldot] at (0,0.35){};
\node[below, label] at (0.5,-0.5) {$m\vphantom{b}$};
\node[below, label] at (-1.2,-0.5) {$b$};
\node[dot] at (0,0){};
\end{tz}
\hspace{1cm}
\begin{tz}[std]
\node (base) at (0,0.5){};
\braidract{base}{1}{1}
\draw[string]  (1,0) to (1,0.5) ;
\draw[string, on layer=back] (0.5,-0.5) to (0.5, 1.5);
\draw[braid=back, hb, on layer=front] (-1.2, -0.5) to (-0.5,0) to  (2.3,2) ;
\node[smalldot] at (0.9,1){};
\draw[string] (0.5, 1.5) to (0.5, 2);
\node[below, label] at (0.5,-0.5) {$m\vphantom{b}$};
\node[below, label] at (-1.2,-0.5) {$b$};
\node[dot] at (1,0){};
\end{tz}
\]
For left modules (and analogously for right modules), 
this construction extends to a $\cB$-module functor $\Modl[\cB]{A} \to \cZ(\cB)$, where $\cB$ acts on the right on the Drinfel'd centre $\cZ(\cB)$ via the (braided) monoidal functor $\cB \to \cZ(\cB)$, $b \mapsto (b, \br_{-, b})$. 
\end{lemma}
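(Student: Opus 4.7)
The plan is first to make the formula encoded by the first (respectively, second) diagram explicit, then verify that it satisfies the half-braiding axioms making $m$ an object of $\cZ(\cB)$, and finally establish functoriality in $A$-module morphisms and the $\cB$-module functor property. I concentrate on the left module case; the right module case is parallel, with the unit inserted on the right of $m$ and the opposite braiding.

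Unpacking the first diagram, the proposed half-braiding on a left $A$-module $m$ with action $\mu_m : A \otimes m \to m$ is
\[
\gamma_{b,m} \;=\; (\mu_m \otimes \id_b) \circ (\id_A \otimes \beta_{b,m}) \circ (\gamma_b \otimes \id_m) \circ (\id_b \otimes \eta_A \otimes \id_m),
\]
which is manifestly natural in $b$ since each constituent is. The main content is the half-braiding hexagon
\[
\gamma_{b \otimes c,\,m} \;=\; (\gamma_{b,m} \otimes \id_c) \circ (\id_b \otimes \gamma_{c,m}).
\]
To verify this, I would start from the left-hand side and apply the hexagons for the algebra half-braiding $\gamma$ on $A$ (which is part of the half-braided algebra structure, implicit alongside equation~\eqref{eq:hb}) and for the braiding $\beta$, producing an expression with a single $A$-string that is used by both $\gamma_b$ and $\gamma_c$. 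The key step is then to split this single $A$-string into two copies using the unitality identity $\mu_A \circ (\id_A \otimes \eta_A) = \id_A$, and to use the compatibility of $\gamma$ with $\mu_A$ encoded in~\eqref{eq:hb} together with the associativity $\mu_m \circ (\mu_A \otimes \id_m) = \mu_m \circ (\id_A \otimes \mu_m)$ of the action to rearrange the diagram into the right-hand side. This diagrammatic manipulation is the main obstacle, though it is routine in the graphical calculus.

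Functoriality of $m \mapsto (m, \gamma_{-, m})$ in $A$-module morphisms $f : m \to m'$ amounts to the identity $(f \otimes \id_b) \circ \gamma_{b, m} = \gamma_{b, m'} \circ (\id_b \otimes f)$, which follows immediately from the $A$-linearity $f \circ \mu_m = \mu_{m'} \circ (\id_A \otimes f)$ and naturality of $\beta$. For the $\cB$-module functor property, the right $\cB$-action on $\Modl[\cB]{A}$ is $m \cdot x = m \otimes x$ with $A$-action on the first factor, while the right $\cB$-action on $\cZ(\cB)$ is by tensor product with $(x, \br_{-, x})$. Expanding our formula for $\gamma_{b,\, m \otimes x}$ by means of the hexagon for $\beta_{b,\, m \otimes x}$, and comparing with the tensor-product half-braiding $(\id_m \otimes \br_{b, x}) \circ (\gamma_{b, m} \otimes \id_x)$ in $\cZ(\cB)$, the two expressions agree by the interchange law applied to $\mu_m$ (acting on the $(A, m)$ factors) and the braiding of $b$ with $x$ (acting on the disjoint $(b, x)$ factors), completing the construction.
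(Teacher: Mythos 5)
Your unpacking of the first diagram into the formula
\[
\gamma_{b,m} \;=\; (\mu_m \otimes \id_b) \circ (\id_A \otimes \beta_{b,m}) \circ (\gamma_b \otimes \id_m) \circ (\id_b \otimes \eta_A \otimes \id_m)
\]
is correct, as are your verifications of the hexagon, naturality in $b$, functoriality in $A$-module maps, and the $\cB$-module functor compatibility $\gamma_{b,\,m\otimes x} = (\id_m \otimes \beta_{b,x}) \circ (\gamma_{b,m}\otimes \id_x)$ via the interchange law. The paper records this proof simply as ``Immediate,'' reflecting that once the string-diagram formula is read off, each check is a routine calculus manipulation; your proposal supplies precisely that verification.

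One small remark on the hexagon argument: it is marginally cleaner to start from the right-hand side $(\gamma_{b,m}\otimes\id_c)\circ(\id_b\otimes\gamma_{c,m})$, which a priori contains two copies of $\eta_A$ and two applications of $\mu_m$, and then use associativity $\mu_m\circ(\id_A\otimes\mu_m)=\mu_m\circ(\mu_A\otimes\id_m)$ together with the compatibility equation~\eqref{eq:hb} (to slide $\gamma_b$ and $\gamma_c$ past $\mu_A$) and finally unitality $\mu_A\circ(\id_A\otimes\eta_A)=\id_A$ to collapse to the single $\eta_A$ and single $\mu_m$ appearing in $\gamma_{b\otimes c,\,m}$. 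Your stated plan of running this in reverse (splitting one $A$-string into two by inserting $\eta_A$) works too, but one must take care of a wrinkle: after the hexagon for $\gamma$ on $A$ is applied, $\gamma_b$ and $\gamma_c$ both act on the \emph{same} $A$-string, and to reach the two-string picture of the right-hand side one must both insert the extra unit \emph{and} invoke~\eqref{eq:hb} to route $\gamma_b$ through $\mu_A$; your sketch mentions both ingredients, so the gap is only expository.
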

\begin{proof} Immediate.
\end{proof}

\begin{definition} \label{def:hbbimod}A \emph{half-braided bimodule} $(A, \gamma) \to (B, \zeta)$ between half-braided algebras $(A, \gamma)$ and $(B, \zeta)$ is a unital $B$-$A$ bimodule $_{B}m_A$ in $\cB$ for which the action $B\otimes m \otimes A \to m$ is compatible with the half-braidings $\gamma$ and $\mu$  as follows:
\[
\begin{tz}[std]
\node (base) at (0,0.5){};
\draw[braid, hb, on layer=back] (-0.5,0) to (2.3,2);
\braidmult{base}{1}{1}
\draw[braid] (0.5,0) to (0.5,1.5);
\draw[string] (0,0) to (0,0.5); 
\draw[string] (0.5, 1.5) to (0.5, 2);
\draw[string]  (1,0) to (1,0.5) ;
\node[smalldot] at (0,0.35){};
\node[below, label] at (-0.5,0) {$b$};
\node[below, label] at (0,0) {$A$};
\node[below, label] at (0.5,0) {$m\vphantom{A}$};
\node[below, label] at (1,0) {$B$};
\end{tz}
=
\begin{tz}[std]
\node (base) at (0,0.5){};
\braidmult{base}{1}{1}
\draw[string, on layer=back] (0,0) to (0,0.5); 
\draw[string]  (1,0) to (1,0.5) ;
\draw[string, on layer=back] (0.5,0) to (0.5, 1.5);
\draw[braid=back, hb,  on layer=front] (-0.5,0) to (0.89,1) to (2.3,2) ;
\node[smalldot] at (0.9,1){};
\draw[string] (0.5, 1.5) to (0.5, 2);
\node[below, label] at (-0.5,0) {$b$};
\node[below, label] at (0,0) {$A$};
\node[below, label] at (0.5,0) {$m\vphantom{A}$};
\node[below, label] at (1,0) {$B$};
\end{tz}
\]
Equivalently, a half-braided bimodule is one for which the two half-braidings induced by $\gamma$ and $\mu$ as in Lemma~\ref{lem:inducedhb} agree. 
\end{definition}
\begin{remark}
While half-braided algebras are algebras equipped with the additional \emph{structure} of a compatible half-braiding, it is a mere \emph{property} for a bimodule to be half-braided. In particular, morphisms of half-braided bimodules are just morphisms of bimodules without any additional compatibilities.  
In fact, on a half-braided bimodule $_{B}m_A$, the induced half-braiding $\gamma_{b, m_A} = \gamma_{b, {}_{B}m}: b\otimes m \to m\otimes b$ on $m$ from Lemma~\ref{lem:inducedhb} is the unique one fulfilling:
\[
\begin{tz}[std]
\node (base) at (1.25,0){};
\braidmult{base}{1}{1}
\draw[string] (1.75, 1) to (1.75,2); 
\draw[string] (1.75, 0) to (1.75, 1);
\draw[braid=back, hb ] (-0.5,0) to (0.89,1) to node[smalldot, pos=0.61]{} (2.3,2) ;
\node[below, label] at (-0.5,0) {$b$};
\node[below, label] at (1.25,0) {$A$};
\node[below, label] at (1.75,0) {$m\vphantom{A}$};
\node[below, label] at (2.25,0) {$B$};
\end{tz}
=
\begin{tz}[std]
\node (base) at (0,0.5){};
\draw[braid, hb, on layer=back] (-0.5,0) to (2.3,2);
\braidmult{base}{1}{1}
\draw[braid] (0.5,0) to (0.5,1.5);
\draw[string] (0,0) to (0,0.5); 
\draw[string] (0.5, 1.5) to (0.5, 2);
\draw[string]  (1,0) to (1,0.5) ;
\node[smalldot] at (0,0.35){};
\node[below, label] at (-0.5,0) {$b$};
\node[below, label] at (0,0) {$A$};
\node[below, label] at (0.5,0) {$m\vphantom{A}$};
\node[below, label] at (1,0) {$B$};
\end{tz}
=
\begin{tz}[std]
\node (base) at (0,0.5){};
\braidmult{base}{1}{1}
\draw[string, on layer=back] (0,0) to (0,0.5); 
\draw[string]  (1,0) to (1,0.5) ;
\draw[string, on layer=main] (0.5,0) to (0.5, 1.5);
\draw[braid=back, hb, on layer=back] (-0.5,0) to node[pos=0.355, smalldot]{} (2.3,2) ;
\draw[string] (0.5, 1.5) to (0.5, 2);
\node[below, label] at (-0.5,0) {$b$};
\node[below, label] at (0,0) {$A$};
\node[below, label] at (0.5,0) {$m\vphantom{A}$};
\node[below, label] at (1,0) {$B$};
\end{tz}
=
\begin{tz}[std]
\node (base) at (0,0.5){};
\braidmult{base}{1}{1}
\draw[string, on layer=back] (0,0) to (0,0.5); 
\draw[string]  (1,0) to (1,0.5) ;
\draw[string, on layer=back] (0.5,0) to (0.5, 1.5);
\draw[braid=back,hb,  on layer=front] (-0.5,0) to (0.89,1) to (2.3,2) ;
\node[smalldot] at (0.9,1){};
\draw[string] (0.5, 1.5) to (0.5, 2);
\node[below, label] at (-0.5,0) {$b$};
\node[below, label] at (0,0) {$A$};
\node[below, label] at (0.5,0) {$m\vphantom{A}$};
\node[below, label] at (1,0) {$B$};
\end{tz}
\]
\end{remark}
\begin{example} \label{eg:phistar}
Just as any (iso)morphism $\phi : A\to B$ of algebras induces a $B$-$A$ bimodule structure on $B$, so too
any (iso)morphism $ \phi: (A, \gamma) \to (B, \zeta)$ of half-braided algebras induces a half-braided bimodule, which we will denote $_{B}(\phi_{*})_{A}$. 

Given a right, respectively left, $B$-module $m_B$, respectively $_B m$, we will later use the notation $m_{\phi A}$, respectively $_{A\phi}m$ to denote its restriction along the (iso)morphism $\phi$; in this notation, ${_{B}(\phi_{*})_{A}} = {_B B_{\phi A}}$, but we will have times when this latter notation will be cumbersome.
\end{example}

Given half-braided bimodules $n:(A, \gamma) \to (B, \zeta)$ and $m:(B, \zeta) \to (C, \delta)$, their relative tensor product $_{C}m\otimes_B n_A$ is again a half-braided bimodule. Hence, (separable) half-braided algebras, half-braided bimodules and bimodule maps may be assembled into a $2$-category $\mathrm{(s)HBA}(\cB)$.

Recall from Lemma~\ref{lem:inducedhb} that a half-braiding $\{\gamma: b\otimes A \to A\otimes b\}_{b \in \cB}$ on an algebra $A$ fulfilling~\eqref{eq:hb} induces a half-braiding $\gamma_{b, _{A}m}: b\otimes m\to m \otimes b$ on any left $A$-module $_{A}m$. 
The following Lemma is a half-braided algebra version of~\cite[Corollary 4.11]{MR3874702}. 
\begin{lemma}\label{lem:hbvsBmodbraiding}
Let $A$ be an algebra in $\cB$ with associated right $\cB$-module category $\Modl[\cB]{A}$. 
For any half-braiding $\gamma$ on $A$ fulfilling~\eqref{eq:hb}, the natural isomorphism
\[\{\sigma_{_{A}m, b}:=  \gamma_{b, _{A}m} \cdot \beta_{m, b}: m\otimes b \to m \otimes b\}_{b \in \cB,\,{_{A}m} \in \Modl[\cB]{A}}\]
defines a $\cB$-module braiding on $\Modl[\cB]{A}$. This construction defines a bijection between the set of half-braidings on $A$ fulfilling~\eqref{eq:hb} and the set of $\cB$-module braidings on $\Modl[\cB]{A}$. 

 Moreover, if $_{B}m_A$ is a bimodule between half-braided algebras $(A, \gamma)$ and $(B, \zeta)$, then the $\cB$-module functor $_{B}m \otimes_A - : \Modl[\cB]{A} \to \Modl[\cB]{B}$ is a braided module functor if and only if the bimodule $_{B}m_A$ is half-braided in the sense of Definition~\ref{def:hbbimod}. 
\end{lemma}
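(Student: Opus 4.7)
The plan is to establish each of the three assertions in turn.

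\textbf{First assertion.} The goal is to verify directly that $\sigma_{_A m, b} := \gamma_{b, {_A m}} \cdot \beta_{m, b}$ is a $\cB$-module braiding. Triviality at $b = I$ is immediate from triviality of $\beta_{m, I}$ and $\gamma_{I, m}$. Naturality in $m$ (as an object of $\Modl[\cB]{A}$) follows from the fact recorded at the end of Lemma~\ref{lem:inducedhb} that $\gamma_{b, -}$ extends to a $\cB$-module functor $\Modl[\cB]{A} \to \cZ(\cB)$; that lemma also encodes that the induced $\gamma_{b, {_A m}}$ is a genuine morphism of $A$-modules, which is equivalent to condition~\eqref{eq:hb}. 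The two hexagon coherence axioms for a module braiding then reduce, after unpacking $\sigma$ in the graphical calculus of $\cB$, to a combination of the hexagon axioms for $\beta$ and the monoidality of $\gamma$ in the $b$-variable.

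\textbf{Bijectivity.} The inverse map sends a module braiding $\sigma$ to $\gamma_{b, A} := \sigma_{A, b} \cdot \beta_{A, b}^{-1} : b \otimes A \to A \otimes b$, viewing $\sigma_{A, b}$ as an endomorphism of $_A(A \otimes b)$. The $A$-linearity of $\sigma_{A, -}$ translates directly into the compatibility~\eqref{eq:hb} with the algebra multiplication. The round trip $\gamma \leadsto \sigma \leadsto \gamma$ is immediate, since the induced half-braiding on the free module $_A A$ from Lemma~\ref{lem:inducedhb} reduces, via unitality, back to $\gamma_{b, A}$. For the other round trip $\sigma \leadsto \gamma \leadsto \sigma$, it suffices to show that a $\cB$-module braiding on $\Modl[\cB]{A}$ is determined by its value on $_A A$. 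The first hexagon axiom expresses $\sigma_{A \otimes x, b}$ in terms of $\sigma_{A, b}$ and braidings in $\cB$, so $\sigma$ is known on all free modules. Naturality in $m$ applied to the split surjection ${_A(A \otimes m)} \twoheadrightarrow {_A m}$ (the action map, split by $u \otimes \id_m$ using semisimplicity of $\cB$) then determines $\sigma$ on all modules.

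\textbf{Bimodule assertion.} The plan is to unpack the braided module functor condition for $F := {}_B m \otimes_A (-): \Modl[\cB]{A} \to \Modl[\cB]{B}$ and evaluate it at the free module $_A A$. On this input $F({_A A}) \cong {_B m}$, and the compatibility equation becomes an equation in $\cB$ comparing the two half-braidings on $m$ induced by Lemma~\ref{lem:inducedhb} from the $B$-action and from the $A$-action; this is precisely the half-braided bimodule equation of Definition~\ref{def:hbbimod}. The determinacy result from the previous step then shows that this single equation at $_A A$ suffices for the braided module functor condition at every module.

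The main obstacle will be the determinacy step in the bijectivity argument: carefully propagating $\sigma$ from the free module $_A A$ to all modules using the hexagon and naturality. The remaining verifications are direct graphical manipulations using the half-braiding axioms and the braided structure on $\cB$.
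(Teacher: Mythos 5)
Your proposal follows the same overall strategy as the paper's proof: construct the explicit inverse $\gamma_b := \sigma_{{}_A A, b}\cdot \beta^{-1}_{A,b}$, and check that the two assignments are mutually inverse. The paper dispatches the latter with "a direct computation shows\ldots"; your determinacy argument --- that a $\cB$-module braiding is determined by its restriction to the free module ${}_A A$, propagated first to all free modules by the hexagon and then to all modules by naturality and a split surjection --- is a clean conceptual way to package that computation, and it also lets you reduce the bimodule assertion (which the paper's proof is entirely silent about) to a single equation at ${}_A A$. That is a genuine improvement in readability.

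Two small flags. First, your parenthetical "(the action map, split by $u\otimes\id_m$ using semisimplicity of $\cB$)" conflates two things: the unit map $\nu\otimes\id_m : m \to A\otimes m$ is a section in $\cB$ but is \emph{not} a morphism of $A$-modules, so it cannot serve as the splitting to which you apply naturality of $\sigma$ (which only holds against morphisms in $\Modl[\cB]{A}$). What you need instead is an $A$-module splitting of the counit $a : {}_A(A\otimes m)\to{}_A m$, which exists because $A$ is separable (use the separability idempotent $\delta : A \to A \otimes A$), or more abstractly by semisimplicity of $\Modl[\cB]{A}$ --- but not of $\cB$. Second, the phrase "the induced $\gamma_{b,{}_A m}$ is a genuine morphism of $A$-modules" is a bit loose: the object $b\otimes m$ has no canonical $A$-module structure, and what condition~\eqref{eq:hb} really buys you is that $\sigma_{m,b} = \gamma_{b,m}\cdot\beta_{m,b}$ is $A$-linear as an endomorphism of $m * b$, equivalently that $\gamma_{b,-}$ lands in morphisms of $\Modl[\cB]{A}$. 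Both are easy to repair and do not affect the structure of the argument. Finally, for the round trip $\gamma \leadsto \sigma \leadsto \gamma$, you might spell out that $\gamma_{b,{}_A A} = \gamma_b$ follows by combining the equality "first = third" in~\eqref{eq:hb} with unitality of $\mu$ applied to the inserted unit strand; this is the one place where~\eqref{eq:hb} is actually used in the identification and it is worth making explicit.
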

\begin{proof} Verifying that $\sigma$ defines a $\cB$-module braiding is a direct consequence of the axioms. To show that this assignment is a bijection, we construct an explicit inverse: Given a $\cB$-module braiding $\{\sigma_{_{A}m, b}\}_{b \in \cB, _{A}m \in \Modl[\cB]{A}}$ we set $\gamma_b := \sigma_{_{A}A, b}\cdot \beta^{-1}_{A, b}: b \otimes A \to A \otimes b$ and note that $\gamma_b$ is a half-braiding on $A$ fulfilling~\eqref{eq:hb}. A direct computation shows that these two functions are inverse to one another. 
\end{proof}

\begin{remark}The $\cB$-module category $\Modl[\cB]{A}$ is the category $\Hom_{\Sigma \cB}(I, [A])$ 
with right $\cB$-module structure induced by the identification $\Hom_{\Sigma \cB}(I, I) = \Omega \Sigma \cB \cong \cB$. Following the diagrammatic notation from Remark~\ref{remark.pointedmoritaobject} for the (2-categorical) half-braiding on the object $[A]$ in $\Sigma \cB$, the induced $\cB$-module braiding $\sigma_{_{A}m, b}: m*b \to m*b$ on $\Modl[\cB]{A}$ arises as follows: 
\[
   \begin{tz}[td]
  \begin{scope}[xzplane=1.]
 \path[surface] (0,0) rectangle (1.5,3);
 \end{scope}
 \draw[string] (3,0,0) to [out=up, in=-110]node[mask point, pos=0.5](A){} (0.75, 1, 1.5);
  \cliparoundone{A}{\draw[1mor] (1.5, 1,0) to node[mask point=back, pos=0.75](B){} (1.5, 1,3);}
\cliparoundone[back]{B}{ \draw[string,on layer =back] (0.75, 1, 1.5) to [out=70, in=down]  (2,2, 3);}
 \node[smalldot] at (0.75, 1, 1.5){};
 \node[label, below] at( 1.5, 1, 0) {$_{A}m$};
 \node[label, below] at (3,0,0){$b$};
 \end{tz}
\]
\end{remark}

\begin{theorem} \label{thm:HBADrinfeld}The $2$-category $\sHBA(\cB)$ of separable half-braided algebras is equivalent to $\BrModr\cB$  and hence to the Drinfel'd centre $\cZ(\Sigma \cB)$. 
\end{theorem}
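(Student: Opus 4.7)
The plan is to construct a direct equivalence of $2$-categories $F : \sHBA(\cB) \to \BrModr{\cB}$; the equivalence with $\cZ(\Sigma\cB)$ will then follow from Theorem~\ref{thm.zmod}. On objects, set $F(A, \gamma) := (\Modl[\cB]{A}, \sigma)$ with $\sigma$ the $\cB$-module braiding produced in Lemma~\ref{lem:hbvsBmodbraiding}. On a half-braided bimodule ${}_{B} m_A : (A, \gamma) \to (B, \zeta)$, send it to the relative tensor product functor ${}_{B} m \otimes_A (-) : \Modl[\cB]{A} \to \Modl[\cB]{B}$, which is braided as a $\cB$-module functor by the second half of Lemma~\ref{lem:hbvsBmodbraiding}. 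On a $2$-morphism (a bimodule map), $F$ acts by the induced natural transformation between relative tensor products, which is automatically $\cB$-linear. Functoriality up to coherent isomorphism is the standard $2$-functoriality of relative tensor product.

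For essential surjectivity on objects, I will use the equivalence $\Sigma\cB \cong \Modr{\cB}$ recalled in Example~\ref{eg.SigmaB}: any finite semisimple right $\cB$-module category $\cM$ is equivalent as a $\cB$-module to $\Modl[\cB]{A}$ for some separable algebra $A \in \cB$. Given any braided module structure $\sigma$ on $\cM$, transporting it along this equivalence and applying Lemma~\ref{lem:hbvsBmodbraiding} in reverse produces a half-braiding $\gamma$ on $A$ satisfying~\eqref{eq:hb} with $(\cM, \sigma) \cong F(A, \gamma)$. For essential surjectivity on each Hom $1$-category, let $(A, \gamma)$ and $(B, \zeta)$ be separable half-braided algebras; by the Eilenberg--Watts-style theorem for module categories over multifusion $1$-categories (see e.g.~\cite{EGNO}), any $\cB$-module functor $\Modl[\cB]{A} \to \Modl[\cB]{B}$ is equivalent to tensoring with some bimodule ${}_{B} m_A$, and if the functor is a braided module functor then ${}_{B} m_A$ is half-braided by the second part of Lemma~\ref{lem:hbvsBmodbraiding}.

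Finally, $F$ will be bijective on $2$-morphism spaces: $2$-morphisms in $\sHBA(\cB)$ are bimodule maps with no extra condition, while $2$-morphisms in $\BrModr{\cB}$ are $\cB$-module natural transformations with no braiding compatibility (as recorded just before Theorem~\ref{thm.zmod}). Under the standard identification of bimodule maps with natural transformations of the associated relative tensor product functors, $F$ induces the desired bijections. The essential content of the argument is the identification of half-braided bimodules with braided $\cB$-module functors already carried out in Lemma~\ref{lem:hbvsBmodbraiding}; the only substantive additional step is the Eilenberg--Watts-type statement for $\cB$-modules, which is standard Morita theory, so I expect no serious obstacle — everything else reduces to unpacking the definitions.
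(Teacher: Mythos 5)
Your proposal is correct and follows the same route as the paper's own proof: you construct the identical $2$-functor $\sHBA(\cB)\to\BrModr{\cB}$ and deduce it is an equivalence from the equivalence $\Modl[\cB]{(-)}:\Sigma\cB\to\Modr\cB$ of Example~\ref{eg.SigmaB} combined with the bijection of braidings in Lemma~\ref{lem:hbvsBmodbraiding}. The only cosmetic difference is that you re-invoke an Eilenberg--Watts statement for essential surjectivity on Hom $1$-categories, whereas the paper simply notes that this is already contained in the $2$-categorical equivalence $\Sigma\cB\cong\Modr\cB$.
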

\begin{proof}The $2$-functor $\sHBA(\cB) \to \BrModr\cB$ is defined as follows:
To a half-braided algebra $(A, \gamma)$ in $\cB$, we assign the the right $\cB$-module category $\Modl[\cB]{A}$ of left $A$ modules with $\cB$-module braiding induced from the half-braiding $\gamma$ as in Lemma~\ref{lem:hbvsBmodbraiding}. To a half-braided bimodule $_{B}m_A$ we assign the induced $\cB$-module functor $_{B}m \otimes_A- : \Modl[\cB]{A} \to \Modl[\cB]{B}$ (see Lemma~\ref{lem:hbvsBmodbraiding}), and to a bimodule morphism $_{B}m_A \to {_{B}m'_A}$ we assign the corresponding $\cB$-module natural transformation $f \otimes_A -:  {}_{B}m \otimes_A - \To {}_{B}m' \otimes_A -$. 
Since $\Modl[\cB]{(-)}: \Sigma \cB \to \Modr \cB$ is an equivalence (see Example~\ref{eg.SigmaB}), it follows from Lemma~\ref{lem:hbvsBmodbraiding} that this $2$-functor $ \sHBA(\cB) \to \BrModr\cB$ is an equivalence.  \end{proof}

In particular, the braided monoidal structure on $\cZ(\Sigma \cB)$ induces a braided monoidal structure on $\sHBA(\cB)$. 
For the reader's convenience, we explicitly unpack some pieces of this induced braided monoidal structure. 
The tensor unit of $\sHBA(\cB)$ is the trivial algebra $I$ with half braiding $\gamma_b = \id_b: b\to b$ (omitting unitors from the notation).
The tensor product $(A, \gamma) \boxtimes (B, \zeta)$ of two half-braided algebras $(A,\gamma)$ and $(B, \zeta)$ is the half-braided algebra with underlying object $A\otimes B$ and multiplication and half-braiding $\gamma \boxtimes \mu : b\otimes (A\otimes B) \to (A\otimes B) \otimes b$ given as follows: 
\begin{equation}\label{eq:tensor}
\begin{tz}[std]
\node (base) at (0,0){};
\node (2nd) at (0.5,0.){};
\draw[string](0.5, 1) to (0.5, 1.5);
\draw[string] (1, 1) to (1, 1.5);
\mult{2nd}{1}{1}
\braidmult{base}{1}{1}
\node[below, label] at (0.,0) {$A$};
\node[below, label] at (0.5,0) {$B$};
\node[below, label] at (1,0) {$A$};
\node[below, label] at (1.5,0) {$B$};
\end{tz}
\hspace{0.3\linewidth}
 \begin{tz}[std]
 \draw[string, hb] (0,0) to [out=up, in=down]node[smalldot, pos=0.385]{} node[smalldot,pos=0.615]{}(1.5,1.5);
 \draw[string] (0.5,0) to (0.5, 1.5);
 \draw[string] (1,0) to (1,1.5);
\node[below, label] at (0,0) {$b\vphantom{A}$};
\node[below, label] at (0.5,0) {$A$};
\node[below, label] at (1,0) {$B$};
\end{tz}
\end{equation}
Similarly, the tensor product $(_{A}m_B)\boxtimes (_{C}n_D)$ of two half-braided bimodules $_{A}m_B$ and $_{C}n_D$ is the $(A\otimes C)$--$(B\otimes D)$ bimodule $m\otimes n$ with action given as follows:
\[\begin{tz}[std]
\node (base1) at (1.25,0){};
\node (base2) at (1.75,0){};
\braidmult{base1}{2}{1}
\draw[string] (2.25, 0) to node[mask point,pos=0.6] (A){} (2.25, 1);
\cliparoundone[back]{A}{\braidmult{base2}{2}{1}}
\draw[string] (2.25, 1) to (2.25,2); 
\draw[string] (2.75, 1) to (2.75,2); 
\draw[string] (2.75, 0) to (2.75, 1);
\node[below, label] at (1.25,0) {$A$};
\node[below, label] at (1.75,0) {$B$};
\node[below, label] at (2.25,0) {$m\vphantom{A}$};
\node[below, label] at (2.75, 0) {$n\vphantom{A}$};
\node[below, label] at (3.25, 0) {$C$};
\node[below, label] at (3.75, 0) {$D$};
\end{tz}
\]
\begin{lemma}\label{lem:algebraiso}Let A and B be algebra objects in $\cB$ and suppose $\gamma$ is a half-braiding on~$A$  fulfilling~\eqref{eq:hb}. Then, $\gamma_B: B\otimes A \to A \otimes B$ is an algebra isomorphism. Moreover, if $B$ admits a half-braiding $\mu$, then $\gamma_B$ is an isomorphism of half-braided algebras $(B, \zeta) \boxtimes (A, \gamma) \to (A, \gamma) \boxtimes (B, \zeta) $.
\end{lemma}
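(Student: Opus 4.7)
We verify the algebra-isomorphism conditions directly. Invertibility of $\gamma_B$ is immediate since any half-braiding consists of isomorphisms. Unit preservation, $\gamma_B \circ (u_B \otimes u_A) = u_A \otimes u_B$, follows from naturality of $\gamma$ applied to $u_B : I \to B$ (as a morphism in $\cB$) together with the monoidality convention $\gamma_I = \id$.

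The heart of the proof is the multiplication-preservation identity
\[ \gamma_B \circ \mu_{B \otimes A} = \mu_{A \otimes B} \circ (\gamma_B \otimes \gamma_B) \]
as morphisms $(B \otimes A) \otimes (B \otimes A) \to A \otimes B$, with both tensor-product algebras carrying the structures from Remark~\ref{rem:tensorconventions}. I would expand the LHS as $\gamma_B \circ (\mu_B \otimes \mu_A) \circ (\id_B \otimes \beta \otimes \id_A)$, apply equation~\eqref{eq:hb} to commute $\gamma_B$ past $\mu_A$, and then use naturality of $\gamma$ with respect to $\mu_B$ together with monoidality $\gamma_{B \otimes B} = (\gamma_B \otimes \id_B) \circ (\id_B \otimes \gamma_B)$ to distribute $\gamma_B$ across the $B$-multiplication. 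Analogous expansions on the RHS, employing naturality of $\beta$ with respect to the morphism $\gamma_B$ in $\cB$, bring both sides to a common canonical string diagram.

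For the second statement, we additionally verify that $\gamma_B$ intertwines the tensor-product half-braidings described by~\eqref{eq:tensor}: for every $c \in \cB$,
\[ (\id_A \otimes \zeta_c) \circ (\gamma_c \otimes \id_B) \circ (\id_c \otimes \gamma_B) = (\gamma_B \otimes \id_c) \circ (\id_B \otimes \gamma_c) \circ (\zeta_c \otimes \id_A). \]
This follows from naturality of $\gamma$ applied to the morphism $\zeta_c : c \otimes B \to B \otimes c$ in $\cB$, combined with naturality of $\zeta$ applied to $\gamma_c$, plus the monoidality of both half-braidings.

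The hardest step is the careful string-diagrammatic bookkeeping in the multiplication-preservation identity, which involves tracking several $\gamma_B$'s, $\beta$'s, and multiplications through a chain of rewrites using the axioms of a half-braided algebra. Conceptually, though, $\gamma_B$ realizes the underlying $1$-morphism of the braiding in $\cZ(\Sigma\cB) \cong \sHBA(\cB)$ at the pair $((B,\beta_{-,B}), (A,\gamma))$ (Theorem~\ref{thm:HBADrinfeld}), and the lemma is the explicit algebraic manifestation of this abstract braiding.
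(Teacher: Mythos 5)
Your proposal is correct and takes essentially the same approach as the paper, which simply states that the result ``follows directly from naturality of $\gamma_B$ in $B$ and its compatibility with the algebra structure on $A$.'' Your write-up is a faithful unpacking of that one-line argument: naturality of $\gamma$ at $u_B$ and $\mu_B$, monoidality $\gamma_{B\otimes B}=(\gamma_B\otimes\id_B)\circ(\id_B\otimes\gamma_B)$, and equation~\eqref{eq:hb} give the algebra isomorphism, while naturality of $\gamma$ at $\zeta_c$ together with monoidality gives the half-braiding intertwining (you mention naturality of $\zeta$ at $\gamma_c$ as well, but that step is not actually needed).
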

\begin{proof} This follows directly from naturality of $\gamma_B$ in $B$ and its compatibility with the algebra structure on $A$.
\end{proof}
In the braided monoidal $2$-category $\sHBA(\cB)$, the braiding of two half-braided algebras $(A, \gamma) \boxtimes (B, \zeta) \to (B, \zeta) \boxtimes (A, \gamma)$ is the half-braided bimodule $\left(\mu_A\right)_*$ represented by the half-braided algebra isomorphism $\mu_A: A\otimes B \to B \otimes A$ as in Lemma~\ref{lem:algebraiso}.

\begin{remark}
It is often convenient to present braided monoidal $2$-categories in terms of braided monoidal double categories (see~\cite{Shulman2010,1910.09240} on notations and details of this presentation). 

In our case, the full braided monoidal structure on the $2$-category $\HBA(\cB)$ (and analogously on the subcategory $\sHBA(\cB)$) can be efficiently described in terms of the braided monoidal double category $\mathbb{H}\mathrm{BA}(\cB)$ whose groupoid of objects $\mathbb{H}\mathrm{BA}(\cB)_0$ is the braided monoidal $1$-groupoid of half-braided algebras and half-braided algebra isomorphisms with monoidal structure and braiding given as in~\eqref{eq:tensor} and Lemma~\ref{lem:algebraiso}. Its  braided monoidal $1$-category $\mathbb{H}\mathrm{BA}(\cB)_1$ of $1$-morphisms is the braided monoidal category whose objects are triples $(A,m,B)$ of half-braided algebras $A $ and $B$ and a half-braided bimodule $_{A}m_B$ and whose morphisms $(A, m, B) \to (C, n, D)$ are triples $(\phi, f, \psi)$ of half-braided algebra isomorphisms $\phi:A \to C$ and $\psi:B \to D$ and an $A$-$B$ bimodule map $f: (\phi_*) \otimes_A m \to  n \otimes_{D} ( \psi _*)$. 

The monoidal structure on $\mathbb{H}\mathrm{BA}(\cB)_1$ has unit $(I, I,I)$ and tensor product $(A, m, B) \boxtimes (C, n,D) = (A\otimes C, m \otimes n, B \otimes D)$ where we use the tensor product of half-braided algebras from above and give $m \otimes n$ the obvious bimodule structure. The braiding isomorphism $ (A,  m,B) \otimes (C,n, D) \to (C,n, D) \otimes (A, m,B)$ in  $\mathbb{H}\mathrm{BA}(\cB)_1$ is given by the triple $(\gamma_A, \zeta_{m, n_D}, \zeta_B)$, where $\gamma_A: A\otimes C \to C \otimes A$ is the half-braided algebra isomorphism given by the half-braiding $\gamma$ of $C$, $\zeta_B: B\otimes D \to D \otimes B$ the one induced by the half-braiding $\zeta$ of $D$ and where $\zeta_{m,n_D}:m \otimes n \to n \otimes m $ is the half-braiding on $n$ induced by the half-braiding $\zeta$ of $D$  as in Lemma~\ref{lem:inducedhb} (as $n$ is a half-braided bimodule, $\xi_{m, n_{D}}$ equals the half-braiding $\gamma_{m, _{C}n}$ induced by $\gamma$ of $C$). This indeed defines a $(C \otimes A)$--$(B \otimes D)$ bimodule isomorphism $ (\gamma_A )_* \otimes_{A \otimes C} (m\otimes n) \to  (n\otimes m) \otimes_{D \otimes B} (\zeta_B)_*$ as required. \end{remark}

\begin{example} As discussed in Lemma~\ref{lemma.centres}, $\Omega \cZ(\Sigma \cB) \cong \cZ_{2}(\cB)$. This is easily see when describing $\cZ(\Sigma \cB)$ in terms of half-braided algebras: the tensor unit of $\sHBA(\cB)$ is the trivial algebra in $\cB$; any object $b$ of $\cB$ admits a unique $I$--$I$ bimodule structure; this bimodule structure is half-braided in the sense of Definition~\ref{def:hbbimod} if and only if $b$ is transparent in $\cB$. 
\end{example}

Combining Theorem~\ref{thm:HBADrinfeld} and Lemma~\ref{lem:inducedhb}, it follows that the (underlying object of the) canonical Lagrangian braided monoidal object $\cB \to \cZ(\cB)$ (see Example~\ref{eg:canonicalLag}) is a generating object of $\BrModr\cB \cong \cZ(\Sigma \cB)$. 

\begin{corollary} \label{cor.Lgenerates}
The braided $\cB$-module category $\cZ(\cB)$ with braided $\cB$-module structure induced by the braided monoidal functor $\cB \to \cZ(\cB)$ is a generating object (in the sense of Definition~\ref{defn:generator}) of $\BrModr\cB$. In particular, the $2$-functor $\Sigma\Omega_{\cZ(\cB)} \BrModr\cB \to\BrModr\cB$ is an equivalence of $2$-categories. 
\end{corollary}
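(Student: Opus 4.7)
The plan is to verify condition~(1) of Definition~\ref{defn:generator} for $X = \cZ(\cB)$: for every nonzero $\cM \in \BrModr\cB$, I must produce a nonzero braided $\cB$-module functor $\cM \to \cZ(\cB)$. Using the equivalence $\BrModr\cB \cong \sHBA(\cB)$ of Theorem~\ref{thm:HBADrinfeld}, I represent $\cM$ by a nonzero separable half-braided algebra $(A, \gamma)$, so that $\cM = \Modl[\cB]{A}$ with the $\cB$-module braiding supplied by Lemma~\ref{lem:hbvsBmodbraiding}. Once this generating property is established, the second assertion of the corollary — equivalence of $\Sigma \Omega_{\cZ(\cB)} \BrModr\cB \to \BrModr\cB$ — is immediate from condition~\ref{condition:2functor} of Definition~\ref{defn:generator}.

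The candidate functor is essentially already built for me by Lemma~\ref{lem:inducedhb}: the assignment $_A m \mapsto (m, \gamma_{-, m})$ defines a $\cB$-module functor $\Phi_{A, \gamma}: \Modl[\cB]{A} \to \cZ(\cB)$. Non-vanishing is automatic, since $\Phi_{A, \gamma}({_A A}) = (A, \gamma)$ is a nonzero object of $\cZ(\cB)$ whenever $A$ is nonzero in $\cB$. The whole problem therefore reduces to the single point of showing that $\Phi_{A, \gamma}$ is braided with respect to the braided $\cB$-module structure on $\cZ(\cB)$ induced by the braided monoidal functor $\cB \to \cZ(\cB)$, $b \mapsto (b, \beta_{-, b})$.

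To verify this, I would compare two formulas for the module braiding at an object $m$. On $\Modl[\cB]{A}$, Lemma~\ref{lem:hbvsBmodbraiding} writes the module braiding as $\gamma_{b, _A m} \cdot \beta_{m, b}: m \otimes b \to m \otimes b$. On $\cZ(\cB)$, the general formula $\sigma_{c, b} = \beta^{\cN}_{F(b), c} \cdot \beta^{\cN}_{c, F(b)}$ for the braided module structure of a braided extension $F: \cB \to \cN$, applied to $\cN = \cZ(\cB)$ and $c = (m, \gamma_{-, m})$, unpacks — once one records that the braiding in $\cZ(\cB)$ is, on underlying objects, the half-braiding of the second factor evaluated at the first — to the same composite $\gamma_{b, m} \cdot \beta_{m, b}$. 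Since $\gamma_{b, _A m} = \gamma_{b, m}$ by Lemma~\ref{lem:inducedhb}, the two module braidings coincide and $\Phi_{A, \gamma}$ is braided, as required. The main obstacle — the only step that is not purely formal — is the convention bookkeeping in this comparison: with the paper's orientation $\gamma_b: b \otimes A \to A \otimes b$, only one of the two a priori possible conventions for $\beta^{\cZ(\cB)}_{X, Y}$ makes $b \mapsto (b, \beta_{-, b})$ braided monoidal, and one must carefully keep track of which of the half-braidings $\sigma^{F(b)}$ and $\sigma^{(m, \gamma)}$ contributes to which factor of $\sigma_{(m, \gamma), b}$. Once that orientation data is pinned down, the desired equality is essentially the content of Lemma~\ref{lem:hbvsBmodbraiding} read backwards.
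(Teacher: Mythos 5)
Your proof is correct and takes essentially the same route as the paper: both realize an arbitrary nonzero $\cM \in \BrModr\cB$ as $\Modl[\cB]{A}$ via Theorem~\ref{thm:HBADrinfeld} and then exhibit the $\cB$-module functor of Lemma~\ref{lem:inducedhb} as a nonzero braided module functor to $\cZ(\cB)$, reading off the equivalence $\Sigma\Omega_{\cZ(\cB)}\BrModr\cB \simeq \BrModr\cB$ from condition~\ref{condition:2functor} of Definition~\ref{defn:generator}. The only difference is that the paper asserts ``is in fact a braided $\cB$-module functor'' without elaboration, whereas you carry out the convention-matching that justifies it — and you get the orientation bookkeeping right: the Drinfel'd-centre braiding on underlying objects is indeed the half-braiding of the second factor evaluated at the first, so that $\sigma^{\cZ(\cB)}_{(m,\gamma_{-,m}),b} = \gamma_{b,m}\cdot\beta_{m,b}$ agrees with the module braiding $\sigma_{{}_A m,b}$ from Lemma~\ref{lem:hbvsBmodbraiding}.
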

\begin{proof} By Theorem~\ref{thm:HBADrinfeld}, any braided right $\cB$-module category $\cM$ is equivalent to the category  $\Modl[\cB]{A}$ of left modules for a half-braided algebra $(A, \gamma)$, with $\cB$-module braiding constructed as in Lemma~\ref{lem:hbvsBmodbraiding}. With this $\cB$-module braiding, the $\cB$-module functor $\Modl[\cB]{A}\to \cZ(\cB)$ from Lemma~\ref{lem:inducedhb} is in fact a braided $\cB$-module functor, and is evidently non-zero (if $A$ is non-zero). \end{proof}

\subsection{The canonical Lagrangian object in \texorpdfstring{$\cZ(\Sigma \cB)$}{Z(Sigma B)} as a half-braided algebra}\label{subsec.canonicalLagHBA}

In Example~\ref{eg:canonicalLag} we observed that, for a braided fusion 1-category $\cB$, the braided fusion 2-category $\cZ(\Sigma\cB) \cong \BrModr\cB$ has a canonical Lagrangian braided monoidal object $\cL$, which comes from the nondegenerate extension $\cB \hookrightarrow \cZ(\cB)$. We now present an explicit half-braided algebra representative $(L, \lambda)$ of the underlying object of this canonical Lagrangian braided monoidal object. We will not give a half-braided algebra description of the braided monoidal structure of $\cL$, as we will not require it for our proof.

To give the definition, recall that the \emph{coend} $\int^{\cB}F = \int^{b\in \cB} F(b,b)$ of a functor $F:\cB  \times \cB^{\mathrm{op}} \to \cB$ is the universal object in $\cB$ equipped with a dinatural \cite{MR0274550} transformation
 $\iota: F(-,-) \to \int^{\cB} F$. In other words, for each $x \in \cB$ there is a map $\iota_x : F(x,x) \to \int^\cB F$, and for each morphism $\alpha : x \to y$, 
 $$ \iota_x \cdot F(\alpha, \id_y) = \iota_y \cdot F(\id_x, \alpha) : F(x,y) \to \int^\cB F,$$
 and $\int^\cB F$ is the universal object in $\cB$ receiving maps with these compatibilities.
Of course, as $\cB$ is semisimple, the map $\bigoplus_{b_i \in \pi_0 \cB} \iota_{b_i}: \bigoplus_{b_i \in \pi_0 \cB} F(b_i, b_i) \to \int^{b \in \cB} F(b, b)$ is an isomorphism, but we prefer the coend formulation with its defining universal property, as this will make various explicit computations cleaner. For example, for objects $x,z \in \cB$, to define a map $x \otimes \int^{b \in \cB} F(b, b) \to z$, it suffices to write down a map $x \otimes F(b,b) \to z$ which is dinatural in $b$.

\begin{definition} \label{defn.lagHBA}
The \emph{canonical Lagrangian half-braided algebra} $(L, \lambda)$ in $\sHBA(\cB)$ is the half-braided algebra $L:=\int^{b \in \cB} b \otimes b^*$ with unit $\iota_{I}: I \cong I \otimes I^* \to L$ and multiplication $L\otimes L \to L$ defined in terms of the following dinatural (in $b$ and $c$) transformation $b\otimes b^* \otimes (c\otimes c^*) \to L$:
\[\begin{tz}[std]
\clip (-0.5, -0.5) rectangle (2.5, 3);
\draw[dinat] (0,0) to (0,1.5);
\draw[dinat] (0.5, 0) to [out=up, in=down] (2,1.5);
\draw[braid=main, dinat] (1.5,0) to [out=up, in=down] (0.5,1.5);
\draw[braid=main, dinat] (2,0) to [out=up, in=down] (1.5,1.5);
\draw[string] (1, 1.7) to (1,2.5);
\node[box=2.4cm] at (1, 1.7) {$\iota_{b\otimes c}$};
\node[label, below] at (0,0) {$b$};
\node[label, below] at (0.5,0) {$b^*$};
\node[label, below] at (1.5, 0) {$c\vphantom{c^*}$};
\node[label, below] at (2,0) {$c^*$};
\node[label, above] at (1,2.5) {$L$};
\end{tz}
\]
The half-braiding $\{\lambda_x:x \otimes L \to L \otimes x\}_{x\in \cB}$ is given by the following dinatural (in $b$) transformation $x\otimes (b \otimes b^*) \to L \otimes x$:
\begin{equation}\label{eq:Laghb}
\begin{tz}[std]
\draw[string,hb, arrow data={0.5}{>}] (0,0) to [out=up, in=down] (0.5, 1);
\draw[string, dinat, arrow data={0.5}{>}] (1,0) to (1,1);
\draw[string, dinat, arrow data={0.5}{<}] (1.5, 0) to +(0,1);
\draw[string, hb, arrow data={0.15}{>}] (2, 1) to [out=down, in=down, looseness=2] (3, 2);
\draw[string] (1.25, 1.5) to (1.25, 2);
\node[box=2.4cm] at (1.25, 1.25) {$\iota_{x\otimes b}$};
\node[label, above] at (1.25, 2) {$L$};
\node[label, below] at (1, 0) {$b$};
\node[label, below] at (1.5, 0) {$b^*$};
\node[label, below] at (0, 0) {$x\vphantom{b^*}$};
\node[label, above] at (3, 2) {$x$};
\end{tz}
\end{equation}
\end{definition}
In this Definition~\ref{defn.lagHBA} and in the rest of the paper, whenever we work with a string diagram expression for a dinatural transformation, we will colour the objects in which the transformation is dinatural in {red}. This is merely a visual guidance and has no mathematical content. 

\begin{lemma} \label{lem:lagworks}
The functor $\Modl[\cB]{L} \to \cZ(\cB)$ induced from the half-braiding on $L$ as in Lemma~\ref{lem:inducedhb} is an equivalence of braided module categories. 
\end{lemma}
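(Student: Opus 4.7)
The functor $F: \Modl[\cB]{L} \to \cZ(\cB)$ of Lemma~\ref{lem:inducedhb} sends a left $L$-module $(m, a: L\otimes m \to m)$ to $m$ equipped with the half-braiding $\lambda_{-,m}$ induced from $\lambda$ and $a$. I will construct an explicit inverse $G: \cZ(\cB) \to \Modl[\cB]{L}$ using the universal property of the coend, verify that $F \circ G$ and $G \circ F$ are the identity, and then check that the resulting equivalence intertwines the braided $\cB$-module structures.

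For the inverse, given $(z, \rho) \in \cZ(\cB)$ with $\rho_b: b\otimes z \to z \otimes b$, define an action $a_z : L \otimes z \to z$ via the universal property of $L = \int^{b\in\cB} b\otimes b^*$ by specifying the dinatural family
\[
\alpha_b \;=\; (\id_z \otimes \ev_b) \circ (\rho_b \otimes \id_{b^*}) \circ (\id_b \otimes \rho_{b^*}) \;:\; b\otimes b^* \otimes z \to z.
\]
Dinaturality of $\{\alpha_b\}$ in $b$ follows from naturality of $\rho$ together with the compatibility of evaluation with duals of morphisms. Associativity of $a_z$ with respect to the multiplication on $L$ from Definition~\ref{defn.lagHBA}, and unitality with respect to $\iota_I: I \to L$, follow by applying the universal property of the coend to $L \otimes L \otimes z$ and $I \otimes z$ respectively; the verifications reduce to the hexagon-type compatibility of $\rho$ with tensor products on one side, and the triangle/zigzag identities for $(\ev_b, \coev_b)$ on the other. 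A morphism $f: (z,\rho) \to (z', \rho')$ in $\cZ(\cB)$ (namely a morphism $f: z \to z'$ in $\cB$ intertwining $\rho$ and $\rho'$) automatically intertwines the associated actions $a_z$ and $a_{z'}$ by the universal property, so setting $G(f) := f$ yields a well-defined $L$-module map.

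To see that $F$ and $G$ are mutually inverse, unpack equation~\eqref{eq:Laghb} and the formula for $\alpha_b$: a short string-diagram calculation with one zigzag cancellation shows that the half-braiding $\lambda_{b, z}$ on $F(G(z, \rho))$ constructed as in Lemma~\ref{lem:inducedhb} is exactly $\rho_b$, proving $F \circ G = \id$. Conversely, any $L$-action $a: L \otimes m \to m$ is determined by its coend components $a_b := a \circ (\iota_b \otimes \id_m)$, and an analogous computation shows these are recovered as $\alpha_b$ applied to the induced half-braiding $\lambda_{-, m}$, giving $G \circ F = \id$. Finally, the right $\cB$-action on $\Modl[\cB]{L}$ is $(m, a) * b = (m \otimes b, a \otimes \id_b)$ with module braiding from Lemma~\ref{lem:hbvsBmodbraiding}, while on $\cZ(\cB)$ it is the restriction along $\cB \to \cZ(\cB)$, $b \mapsto (b, \beta_{-, b})$ with module braiding given by the full braiding in $\cB$; Lemma~\ref{lem:hbvsBmodbraiding} applied to $L$ itself identifies these, showing that $F$ is a braided $\cB$-module functor. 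The main obstacle is the careful bookkeeping of several overlapping dinatural transformations in the associativity diagram for $a_z$; no input deeper than the universal property of the coend, the axioms of a half-braiding, and the zigzag identities is needed.
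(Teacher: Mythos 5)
Your overall strategy — build an explicit inverse $\cZ(\cB)\to\Modl[\cB]{L}$ using the universal property of the coend, then check that it is inverse and braided-module — is exactly the paper's. But the formula you propose for the $L$-action is wrong, and the error is fatal. You set
\[
\alpha_b = (\id_z \otimes \ev_b) \circ (\rho_b \otimes \id_{b^*}) \circ (\id_b \otimes \rho_{b^*}),
\]
using the half-braiding $\rho$ of $z$ on \emph{both} strands. By the hexagon identity for half-braidings, $(\rho_b \otimes \id_{b^*}) \circ (\id_b \otimes \rho_{b^*}) = \rho_{b \otimes b^*}$, and then naturality of $\rho$ applied to $\ev_b : b \otimes b^* \to I$ together with $\rho_I \cong \id_z$ gives
\[
\alpha_b = (\id_z \otimes \ev_b) \circ \rho_{b \otimes b^*} = \rho_I \circ (\ev_b \otimes \id_z) = \ev_b \otimes \id_z.
\]
So your $\alpha_b$ is independent of $\rho$: it is just the pullback of the trivial $I$-module structure along the algebra map $L \to I$ dinaturally given by $\{\ev_b\}$. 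The functor $G$ you construct therefore forgets the half-braiding entirely, and the claim $F\circ G = \id$ cannot hold (two objects of $\cZ(\cB)$ with the same underlying $\cB$-object but different half-braidings would be sent to the same $L$-module, and then to the same object of $\cZ(\cB)$ by $F$).

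The correct formula — the one drawn as the dinatural transformation in the paper's proof (the same diagram as $\mathrm{l}_{x,\gamma}$ in equation~\eqref{eq:LLbimod}) — uses the half-braiding $\rho_b$ for exactly \emph{one} of the two crossings with the $z$-strand; the $b^*$-strand crosses $z$ with the ordinary braiding $\beta$ of $\cB$, not with $\rho_{b^*}$. That single black square in the diagram is the whole point: with one $\rho$-crossing and one $\beta$-crossing, naturality of $\rho$ cannot be used to cancel $\ev_b$ against the half-braiding, so the action genuinely records $\rho$. Heuristically, one half-braided crossing encodes the "difference" between the free $L$-module and $(z,\rho)$, while two such crossings cancel out by the very naturality you later invoke. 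If you replace $\rho_{b^*}$ by the appropriate $\cB$-braiding, the rest of your outline (dinaturality, associativity via naturality of $\beta$, unitality, and the mutual inverse and braided-module checks) can be carried out and matches the paper's argument.
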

The Lemma confirms that $(L, \lambda)$ as described in Definition~\ref{defn.lagHBA} is indeed a half-braided algebra representative of the canonical Lagrangian object $\cL$ from Example~\ref{eg:canonicalLag}. In particular, it follows that the algebra $L$ is separable and is therefore indeed an object of $\sHBA(\cB) \cong \cZ(\Sigma \cB)$ and not merely of $\HBA(\cB)$.

\begin{proof}An explicit inverse $\cZ(\cB) \to \Modl[\cB]{L} $ maps an object $(x, \gamma)$ to the object $b$ with left $L$ action defined by the following dinatural transformation $b \otimes b^* \otimes x \to x$:
\[
\begin{tz}[std]
\clip (-1.2, -0.4) rectangle (0.75, 2);
\draw[string, dinat] (-1,0) to [out=up, in=-135] (0, 1.5) to  +(45:0.2cm) coordinate(x); 
\draw[string,dinat, arrow data={0.9}{>}] (x) arc (135:-45:0.3cm) to [out=-135, in=up] (-0.5,0); 
\draw[braid=main] (0,0) to (0,1);
\draw[string](0,1) to (0,2);
\node[smalldot] at (0,1.5) {};
\node[label, below] at (0,0) {$x\vphantom{b^*}$};
\node[label, below] at (-0.5,0) {$b^*$};
\node[label, below] at (-1,0) {$b$};
\end{tz} \qedhere
\]
\end{proof}

In Corollary~\ref{cor.Lgenerates}, we showed that the Lagrangian object $\cL$, represented by the half-braided algebra $(L, \lambda)$ or the braided module category $\cZ(\cB)$, respectively, generates the $2$-category $\cZ(\Sigma \cB) \cong \sHBA(\cB) \cong \BrModr{\cB}$. In particular, the $2$-functor $\Sigma \Omega_{\cL} \cZ(\Sigma \cB) \to \cZ(\Sigma \cB)$ is an equivalence. In other words, ignoring the braided monoidal structure, the semisimple $2$-category $\cZ(\Sigma \cB)$ is the category of finite semisimple module categories of the multifusion $1$-category $\End_{\cZ(\Sigma \cB)}(\cL)$. Although not strictly necessary for the proof of our \MainTheorem, we now show that this multifusion category is equivalent to the Drinfel'd centre $\cZ(\cB)$ of $\cB$ equipped with an unusual monoidal structure.

For every object $(x, \gamma)$ in the Drinfel'd center $\cZ(\cB)$ we define the morphisms $\mathrm{l}_{x, \gamma}:L \otimes x \to x$ and  $\mathrm{r}_{x, \gamma}: x\otimes L \to x$  in $\cB$ in terms of the following dinatural transformations, respectively (the square denotes the half-braiding $\gamma_b: b\otimes x \to x \otimes b$ of $x$):
\begin{equation}\label{eq:LLbimod}
\begin{tz}[std]
\clip (-1.2, -0.4) rectangle (0.75, 2);
\draw[string, dinat] (-1,0) to [out=up, in=-135] (0, 1.5) to  +(45:0.2cm) coordinate(x); 
\draw[string,dinat, arrow data={0.9}{>}] (x) arc (135:-45:0.3cm) to [out=-135, in=up] (-0.5,0); 
\draw[braid=main] (0,0) to (0,1);
\draw[string](0,1) to (0,2);
\node[smalldot] at (0,1.5) {};
\node[label, below] at (0,0) {$x\vphantom{b^*}$};
\node[label, below] at (-0.5,0) {$b^*$};
\node[label, below] at (-1,0) {$b$};
\end{tz}
\hspace{4cm}
\begin{tz}[std]
\clip (-0.5, -0.4) rectangle (1.2, 2);
\draw[string] (0,0) to (0,2);
\draw[braid, dinat] (0.5,0) to [out=up, in=down] (-0.4, 1);
\draw[string, dinat, arrow data={0.7}{>}] (-0.4,1)  to [out=up, in =-135] (0, 1.5) to [out=45,  in=up, out looseness=2](1,0);
\node[smalldot] at (0,1.5) {};
\node[label, below] at (0,0) {$x\vphantom{b^*}$};
\node[label, below] at (0.5,0) {$b$};
\node[label, below] at (1,0) {$b^*$};
\end{tz}
\end{equation}
Observe that $\mathrm{l}_{x, \gamma}$ and $\mathrm{r}_{x, \gamma}$ define commuting left and right actions of the algebra object $L$, and hence an $L$--$L$ bimodule structure on $x$. This construction defines a fully faithful functor $\cZ(\cB) \hookrightarrow \mathrm{Bim}(L)$. 

\begin{definition}\label{def:annular}
The \define{annular tensor product} $(x, \gamma) \otimes_{\mathrm{ann}} (y, \zeta)$ of two objects in $\cZ(\cB)$ is the coequalizer of $\id_x \otimes \mathrm{l}_{y, \zeta}$ and $\mathrm{r}_{x,\gamma} \otimes \id_y: x \otimes L \otimes y \to x \otimes y$ in $\cB$ equipped the half-braiding 
\begin{equation}
\label{eq:annularhb}
\begin{tz}[std]
\draw[string, on layer=main] (0.3333,0) to (0.33333, 2);
\draw[braid=back, hb, on layer=back] (-0.5,0) to node[pos=0.33, smalldot](A){} (2.,2) ;
\draw[braid]  (1.16666,0) to (1.16666,2) ;
\node[below, label] at (-0.5,0) {$b\vphantom{y}$};
\node[below, label] at (0.333333,0) {$\vphantom{yb}x$};
\node[below, label] at (1.1666,0) {$y\vphantom{b}$};
\node[above left, label] at (A){$\gamma_b$};
\end{tz}
\hspace{0.75cm} \text{or equivalently} \hspace{0.75cm} \begin{tz}[std]
\draw[string, on layer=main] (0.3333,0) to (0.33333, 2);
\draw[string]  (1.16666,0) to (1.16666,2) ;
\draw[braid=main, hb] (-0.5,0) to node[pos=0.66666, smalldot](A){} (2.,2) ;
\node[below, label] at (-0.5,0) {$b\vphantom{y}$};
\node[below, label] at (0.333333,0) {$\vphantom{yb}x$};
\node[below, label] at (1.1666,0) {$y\vphantom{b}$};
\node[below right, label] at (A){$\zeta_b$};
\end{tz}
\end{equation}
descended to the coequalizer. In other words, the annular tensor product $(x, \gamma) \otimes_{\mathrm{ann}} (y, \zeta)$ may be understood as coequalizing the two half-braidings~\eqref{eq:annularhb} on $x \otimes y $.
\end{definition}

\begin{prop} \label{prop:annular}
The subcategory $\cZ(\cB) \hookrightarrow \mathrm{Bim}(L)$ is precisely the subcategory of those bimodules which are half-braided in the sense of Definition~\ref{def:hbbimod}, and hence is equivalent to $\End_{\sHBA(\cB)}(L,\lambda)$. The induced tensor product on $\cZ(\cB)$ is the annular tensor product. 
\end{prop}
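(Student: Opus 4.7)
The plan is to separately verify three things: that the functor $\cZ(\cB)\hookrightarrow \mathrm{Bim}(L)$ defined by~\eqref{eq:LLbimod} lands in the subcategory of half-braided bimodules, that every half-braided $L$--$L$ bimodule is in its essential image, and that the $L$-relative tensor product on bimodules transports to the annular tensor product on $\cZ(\cB)$. Throughout, the main tools will be the universal property of the coend $L=\int^{b\in\cB} b\otimes b^\ast$, the formula~\eqref{eq:Laghb} for the half-braiding $\lambda$, and Lemmas~\ref{lem:inducedhb}, \ref{lem:hbvsBmodbraiding}, and~\ref{lem:lagworks}.

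First I would check that for $(x,\gamma)\in\cZ(\cB)$, the $L$--$L$ bimodule structure on $x$ defined by the left and right actions $\mathrm{l}_{x,\gamma}$ and $\mathrm{r}_{x,\gamma}$ is half-braided in the sense of Definition~\ref{def:hbbimod}. By direct diagrammatic manipulation, using the zig-zag identity for duality together with~\eqref{eq:Laghb}, both half-braidings on $x$ induced via Lemma~\ref{lem:inducedhb} from the left and right $L$-module structures reduce back to $\gamma$. Since both reproduce the same half-braiding, they in particular agree, which is exactly the half-braided bimodule condition. For the converse, given any half-braided $L$--$L$ bimodule $m$, its underlying left $L$-module determines a unique $(x,\gamma)\in\cZ(\cB)$ by Lemma~\ref{lem:lagworks}. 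The half-braided condition forces the right $L$-action to produce the same induced half-braiding $\gamma$ on $m$; by the bijection in Lemma~\ref{lem:hbvsBmodbraiding} (applied on the right), this right action is uniquely determined by $\gamma$ and must coincide with $\mathrm{r}_{x,\gamma}$. Hence every half-braided bimodule arises from $\cZ(\cB)$. Full faithfulness of the inclusion $\cZ(\cB)\hookrightarrow\mathrm{Bim}(L)$ follows from the universal property of the coend: a morphism $f:x\to y$ in $\cB$ commutes with $\mathrm{l}_{x,\gamma}$ and $\mathrm{l}_{y,\zeta}$ if and only if it intertwines $\gamma_b$ and $\zeta_b$ for every $b\in\cB$, i.e.\ defines a morphism in $\cZ(\cB)$. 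This identifies $\cZ(\cB)$ with $\End_{\sHBA(\cB)}(L,\lambda)$ as categories.

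Second, the tensor product on $\End_{\sHBA(\cB)}(L,\lambda)$ induced from Theorem~\ref{thm:HBADrinfeld} is the relative tensor product $\otimes_L$ of half-braided bimodules. For $(x,\gamma),(y,\zeta)\in\cZ(\cB)$, the underlying object of $x\otimes_L y$ is by definition the coequalizer of $\mathrm{r}_{x,\gamma}\otimes\id_y$ and $\id_x\otimes\mathrm{l}_{y,\zeta}$ from $x\otimes L\otimes y$ to $x\otimes y$, which matches Definition~\ref{def:annular} verbatim. To compare the half-braidings, I would compute the half-braiding on $x\otimes_L y$ inherited from its left $L$-action, which descends from $\mathrm{l}_{x,\gamma}\otimes\id_y$ and, by Lemma~\ref{lem:inducedhb} together with~\eqref{eq:Laghb}, yields the first presentation of the annular half-braiding in~\eqref{eq:annularhb}; the right $L$-action symmetrically yields the second. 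The agreement of these two half-braidings after passing to the coequalizer is precisely the coequalizing relation, which is exactly what Definition~\ref{def:annular} encodes as ``coequalizing the two half-braidings.'' Hence the $L$-relative tensor product, transported along the identification of the first part, is the annular tensor product on $\cZ(\cB)$.

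The conceptual structure is straightforward: everything is forced by the universal property of $L$ and the uniqueness statements of Lemmas~\ref{lem:inducedhb}--\ref{lem:hbvsBmodbraiding}. The main obstacle is purely diagrammatic bookkeeping: the strand crossings and placements of the half-braiding squares in~\eqref{eq:Laghb}, \eqref{eq:LLbimod}, and~\eqref{eq:annularhb} must be tracked carefully to confirm that the compositions collapse correctly under the zig-zag and dinaturality identities, with the coend variables being eliminated precisely where expected.
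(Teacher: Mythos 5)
Your proof is correct and takes essentially the same approach as the paper, which proves the statement in three sentences by observing (analogously to Lemma~\ref{lem:lagworks}) that an $L$--$L$ bimodule structure on $x$ is equivalent to a pair of commuting half-braidings $\gamma,\zeta$, with half-braidedness precisely saying $\gamma=\zeta$. One small citation slip: in your essential-surjectivity argument, the fact that the right $L$-action on $m$ is uniquely recovered from the induced half-braiding is the right-module analogue of Lemma~\ref{lem:lagworks} (i.e.\ the equivalence between right $L$-modules and $\cZ(\cB)$), rather than Lemma~\ref{lem:hbvsBmodbraiding}, which concerns half-braidings on the algebra $A$ itself versus module braidings on $\Modl[\cB]{A}$; the conceptual point you are making is nonetheless the correct one.
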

In particular, a (separable) half-braided algebra in the sense of Definition~\ref{defn.HBA} is precisely a separable algebra object in the monoidal category $(\cZ(\cB), \otimes_{\mathrm{ann}})$, yielding another proof of Corollary~\ref{cor.Lgenerates} that $\Sigma (\cZ(\cB), \otimes_{\mathrm{ann}}) \cong \sHBA(\cB)$.

\begin{proof}Analogous to Lemma~\ref{lem:lagworks}, the data of an $L$--$L$ bimodule structure on an object $x$ of $\cB$ is equivalent to the data of two commuting
 half-braidings $\gamma, \zeta$. The bimodule induced by such a $(x, \gamma, \zeta)$ is half-braided in the sense of Definition~\ref{def:hbbimod} precisely when the two half-braidings $\gamma$ and $\zeta$ agree. The induced tensor product is the relative tensor product over $L$ which agrees with the annular tensor product of Definition~\ref{def:annular}. 
\end{proof}

\begin{remark} \label{rem:quantummomentmap} Half-braided algebras $(A, \gamma)$ are algebra objects in $(\cZ(\cB), \otimes_{\mathrm{ann}})$. 
As in the proof of Proposition~\ref{prop:annular}, identify $\cZ(\cB)\cong \Modl[\cB]{L}$ and embed $\Modl[\cB]{L} \hookrightarrow \operatorname{Bim}(L)$ as those $L$-bimodules for which (the half braidings that encode) the left and right $L$-actions agree.
It follows that algebra objects in $(\cZ(\cB), \otimes_{\mathrm{ann}})$ can be identified with algebra objects in $\operatorname{Bim}(L)$, or equivalently, algebra objects $A$ in $\cB$ equipped with an algebra homomorphism $L \to A$, which fulfill the additional compatibility condition from~\cite[Definition 3.1]{Safronov} (which ensures that the induced $L$--$L$ bimodule arises from an $L$-module). 
Hence, the data of a compatible half-braiding $\gamma$ on an algebra $A$ in $\cB$ is equivalent to the data of a \emph{quantum moment map} on $A$ in the sense of~\cite[Definition 3.1]{Safronov}. (The original definition of quantum moment map from~\cite{MR3874702} is missing the compatibility condition, cf.~\cite[Remark~3.5]{Safronov}.) \end{remark}

\begin{remark} As $L$ is a separable algebra, the annular tensor product on $\cZ(\cB)$ may also be described by splitting certain idempotents. 
Explicitly, given objects $(x,\gamma), (y,\zeta) \in \cZ(\cB)$, their annular tensor product $(x, \gamma) \otimes_{\mathrm{ann}} (y, \zeta)$ may be obtained by splitting the following idempotent on $x\otimes y$: 
\begin{equation}\label{eq:idempotent}
\sum_{b \in \pi_0 \cB} ~ \frac{d_-\!(b)}{\mathcal{D}}
\begin{tz}[std, scale=1.5]
\draw[string] (0,0) to (0,0.5);
\draw[hb, arrow data={0.2}{>}] (0,1) to [out=45, in=45, looseness=3.5] (1,1);
\draw[braid, hb] (1,1) to [out=-135, in=-135, looseness=3.5] node [pos=0.15] (A){}(0,1);
\draw[string] (1,0) to (1, 1.5);
\draw[braid] (1,1.5) to (1,2);
\draw[string] (0,0.5) to (0,2);
\node[smalldot] at (0,1){};
\node[smalldot] at (1,1){};
\node[dot] at (A){};
\node[label, below right] at (A) {$\alpha_b$};
\node[label, above left] at (0,1) {$\gamma_b$};
\node[label,below right] at (1,1) {$\zeta_{b^*}$};
\node[label,below] at (0,0) {$x\vphantom{yf}$};
\node[label,below] at (1,0) {$y\vphantom{f}$};
\end{tz}\end{equation}
Here, the sum is over a set of representatives of the isomorphism classes of simple objects of $\cB$, and we have chosen right and left duals and an isomorphism $\alpha_b: {}^*b \to b^*$ for every one of them. The scalar factor $d_-\!(b) = \ev_{{}^*b} \cdot (\alpha_b^{-1} \otimes \id_b) \cdot \coev_b$ is as defined in equation~\eqref{eqn:dimensions} in \S\ref{sec:Smatrix} below, and $\cD:= \sum_{b \in \pi_0 \cB} d_{-}(b) d_+(b)$ is the \emph{global dimension} of $\cB$ (which is itself independent of these choices). The idempotent~\eqref{eq:idempotent} is independent of all these choices.

Versions of the annular tensor product for ribbon braided fusion categories have appeared in~\cite{MR4074586} and~\cite{2004.09611} under the name ``reduced tensor product.''
\end{remark}

\begin{remark}  \label{rem.ann}
The multifusion $1$-category $(\cZ(\cB), \otimes_{\mathrm{ann}})$ is itself the (1-categorical) idempotent completion of the \emph{annular category} $\mathrm{Ann}(\cB)$ whose object are the objects of $\cB$, with $\Hom_{\mathrm{Ann}(\cB)}(x, y) = \int^{b \in \cB} \Hom_{\cB}(b \otimes x,x \otimes b^{**})$, and with monoidal structure directly induced by $\cB$ (defining the tensor product of morphisms in $\mathrm{Ann}(\cB)$ requires the braiding of $\cB$). The inclusion $\mathrm{Ann}(\cB) \to \cZ(\cB)$ maps an object $x$ to the object $\int^b b\otimes x \otimes b^*$ with half-braiding defined analogously to~\eqref{eq:Laghb}, and conversely, any object $(y, \gamma) \in \cZ(\cB)$ gives rise to an idempotent in $\mathrm{Ann}(\cB)$ on $y$ seen as an object of $\mathrm{Ann}(\cB)$. 

In other words, the sequence of $2$-functors $\rB\mathrm{Ann}(\cB) \to \rB\cZ(\cB) \to \cZ(\Sigma \cB)$ may be understood as successively completing with respect to ordinary idempotents and then $2$-idempotents, and the whole underlying $2$-category of $\cZ(\Sigma \cB)$ can be recovered from $\mathrm{Ann}(\cB)$. 
See~\cite{2004.09611} for a detailed construction (in the ribbon setting) of the equivalence between $(\cZ(\cB),\otimes_{\mathrm{ann}})$ and the Karoubi completion of $\mathrm{Ann}(\cB)$.

More geometrically, the annular category $\mathrm{Ann}(\cB)$ is equivalent to the \emph{skein category} $\int_{S^1_b} \cB$ of $\cB$, whose objects are appropriately framed $\cB$-labelled points on a framed annulus $S^1 \times I$  and whose morphisms are $\cB$-string diagrams in $S^1 \times I \times I$. From this perspective, our monoidal structure $\otimes_{\mathrm{ann}}$ comes from ``stacking'' annuli, hence the name. In the oriented/ribbon case, these various perspectives on $(\cZ(\cB), \otimes_{\mathrm{ann}})$ are discussed in detail in~\cite{2004.09611}. (More formally, $\mathrm{Ann}(\cB)$ is the factorization homology of the $E_2$-category $\cB$ on $S_b^1$ with its bounding $2$-framing, and the monoidal structure $\otimes_{\mathrm{ann}}$ is the remaining $E_1$-structure induced by the $E_2$-structure of $\cB$.)
\end{remark}

\subsection{Components of \texorpdfstring{$\cZ(\Sigma \cB)$}{Z(Sigma B)}}\label{subsec.compZSB}
In this section, we prove the following theorem. 

\begin{theorem} \label{thm:componentcount}
Let $\cB$ be a braided fusion $1$-category. Then, the number of components of $\cZ(\Sigma \cB)$ agrees with the number of simple transparent objects in $\cB$, i.e.  $|\pi_0 \cZ(\Sigma \cB)| = | \pi_0 \cZ_{2}(\cB)|$. 
\end{theorem}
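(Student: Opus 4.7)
The plan is to reduce the counting problem to a question about the multifusion $1$-category $(\cZ(\cB), \otimes_{\mathrm{ann}})$. By Corollary~\ref{cor.Lgenerates}, the Lagrangian $\cL$ is a generator of $\cZ(\Sigma\cB)$, and Proposition~\ref{prop:annular} identifies $\End_{\cZ(\Sigma\cB)}(\cL)$ with $(\cZ(\cB), \otimes_{\mathrm{ann}})$. Combining these through condition (\ref{condition:2functor}) of Definition~\ref{defn:generator} yields an equivalence $\cZ(\Sigma\cB) \simeq \Sigma (\cZ(\cB), \otimes_{\mathrm{ann}})$, so $|\pi_0 \cZ(\Sigma\cB)|$ equals the number of indecomposable multifusion summands of $(\cZ(\cB), \otimes_{\mathrm{ann}})$, equivalently the number of connected components of the ``block graph'' on the simple summands of the unit object $L \in \cZ(\cB)$. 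By the adjunction with the forgetful functor $F: \cZ(\cB) \to \cB$, these simple summands are in bijection with $\pi_0 \cB$, since $L \cong R(I_\cB)$ and $\operatorname{Hom}_{\cZ(\cB)}(-, L) \cong \operatorname{Hom}_\cB(F(-), I_\cB)$.

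Next, I would identify the block structure with $\pi_0 \cZ_2(\cB)$ via central characters. First, observe that $\cZ_2(\cB)$ embeds into $(\cZ(\cB), \otimes_{\mathrm{ann}})$ as a central symmetric subcategory: each transparent object $e$ carries the trivial half-braiding, and transparency forces the two half-braidings coequalised in~\eqref{eq:annularhb} to agree, so the ordinary and annular tensor products coincide on the image and the inclusion is central. Since $\cZ_2(\cB)$ is a symmetric fusion category over our algebraically closed characteristic-zero field $\bk$, Tannakian reconstruction gives $\cZ_2(\cB) \simeq \operatorname{Rep}(G, z)$, and $K_0(\cZ_2(\cB)) \otimes_\ZZ \bk \simeq \bk^{|\pi_0 \cZ_2(\cB)|}$ is a semisimple commutative algebra whose $|\pi_0 \cZ_2(\cB)|$ primitive idempotents (in bijection with ring homomorphisms $K_0(\cZ_2(\cB)) \to \bk$, equivalently with simple objects of $\cZ_2(\cB)$, equivalently with conjugacy classes of $G$) lift to central idempotents of $(\cZ(\cB), \otimes_{\mathrm{ann}})$. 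These give a decomposition into at least $|\pi_0 \cZ_2(\cB)|$ blocks, proving the inequality $|\pi_0 \cZ(\Sigma\cB)| \geq |\pi_0 \cZ_2(\cB)|$.

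The main obstacle is the matching upper bound: showing that the idempotents lifted from $\cZ_2(\cB)$ are primitive (i.e., cannot be refined) in $(\cZ(\cB), \otimes_{\mathrm{ann}})$, or equivalently that every pair of simple summands of $L$ sharing a ``central character'' over $K_0(\cZ_2(\cB))$ is connected through the annular tensor product. The plan here is to analyse the annular product of simple summands $L_{b_1}, L_{b_2}$ of $L$ using the explicit idempotent presentation~\eqref{eq:idempotent}: the vanishing of $L_{b_1} \otimes_{\mathrm{ann}} z \otimes_{\mathrm{ann}} L_{b_2}$ for all $z$ can be read off as the vanishing of a character pairing between $b_1$ and $b_2$ against all of $\cZ_2(\cB)$. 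The ``only if'' direction (distinct characters give distinct blocks) should be straightforward from the centrality of the $\cZ_2(\cB)$-inclusion. The ``if'' direction (equal characters give equal blocks) will require constructing an explicit nonzero annular morphism between $L_{b_1}$ and $L_{b_2}$, for which the natural candidate is a sum over simples of $\cZ_2(\cB)$ weighted by the common character. I expect the key computation will mirror the character-sum trick already used in~\eqref{eq:idempotent} and will combine neatly with a global Frobenius-Perron dimension count, $\operatorname{FPdim}(\cZ(\cB)) = \operatorname{FPdim}(\cB)^2$, to pin down the block sizes and force the desired equality.
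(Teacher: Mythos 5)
The starting point of your proposal — reducing to block-counting for the multifusion category $(\cZ(\cB),\otimes_{\mathrm{ann}})$ via Corollary~\ref{cor.Lgenerates} and Proposition~\ref{prop:annular}, and observing that $L$ is multiplicity-free with simples indexed by $\pi_0\cB$ (since $\End_{\cZ(\cB)}(L)\cong\End(\id_\cB)$ is commutative) — is sound, and the paper itself records this viewpoint in Remark~\ref{rem:blocksofmultifusion}. But from there your argument diverges from the paper's, and it has two genuine gaps.

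First, the claimed ``central symmetric embedding'' of $\cZ_2(\cB)$ into $(\cZ(\cB),\otimes_{\mathrm{ann}})$ as the subcategory of transparent objects with trivial half-braiding is not what you describe. The monoidal unit of $(\cZ(\cB),\otimes_{\mathrm{ann}})$ is $L$, not the unit $I\in\cZ(\cB)$, so a monoidal functor $\cZ_2(\cB)\to(\cZ(\cB),\otimes_{\mathrm{ann}})$ cannot send $I_{\cZ_2(\cB)}$ to $I\in\cZ(\cB)$; and even when the two half-braidings in~\eqref{eq:annularhb} agree, the coequalizer of $\id_x\otimes l_y$ and $r_x\otimes\id_y$ (a quotient of $x\otimes y$ along the $L$-action) is not $x\otimes y$. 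The structural fact you actually need is that $\Omega\cZ(\Sigma\cB)=\cZ_2(\cB)$ acts centrally on $\End(\cL)$ via $e\mapsto e\boxtimes\id_\cL$, which is a general fact about monoidal $2$-categories; but the image of $e$ under this functor is the $L$-bimodule $e\otimes L$, not $e$ itself, so your explicit description needs correction before the lower bound is established.

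Second, and more importantly, the upper bound (primitivity of the lifted idempotents) is only sketched: you say explicitly ``I expect the key computation will mirror\ldots and will combine neatly with a global Frobenius--Perron dimension count.'' This is exactly the step the paper avoids by a different route: the paper shows the block relation $\sim_2$ coincides with the relation $\sim_3$ of ``lying in the same $\cB^\rev\boxtimes_{\cZ_2(\cB)}\cB$-module summand of $\cZ(\cB)$,'' using the fact (via the annular-category idempotent completion) that every braided $\cB$-module endofunctor of $\cZ(\cB)$ is a retract of $b\otimes(-)$ for $b\in\cB^\rev$, and then cites~\cite[Corollary~3.6]{DGNO}, which counts these summands as $|\pi_0\cZ_2(\cB)|$. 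Your direct construction of an explicit annular morphism between $L_{b_1}$ and $L_{b_2}$ with matching characters, and the dimension count to pin down block sizes, would require proving essentially that DGNO result from scratch; absent that, the proof is incomplete.
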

In fact, versions of Theorem~\ref{thm:componentcount} hold for all nondegenerate braided fusion $2$-categories, and more generally for fusion $n$-categories $\cC$ with trivial centre in the sense of~\cite{2003.06663}. This owes to the existence of a nondegenerate \emph{$S$-matrix pairing} $\widetilde S: \pi_0 \Omega^k \cC \times \pi_0 \Omega^{n-k} \cC \to \bk$, which we will develop in future work~\cite{Smatrix}.
To keep our proofs in this paper elementary, we will somewhat reverse the narrative and conclude Theorem~\ref{thm:componentcount} from well-known results about braided fusion $1$-categories, and then use it in \S\ref{sec:Smatrix} to show that the $S$-matrix of $\cZ(\Sigma \cB)$ is nondegenerate.

\begin{proof}[Proof of Theorem~\ref{thm:componentcount}]
Consider the braided module category $\cZ(\cB)$ over $\cB$ induced by the braided monoidal functor $\cB \to \cZ(\cB)$ (i.e.\ the canonical Lagrangian object $\cL$ in $\cZ(\Sigma \cB)$ from Example~\ref{eg:canonicalLag}). It follows from Corollary~\ref{cor.Lgenerates} that $\cZ(\cB)$ is a generating object of $ \BrModr{\cB} \cong \cZ(\Sigma \cB)$, and hence that its direct sum decomposition into indecomposable braided $\cB$-module categories contains at least one simple object from each component of $\BrModr{\cB}$. 

Recall from \cite[Corollary 4.4]{MR3022755} that the centralizer of the symmetric subcategory $\cZ_{2}(\cB) \hookrightarrow \cB \hookrightarrow \cZ(\cB)$ in $\cZ(\cB)$ is the monoidal subcategory $\cB^\rev\boxtimes_{\cZ_{2}(\cB)}\cB \subseteq \cZ(\cB)$ induced by the mutually centralizing subcategories $\cB \hookrightarrow \cZ(\cB), b\mapsto (b, \beta_{-, b})$ and $\cB^{\rev} \hookrightarrow \cZ(\cB), b\mapsto (b, \beta^{-1}_{b,-})$ of $\cZ(\cB)$.
Define the following equivalence relations on the indecomposable $\cB$-module summands of $\cZ(\cB)$:
\begin{enumerate}
\item $\cM\sim_1 \cN$ if $\cM$ and $\cN$ are in the same component of $\BrModr{\cB}$.
\item $\cM \sim_2 \cN $ if there exists a braided $\cB$-module functor $F: \cZ(\cB) \to \cZ(\cB)$ whose $\cM \to \cN$ component does not vanish. 
\item $\cM \sim_3 \cN$ if $\cM$ and $\cN$ are in the same $\cB\boxtimes_{\cZ_{2}(\cB)} \cB^{\rev}$-module summand of $\cZ(\cB)$. 
\end{enumerate}
We claim that all these equivalence relations agree. Indeed, equivalence of $\sim_1$ and $\sim_2$ is obvious, as any non-zero braided $\cB$-module functor $\cM \to \cN$ may be extended by zero to a braided $\cB$-module functor $\cZ(\cB) \to \cZ(\cB)$. 

To see the equivalence of $\sim_2$ and $\sim_3$ note that for any object $b\in \cB^\rev \subseteq \cZ(\cB)$, tensoring $b \otimes -: \cZ(\cB) \to \cZ(\cB)$ defines a braided $\cB$-module functor. By definition, if $\cM \sim_3 \cN$, i.e. $\cM, \cN \subseteq \cZ(\cB)$ are sub-$\cB$-modules in the same $\cB^{\rev}\boxtimes_{\cZ_{2}(\cB)}\cB$-module summand of $\cZ(\cB)$, then there exists an object $b \in \cB^{\rev}$ such that $(b \otimes \cM ) \cap \cN \neq 0$. (This notation is short-hand for: for every object $m \in \cM \subseteq \cZ(\cB)$, there exists an object $n \in \cN \subseteq \cZ(\cB)$ such that $\Hom_{\cZ(\cB)}(b \otimes m, n) \neq 0$.) In particular, $b \otimes -: \cZ(\cB) \to \cZ(\cB)$ is a braided $\cB$ module functor whose $\cM \to \cN$ component is non-zero and hence $\cM \sim_{2} \cN$.

Conversely, suppose that $\cM\sim_2 \cN$, i.e. that there exists a braided $\cB$-module functor $F:\cZ(\cB) \to \cZ(\cB)$ with non-zero $\cM\to \cN$ component. 
Passing through the equivalence between braided module categories and half-braided algebras, it follows from Proposition~\ref{prop:annular} that the monoidal functor
 \[\left( \cZ(\cB), \otimes_{\mathrm{ann}} \right) \to \End_{\BrModr{\cB}}( \cZ(\cB))\hspace{1cm} (b, \gamma) \mapsto ( (b, \gamma) \otimes_{\mathrm{ann}} -)\] is an equivalence. Recall from Remark~\ref{rem.ann} that $\left( \cZ(\cB), \otimes_{\mathrm{ann}} \right)$ is the (1-categorical) idempotent completion of the annular category $\mathrm{Ann}(\cB)$. It follows that any braided $\cB$-module functor  $F:\cZ(\cB) \to \cZ(\cB)$ arises from splitting an idempotent on a braided $\cB$-module functor of the form $ b \otimes - : \cZ(\cB) \to \cZ(\cB)$ for $b \in \cB^{\rev} \subseteq \cZ(\cB)$. Hence, if there is a functor $F: \cZ(\cB) \to \cZ(\cB)$ with a non-zero $\cM \to \cN$ component, then there is a $b\in \cB^{\rev}$ such that $(b \otimes \cM)\cap \cN \neq 0 $ and hence $\cM \sim_3 \cN$. 

As $\cZ(\cB)$ is generating in $\BrModr{\cB}$, the set of equivalence classes of $\sim_1$ is the set of components $\pi_0 \BrModr{\cB}$. On the other hand, as $\cB^{\rev}\boxtimes_{\cZ_{2}(\cB)}\cB \subseteq \cZ(\cB)$ is the centralizer of $\cZ_{2}(\cB) \subseteq \cZ(\cB)$, it follows from~\cite[Corollary 3.6]{DGNO} that $\sim_3$ has precisely $|\pi_0 \cZ_{2}(\cB)|$ equivalence classes. 
\end{proof}

\begin{remark} \label{rem:blocksofmultifusion}
As $\Sigma(\cZ(\cB), \otimes_{\mathrm{ann}}) \cong \cZ(\Sigma \cB)$, there is a bijection between components of $\cZ(\Sigma \cB)$ and blocks of the multifusion category $(\cZ(\cB), \otimes_{\mathrm{ann}})$.
Therefore, Theorem~\ref{thm:componentcount} is the statement that the multifusion category $(\cZ(\cB), \otimes_{\mathrm{ann}})$ has precisely as many blocks as there are simple transparent objects in $\cB$. This generalizes~\cite[Theorem 5.3]{2004.09611} which observes that for non-degenerate $\cB$, $(\cZ(\cB), \otimes_{\mathrm{ann}})$ is equivalent to the `matrix multifusion category' $\End(\cB)$ of endo-functors of $\cB$. 
\end{remark}

\subsection{The framed $S$-matrix of \texorpdfstring{$\cZ(\Sigma \cB)$}{Z(Sigma B)}}\label{sec:Smatrix}

The $S$-matrix plays a prominent role in the theory of modular tensor categories. Defined as the evaluation of a Hopf link --- two unknotted linked circles in $3$-space ---  it defines a pairing on simple objects which records their mutual braiding statistics. In this section, we discuss a categorification of this concept. Intuitively, in an appropriately ``oriented'' (or ``ribbon'') braided fusion $2$-category $\cC$, one can define a pairing between objects $A$ of $\cC$ and endomorphisms $b$ of the tensor unit by linking a $b$-labelled circle with an $A$-labelled $2$-sphere in ambient $4$-space. When restricted to simple objects and simple endomorphisms of the tensor unit, this results in an ``$S$-matrix'' for $\cC$ which one expects to be nondegenerate (as a pairing between components of $\cC$ and simple endomorphisms of the tensor unit) if and only if $\cC$ is.

A first difficulty in carrying out this construction, is that the braided fusion $2$-categories in this paper are not equipped with any sort of ribbon structure or orientation. In the setting of braided fusion $1$-categories $\cB$, this issue is overcome by defining a \emph{framed $\widetilde{S}$-matrix}  as follows (see~\cite[Section 2.8.1]{DGNO} for more details).
For each simple object $b$ of $\cB$ choose right and left duals 
\[\ev_b: b\otimes b^* \to I, \qquad \coev_b: I \to b^* \otimes b, \qquad \ev_{{}^*b}: {}^*b \otimes b \to I, \qquad \coev_{{}^*b}: I \to b \otimes {{}^*b},
\]
depicted as right- and left- turning cups and caps as in Remark~\ref{conv:catno}.
Semisimplicity of $\cB$, together with simplicity of the unit object $I$, guarantees that there exists an isomorphism $\alpha_b:{}^*b\to b^*$ and moreover that for any choice of such an isomorphism $\alpha_b$ the following scalars $d_+\!(b)$ and $d_{-}(b)$ are nonzero (see e.g.~\cite[\S 2.1]{MR2183279}): 
\begin{equation}\label{eqn:dimensions}
d_+\!(b):= \begin{tz}[std]
\draw[string, arrow data={0.5}{<}] (0,0) circle (1cm);
\node[dot] at (1,0){};
\node[label, right, scale=1.5] at (1,0){$\alpha_b$};
\node[label, left, scale=1.5] at (-1,0) {$b$};
\end{tz}
\hspace{1cm}
d_-\!(b):= \begin{tz}[std]
\draw[string, arrow data={0}{>}] (0,0) circle (1cm);
\node[dot] at (-1,0){};
\node[label, left, scale=1.5] at (-1,0){$\alpha_b^{-1}$};
\node[label, right, scale=1.5] at (1,0){$b$};
\end{tz}
\end{equation}

For $b, b'$ in $\cB$, define
\begin{equation}\label{eq:framedS1cat}
\widetilde{S}_{b, b'} :=\frac{1}{d_-\!(b) \, d_+\!(b')}~~
\begin{tz}[std, yscale=-1]
\draw[string, arrow data={1}{<}] (0.5,0) to [out=up, in=down] (0,1); 
\draw[braid = main, arrow data={0.5}{<}](0,0) to [out=up, in=down] (0.5, 1) to [out=up, in=down] (0,2);
\draw[braid=main] (0,1) to [out=up, in=down] (0.5,2);
\draw (0.5, 2) to [out=up, in=up] (1.5,2) to (1.5,0) to [out=down, in=down] (0.5,0);
\draw (0,2) to [out=up, in=up] (-1,2) to (-1,0) to [out=down, in=down] (0,0);
\node[label, scale=1.5, left] at (0,1) {$b'$};
\node[label, scale=1.5, right] at (0.5,1) {$b$};
\node[dot] at (1.5, 1) {};
\node[dot] at (-1,1) {};
\node[label, scale=1.5, right] at (1.5, 1) {$\alpha_{b'}$};
\node[label, scale=1.5, left] at (-1, 1) {$\alpha_{b}^{-1}$};
\end{tz}
\end{equation}
Note that while $d_+$ and $d_-$ both depend on the choice of duality data and isomorphism $\alpha$, these choices cancel in~\eqref{eq:framedS1cat}, and $\widetilde{S}_{b, b'}$ only depends on the isomorphism class of $b$ and $b'$.  Equivalently, without having to choose duality data for $b$,  $\widetilde{S}_{b,b'}$ may also be defined as the proportionality factor:
\[
\left \langle 
\frac{1}{d_+\!(b')}~
\begin{tz}[std, yscale=-1]
\draw[string, arrow data={1}{<}] (0.5,0) to [out=up, in=down] (0,1); 
\draw[braid = main, arrow data={0.5}{<}](0,0) to [out=up, in=down] (0.5, 1) to [out=up, in=down] (0,2);
\draw[braid=main] (0,1) to [out=up, in=down] (0.5,2);
\draw (0.5, 2) to [out=up, in=up] (1.5,2) to (1.5,0) to [out=down, in=down] (0.5,0);
\draw (0,2) to (0,2.5);
\draw (0,0) to (0,-0.5);
\node[label, scale=1.5, left] at (0,1) {$b'$};
\node[label, scale=1.5, right] at (0.5,1) {$b$};
\node[dot] at (1.5, 1) {};
\node[label, scale=1.5, right] at (1.5, 1) {$\alpha_{b'}$};
\end{tz}
\right \rangle 
\]

In the same way, we can define a framed $\widetilde{S}$-matrix for a braided fusion $2$-category $\cC$.
Given an object $A \in \cC$ and a $1$-endomorphism $b:I \to I$ in $\cC$, there are over- and under-braiding $2$-isomorphisms $\br_{A, b}: \id_A\boxtimes b \to b \boxtimes \id_A$ and $\br_{b,A}: b\boxtimes \id_A \to \id_A \boxtimes b$ of $b$ and $A$, which arise as naturality data for the braiding $1$-isomorphisms $\br_{-, A}$ and $\br_{A,-}$.
Graphically expressed as movies in $3$-space (projected to $\bR^2)$, these $2$-isomorphisms correspond to the following motion of a particle $b$ around a string $A$:
\[
\begin{tz}[std]
\draw[string] (0,0) to (0,2);
\node[dot] at (0.5,1){};
\node[label, below] at (0,0) {$A$};
\node[label, below right] at (0.5,1) {$b$};
\end{tz}
\hspace{1em}
\stackrel[\br_{A,b}]{
\begin{tz}[std]
\clip (-0.75,-0.25) rectangle (0.75, 2);
\node[dot] at (0.5,1){};
\draw[string] (0,0) to (0,2);
\draw[braid=main,-{Triangle[width=3pt,length=3pt]},gray, line width=1.5pt, shorten <= 0.15cm, shorten >= 0.05cm] (0.5, 1) to [out=-135, in=-45, looseness=1.] (-0.5,1);
\end{tz}
}{\Longrightarrow}
\hspace{1em}
\begin{tz}[std]
\draw[string] (0,0) to (0,2);
\node[dot] at (-0.5,1){};
\node[label, below] at (0,0) {$A$};
\node[label, below left] at (-0.5,1) {$b$};
\end{tz}
\hspace{1ex}
\qquad\qquad\qquad
\begin{tz}[std]
\draw[string] (0,0) to (0,2);
\node[dot] at (-0.5,1){};
\node[label, below] at (0,0) {$A$};
\node[label, below left] at (-0.5,1) {$b$};
\end{tz}
\hspace{1em}
\stackrel[\br_{b,A}]{
\begin{tz}[std]
\clip (-0.75,-0.25) rectangle (0.75, 2);
\node[dot] at (-0.5,1){};
\draw[string,-{Triangle[width=3pt,length=3pt]},gray, line width=1.5pt,shorten <= 0.15cm, shorten >= 0.05cm] (-0.5, 1) to [out=45, in=135, looseness=1.] (0.5,1);
\draw[braid=main] (0,0) to (0,2);
\end{tz}
}{\Longrightarrow}
\hspace{1em}
\begin{tz}[std]
\draw[string] (0,0) to (0,2);
\node[dot] at (0.5,1){};
\node[label, below] at (0,0) {$A$};
\node[label, below right] at (0.5,1) {$b$};
\end{tz}
\hspace{1ex}
,
\]
As diagrams in 4-dimensional ``spacetime'' (projected to $\bR^3$), we depict these $2$-morphisms as follows, respectively (where $A$ now appears as a surface and $b$ as a line):
\[
    \begin{tz}[td]
    \draw[string, on layer=back] (2,2,-0.75) to node[circ mask point, pos=1](A){} (1.5, 1, 1.5);
  \begin{scope}[xzplane=1.]
\circcliparoundone{A}{ \path[surface] (0,0) rectangle (3,3);}
 \end{scope}
 \draw[string, on layer=front] (1.5, 1, 1.5) to   (1.,0, 3.75);
\node[label, below] at (2,2,-0.75) {$b$};
\node[label, right] at (1,0,0){$A$};
 \end{tz}
 \hspace{2cm}
   \begin{tz}[td]
\path (0.75, 0,0) to node[box mask point, pos=0.5] (A){} (2.25, 2,3); 
  \begin{scope}[xzplane=1.]
\path[surface] (0,0) rectangle (3,3);
 \end{scope}
  \boxcliparoundone{A}{\draw[string,] (0.75,0,0) to (1.5, 1, 1.5);}
\boxcliparoundone[back]{A}{ \draw[string, on layer =back] (1.5, 1, 1.5) to  (2.25,2, 3);}
\boxclipshadow{A}
\node[label, below] at (0.75, 0,0) {$b$};
\node[label, right] at (3,0,0){$A$};
 \end{tz}
\]
The \define{full braid} $\id_A \boxtimes b \To \id_A \boxtimes b$  is the composite $\br_{b, A} \cdot \br_{A, b}$.
Assume now that $b$ is simple, and choose duality data for $b$ and a $2$-isomorphism $\alpha_b: {{}^*b} \To b^*$. 
Then we may ``trace out'' $b$, 
 defining the following $2$-endomorphism  $\widetilde{R}_{A, b}: \id_A \To \id_A$ (as usual omitting coherence isomorphisms):
 \begin{equation*}
 d_+\!(b)^{-1} \left( \vphantom{\frac{a}{b}}(\id_A \boxtimes \ev_b) \cdot\left((\br_{b, A} \cdot \br_{A, b} )\boxtimes\alpha_b\right) \cdot (\id_A \boxtimes \coev_{*b})) \right) ,
 \end{equation*}
 or graphically: 
\begin{equation}\label{eq:R}
\widetilde{R}_{A, b} = 
d_+\!(b)^{-1} 
 \begin{tz}[td]
 \clip (-0.25,0,0.15) rectangle (3.5,2,3);
    \path[ on layer=back] (2.,2,0) to node[circ mask point, pos=1](A){} (1.5, 1, 1.5);
    \path (1.5, 1,2.5) to node[box mask point, pos=0.] (B){} (2, 2,3); 
  \begin{scope}[xzplane=1.]
\circcliparoundone{A}{ \path[surface] (-0.25,0) rectangle (3.,3.2);}
 \end{scope}
\boxcliparoundone[front]{B}{ \draw[string, on layer=front, arrow data={0.5}{>}] (1.5, 1, 1.5) to [out=135, in=-135] (1.5, 1, 2.5) ;}
\boxcliparoundone[back]{B}{ \draw[string, on layer=back, arrow data={0.4}{>}] (1.5, 1, 2.5) to [out=45, in=up, looseness=2.5] (3.5,1,2) to [out=down, in=-45, looseness=2.5] (1.5, 1, 1.5);}
\boxclipshadow{B}
\node[dot] at (3.5, 1,2 ){};
\node[label, right] at (3.5, 1,2) {$~\alpha_b$}; \end{tz}
\end{equation}
As before, the normalization factor $d_+\!(b)^{-1}$ ensures that this endomorphism is independent of the choice of duality data for $b$ and the isomorphism $\alpha_b :{}^*b \to b^*$. 

\begin{example}\label{ex:Rhba} Given a half-braided algebra $(A,\gamma)$ in a braided fusion $1$-category $\cB$ and a transparent simple object $b \in \cZ_{2}(\cB) = \Omega \cZ(\Sigma \cB)$, and using the braided monoidal structure on $\sHBA(\cB)$ described in \eqref{eq:tensor} and Lemma~\ref{lem:algebraiso}, the bimodule endomorphism $\widetilde{R}_{(A,\gamma), b} : {}_{A}A_{A} \To {}_{A}A_{A}$ unpacks to 
\begin{equation}\label{eq:Rhba}
d_+\!(b)^{-1}~ \left(\vphantom{\frac{a}{b}}(\id_A \otimes \ev_b) \cdot ((\gamma_{b,A} \cdot \br_{A,b}) \otimes \alpha_b) \cdot (\id_A \otimes \coev_{{}^*b})\right) ~= ~ d_+\!(b)^{-1}~
\begin{tz}[std]
\clip (-0.75, -0.5) rectangle (1.75, 2.5);
\draw[string] (0,0) to (0,2);
\draw[braid, ] (1,1) to [out=down, in=-45, looseness=2] (0, 0.5) to [out=135, in=down] (-0.4, 1);
\draw[string, , arrow data={0.01}{>}, arrow data={0.8}{>} ] (-0.4,1)  to [out=up, in =-135] (0, 1.5)  to [out=45,  in=up, looseness=2] (1, 1);
\node[smalldot] at (0,1.5) {};
\node[label,left] at (-0.5,0.9) {$b$};
\node[label,below] at (0,-0.) {$A$};
\node[dot] at (1,1){};
\node[label, right] at (1,1) {$~\alpha_b$};
\end{tz}.
\end{equation}
\end{example}

\begin{example} Analogous to Example~\ref{ex:Rhba}, given a braided $\cB$-module category $\cM_{\cB} \in \BrModr{\cB}$ and a transparent simple $b\in \cZ_{2}(\cB)$, the coefficients $(\widetilde{R}_{\cM, b})_m :m \to m$ of the $\cB$-module natural endomorphism $\widetilde{R}_{\cM, b}: \id_\cM \To \id_{\cM}$ are given as follows:
 \begin{multline}
 \label{eq:Rbrmod}
  d_+\!(b)^{-1}
   \left( m \cong m * I \to[m *  \coev_{{{}^*b}} ] m* (b\otimes {{}^*b})   \right.  
   \\
   \cong (m * b )* {{}^*b} \to[\sigma_{m, b} * \alpha_b]
   (m * b )* {b^*}
   \\
    \left. \cong m* (b\otimes {b^*}) 
   \to[m * \ev_{b} ] m*I \cong m   \right).
 \end{multline}
\end{example}

If the object $A$ in~\eqref{eq:R} is also simple, then $\widetilde{R}_{A,b}$, being an endo-$2$-morphism of the simple 1-morphism $\id_A$, is proportional to the identity, and we define $\widetilde{S}_{A,b} := \langle \widetilde{R}_{A,b} \rangle \in \bk$ to be the proportionality factor (so that $\widetilde{R}_{A,b} = \widetilde{S}_{A,b} \, \id_A$; compare Remark~\ref{notation:langlerangle}).

 \begin{lemma}\label{lemma.Sonlycomp}
 $\widetilde{S}_{A,b}$ only depends on the component of $A$ and descends to a function 
 \[\widetilde{S}:  \pi_0 \cC \times \pi_0 \Omega \cC   \to \bk
 \]
 which we call the \emph{framed $\widetilde{S}$-matrix} of $\cC$. \qed
 \end{lemma}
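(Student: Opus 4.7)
The plan has two parts: (i) that $\widetilde{S}_{A,b}$ depends on $b$ only through its isomorphism class in the braided fusion $1$-category $\Omega\cC$, and (ii) that $\widetilde{S}_{A,b} = \widetilde{S}_{A',b}$ whenever $A$ and $A'$ lie in the same component of $\cC$. Combined with the normalization independence already guaranteed by the $d_+\!(b)^{-1}$ prefactor in~\eqref{eq:R}, parts (i) and (ii) will give the descent to $\pi_0\cC \times \pi_0\Omega\cC$.

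For (i), I would argue as follows. Given an isomorphism $\phi \colon b \to b'$ in $\Omega\cC$, transport the chosen duality data and the chosen isomorphism $\alpha_b\colon{}^*b \to b^*$ across $\phi$ to obtain corresponding data for $b'$. Substituting into formula~\eqref{eq:R} shows that the resulting $\widetilde{R}_{A,b'}$ equals $\widetilde{R}_{A,b}$ once the two copies of $\phi$ inserted on either side of the trace cancel. Coupled with independence of the chosen duality data, this settles (i).

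For (ii), the $2$-categorical Schur's Lemma (Theorem~\ref{thm.Schur}) reduces the task to showing that $\widetilde{S}_{A,b} = \widetilde{S}_{A',b}$ for any nonzero $1$-morphism $f\colon A \to A'$ between simple objects. Simplicity of $\id_A$ and $\id_{A'}$ gives $\widetilde{R}_{A,b} = \widetilde{S}_{A,b}\cdot\id_{\id_A}$ and $\widetilde{R}_{A',b} = \widetilde{S}_{A',b}\cdot\id_{\id_{A'}}$, so the whiskerings satisfy $f\cdot\widetilde{R}_{A,b} = \widetilde{S}_{A,b}\cdot \id_f$ and $\widetilde{R}_{A',b}\cdot f = \widetilde{S}_{A',b}\cdot \id_f$. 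Since $f \neq 0$ forces $\id_f \neq 0$, it suffices to prove the equality
\[ f \cdot \widetilde{R}_{A,b} \;=\; \widetilde{R}_{A',b} \cdot f \]
of $2$-endomorphisms of the $1$-morphism $f$.

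To establish this displayed equality, the plan is to invoke the pseudonaturality of the braiding with respect to the $1$-morphism $f$: the coherence data of $\cC$ supplies invertible $2$-morphisms
\[ (b\boxtimes f)\cdot \br_{A,b} \;\cong\; \br_{A',b}\cdot (f\boxtimes b), \qquad \br_{b,A'}\cdot (b\boxtimes f) \;\cong\; (f\boxtimes b)\cdot \br_{b,A}, \]
which let one ``slide $f$ past the linked $b$-loop'' in the spacetime diagram defining $\widetilde{R}$. Since the duality data and $\alpha_b$ for $b$ are chosen independently of $A$ and $A'$, they pass through the slide without modification. The main obstacle is bookkeeping: one must assemble these $2$-isomorphisms into a closed chain around the trace loop in~\eqref{eq:R} and verify that the accumulated coherence data cancels. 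This is a diagram chase in the coherence theory of braided monoidal bicategories --- routine in spirit but the only genuinely computational step in the proof.
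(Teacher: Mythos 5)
Your proof is correct and is essentially the paper's own argument. The paper's entire proof is the single equality you reduce to — namely that whiskering $\widetilde R_{A,b}$ by $f$ on one side equals whiskering $\widetilde R_{A',b}$ by $f$ on the other, expressed graphically as sliding the surface-bound $1$-morphism $f$ past the linked $b$-loop via naturality of the braiding; your preliminary reductions (transporting duality data across an iso $\phi\colon b\to b'$ for part (i), and using $2$-categorical Schur plus $\id_f\neq 0$ for simple $A,A'$ in part (ii)) make explicit what the paper treats as immediate.
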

 \begin{proof}
  This is a direct consequence of the fact that for any $1$-morphism $f:A\to B$ and any $1$-morphism $b:I \to I$, naturality of the braiding implies that 
 \[
    \begin{tz}[td]
\path (0.75, 0,0) to node[box mask point, pos=0.5] (A){} (2.25, 2,3); 
  \begin{scope}[xzplane=1.]
\path[surface] (-0.,0) rectangle (3.,3);
 \end{scope}
 \draw[1mor] (2,1, 0) to node[mask point, pos=0.73](X){} (2,1,3);
  \boxcliparoundone{A}{\draw[string,] (0.75,0,0) to (1.5, 1, 1.5);}
\boxcliparoundone[back]{A}{ \cliparoundone[back]{X}{\draw[string, on layer =back] (1.5, 1, 1.5) to  (2.25,2, 3);}}
\boxclipshadow{A}
\node[label, below] at (0.75, 0,0) {$b$};
\node[label, below] at (2,1,0){$f$};
 \end{tz}
 \qquad
\hspace{0.6cm}=\hspace{1.3cm}
     \begin{tz}[td]
\path (0.75, 0,0) to node[box mask point, pos=0.5] (A){} (2.25, 2,3); 
  \begin{scope}[xzplane=1.]
\path[surface] (-0.,0) rectangle (3.,3);
 \end{scope}
  \boxcliparoundone{A}{\draw[string] (0.75,0,0) to node[mask point, pos=0.63](X){} (1.5, 1, 1.5);}
\boxcliparoundone[back]{A}{ \cliparoundone[back]{X}{\draw[string, on layer =back] (1.5, 1, 1.5) to  (2.25,2, 3);}}
\boxclipshadow{A}
 \cliparoundone{X}{\draw[1mor] (1,1, 0) to  (1,1,3);}
\node[label, below] at (0.75, 0,0) {$b$};
\node[label, below] at (1,1,0){$f$};
 \end{tz}~~~~~.\qedhere
 \]
 \end{proof}

It turns out that $\cC$ is a nondegenerate braided fusion $2$-category if and only if this $\widetilde{S}$-matrix is invertible \cite{Smatrix}. 
As a special case, we will prove:
 
 \begin{theorem} \label{thm:invertibleSmatrix}The framed $\widetilde{S}$-matrix $\widetilde{S}: \pi_0 \cZ(\Sigma \cB) \times  \pi_0 \Omega \cZ(\Sigma \cB)  \to \bk$  of $\cZ(\Sigma \cB)$ is invertible. \end{theorem}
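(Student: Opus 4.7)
My plan is to realize the framed $\widetilde{S}$-matrix as the character table of the commutative semisimple $\bk$-algebra $R := K_0(\cZ_{2}(\cB)) \otimes_\bZ \bk$, and then conclude invertibility from the standard orthogonality of characters. To start, note that both axes of $\widetilde{S}$ have the same cardinality $N := |\pi_0 \cZ_{2}(\cB)|$: the left by Theorem~\ref{thm:componentcount}, the right by Lemma~\ref{lemma.centres}. In characteristic zero, the Grothendieck ring of any symmetric fusion category is commutative and becomes semisimple after tensoring with $\bk$ (by Deligne's theorem applied to $\cZ_{2}(\cB)$), so $R \cong \bk^N$ as a $\bk$-algebra and $|\Hom_{\bk\mathrm{-alg}}(R, \bk)| = N$.

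The first step will be to show that for each simple $A \in \cZ(\Sigma\cB)$, the assignment $b \mapsto \widetilde{S}_{A,b}$ extends $\bk$-linearly to a ring homomorphism $\chi_{[A]} \colon R \to \bk$ depending only on the component $[A]$. Additivity is immediate from semisimplicity, and independence from the chosen simple representative is Lemma~\ref{lemma.Sonlycomp}. Multiplicativity uses transparency: for $b, b' \in \cZ_{2}(\cB)$, the full braid of $b \otimes b'$ past $A$ factors as full braids of $b$ and $b'$ separately, and these can be partially traced independently because $b$ and $b'$ centralize each other and because $\Omega^2_A \cZ(\Sigma\cB) = \bk$ is commutative; after normalizing by $d_+$ this yields $\widetilde{R}_{A, b \otimes b'} = \widetilde{R}_{A, b} \cdot \widetilde{R}_{A, b'}$, which upon taking scalars gives $\chi_{[A]}(b \otimes b') = \chi_{[A]}(b)\,\chi_{[A]}(b')$. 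That $\chi_{[A]}(I) = 1$ is built into the normalization, since $\widetilde{R}_{A, I} = \id_{\id_A}$.

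The second and most delicate step will be to prove injectivity of $\chi \colon \pi_0 \cZ(\Sigma\cB) \hookrightarrow \Hom_{\bk\mathrm{-alg}}(R, \bk)$. By Corollary~\ref{cor.Lgenerates}, every component of $\cZ(\Sigma\cB)$ contains an indecomposable $\cB$-module summand $\cM$ of the canonical Lagrangian object $\cZ(\cB) \in \BrModr{\cB}$, so it suffices to distinguish components via such summands. Formula~\eqref{eq:Rbrmod} rewrites $d_+(b)\,\widetilde{S}_{\cM, b}$ as the categorical trace of the module braiding $\sigma_{m, b} = \beta^{\cZ(\cB)}_{b,m}\circ \beta^{\cZ(\cB)}_{m,b}$ on any simple $m \in \cM$. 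On the other hand, the proof of Theorem~\ref{thm:componentcount} matches components of $\cZ(\Sigma\cB)$ with $\cB^{\rev} \boxtimes_{\cZ_{2}(\cB)} \cB$-module summands of $\cZ(\cB)$, which by~\cite[Corollary 3.6]{DGNO} are parameterized by $\pi_0 \cZ_{2}(\cB)$ via a ``central character'' recording how simple transparent objects act through the full braiding. The plan is to identify $\chi_{[\cM]}$ with this central character by a direct comparison: both invariants literally measure the trace of the full braiding of $b \in \cZ_{2}(\cB)$ with a simple object $m$ of the summand $\cM$, so unpacking the two descriptions should match them up.

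Combining the three steps, $\chi$ will be a bijection and $\widetilde{S}$ the complete character table of $R$, invertible by the Wedderburn decomposition $R \cong \bk^N$. The main obstacle is the comparison in the second step, namely converting the partial-trace formula~\eqref{eq:Rbrmod} into the \cite{DGNO} central character; this requires a hands-on calculation with the braiding on $\cZ(\cB)$ restricted to $\cZ_{2}(\cB) \hookrightarrow \cB \hookrightarrow \cZ(\cB)$ and a careful bookkeeping of dimension factors, but no new ideas beyond those already developed in \S\ref{subsec.halfbraidedalgs}--\ref{subsec.compZSB}.
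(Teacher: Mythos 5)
Your overall strategy --- reduce to the canonical Lagrangian object $\cZ(\cB)$ via Corollary~\ref{cor.Lgenerates}, descend to the $1$-categorical framed $\widetilde S$-matrix of $\cZ(\cB)$, and then invoke DGNO --- is close in spirit to the paper's actual proof. But the packaging as a character-table argument has a genuine gap in your first step.

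The assertion $\widetilde{R}_{A, b \otimes b'} = \widetilde{R}_{A, b} \cdot \widetilde{R}_{A, b'}$ is only available for \emph{invertible} $b,b'$ (that is Remark~\ref{rem:Rinvertible}), and the framed normalization by $d_+$ is precisely what breaks the claim for non-invertible simples. Even when $b\otimes b'$ is simple, your ``factor the full braid and trace independently'' step produces the \emph{un}normalized trace, which is multiplicative; after dividing by $d_+(b\otimes b')$ versus $d_+(b)\,d_+(b')$ you get different answers. And when $b\otimes b'$ is not simple, $\widetilde{R}_{A,b\otimes b'}$ is not even defined by the formula in~\eqref{eq:R}; extending by additivity over simple summands you would need $\widetilde{S}_{A,b}\,\widetilde{S}_{A,b'} = \sum_i N^{bb'}_i \,\widetilde{S}_{A,b_i}$, but the actual multiplicative object is $S_{A,-} = \dim(-)\,\widetilde{S}_{A,-}$ for a chosen ribbon structure, which satisfies $\dim(b)\,\dim(b')\,\widetilde{S}_{A,b}\,\widetilde{S}_{A,b'} = \sum_i N^{bb'}_i \dim(b_i)\,\widetilde{S}_{A,b_i}$ --- a different relation unless $\cZ_2(\cB)$ is pointed. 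This is exactly the content of the remark in the paper immediately after the theorem, which presents the ring homomorphism $S_{A,-} : K_0(\cZ_2(\cB)) \to \bk$ as depending on a choice of ribbon structure, and derives the bijection with $\Spec_\bk(K_0(\cZ_2(\cB)))$ \emph{from} the theorem rather than as a route \emph{to} it. Your argument could be repaired by fixing a ribbon structure on $\cZ_2(\cB)$ (one always exists, since $\cZ_2(\cB)$ is Tannakian or super-Tannakian), carrying out the character-table argument for $S$, and then observing that $\widetilde{S}$ differs from $S$ only by multiplying columns by the nonzero scalars $\dim(b)$; but as written, your Step 1 asserts a false multiplicativity.

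Separately, your injectivity step is a plan rather than a proof; the matching of $\chi_{[\cM]}$ with DGNO's central character of a $\cB^\rev \boxtimes_{\cZ_2(\cB)} \cB$-module summand is plausible but would need the same unpacking. The paper avoids both difficulties with a rank argument: by Theorem~\ref{thm:componentcount} the matrix is square, so it suffices to show its rank equals $|\pi_0\cZ_2(\cB)|$; one checks that the module $\widetilde S$-matrix of $\cZ(\cB)$ (which is the $1$-categorical framed $\widetilde S$-matrix of $\cZ(\cB)$ with columns restricted to $\cZ_2(\cB)$) is obtained from the true $\widetilde S$-matrix by repeating columns, and its rank is $|\pi_0\cZ_2(\cB)|$ by DGNO Theorem~3.4 and Corollary~3.6. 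That avoids any need for multiplicativity of $\widetilde S$ or ribbon structures at all.
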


 \begin{proof} It follows from Theorem~\ref{thm:componentcount} that the framed $\widetilde{S}$-matrix of $\cZ(\Sigma \cB)$ is a square matrix. Therefore, it suffices to show that it has full rank $|\pi_0 \cZ_{2}( \cB)|$. We will prove this in terms of braided module categories. 
 
For any braided $\cB$-module category $\cM_{\cB}$, we define a \emph{braided $\cB$-module S-matrix} $\widetilde{S}^{\cM}_{m, b}$ indexed by simple objects $m\in \cM$ and simple transparent objects $b\in \cZ_{2}(\cB)$ as the proportionality factor $\widetilde{S}^{\cM}_{m, b} := \langle (\widetilde{R}_{\cM,b})_m \rangle$ where $(\widetilde{R}_{\cM,b})_m: m\to m$ is defined as in~\eqref{eq:Rbrmod}. (Note that while $\widetilde{S}^{\cM}_{m, b}$ and $(\widetilde{R}_{\cM,b})_m$ can also be defined for possibly non-transparent $b \in \cB$, the $(\widetilde{R}_{\cM, b})_m$ do not assemble into a $\cB$-module transformation in that case.) As $\widetilde{R}_{\cM, b}$ is a $\cB$-module transformation it follows that if $\cM_{\cB}$ is an indecomposable $\cB$-module category, then $\widetilde{R}_{\cM, b}$ is proportional to the identity natural transformation, and hence that the coefficient $\widetilde{S}_{\cM, b}$  of the framed S matrix of $\cZ(\Sigma \cB)$ for indecomposable $\cM_{\cB}$ and simple transparent $b \in \cZ_{2}(\cB)$ can be computed in terms of the braided $\cB$-module S-matrix of $\cM_{\cB}$: 
\[\widetilde{S}_{\cM, b} = \widetilde{S}^{\cM}_{m, b} = \widetilde{S}^{\cM}_{m', b} \text{ for all simple $m,m' \in \cM$.}\]
 In fact, it follows from Lemma~\ref{lemma.Sonlycomp} that if $m \in \cM$ and $m' \in \cM'$ are simple objects in indecomposable braided $\cB$-module categories $\cM$ and $\cM'$ which are in the same connected component of $\BrModr{\cB}$, then $\widetilde{S}^\cM_{m, b} = \widetilde{S}^{\cM'}_{m', b}$. 
 
 By Corollary~\ref{cor.Lgenerates}, the braided $\cB$-module category $\cZ(\cB)$ generates $\BrModr{\cB}$ and hence decomposes into a direct sum of indecomposable $\cB$-module categories, at least one from every component of $\BrModr{\cB}$. It follows from the previous paragraph that for simple transparent $b \in \cZ_{2}(\cB)$ and simple object $m\in \cZ(\cB)$ in an indecomposable $\cB$-module summand $\cM\subseteq \cZ(\cB)$ of $\cZ(\cB)$ which belongs to a component $C$ of $\BrModr{B}$ we have that $\widetilde{S}_{C, b} = \widetilde{S}^{\cZ(\cB)}_{m, b}$. Hence, the $|\pi_0(\cZ(\cB))| \times |\pi_0(\cZ_{2}(\cB))|$ matrix $\widetilde{S}^{\cZ(\cB)}_{ m \in \cZ(\cB), b \in \cZ_{2}(\cB)}$ is obtained from the $ |\pi_0 \cZ(\Sigma \cB)| \times |\pi_0(\cZ_{2}(\cB))| $ matrix $\widetilde{S}_{ C \in \pi_0 \cZ(\Sigma\cB), b \in \cZ_{2}(\cB)}$ by repeating columns. In particular, these two matrices have the same rank.

But $\widetilde{S}^{\cZ(\cB)}_{m \in \cZ(\cB), b \in \cZ_{2}(\cB)}$ is the framed $\widetilde{S}$-matrix (as in~\eqref{eq:framedS1cat}) of the braided fusion $1$-category $\cZ(\cB)$ with columns restricted to $\cZ_{2}(\cB) \subseteq \cZ(\cB)$. As $\cZ(\cB)$ is a nondegenerate braided fusion $1$-category, it follows from~\cite[Theorem 3.4 and Corollary 3.6]{DGNO} that $\mathrm{rk}(\widetilde{S}^{\cZ(\cB)}) = | \pi_0(\cZ_{2}(\cB))|$. 
\end{proof}

\begin{remark}\label{rem:Rinvertible} If $b\in \cZ_{2}(\cB)$ is an \emph{invertible} object, then 
\[\widetilde{R}_{A, b}:= \left(\id_A \boxtimes \ev_b\right) \cdot \left(\left(\br_{b,A}\cdot \br_{A,b}\right) \boxtimes \id_b\right) \cdot \left(\id_A \boxtimes(\ev_b)^{-1}\right) :  \id_A \To \id_A
\] for any choice of right dual $\ev_b: b\otimes b^* \to I$ and hence it follows that for invertible $b, c \in \cZ_{2}(\cB)$
\[\widetilde{R}_{A, b} \cdot \widetilde{R}_{A, c} = \widetilde{R}_{A, b\otimes c}.\]
 In particular,  if every simple object in $\cZ_{2}(\cB)$ is invertible --- inducing an abelian group structure on $\pi_0 \cZ_{2}(\cB)$ --- then $\widetilde{S}_{A, -}: \pi_0 \cZ_{2}(\cB) \to \bk$ is a group homomorphism and it follows from Theorem~\ref{thm:invertibleSmatrix} that the assignment $A\mapsto \widetilde{S}_{A,-}$ defines a bijection 
 \[\pi_0 \cZ(\Sigma \cB) \to  \Hom(\pi_0 \cZ_{2}(\cB), \bk^\times)
 \] between the set of components of $\cZ(\Sigma\cB)$ and the set of group homomorphisms $\pi_0 \cZ_{2}(\cB) \to \bk^\times$. 
 Explicitly, a simple object $A \in \cZ(\Sigma \cB)$ is in the component corresponding to a group homomorphism $\phi: \pi_0 \cZ_{2}(\cB) \to \bk^\times$ if and only if the full braid of $A$ and $b$ is proportional to the identity with proportionality factor $\phi(b)$:
 \[\br_{b,A} \cdot \br_{A, b}  = \phi(b)~ \id_{\id_A \boxtimes b}: \id_{A} \boxtimes b \To \id_{A} \boxtimes b.\]
\end{remark}

\begin{remark}Generalizing Remark~\ref{rem:Rinvertible}, any choice of ribbon structure on $\cZ_{2}(\cB)$ (with possibly non-invertible objects) induces a canonical normalization $R_{A,b}: \id_A \To \id_A$ defined as in~\eqref{eq:R} with $\alpha_b: {}^*b \to b$ the canonical induced pivotal structure but \emph{without} the normalization factor $d_+\!(b)^{-1}$. In other words, $R_{A, b} = \dim(b) \widetilde{R}_{A, b}$. With this normalization, the proportionality factors $S_{A, b} = \langle R_{A, b} \rangle = \dim(b) \widetilde{S}_{A, b}$ induce, for every simple object $A\in \cZ(\Sigma \cB)$, a ring homomorphism $S_{A,-}: K_0(\cZ_{2}(\cB))\to \bk$ from the Grothendieck ring of $\cZ_{2}(\cB)$ to the ground field $\bk$. As $K_0(\cZ_{2}(\cB)) \otimes_{\bZ} \bk$ is a commutative semisimple $\bk$-algebra~\cite[Corollary 3.7.7]{EGNO} it follows from Theorem~\ref{thm:invertibleSmatrix} that this induces a bijection (dependent on the choice of ribbon structure) between the set $\pi_0\cZ(\Sigma \cB)$ of components and the set $\mathrm{Spec}_\bk(K_0(\cZ_{2}(\cB)))$ of ring homorphisms $K_0(\cZ_{2}(\cB)) \to \bk$. Under this bijection, the identity component is sent to the ring homomorphism $\dim: K_0(\cZ_{2}(\cB)) \to \bk$.
\end{remark}

\section{Proof of the Main Theorem} \label{sec.proof}
We turn now to the proof of the \MainTheorem. 
Our strategy is directed by Theorem~\ref{thm.BCs}: in order to find a minimal nondegenerate extension of a slightly degenerate braided fusion category $\cB$, it suffices to find an equivalence of braided fusion $2$-categories $\cZ(\Sigma\cB) \cong \cZ(\Sigma\,\sVec)$.

\subsection{The braided fusion $2$-categories \texorpdfstring{$\cS$}{S} and \texorpdfstring{$\cT$}{T}}\label{subsec:SandT}

Suppose that $\cB$ is a slightly degenerate braided fusion 1-category. The results in Section~\ref{sec.2cats} constrain the structure of the braided fusion 2-categories which can arise as $\cZ(\Sigma\cB)$. The goal of this section is to classify the possible equivalence classes of braided fusion 2-categories consistent with these constraints.

For this, we will need a few facts and notation about the cohomology of the Eilenberg--MacLane spaces $K(\bZ_2,n)$:
\begin{itemize}
\item The cohomology ring $\H^\bullet( K(\bZ_2, n); \bZ_2)$ is freely generated by a degree-$n$ generator $t_n$
under cup products and the action of the Steenrod operators $\Sq^i$, obeying the usual compatibility relations~\cite{MR60234}; explicitly, as a commutative $\bZ_2$-algebra, it is the polynomial ring on expressions of the form $\Sq^{k_1} \cdots \Sq^{k_m} t_n \in \H^{\sum_j k_j + n} (K(\bZ_2, n); \bZ_2)$ with $k_j \geq 2 k_{j+1}$ and with $\sum_{j=1}^m (k_j - 2 k_{j+1}) < n$  (where we set $k_{m+1}=0$).
\item The desuspension map $\Omega : \H^{\bullet+1}(K(\bZ_2, n+1); \bZ_2) \to \H^{\bullet}(K(\bZ_2, n); \bZ_2)$ takes $t_{n+1} \mapsto t_n$ and commutes with the Steenrod operators $\Sq^i$, but it is not a ring homomorphism.
\item Since $K(\bZ_2, n)$ is rationally acyclic, its cohomology with $\bk^\times$ coefficients is independent of our choice of algebraically closed characteristic zero field $\bk$: any choice of inclusion $\overline\bQ \to \bk$ induces an isomorphism $\H^\bullet(K(\bZ_2,n); \overline\bQ^\times) \to \H^\bullet(K(\bZ_2,n);\bk^\times)$ in positive degrees $\bullet>0$. 
\item Most of the low-degree cohomology groups $\H^\bullet(K(\bZ_2,n);\bk^\times)$ that we will use are computed in \cite{MR65162}, and conveniently summarized in Examples~2.2 and~2.3 of \cite{DN}. (That paper uses the notation $\H^i_{\mathrm{br}}(A; -) := \H^{i+1}(K(A,2); -)$ and $\H^i_{\mathrm{syl}}(A; -) := \H^{i+2}(K(A,3); -)$ for $i>0$.) Those descriptions in particular allow us to read off the map on coefficients $t \mapsto (-1)^t : \H^\bullet(K(\bZ_2,n);\bZ_2) \to \H^\bullet(K(\bZ_2,n);\bk^\times)$ and hence to give names to elements in the $\bk^\times$-cohomology.
\end{itemize}

\begin{lemma}\label{lem:aux} Let $K(\bZ_2, 3) \to X \to K(\bZ_2,2)$ denote the non-trivial extension classified by $\Sq^2t_2 = t_2^2\in \H^4(K(\bZ_2, 2); \bZ_2) \cong \bZ_2$. Then, the induced maps on $\bk^\times$-cohomology form a short exact sequence 
\[0 \to \H^5(K(\bZ_2, 2); \bk^\times) \to \H^5(X; \bk^\times) \to \H^5(K(\bZ_2, 3) ;\bk^\times)\to 0.
\]
Similarly, the extension $K(\bZ_2,4) \to \rB X \to K(\bZ_2,3)$, classified by $\Sq^2 t_3 \in \H^5(K(\bZ_2, 3); \bZ_2)$, which characterizes the unique delooping of $X$ induces a short exact sequence
\[0 \to \H^6(K(\bZ_2, 3); \bk^\times) \to \H^6(\rB X; \bk^\times) \to \H^6(K(\bZ_2, 4) ;\bk^\times)\to 0.
\]
Since the desuspension morphisms
\[\Omega: \H^6(K(\bZ_2, 3); \bk^\times) \to \H^5(K(\bZ_2, 2); \bk^\times)\hspace{0.75cm} \Omega: \H^6(K(\bZ_2, 4); \bk^\times) \to \H^5(K(\bZ_2, 3); \bk^\times)\]
are isomorphisms, so is the desuspension morphism $\Omega: \H^6(\rB X; \bk^\times) \to \H^5(X; \bk^\times)$.
\end{lemma}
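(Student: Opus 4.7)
The plan is to analyze each extension via its Serre spectral sequence with $\bk^\times$-coefficients, degenerate the low-degree part using the transgression and Kudo's theorem, and deduce the final desuspension isomorphism by the five-lemma. For the first extension, I would set up
\[
E_2^{p,q} = \H^p(K(\bZ_2,2);\H^q(K(\bZ_2,3);\bk^\times)) \Rightarrow \H^{p+q}(X;\bk^\times).
\]
Since the base is simply-connected and the fiber is $2$-connected, the possibly nonzero $E_2$-entries in low total degree can be read off from the low-degree values of $\H^\bullet(K(\bZ_2,n);\bk^\times)$ summarized in~\cite[Examples~2.2--2.3]{DN}. In total degree $5$ the only contributions come from the positions $(5,0)$, $(2,3)$, and $(0,5)$, and the two edge homomorphisms of the spectral sequence are precisely the maps claimed to form the short exact sequence. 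It therefore suffices to show that $E_\infty^{2,3}$ vanishes, that no differential enters $E^{5,0}_r$ (which is immediate by degree count, since all potential source entries are zero), and that no differential leaves $E^{0,5}_r$.

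The key input is the transgression. Because the fibration is classified by $\Sq^2 t_2 = t_2^2$, the transgression $d_4\colon E_4^{0,3} \to E_4^{4,0}$ sends the generator $(-1)^{t_3}$ of $\H^3(K(\bZ_2,3);\bk^\times) \cong \bZ_2$ to the unique order-$2$ element $(-1)^{t_2^2}$ of $\H^4(K(\bZ_2,2);\bk^\times) \cong \bZ_4$, and is in particular injective. Multiplicativity of the Serre spectral sequence (using its natural multiplicative structure in $\bZ_2$-coefficients, transferred to $\bk^\times$-coefficients via the coefficient map induced by $1 \mapsto -1$) determines $d_4\colon E_4^{2,3} \to E_4^{6,0}$ on the generating class $t_2 \otimes (-1)^{t_3}$; Kudo's transgression theorem determines the only potentially nonzero differential leaving $(0,5)$, namely $d_6$, via $\tau(\Sq^2 t_3) = \Sq^2\tau(t_3) = \Sq^2(t_2^2) = (\Sq^1 t_2)^2$ by the Cartan formula. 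A direct verification shows that these give the required degenerations in total degree $5$, establishing the first short exact sequence. An entirely parallel analysis applied to the delooped fibration $K(\bZ_2,4) \to \rB X \to K(\bZ_2,3)$, classified by $\Sq^2 t_3$, produces the second short exact sequence in total degree $6$.

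Finally, the desuspension map induces a natural commutative ladder between the two short exact sequences, whose outer vertical arrows are the stated desuspension isomorphisms (standard consequences of the polynomial structure of $\H^\bullet(K(\bZ_2,n);\bk^\times)$ in the relevant degree range, where all generators are transgressive suspensions of the fundamental class). The five-lemma then yields $\Omega\colon \H^6(\rB X;\bk^\times) \isom \H^5(X;\bk^\times)$. The main obstacle is the spectral-sequence bookkeeping: verifying that the middle $E_2$-entries are actually killed requires combining the multiplicative structure (most easily handled in $\bZ_2$-coefficients) with the transfer to $\bk^\times$-coefficients, where the statement naturally lives, and confirming that the various transgression targets are nonzero or zero as needed in the relevant $\bk^\times$-cohomology groups.
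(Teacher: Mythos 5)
Your approach is correct but genuinely different from the paper's. Both you and the paper run the Serre spectral sequences, identify the relevant $E_2$-entries, and degree-count away most differentials; the crux in both cases is showing that the $\bZ_2$ sitting in $E_2^{0,5}$ (resp.\ $E_2^{0,6}$) survives to $E_\infty$. The paper proves this \emph{indirectly}: it exhibits an explicit class $\alpha \in \H^5(X;\bk^\times)$ with nontrivial restriction to the fiber, constructed from the Picard groupoid of the braided monoidal $2$-category $\Sigma\sVec$ (and its sylleptic structure for the delooped case). You instead argue \emph{directly} inside the spectral sequence via Kudo's transgression theorem: since $t_3$ transgresses to $t_2^2$, Kudo says $\Sq^2 t_3$ is transgressive (so $d_3,d_4,d_5$ all vanish on it) and $\tau(\Sq^2 t_3) = \Sq^2(t_2^2) = (\Sq^1 t_2)^2$; by naturality of the spectral sequence under the coefficient map $\bZ_2 \to \bk^\times$, this transfers to the statement you need. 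Your route is more elementary and self-contained, requiring no $2$-categorical input; the paper's route has the virtue of tying the surviving class to concrete categorical data.

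Two points that you gesture at but should not leave implicit. First, you write that Kudo ``determines the only potentially nonzero differential leaving $(0,5)$, namely $d_6$,'' but note that $E_2^{3,3}(\bk^\times) \cong \bZ_2$ is nonzero, so a priori $d_3\colon E_3^{0,5}\to E_3^{3,3}$ could also fail to vanish; it is precisely the \emph{transgressivity} part of Kudo's theorem (not just the formula for $\tau(\Sq^2 t_3)$) that kills $d_3,d_4,d_5$ in $\bZ_2$-coefficients, and naturality pushes this to $\bk^\times$. Second, your ``direct verification'' hides the key subtlety: in $\bZ_2$-coefficients the transgression $(\Sq^1 t_2)^2 \in E_6^{6,0}$ is actually \emph{nonzero} (so $\Sq^2 t_3$ dies there!), and the conclusion you want is only true with $\bk^\times$-coefficients. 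You need to check either that $(\Sq^1 t_2)^2 = \Sq^1(t_2\,\Sq^1 t_2)$ lies in the image of $\Sq^1$ and hence maps to zero under the coefficient homomorphism, or — more quickly — that $E_6^{6,0}(\bk^\times)=0$ already by the $d_4$ differential from $(2,3)$ (which you do note, via multiplicativity/naturality, sends $(-1)^{t_3t_2}$ to the generator $(-1)^{t_2^3}$). An analogous check is needed in the $\rB X$ case: Kudo gives $\tau(\Sq^2 t_4) = \Sq^2\Sq^2 t_3 = \Sq^3\Sq^1 t_3 = \Sq^1(\Sq^2\Sq^1 t_3)$, again nonzero in $\bZ_2$-coefficients but in the image of $\Sq^1$ and hence vanishing with $\bk^\times$-coefficients. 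Once these two points are spelled out, your argument stands.
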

\begin{proof}
Let us inspect the Serre spectral sequence for $X$:
$$ E_2^{i,j} = \H^i(K(\bZ_2,2); \H^j(K(\bZ_2,3); \bk^\times)) \Rightarrow \H^{i+j}(X; \bk^\times). $$
The $E_2$ page of this spectral sequence begins:
\begin{equation}\label{eqn:E22}
\begin{tikzpicture}[anchor=base,xscale=1.5, baseline=(name.base)]
\path
(0,0) node {$\bk^\times$} ++(.75,0) node {$0$} ++(.75,0) node {$\bZ_2$} ++(.75,0) node {$0$} ++(.75,0) node {$\bZ_4$} ++(.75,0) node {$\bZ_2$}   ++(.75,0) node {$\bZ_2$}   
(0,.5) node {$0$} ++(.75,0) node {$0$} ++(.75,0) node {$0$} ++(.75,0) node {$0$} ++(.75,0) node {$0$} ++(.75,0) node {$0$}
(0,1) node {$0$} ++(.75,0) node {$0$} ++(.75,0) node {$0$} ++(.75,0) node {$0$} ++(.75,0) node {$0$} 
(0,1.5) node {$\bZ_2$} ++(.75,0) node {$0$} ++(.75,0) node {$\bZ_2$} ++(.75,0) node {$\bZ_2$}
(0,2) node {$0$} ++(.75,0) node {$0$} ++(.75,0) node {$0$}
(0,2.5) node {$\bZ_2$} ++(.75,0) node {$0$}
;
\draw[->] (-.75,-.125) -- ++(6,0);
\draw[->] (-.4,-.5) -- ++(0,3.5);
\path  (0,-.5) node {$0$} ++(.75,0) node {$1$} ++(.75,0) node {$2$} ++(.75,0) node {$3$} ++(.75,0) node {$4$} ++(.75,0) node {$5$} ++(.75,0) node {$6$} ++(.75,0) node {$i$}
(-.75,0) node {$0$} ++(0,.5) node {$1$} ++(0,.5) node {$2$} ++(0,.5) node {$3$} ++(0,.5) node {$4$} ++(0,.5) node {$5$} ++(0,.5) node {$j$}
;
\path (-1,1.5) node[anchor=east] (name) {$E_2^{i,j} \quad = \quad  $};
\end{tikzpicture}
\end{equation}
These groups are generated by the following elements:
\begin{equation}\label{eqn:E22names}
\begin{tikzpicture}[anchor=base, xscale=2.475, baseline=(name.base)]
\path
(0,0) node {$*$} ++(.75,0) node {$\cdot$} ++(.75,0) node {$(-1)^{t_2}$} ++(.75,0) node {$\cdot$} ++(.75,0) node {$\sqrt{({-1})^{t_2^2}}$} ++(.75,0) node {~\,\,$(-1)^{\Sq^2\Sq^1 t_2}$}   ++(.75,0) node {$(-1)^{t_2^3}$}   
(0,.5) node {$\cdot$} ++(.75,0) node {$\cdot$} ++(.75,0) node {$\cdot$} ++(.75,0) node {$\cdot$} ++(.75,0) node {$\cdot$} ++(.75,0) node {$\cdot$}
(0,1) node {$\cdot$} ++(.75,0) node {$\cdot$} ++(.75,0) node {$\cdot$} ++(.75,0) node {$\cdot$} ++(.75,0) node {$\cdot$} 
(0,1.5) node {$(-1)^{t_3}$} ++(.75,0) node {$\cdot$} ++(.75,0) node {$(-1)^{t_3t_2}$} ++(.75,0) node {$(-1)^{t_3 \,{\Sq^1}t_2}$}
(0,2) node {$\cdot$} ++(.75,0) node {$\cdot$} ++(.75,0) node {$\cdot$}
(0,2.5) node {$(-1)^{\Sq^2 t_3}$} ++(.75,0) node {$\cdot$}
;
\draw[->] (-.5,-.125) -- ++(5.5,0);
\draw[->] (-.4,-.5) -- ++(0,3.5);
\path  (0,-.5) node {$0$} ++(.75,0) node {$1$} ++(.75,0) node {$2$} ++(.75,0) node {$3$} ++(.75,0) node {$4$} ++(.75,0) node {$5$} ++(.75,0) node {$6$} ++(.5,0) node {$i$}
(-.5,0) node {$0$} ++(0,.5) node {$1$} ++(0,.5) node {$2$} ++(0,.5) node {$3$} ++(0,.5) node {$4$} ++(0,.5) node {$5$} ++(0,.5) node {$j$}
;
\path (1,1.5) node[anchor=east] (name) {$\phantom{E_2^{i,j}}$};
\end{tikzpicture}
\end{equation}
The name in bidegree $(i,j) = (4,0)$ is justified because the order-2 element in $E_2^{4,0} \cong \bZ_4$ is $(-1)^{t_2^2}$.
In bidegree $(5,0)$, note that $(-1)^{\Sq^2 \Sq^1 t_2} = (-1)^{t_2 \Sq^1 t_2} \in \H^5(K(\bZ_2,2);\bk^\times)$. (In other words, the map $\bZ_2=\H^4(K(\bZ_2, 2); \bZ_2) \to \H^4(K(\bZ_2,2); \bk^\times) = \bZ_4$ is not surjective, while the map $\bZ_2^2=\H^5(K(\bZ_2, 2); \bZ_2) \to \H^5 (K(\bZ_2, 2); \bk^\times) = \bZ_2$ is not injective.)

The nontrivial extension class $t_2^2 \in \H^4(K(\bZ_2,2); \bZ_2)$ classifying $X$ manifests as a nontrivial $d_4$ differential $t_3 \mapsto t_2^2$ on cohomology with $\bZ_2$ coefficients, which in turn gives nontrivial differentials $(-1)^{t_3} \mapsto (-1)^{t_2^2}$ and $(-1)^{t_3 t_2} \mapsto (-1)^{t_2^3}$. 
In particular, the $\bZ_2$ in bidegree $(i,j) = (2,3)$ does not survive to $E_\infty$. 

The $\bZ_2$ in bidegree $(5,0)$ survives just because of all of the $0$s in total degree $4$. The $\bZ_2$ in bidegree $(0,5)$ also survives. This is not automatic for a spectral sequence with this $E_2$ page. Rather, we know it is true because 
  the edge map in total degree~$5$ is the restriction map $\H^5(X; \bk^\times) \to E_2^{0,5}=\H^5(K(\bZ_2,3); \bk^\times)$, and we know that there is a class $ \alpha \in \H^5(X; \bk^\times)$ whose image in $\H^5(K(\bZ_2,3); \bk^\times) \cong \bZ_2$ is non-trivial: Indeed, consider the symmetric monoidal $2$-category $\Sigma \sVec$ as a braided monoidal $2$-category and observe that the induced delooping of its Picard groupoid $(\Sigma\sVec)^\times$ is a homotopy $4$-type $K(\bk^\times, 4)\cdot K(\bZ_2,3) \cdot K(\bZ_2, 2)$ whose quotient $K(\bZ_2, 3) \cdot K(\bZ_2,2)\cong X$ is the non-trivial extension (since $\br_{C,C} \cong e \boxtimes \id_{C\boxtimes C}$), and which is therefore classified by a class $\alpha \in \H^5(X; \bk^\times)$. The image of this class in $\H^5(K(\bZ_2, 3), \bk^\times)$ classifies the Picard groupoid $\Omega (\Sigma \sVec)^\times = \sVec^\times$ and hence is non-trivial.
  
  The argument for $\rB X $ is completely analogous: The only entries in total degree $6$ in its spectral sequence
  $$ E_2^{i,j} = \H^i(K(\bZ_2,3); \H^j(K(\bZ_2,4); \bk^\times)) \Rightarrow \H^{i+j}(\rB X; \bk^\times). $$
  are $E_2^{6,0} = \H^6(K(\bZ_2,3);\bk^\times) \cong \bZ_2$ generated by $(-1)^{\Sq^2 t_4}$
and
$E_2^{0,6} = \H^6(K(\bZ_2,4);\bk^\times) \cong \bZ_2$ generated by $(-1)^{\Sq^2\Sq^1 t_3}$. The former survives to $E_\infty$ simply because in this case the $E_2$ page already vanishes in total degree $5$. The latter survives to $E_\infty$ because we know that there is a class $ \alpha \in \H^6(\rB X;\bk^\times)$ whose restriction to $\H^6(K(\bZ_2,4);\bk^\times) \cong \bZ_2$ is nontrivial: this time, consider the symmetric monoidal 2-category $\Sigma\sVec$ as a sylleptic monoidal 2-category, restrict to its Picard groupoid $(\Sigma\sVec)^\times$, and observe that its delooping to a homotopy 5-type is an extension $K(\bk^\times, 5) \cdot K(\bZ_2,4) \cdot K(\bZ_2,3)$, classified by a class in $\H^6(\rB X;\bk^\times)$, whose restriction to $\H^6(K(\bZ_2, 4); \bk^\times)$ characterizes the symmetric braiding on $\sVec^\times$ and is hence non-trivial. 

The desuspension morphisms 
\[\bZ_2\langle (-1)^{\Sq^2\Sq^1 t_3} \rangle = \H^6(K(\bZ_2, 3); \bk^\times) \to \H^5(K(\bZ_2, 2); \bk^\times) = \bZ_2 \langle (-1)^{\Sq^2\Sq^1 t_2}\rangle 
\]
\[\bZ_2\langle (-1)^{\Sq^2 t_4}  \rangle = \H^6(K(\bZ_2, 4); \bk^\times) \to \H^5(K(\bZ_2, 3); \bk^\times) = \bZ_2 \langle (-1)^{\Sq^2 t_3}\rangle 
\]
send generators to generators and are therefore isomorphisms.
\end{proof}

We now turn to the main result of this section. To set it up, let us recall the constraints from Section~\ref{sec.2cats} on the structure of $\cC = \cZ(\Sigma\cB)$ for a slightly degenerate braided fusion 1-category $\cB$:
\begin{enumerate}
\item \label{condition:lines} By Lemma~\ref{lemma.centres}, there is an equivalence of symmetric fusion $1$-categories $\Omega\cC \cong  \sVec$. 
\item \label{condition:components} By Theorem~\ref{thm:componentcount}, $\cC$ has exactly two components. 
\item \label{condition:charges} Theorem~\ref{thm:invertibleSmatrix} relates $\pi_0 \cC$ and $\pi_0 \Omega\cC \cong \pi_0 \sVec$. Specifically, write $e$ for (a choice of) the non-identity simple object in $\sVec$; we will call it ``$e$'' because we will think of it as being \define{electrically charged} (under the gauge group $\bZ_2$ which arises from $\sVec$ under Tannakian reconstruction). By Remark~\ref{rem:Rinvertible}, the two components of $\cC$ are distinguished by their full braiding with $e$: objects in the identity component are transparent to $e$, whereas objects $X$ in the non-identity component enjoy \[\br_{e, X} \cdot \br_{X, e} = (-1) ~\id_{\id_X \boxtimes e}.\]
A physical object is a \define{magnetic monopole} if a test electron, when moved all the way around the object, comes back to its original position with some nontrivial phase.
We will therefore refer to the non-identity component of $\cC$ as the \define{magnetic component}, and its objects as \define{magnetically charged}.
\end{enumerate}

The following result was first proved in \cite{2011.11165}, relying on non-degeneracy of the braiding of $\cZ(\Sigma \cB)$ and on some higher Morita-categorical reasoning to establish conditions~\ref{condition:components} and~\ref{condition:charges}; the rest of the proof did not require higher Morita categories. We recall this rest of the proof both for completeness and because it contains many useful details about the $2$-categories in question.

\begin{theorem}[\cite{2011.11165}] \label{thm:SandT}
  Up to braided monoidal equivalence, there are at most two braided fusion $2$-categories satisfying the three conditions \ref{condition:lines}--\ref{condition:charges} above.
  
  These two braided fusion $2$-categories are built as in Example~\ref{eg:linearization} as twisted linearizations 
  \begin{align*}&\cS:= 2\Vec^\sigma[\cG]  &
  &\cT:= 2\Vec^{\tau}[\cG]
  \\ 
  \intertext{%
  of the $1$-groupoid $\mathcal{G} = K(\mathbb{Z}_2, 1) \times K(\mathbb{Z}_2, 0)$ with braided monoidal structure determined by the delooping $\mathrm{B}^2 \mathcal{G} = K(\bZ_2, 3) \times K(\bZ_2, 2)$ and the following twists in $\H^5(\mathrm{B}^2\cG; \bk^\times)$:}
  & \sigma := (-1)^{\Sq^2 t_3 + t_3 t_2} 
 & &\tau:= (-1)^{\Sq^2 t_3 + t_3 t_2 + \Sq^2 \Sq^1 t_2}
  \end{align*}
Here $t_n$ is the generator (over the Steenrod algebra) of $\H^\bullet(K(\bZ_2,n);\bZ_2)$, and we have used the K\"unneth formula and the map $t \mapsto (-1)^t$ on coefficients to map
\begin{multline*}
\H^5(K(\bZ_2, 3); \bZ_2) \oplus \bigl( \H^3(K(\bZ_2, 3); \bZ_2) \otimes \H^2(K(\bZ_2,2); \bZ_2)\bigr) \oplus \H^5(K(\bZ_2, 2); \bZ_2)
\\ \overset{\text{K\"unneth}}{\cong} \H^5(\mathrm{B}^2\cG; \bZ_2) \to[(-1)^-] \H^5(\mathrm{B}^2 \cG; \bk^\times)
\end{multline*}  
\end{theorem}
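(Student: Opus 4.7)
The plan is to show that any braided fusion 2-category $\cC$ satisfying the three listed conditions must be a twisted linearization $2\Vec^\alpha[\cG]$ for the specific 1-groupoid $\cG = K(\bZ_2, 1) \times K(\bZ_2, 0)$ with the specified delooping, and then to classify the allowed twists $\alpha$ cohomologically. The crucial preliminary move is to establish that $\cC$ admits a groupal presentation in the sense of Example~\ref{eg:linearization}, which amounts to showing that the magnetic component of $\cC$ contains an invertible object. Given a simple magnetic $X$, I would combine the nondegeneracy of the framed $S$-matrix from Theorem~\ref{thm:invertibleSmatrix} with a dimension-counting argument: the $S$-matrix row of $X$ against the two transparent simples $I, e \in \Omega\cC \cong \sVec$ is determined by condition~(3) via Remark~\ref{rem:Rinvertible}, and multiplicativity of $\widetilde{S}_{-, b}$ on tensor products of invertibles forces $X \boxtimes X^* \cong I$. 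With every component containing an invertible object, $\pi_0\cC$ inherits a group structure $\cong \bZ_2$, and $\cC \cong 2\Vec^\alpha[\cG]$ with $\cG = K(\bZ_2, 1) \times K(\bZ_2, 0)$.

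Next I would pin down the homotopy type of the double delooping $\rB^2 \cG$. As a pointed, simply-connected 4-type with $\pi_2 = \pi_3 = \bZ_2$, it is classified by a Postnikov $k$-invariant in $\H^4(K(\bZ_2, 2); \bZ_2) = \bZ_2 \langle \Sq^2 t_2 \rangle$. I would rule out the non-trivial class by observing that it would obstruct the existence of an identification $X \boxtimes X \cong I$ compatible with the groupal presentation from the previous step: the non-trivial extension would instead force $X \boxtimes X$ to represent a non-trivial element of $\pi_1 \cG = \bZ_2$, contradicting invertibility of $X$. Hence $\rB^2 \cG \cong K(\bZ_2, 3) \times K(\bZ_2, 2)$ as claimed.

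The final step is cohomological. By the K\"unneth formula and the computations of Lemma~\ref{lem:aux}, the group $\H^5(\rB^2 \cG; \bk^\times)$ decomposes as $(\bZ_2)^3$ with generators $(-1)^{\Sq^2 t_3}$, $(-1)^{t_3 t_2}$, and $(-1)^{\Sq^2 \Sq^1 t_2}$. The constraint $\Omega\cC \cong \sVec$ together with Example~\ref{eg:sVeclin} fixes the first coefficient to $(-1)^{\Sq^2 t_3}$, since the restriction of the $K(\bZ_2,3)$-part of $\alpha$ to the subcategory $\Sigma\sVec \subseteq \cC$ must recover the symmetric braiding~\eqref{eqn.halfbraide} of $\sVec$. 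The full-braid condition~(3), which asserts $\br_{e, X} \cdot \br_{X, e} = -\id$ for magnetic $X$, unpacks to fix the mixed coefficient to $(-1)^{t_3 t_2}$, as this component of the twist records precisely the $e$-braiding of magnetically charged objects. The $(-1)^{\Sq^2 \Sq^1 t_2}$ component is unconstrained by the three conditions, yielding exactly the two possibilities $\sigma$ and $\tau$ in the statement.

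The principal technical obstacle lies in the first step: rigorously establishing invertibility of magnetic simples demands a careful analysis of the 2-categorical $S$-matrix and dimension formulas in a fusion 2-category, and is where most of the genuine work goes. The intermediate triviality of the Postnikov $k$-invariant of $\rB^2 \cG$ likewise requires some care but follows once invertibility is in place. The closing cohomological analysis, while delicate, is mechanical given Lemma~\ref{lem:aux} and Example~\ref{eg:sVeclin}.
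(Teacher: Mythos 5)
Your overall strategy — establish a groupal presentation, pin down $\rB^2\cG$, then classify the twist cohomologically — is indeed the paper's approach, and your final cohomological step agrees with what the paper does. However there are two substantive gaps in the structural half of the argument.

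First, your mechanism for proving that the magnetic simples are invertible does not work as stated. You propose to deduce $X \boxtimes X^* \cong I$ from the $\widetilde S$-matrix plus ``multiplicativity of $\widetilde S_{-,b}$ on tensor products of invertibles,'' but this is circular: multiplicativity on invertibles presupposes invertibility of $X$, and nondegeneracy of the $S$-matrix only distinguishes components, not individual simples, so it cannot by itself collapse $X \boxtimes X^*$ to the unit. The paper instead cites a nontrivial external theorem (Theorem B of \cite{2010.07950}), which applies to any fusion 2-category with $\Omega\cC \cong \sVec$ and directly asserts that every simple object is invertible and that each component contains two simples. Without that input, the first step has no justification.

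Second, your argument for ruling out the nontrivial $k$-invariant of $\rB^2\cG$ is incorrect, and it also conflates two distinct dichotomies. The choice $M^2 \cong I$ versus $M^2 \cong C$ is about choosing a splitting $\pi_0\cC \to \{\text{invertibles}\}$; the paper settles this by a braiding computation ($\br_{M^2,M^2}$ must be trivial since $e^4 \cong I$, whereas $\br_{C,C} \cong e\boxtimes\id$). Separately, once $M^2\cong I$ is known and $\cG = K(\bZ_2,1)\times K(\bZ_2,0)$ is fixed, there remain two candidate deloopings $\rB^2\cG$, classified by $\Sq^2 t_2 \in \H^4(K(\bZ_2,2);\bZ_2)$. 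Your claim that the nontrivial extension would ``force $X\boxtimes X$ to represent a nontrivial element of $\pi_1\cG$'' is not correct — the extension class constrains higher coherence data, not the fusion rule $M^2\cong I$, which is already decided. The paper's actual argument is via sylleptic structures: Lemma~\ref{lem:aux} shows that for the nontrivial extension $X$, the desuspension $\Omega:\H^6(\rB X;\bk^\times) \to \H^5(X;\bk^\times)$ is an isomorphism, so every twist would lift uniquely to a sylleptic structure, which would trivialize the full braid $\br_{e,M}\cdot\br_{M,e}$, contradicting condition~\ref{condition:charges}. This sylleptic obstruction argument — which the paper reuses to force a nontrivial $(-1)^{t_3t_2}$ coefficient — is absent from your proposal, and some version of it is needed.
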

\begin{proof}
Let $\cC$ be a braided fusion $2$-category satisfying the three conditions \ref{condition:lines}--\ref{condition:charges}. (We will not assume that $\cC$ is equivalent to some $\cZ(\Sigma\cB)$).
We will first show that $\cC$ necessarily arises from linearizing a higher groupoid. Once having done so, the conditions \ref{condition:lines}--\ref{condition:charges} translate into conditions on that groupoid, and we will complete the proof by solving the corresponding purely homotopy-theoretic classification.

 By \cite[Theorem B]{2010.07950}, which applies to any fusion 2-category satisfying condition \ref{condition:lines}, the simple objects of $\cC$ are all invertible and each component contains two simple objects. 
The identity component, equivalent to $\Sigma\sVec$, is described in Example~\ref{eg:sVeclin}. It has two simple objects $I$ and $C$. Arbitrarily choose a magnetic simple object $M$ (i.e.\ a simple object in the non-identity component); the other simple magnetic object is then $C \boxtimes M$.

These choices provide a way to write $\cC$ as a linearization as in Example~\ref{eg:linearization}: there is a non-monoidal equivalence $\Sigma\sVec \simeq 2\Vec[\rB\bZ_2]$ (corresponding to the non-braided equivalence $\sVec \cong \Vec[\bZ_2]$), and hence a non-monoidal equivalence
$$ \cC \simeq 2\Vec[K(\bZ_2,1) \times K(\bZ_2,0)],$$
where the $\bZ_2$ in degree 1 is the group $\{I,e\} = \pi_0 \sVec$, and the $\bZ_2$ in degree 0 is the set $\{I,M\} = \pi_0 \cC$.
  For $\cC$ to admit a presentation as the (twisted) linearization of a groupal braided monoidal groupoid, it suffices for $M$ to satisfy the fusion rule $M^2 \cong I$ (see the last two paragraphs of Example~\ref{eg:linearization}). Since $M$ is invertible and $M^2$ is magnetically neutral, the only other possibility is $M^2 \cong C$.

 To see that $M^2 \cong I$, we will use some facts about not just the monoidal structure but also the braiding on $\cC$. Note first 
that the braided monoidal structure on (the full sub-2-category of $\cC$ spanned by) the identity component is forced by condition~\ref{condition:lines}: it is equivalent to $\Sigma\sVec \cong \cat{sAlg}$. The most important feature worth highlighting from Example~\ref{eg:sVeclin} is the braiding $\br_{C,C} \cong e \boxtimes\id_{C\boxtimes C} $ from equation \eqref{eqn.halfbraidc}. On the other hand, since $\br_{M,M}$ is isomorphic either to $\id_{M \boxtimes M}$ or $e \boxtimes \id_{M \boxtimes M}$, and since $e^4 \cong I$, we find that $\br_{M^2,M^2}$ is isomorphic to $\id_{M^2 \boxtimes M^2}$. (One must insert associators to compute $\id_{M^2 \boxtimes M^2}$ from $\br_{M,M}$, but they cancel in the final computation.) This rules out the possibility that $M^2 \cong C$.

In order to complete the proof, it suffices to classify the possible deloopings $\rB^2\cG$ of $\cG \cong K(\bZ_2,1) \times K(\bZ_2,0)$, and the possible Postnikov classes in $\H^5(\rB^2\cG;\bk^\times)$, which are consistent with the conditions~\ref{condition:lines}--\ref{condition:charges}. We will first argue that condition~\ref{condition:charges} is consistent only with $\rB^2\cG$ being the product $K(\bZ_2,3) \times K(\bZ_2,2)$. We will then study the cohomology of this product, and see that the classes $\sigma$ and $\tau$ are the only two possible classes consistent with our conditions.

Suppose, in search of a contradiction, that $\rB^2 \cG$ were not the product $K(\bZ_2,3) \times K(\bZ_2,2)$, but rather the unique nontrivial extension $X:= K(\bZ_2,3) \cdot K(\bZ_2,2)$ from Lemma~\ref{lem:aux}, classified by $\Sq^2 t_2 = t_2^2 \in \H^4(K(\bZ_2,2); \bZ_2) \cong \bZ_2$. Let $\rB^3 \cG = \rB X$ denote the unique delooping of $\rB^2\cG = X$, given by the extension  $K(\bZ_2,4) \cdot K(\bZ_2,3)$ classified by $\Sq^2 t_3 \in \H^5(K(\bZ_2,3);\bZ_2)$. It follows from Lemma~\ref{lem:aux}, that the desuspension map $\Omega: \H^6(\rB^3 \cG; \bk^\times) \to \H^5(\rB^2\cG; \bk^\times)$ is an isomorphism. 
In categorical language, this means that, for any twisting $\alpha \in \H^5(\rB^2\cG; \bk^\times)$, the braided fusion 2-category $2\Vec^\alpha[\cG]$ admits a (unique) sylleptic structure. But this violates condition~\ref{condition:charges}. Indeed, for any object $X$ and $1$-morphism $f:I \to I$ in a sylleptic monoidal $2$-category, it follows from unitality and naturality of the syllepsis that the full braid trivializes: $\br_{f, X} \cdot \br_{X,f} = \id_{\id_X \boxtimes f}$ (surfaces and lines unlink in $5$-dimensions). 

Since $\rB^2 \cG = K(\bZ_2,3) \times K(\bZ_2,2)$ is a product, the K\"unneth formula provides an isomorphism $\H^5(\rB^2\cG; \bk^\times) \cong \bZ_2^3$. Indeed, writing $\H_\bullet(-)$ for ordinary homology with integer coefficients and $\otimes$ and $\operatorname{Tor}$  for their versions over $\bZ$, a choice of product decomposition for $\rB^2 \cG$ selects a short exact sequence
$$ \bigoplus_{i+j=5} \H_i(K(\bZ_2,3)) \otimes \H_{j}(K(\bZ_2,2)) \to \H_5(\rB^2 \cG) \to \bigoplus_{i+j=4} \operatorname{Tor} \bigl(\H_i(K(\bZ_2,3)) , \H_{j}(K(\bZ_2,2))\bigr). $$
But the connectivity of $K(\bZ_2,3)$ and $K(\bZ_2,2)$ mean that the first term is $\H_5(K(\bZ_2,3)) \oplus \bigl(\H_3(K(\bZ_2,3)) \otimes \H_2(K(\bZ_2,2))\bigr) \oplus \H_5(K(\bZ_2,2))$ and the third term in this sequence vanishes; and $\H^5(\rB^2\cG; \bk^\times) = \hom(\H_5(\rB^2\cG), \bk^\times)$ because $\bk^\times$ is divisible. This calculation also shows that the map $t \mapsto (-1)^t$ on coefficients is a surjection $\H^5(\rB^2\cG; \bZ_2) \to H^5(\rB^2\cG; \bk^\times)$. As in \eqref{eqn:E22names}, $\H^5(\rB^2\cG; \bk^\times)\cong \bZ_2^3$ is therefore generated by the classes:
$$(-1)^{\Sq^2 t_3}, \qquad (-1)^{t_3 t_2}, \qquad (-1)^{\Sq^2 \Sq^1 t_2}.$$

Any Postnikov class consistent with condition~\ref{condition:lines} must have a nontrivial coefficient on $(-1)^{\Sq^2 t_3}$: The restriction $\H^5(\rB^2\cG; \bk^\times) \to \H^5(K(\bZ_2, 3); \bk^\times)$ encodes the non-trivial braiding $\beta_{e,e} = -\id_{e\otimes e}$ on $\Omega \cC =\sVec$. 
Moreover, the classes consistent with condition~\ref{condition:charges} must have a nontrivial coefficient on $(-1)^{t_3 t_2}$. If this coefficient was trivial, our braided fusion $2$-category would admit a sylleptic structure by an argument analogous to the one we gave above. There are therefore two choices for our $5$-cocycle, depending on whether its $(-1)^{\Sq^2\Sq^1 t_2}$ component is trivial or not. As the braided monoidal groupoid $\cG$ and its $5$-cocycle completely determine the braided monoidal structure on $\cC$ (see the last sentence of Example~\ref{eg:linearization}), there can be at most two such braided monoidal 2-categories. 
\end{proof}

The presentations of $\cS$ and $\cT$ as linearizations of braided groupoids as in Theorem~\ref{thm:SandT} depend on a choice of invertible object $M$ in the magnetic component (corresponding to the non-trivial element in $\pi_2\mathrm{B}^2\cG = \bZ_2$), and a choice of equivalence $M^2\cong I$ corresponding to the product decomposition $\mathrm{B}^2 \cG \cong K(\bZ_2, 3) \times K(\bZ_2, 2)$. 
In particular, Theorem~\ref{thm:SandT} does not yet imply that $\cS$ and $\cT$ are inequivalent as braided fusion $2$-categories, merely that any such equivalence must necessarily be incompatible with one of these choices. In fact, the space $\rB^2 \cG = K(\bZ_2,3) \times K(\bZ_2,2)$ has only one nontrivial automorphism, coming from the map $\Sq^1 : K(\bZ_2,2) \to K(\bZ_2,3)$, and this automorphism fixes the classes $\sigma$ and $\tau$. Hence, any potential equivalence between $\cS$ and $\cT$ must be incompatible with the choice of $M$ and cannot come from an equivalence of braided $1$-groupoids. 
In Corollary~\ref{cor:SneqT}, we will see that there is no such equivalence and that $\cS$ and $\cT$ are indeed inequivalent as braided fusion $2$-categories.

Further examining the proof of Theorem~\ref{thm:SandT}, note that in the classes   \begin{align*}& \sigma := (-1)^{\Sq^2 t_3 + t_3 t_2} 
 & &\tau:= (-1)^{\Sq^2 t_3 + t_3 t_2 + \Sq^2 \Sq^1 t_2},
  \end{align*} the factor $(-1)^{\Sq^2 t_3}$ is responsible for the braiding $\beta_{e,e} = -\id_{e\otimes e}$ in $\Omega \cS = \Omega\cT = \sVec$, while the factor $(-1)^{t_3t_2}$ causes the full braid $\br_{e,M} \cdot \br_{M, e}= - \id_{\id_M \boxtimes e}$ between the chosen magnetic object $M$ and $e$. 
The statement in the proof of Theorem~\ref{thm:SandT} that $\rB^2\cG \cong K(\bZ_2, 3) \times K(\bZ_2, 2)$ is isomorphic to a product of Eilenberg--Mac Lane spaces translates into categorical language as the existence of an isomorphism
\begin{equation}\label{eqn:braidingM}
  \br_{M,M} \cong \id_{M \boxtimes M}.
\end{equation}

  In the next section, we will develop an invariant which detects the last factor $(-1)^{\Sq^2\Sq^1 t_2}$ and hence distinguishes $\sigma$ and $\tau$. 
This last factor is already visible on the subcategory generated by the object $M$ (and is completely independent of its interaction with $e$). Indeed, the restriction of the class $\sigma|_{K(\bZ_2,2)} \in \H^5(K(\bZ_2, 2); \bk^\times) $ trivializes, whereas $\tau|_{K(\bZ_2,2)} = (-1)^{\Sq^2 \Sq^1 t_2} \in \H^5(K(\bZ_2, 2); \bk^\times) $. Abusing notation, we will write this still as ``$\tau$.'' These restrictions mean that $\cS$ and $\cT$ admit the following braided fusion sub-2-categories:
\begin{equation}\label{eq:subcategories} \cS \supset 2\Vec[\bZ_2], \qquad \cT \supset 2\Vec^\tau[\bZ_2].\end{equation}
Note that these inclusions depend on the presentation of $\cS$ and $\cT$ as linearizations, and hence depend both on a choice of invertible object $M$ in the magnetic component, and an equivalence $M^2 \cong I$ (or equivalently, a product decomposition $\rB^2 \cG = K(\bZ_2,2) \times K(\bZ_2,3)$).

\begin{lemma} \label{lemma:super2fibre}
  There are braided monoidal 2-functors $2\Vec[\bZ_2] \to \Sigma\sVec$ and $2\Vec^\tau[\bZ_2] \to \Sigma\sVec$.
\end{lemma}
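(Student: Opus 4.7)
The plan is to construct each functor as the linearization of a map of braided 2-groupoid deloopings, via the procedure of Example~\ref{eg:linearization}. By construction, $2\Vec[\bZ_2]$ and $2\Vec^\tau[\bZ_2]$ are linearizations of the trivial braided 2-groupoid $\bZ_2$, whose braided delooping is $K(\bZ_2,2)$, twisted respectively by $0$ and by $\tau = (-1)^{\Sq^2\Sq^1 t_2} \in \H^5(K(\bZ_2,2);\bk^\times)$. The identity component of $\Sigma\sVec$, viewed as a braided monoidal 2-category in its own right, is the linearization of the braided 2-groupoid with delooping $K(\bZ_2,3)$ (generated by $e$) equipped with the twist $(-1)^{\Sq^2 t_3}$ that encodes the nontrivial symmetric braiding $\beta_{e,e} = -\id_{e \otimes e}$ of $\sVec$ recorded in Example~\ref{eg:sVeclin}.

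For the functor $2\Vec[\bZ_2] \to \Sigma\sVec$ I would simply take the constant 2-functor sending every object to $I \in \Sigma\sVec$ with trivial coherence data; this is tautologically a braided monoidal 2-functor since the untwisted source carries no Postnikov obstruction to absorb.

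For $2\Vec^\tau[\bZ_2] \to \Sigma\sVec$ the constant functor is insufficient, because the source twist $\tau$ must match a pullback from the target. My plan is to use the Steenrod operation $\Sq^1: K(\bZ_2,2) \to K(\bZ_2,3)$, namely the generator of $\H^3(K(\bZ_2,2);\bZ_2)$, as the map of underlying braided 2-groupoid deloopings. By naturality of Steenrod operations, the pullback of the target Postnikov class $(-1)^{\Sq^2 t_3}$ along $\Sq^1$ is $(-1)^{\Sq^2 \Sq^1 t_2} = \tau$, exactly matching the source twist. This matching allows $\Sq^1$ to lift to a map of full 3-truncated braided 2-groupoid deloopings from that of $2\Vec^\tau[\bZ_2]$ to that of $\Sigma\sVec$'s identity component; linearizing this lifted map and post-composing with the inclusion of the identity component into $\Sigma\sVec$ produces the desired functor.

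The main subtle point, and the step I expect to require the most careful bookkeeping, is verifying that the linearization procedure of Example~\ref{eg:linearization} is fully functorial in the braided monoidal 2-categorical sense, so that a pointed map of braided 2-groupoid deloopings whose Postnikov twists are compatible under pullback genuinely induces a braided monoidal 2-functor on linearizations. Once this functoriality is in place, the Steenrod naturality calculation above closes the argument.
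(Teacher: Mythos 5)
Your proposal is correct and takes the same route as the paper: realize all three $2$-categories as linearizations, then note that the trivial map and $\Sq^1 : K(\bZ_2,2) \to K(\bZ_2,3)$ pull back $(-1)^{\Sq^2 t_3}$ to $0$ and $(-1)^{\Sq^2\Sq^1 t_2} = \tau$ respectively. One small correction: $\Sigma\sVec$ is \emph{connected} (see Proposition~\ref{prop.connectedfusion} and Example~\ref{eg:sVeclin}), so it \emph{is} $2\Vec^\omega[\rB\bZ_2]$ with $\omega = (-1)^{\Sq^2 t_3}$; there is no proper ``identity component'' to include into, and the post-composition you mention is just the identity.
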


Note that Eilenberg-MacLane spaces (for abelian groups) have unique deloopings, and the cohomology group $\H^5(K(\bZ_2,2); \bk^\times)$ is \define{stable} in the sense that the destabilization map $\H^{5+N}(K(\bZ_2,2+N); \bk^\times) \to \H^5(K(\bZ_2,2); \bk^\times)$ is an isomorphism for all $N \geq 0$. This implies that both the braided monoidal 2-categories  $2\Vec[\bZ_2]$ and $2\Vec^\tau[\bZ_2]$  admit (unique!)\ extensions to symmetric monoidal 2-categories. In particular, the functors to $\Sigma\sVec$ are symmetric monoidal for these unique symmetric monoidal structures.

\begin{proof}
  All three braided monoidal $2$-categories $2\Vec[\bZ_2]$, $2\Vec^\tau[\bZ_2]$, and $\Sigma\sVec$ arise from linearization:
  by Example~\ref{eg:sVeclin},
   the third is $2\Vec^\omega[\rB\bZ_2]$ with twisting $\omega = (-1)^{\Sq^2 t_3} \in \H^5(K(\bZ_2, 3); \bk^\times)$.
    Thus, to give maps $2\Vec[\bZ_2] \to \Sigma\sVec$ and $2\Vec^\tau[\bZ_2] \to \Sigma\sVec$, it suffices to give maps $f : K(\bZ_2,2) \to K(\bZ_2,3)$ such that the restriction $f^*\omega$ either trivializes or becomes cohomologous to $\tau$, respectively. The trivial map suffices for the first case, and for the second case we can take $f = \Sq^1$.
\end{proof}

\begin{remark}\label{rem:selfdualeg}
  In 2-categorical language, the braided monoidal $2$-functors $\text{trivial} : 2\Vec[\bZ_2] \to \Sigma\sVec$ and $F : 2\Vec^\tau[\bZ_2] \to \Sigma\sVec$ from 
  Lemma~\ref{lemma:super2fibre} both map $M$ to $I$. Their difference  is in their monoidality data.
   The nontrivial map $\Sq^1 : K(\bZ_2,2) \to K(\bZ_2,3)$ used for $F:2\Vec^\tau[\bZ_2] \to \Sigma \sVec$ corresponds to choosing the nontrivial monoidality isomorphism
  $$ I = I \boxtimes I = F(I) = F(M) \boxtimes F(M) \overset e \to F(M \boxtimes M) = I.$$
 In other words, $F$ maps the self-duality datum $M^\vee \cong M$, which came from the choice of isomorphism $r : M \boxtimes M \to I$ needed to select 
  $2\Vec^\tau[\bZ_2] \subset \cT$,
 to the nontrivial self-duality datum $e : I^\vee \cong I$.
\end{remark}

\subsection{\texorpdfstring{$\eta$}{eta}-traces and dimensions}\label{subsec:eta}

Our next goal is to present an invariant that can distinguish $\cS$ and $\cT$, and which is more computable than the cohomological distinction described in Theorem~\ref{thm:SandT}. We will do this in \S\ref{subsec:inv} by ``evaluating objects on Klein bottles with nonbounding Pin structure.'' In order to give that definition, we will need to be able to ``evaluate objects on circles with nonbounding Spin structure.'' This section reviews how to do this, and sets some notation for duality that we will use in the remainder of the proof.

\begin{definition}\label{defn:eta1cat}
Let $\cE$ be a braided monoidal 1-category. Each dualizable object $x \in \cE$ has an \define{$\eta$-dimension} defined as:
\begin{equation}\label{eq:eta1cat} \eta(x) := \ev_x \cdot \beta_{x^*, x} \cdot \coev_x =    
 \begin{tz}[std, scale=0.8]
   \draw[string, arrow data ={0.99}{<}] (0,0) to [out=up, in=down] (1,2); 
  \draw[braid, arrow data={0.7}{>}] (0,0) to [out=down, in=down, looseness=2](1,0) to  [out=up, in=down]  (0,2)  to [out=up, in=up, looseness=2]  (1,2);
 \end{tz} 
 \in \Omega\cE
 \end{equation}
 This element $\eta(x) \in \Omega \cE$ is independent of the choice of right duality data $\ev_x: x\otimes x^* \to I, \coev_x: I \to x^* \otimes x$.
 Indeed, $\eta(x)$ depends only on the isomorphism class of $x$. Furthermore, in any braided monoidal $1$-category $\cE$ it holds that  \begin{equation} \label{eq:etax*} \eta(x) = \eta({}^* x).\end{equation}

More generally, for a 1-morphism $\phi : x \to x$, we will define the \define{$\eta$-trace} of $\phi$ to be:
\begin{equation}\label{eqn:etatr}
  \tr(\phi) := \ev_x \cdot (\phi \otimes \id_x) \cdot \beta_{x^*, x} \cdot \coev_x = 
    \begin{tz}[std, scale=0.8]
    \draw[string, arrow data ={0.99}{<}] (0,0) to [out=up, in=down] (1,2); 
  \draw[braid, arrow data={0.31}{>}] (0,0) to [out=down, in=down, looseness=2](1,0) to  [out=up, in=down]   node[pos=0.91, dot] (B){} (0,2)  to [out=up, in=up, looseness=2]  (1,2);
    \node[label,left] at (B) {$\phi~$};
 \end{tz}
 \in \Omega\cE
\end{equation}
\end{definition}

\begin{remark} Suppose that $\cE$ is a braided monoidal groupal $1$-groupoid, corresponding to a connected and simply connected pointed $3$-type $X=\mathrm{B}^2\cE$, and the object $x\in \pi_2(X) = \pi_0 \cE$ is selected by a map $S^2\to X$. Then the object $\eta(x) \in \pi_3(X) = \pi_1\cE$ is selected by the composite $S^3\to[\eta] S^2 \to X$, where $\eta\in \pi_3(S^2)$ is the traditional notation for the Hopf map, the generator of $\pi_3(S^2) \cong \bZ$, hence the name. 

When $\cE$ is a symmetric monoidal 1-category, $\eta(x)$ is nothing but the value of the 1-dimensional framed TFT built from the dualizable object $x$ when evaluated on a circle with Lie group framing. The equality \eqref{eq:etax*} comes from the orientation-reversing diffeomorphism of the circle. 
\end{remark}

\begin{remark}\label{rem:nonmonoidal}
When $\cE$ is not symmetric, $\eta(-)$ is not monoidal. Rather, its failure to be monoidal is measured by the full braid. In particular, if $x$ and $y$ are invertible, then
$$ \eta(x \otimes y) = \langle \beta_{y,x} \beta_{x,y} \rangle \, \eta(x) \, \eta(y)  $$
(As in Remark~\ref{remark.2catDirectSums}, the notation means that $\beta_{y,x} \beta_{x,y} = \langle \beta_{y,x} \beta_{x,y} \rangle \id_{x\otimes y}$ where $\langle \beta_{y, x} \beta_{x,y}\rangle \in \Omega \cE$.)
\end{remark}

When $\cC$ is a braided monoidal 2-category, one must be slightly more careful with the choice of duality data when defining $\eta(-)$. Let us review some definitions about duality in higher categories:

\begin{definition}\label{defn:duals}
  A \define{right adjoint} of a $1$-morphism $f : X\to Y$ in a $2$-category is a $1$-morphism $f^*: Y \to X$ together with $2$-morphisms $\ev_f: f \circ f^* \to \id_b$ and $\coev_f: \id_a \to f^* \circ f$ fulfilling the cusp equations 
\[\id_f = (\ev_f \circ \id_{f}) \cdot (\id_f \circ \coev_f), \qquad \id_{f^*} = (\id_{f^*} \circ \ev_f) \cdot (\coev_f \circ \id_{f^*}).\]
There is a similar notion of left adjoint $^*f$. It is worth emphasizing that, if it exists, the right adjoint to $f$ is unique up to unique isomorphism.
An \define{adjoint equivalence} is a $1$-morphism equipped with a right adjoint for which $\ev_f$ and $\coev_f$ are invertible.

A \emph{right dual} of an object $X$ in a monoidal $2$-category is an object $X^\vee$ together with $1$-morphisms $\ev_X: X\boxtimes X^\vee \to I$ and $\coev_X: I \to X^\vee \boxtimes X$ and a cusp $2$-isomorphism 
\[\cusp_X: \id_X \Isom (\ev_X \boxtimes \id_{X}) \circ (\id_X \boxtimes \coev_X)
\]
such that also the composite $(\id_{X^\vee} \boxtimes \ev_X) \circ (\coev_X \boxtimes \id_{X^\vee})$ is isomorphic to $\id_{X^\vee}$. (To emphasize: we require only the existence of this latter isomorphism and not its data. There is a different equivalent notion in which one also demands the data of an isomorphism $\cusp^X : (\id_{X^\vee} \boxtimes \ev_X) \circ (\coev_X \boxtimes \id_{X^\vee}) \Isom \id_{X^\vee}$ but then asks for a compatibility relation between $\cusp_X$ and $\cusp^X$.) There is a similar notion of left dual. An object $X$ is \define{fully-} (aka \define{2-}) \define{dualizable} when it admits both left and right dual $X^\vee$ and all four evaluation and coevaluation 1-morphisms admit both left and right adjoints (in which case their adjoints end up also being adjunctible).

A \define{(right) mate} of a $1$-morphism $f:X\to Y$ between objects equipped with right duality data $(X^\vee, \ev_X, \coev_X, \cusp_X)$ and $(Y^\vee, \ev_Y, \coev_Y, \cusp_Y)$ is a $1$-morphism $f^\vee: Y^\vee \to X^\vee$ equipped with $2$-isomorphisms 
 \begin{gather*}
  \rot_f :  (\id_{X^\vee} \otimes f) \circ \coev_X \Isom (f^\vee \otimes \id_Y) \circ \coev_Y, \\
 \rot^f : \ev_Y\circ (f \otimes \id_{Y^\vee}) \Isom  \ev_X \circ (\id_X \otimes f^\vee),  \end{gather*}
which are compatible with $\cusp_X$ and $\cusp_Y$ in the sense that the following two isomorphisms $f \cong (\ev_X \boxtimes \id_Y)\circ f^\vee \circ (\id_X \boxtimes \coev_Y)$ are equal (suppressing coherence data for the monoidal 2-category):
\begin{gather*}
  f \cong f\circ\id_X \overset{\cusp_X}\Longrightarrow (\ev_X \boxtimes f) \circ (\id_X \boxtimes \coev_X) \overset{\rot_f}\Longrightarrow (\ev_X \boxtimes \id_Y)\circ f^\vee \circ (\id_X \boxtimes \coev_Y), \\
  f \cong \id_Y \circ f \overset{\cusp_Y}\Longrightarrow (\ev_Y \boxtimes \id_Y) \circ (f \boxtimes \coev_Y) \overset{\rot^f}\Longrightarrow (\ev_X \boxtimes \id_Y)\circ f^\vee \circ (\id_X \boxtimes \coev_Y).
\end{gather*}
There is a similar notion of left mate with respect to left duality data.
It is worth emphasizing that for a 1-morphism $f : X \to Y$ and for any choice of duality data, the mate of $f$ is unique up to unique isomorphism.
\end{definition}

\begin{definition}\label{defn:eta}
Let $X \in \cC$ and choose right duality data $(X^\vee, \ev_X, \coev_X, \cusp_X)$. The \define{$\eta$-dimension} of this data is the composition:
\[\eta(X,X^\vee,\ev_X, \coev_X, \cusp_X) := \ev_X \circ \br_{X^\vee, X} \circ \coev_X =    
 \begin{tz}[std, scale=0.8]
   \draw[string, arrow data ={0.99}{<}] (0,0) to [out=up, in=down] (1,2); 
  \draw[braid, arrow data={0.7}{>}] (0,0) to [out=down, in=down, looseness=2](1,0) to  [out=up, in=down]  (0,2)  to [out=up, in=up, looseness=2]  (1,2);
 \end{tz} \]
\end{definition}

It is easy to check that the isomorphism class of $\eta(X)$ depends only on the equivalence class of $X$. We will need, however, to promote $\eta(-)$ to a functor. To do this, we introduce a 2-category $\cC^\coh$ of \define{coherent dual pairs} in $\cC$ (compare \cite{1411.6691}):
\begin{itemize}
  \item An object of $\cC^\coh$ is an object $X \in \cC$ together with a choice of right duality data $(X^\vee, \ev_X, \coev_X, \cusp_X)$.
  \item A 1-morphism $(X,X^\vee, \dots) \to (Y,Y^\vee, \dots)$ is a 1-morphism $f : X \to Y$ together with a choice of right adjoint $(f^*, \ev_f, \coev_f)$, and also choices of their mates $(f^\vee,\rot_f,\rot^f)$ and $(f^{*\vee},\rot_{f^*},\rot^{f^*})$.
  \item A 2-morphism $(f,f^\vee,f^*,\dots) \To (g,g^\vee,g^*,\dots)$ is a 2-morphism $\phi : f \To g$. This choice selects 2-morphisms $\phi^* : g^* \to f^*$, $\phi^\vee : f^\vee \Rightarrow g^\vee$, and $\phi^{*\vee} : g^{*\vee} \to f^{*\vee}$ canonically.
\end{itemize}
Provided all objects and 1-morphisms in $\cC$ have duals, the forgetful functor $\cC^\coh \rightarrow \cC$, $(X,X^\vee,\dots) \mapsto X$ is an equivalence. Indeed, the definitions are arranged so that a 1-isomorphism in $\cC^\coh$ over $\id_X$ is precisely an \define{equivalence of duality data}: an adjoint equivalence $f: X^\vee \isom X'^\vee$ together with isomorphisms $\ev_X \cong \ev_X' \circ (\id_X \boxtimes f)$ and $(f \boxtimes \id_X) \circ \coev_X \cong \coev_X'$ satisfying the natural compatibility equation with $\cusp_X$ and $\cusp'_X$. Moreover, for a fixed object $X$, the 2-groupoid of right duals of $X$, equivalences of duality data between these, and compatible isomorphisms between the adjoint equivalences between these, is contractible. This contractibility is the reason that we need to give the isomorphism $\cusp_X$ as part of the data, but not a separate choice of isomorphism $\cusp^X$.

\begin{definition}\label{defn:hocat}
  The \define{homotopy 1-category} $\rh_1\cC$ of a (braided monoidal) 2-category $\cC$ is the (braided monoidal) 1-category whose objects are the objects of $\cC$ and whose morphisms are the isomorphism classes of $\cC$. We will write $[f] : X \to Y$ for the 1-morphism in $\rh_1\cC$ represented by a 1-morphism $f : X \to Y \in \cC$. (We will write the objects, and their tensor products, just as in $\cC$.)
\end{definition}

Explicitly, an object of $\rh_1\cC^\coh$ is an object $X \in \cC$ together with a choice of right duality data $(X^\vee,\ev_X,\coev_X,\cusp_X)$. A 1-morphism $(X,X^\vee,\dots) \to (Y,Y^\vee,\dots)$ in $\rh_1 \cC^\coh$ is simply an isomorphism class $[f]$ of 1-morphisms $f : X \to Y$ in $\cC$: the choices of mates and adjoints are uniquely determined up to unique isomorphism. This means in particular that to choose a splitting of the equivalence of $1$-categories $\rh_1\cC^\coh \to \rh_1 \cC$, one needs only to choose, for every object $X \in \cC$, right duality data $(X^\vee,\ev_X,\coev_X,\cusp_X)$.

 \begin{lemma}\label{lemma:etafunctor}
  The assignment $\eta(-)$ extends to a (nonmonoidal) functor $\mathrm{h}_1\cC^{\mathrm{coh}} \to \Omega \cC$, and hence, after choosing duality data for every object of $\cC$, into a functor $\mathrm{h}_1\cC \to \Omega \cC$. \end{lemma}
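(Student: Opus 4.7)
The strategy is to construct a pseudofunctor from $\cC^\coh$ to $\rB\Omega\cC$ (i.e., $\Omega\cC$ viewed as a $2$-category with one object) and then pass to homotopy $1$-categories. The construction on objects is given by Definition~\ref{defn:eta}; what remains is (i) to define, for each $1$-morphism $f: (X, X^\vee, \dots) \to (Y, Y^\vee, \dots)$ in $\cC^\coh$, a $2$-morphism $\eta(f) : \eta(X) \Rightarrow \eta(Y)$; (ii) to verify $\eta(\id) = \id$ and $\eta(g\circ f) = \eta(g) \cdot \eta(f)$ up to $2$-isomorphism; and (iii) to verify invariance under $2$-isomorphisms of $1$-morphisms, so that the assignment descends to $\rh_1 \cC^\coh$. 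The final functor $\rh_1 \cC \to \Omega \cC$ is then obtained by choosing right duality data for every object of $\cC$, which amounts to a splitting of the equivalence $\rh_1 \cC^\coh \to \rh_1 \cC$.

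For (i), recall that a $1$-morphism in $\cC^\coh$ carries right adjoint data $(f^*, \ev_f, \coev_f)$ and right mate data $(f^\vee, \rot_f, \rot^f)$ for $f$ and $(f^{*\vee}, \rot_{f^*}, \rot^{f^*})$ for $f^*$. I plan to define $\eta(f)$ as the composite
\begin{align*}
\eta(X) &\To[{\coev_f}] \ev_X \circ ((f^*\circ f) \boxtimes \id_{X^\vee}) \circ \br_{X^\vee, X} \circ \coev_X \\
&\Isom \ev_Y \circ \br_{Y^\vee, Y} \circ (\id_{Y^\vee} \boxtimes (f \circ f^*)) \circ \coev_Y \\
&\To[{\ev_f}] \eta(Y),
\end{align*}
where the first and last $2$-morphisms are the unit and counit of the adjunction $f \dashv f^*$ inserted on the upward $X$-strand and downward $Y$-strand respectively, and the middle $2$-isomorphism is assembled from the mate isomorphisms $\rot^{f^*}$ and $\rot_{f^*}^{-1}$ together with two applications of naturality of the braiding sliding $f$ and $f^{*\vee}$ past $\br$. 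Graphically, $\eta(f)$ corresponds to inserting an ``$f$-cap'' on the upward strand of the $\eta(X)$-loop, dragging it around the loop while converting the $X$- and $X^\vee$-strands into $Y$- and $Y^\vee$-strands via the mate data, and annihilating it as an ``$f$-cup'' on the other side.

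For (iii), a $2$-isomorphism $\phi : f \Isom g$ in $\cC^\coh$ is a $2$-isomorphism in $\cC$ inducing canonical $2$-isomorphisms $\phi^*, \phi^\vee, \phi^{*\vee}$, and unwinding the construction shows that these intertwine $\eta(f)$ and $\eta(g)$, so $\eta$ factors through $\rh_1 \cC^\coh$. The identity law in (ii) follows from the cusp equation on the chosen duality data together with trivial adjoint and mate data for $\id_X$. The main obstacle will be the composition law, for which one must show that the ``two-slide'' $2$-morphism $\eta(g)\cdot\eta(f)$ built from $f$ followed by $g$ agrees with the single-slide $2$-morphism $\eta(g \circ f)$: this reduces to the canonical compatibility $(g \circ f)^\vee \cong f^\vee \circ g^\vee$ of mates with composition, together with the corresponding identity on rotation data and repeated naturality of the braiding. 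All ingredients are standard and available, so the resulting diagrammatic coherence check, though somewhat involved, should go through routinely.
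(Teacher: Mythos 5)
Your construction of $\eta(f)$ matches the paper's, with one choice discussed in the paper itself: you route through the mate data for $f^*$ (using $\rot^{f^*}$ and $\rot_{f^*}^{-1}$), which is exactly the alternative expression \eqref{eq:otherexpression} of Remark~\ref{rem:otherexpressions}, rather than the paper's primary formula \eqref{eq:etaf}, which uses the mate data $(\rot_f,\rot^f)$ of $f$ --- the paper proves the two formulas agree. Your treatment of well-definedness under $2$-isomorphism, unitality, and functoriality coincides with what the paper leaves as routine verifications. One small correction: the opening framing ``construct a pseudofunctor $\cC^\coh \to \rB\Omega\cC$'' does not typecheck, since in $\rB\Omega\cC$ the objects of $\Omega\cC$ appear as $1$-morphisms, so such a pseudofunctor would have to collapse all objects of $\cC^\coh$ to the single object of $\rB\Omega\cC$ and would have nowhere to record $\eta(X)$; this sentence is not load-bearing, since your steps (i)--(iii) describe directly the assignment $[f] \mapsto \eta(f)$ that yields the functor $\rh_1\cC^\coh \to \Omega\cC$, but it should be dropped or replaced by that more literal description.
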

  
  \begin{proof}
  Let $f : X \to Y$ be a representative of a 1-morphism $[f] : (X,X^\vee,\dots) \to (Y,Y^\vee,\dots)$ in $\rh_1 \cC^\coh$. Choose a right adjoint $(f^*,\ev_f,\coev_v)$ and a mate $(f^\vee, \rot_f,\rot^f)$, and define the following $2$-morphism $\eta(f) : \eta(X) \Rightarrow \eta(Y)$: 
 \begin{equation}\label{eq:etaf}
 \def\l{-0.5}
 \def\r{1.5}
 \def\u{2.65}
 \def\d{-0.65}
 \def\scl{0.98}
    \begin{tz}[std, scale=\scl]
    \clip (\l,\d) rectangle (\r, \u);
      \draw[string
      ] (0,0) to [out=up, in=down] (1,2); 
  \draw[braid=main
  ] (0,0) to [out=down, in=down, looseness=2](1,0) to  [out=up, in=down]  (0,2)  to [out=up, in=up, looseness=2]  (1,2);
 \end{tz}
 \overset{\coev_f}{ \Rightarrow} 
  \begin{tz}[std, scale=-\scl]
    \clip (\l,\d) rectangle (\r, \u);
    \draw[string] (0,0) to [out=up, in=down] (1,2); 
  \draw[braid] (0,0) to [out=down, in=down, looseness=2](1,0) to  [out=up, in=down]  node[dot, pos=1.09](A) {} node[pos=0.91, dot] (B){} (0,2)  to [out=up, in=up, looseness=2]  (1,2);
  \node[label, right] at (A) {$f\hphantom{^*}$};
    \node[label,right] at (B) {$f^*$};
 \end{tz}
  \overset{\rot_f}{\cong} 
   \begin{tz}[std,scale=-\scl]
    \clip (\l,\d) rectangle (\r, \u);
    \draw[string] (0,0) to [out=up, in=down] node[pos=1.05, dot] (B) {}(1,2); 
 \draw[braid] (0,0) to [out=down, in=down, looseness=2] (1,0) to  [out=up, in=down]  node[dot, pos=0.9](A) {} (0,2)  to [out=up, in=up, looseness=2]  (1,2);
  \node[label, right] at (A) {$f^*~$};
    \node[label, left] at (B) {$f^\vee$};
    \node[dot] at (B) {}; 
 \end{tz}
 \cong 
  \begin{tz}[std,scale=\scl]
    \clip (\l,\d) rectangle (\r, \u);
    \draw[string] (0,0) to [out=up, in=down] node[pos=0, dot] (B) {}(1,2); 
 \draw[braid] (0,0) to [out=down, in=down, looseness=2] (1,0) to  [out=up, in=down]  node[dot, pos=1.05](A) {} (0,2)  to [out=up, in=up, looseness=2]  (1,2);
  \node[label, left] at (A) {$f^*$};
    \node[label, left] at (B) {$f^\vee$};
    \node[dot] at (B) {}; 
 \end{tz}
  \cong 
  \begin{tz}[std,scale=\scl]
    \clip (\l,\d) rectangle (\r, \u);
    \draw[string] (0,0) to [out=up, in=down] node[pos=0.85, dot] (B) {}(1,2); 
 \draw[braid] (0,0) to [out=down, in=down, looseness=2] (1,0) to  [out=up, in=down]  node[dot, pos=1.05](A) {} (0,2)  to [out=up, in=up, looseness=2]  (1,2);
  \node[label, left] at (A) {$f^*$};
    \node[label, right] at (B) {$f^\vee$};
    \node[dot] at (B) {}; 
 \end{tz}
  \cong  
  \begin{tz}[std,scale=\scl]
    \clip (\l,\d) rectangle (\r, \u);
    \draw[string] (0,0) to [out=up, in=down] node[pos=1.05, dot] (B) {}(1,2); 
 \draw[braid] (0,0) to [out=down, in=down, looseness=2] (1,0) to  [out=up, in=down]  node[dot, pos=0.9](A) {} (0,2)  to [out=up, in=up, looseness=2]  (1,2);
  \node[label, left] at (A) {$f^*$};
    \node[label, right] at (B) {$f^\vee$};
    \node[dot] at (B) {}; 
 \end{tz}
 \overset{\rot^f}{ \cong} 
  \begin{tz}[std, scale=\scl]
    \clip (\l,\d) rectangle (\r, \u);
    \draw[string] (0,0) to [out=up, in=down] (1,2); 
  \draw[braid] (0,0) to [out=down, in=down, looseness=2](1,0) to  [out=up, in=down]  node[dot, pos=1.09](A) {} node[pos=0.91, dot] (B){} (0,2)  to [out=up, in=up, looseness=2]  (1,2);
  \node[label, left] at (A) {$f\hphantom{^*}$};
    \node[label,left] at (B) {$f^*$};
 \end{tz}
 \overset{\ev_f}{\Rightarrow} 
   \begin{tz}[std, scale=\scl]
    \clip (\l,\d) rectangle (\r, \u);
     \draw[string] (0,0) to [out=up, in=down] (1,2); 
  \draw[braid] (0,0) to [out=down, in=down, looseness=2](1,0) to  [out=up, in=down]  (0,2)  to [out=up, in=up, looseness=2]  (1,2);
 \end{tz}
 \end{equation}
 (We have omitted orientation marks for better readability.)
  It is easy to verify that $\eta(f)$ does not depend on the choice of adjoint and mate for $f$ and in fact only depends on the isomorphism class of $f$ and that $\eta(-)$ assembles into a $1$-functor. 
\end{proof}

 Note that, in the definition of $\eta(f)$ in equation~\eqref{eq:etaf}, the figure-eight is fixed, and simply serves as a track for the defects points to move along. Later we will move the figure-eight itself; see e.g.\ equation~\eqref{eqn:fl1} and Lemma~\ref{lemma:fl}.

\begin{remark}
For morphisms $a:A\to A'$ and $b:B\to B'$ in a braided monoidal $2$-category, denote by \[\widetilde{\beta}_{a,b}: \br_{A', B'} \circ (a\boxtimes b) \To (b \boxtimes a) \circ \br_{A, B} \]
the naturality isomorphism of the braiding (i.e.\ the composite of the unlabelled isomorphisms in~\eqref{eq:etaf}). Writing \begin{equation}\label{eq:tildecoev}
\widetilde{\coev}_f: \coev_X \To (f^\vee \boxtimes f^*) \circ \coev_Y \hspace{1cm} \widetilde{\ev}_f: \ev_X \circ (f^* \boxtimes f^\vee) \To \ev_Y
\end{equation}
 for the obvious composites of $\coev_f$ followed by $\rot_f$, and $\rot^f$ followed by $\ev_f$, respectively, we may express~\eqref{eq:etaf} more succinctly as the composite 
 \begin{equation}
 \label{eq:etafeasier} \eta(f)= \left(\widetilde{\ev}_f \circ \br_{Y^\vee, Y} \circ \coev_Y \right) \cdot  \left( \ev_X \circ \widetilde{\beta}_{ f^\vee, f^*} \circ \coev_Y \right) \cdot \left( {\ev_X} \circ {\br_{X^\vee, X}} \circ \widetilde{\coev}_{f}\right)
 \end{equation}
  \begin{equation*}
    \begin{tz}[std, scale=1]
      \draw[string
      ] (0,0) to [out=up, in=down] (1,2); 
  \draw[braid=main
  ] (0,0) to [out=down, in=down, looseness=2](1,0) to  [out=up, in=down]  (0,2)  to [out=up, in=up, looseness=2]  (1,2);
 \end{tz}
 ~\overset{\widetilde{\coev}_f}{\Rightarrow}~
   \begin{tz}[std,scale=-1]
    \draw[string] (0,0) to [out=up, in=down] node[pos=1.0, dot] (B) {}(1,2); 
 \draw[braid] (0,0) to [out=down, in=down, looseness=2] (1,0) to  [out=up, in=down]  node[dot, pos=1](A) {} (0,2)  to [out=up, in=up, looseness=2]  (1,2);
  \node[label, right] at (A) {$f^*~$};
    \node[label, left] at (B) {$f^\vee$};
    \node[dot] at (B) {}; 
 \end{tz}
 ~\overset{\widetilde{\beta}_{f^{\vee}, f^*}}{\cong}~
  \begin{tz}[std]
    \draw[string] (0,0) to [out=up, in=down] node[pos=1, dot] (B) {}(1,2); 
 \draw[braid] (0,0) to [out=down, in=down, looseness=2] (1,0) to  [out=up, in=down]  node[dot, pos=1.0](A) {} (0,2)  to [out=up, in=up, looseness=2]  (1,2);
  \node[label, left] at (A) {$f^*~$};
    \node[label, right] at (B) {$f^\vee$};
    \node[dot] at (B) {}; 
 \end{tz}
 ~\overset{\widetilde{\ev}_f}{\Rightarrow}~
   \begin{tz}[std, yscale=1]
     \draw[string] (0,0) to [out=up, in=down] (1,2); 
  \draw[braid] (0,0) to [out=down, in=down, looseness=2](1,0) to  [out=up, in=down]  (0,2)  to [out=up, in=up, looseness=2]  (1,2);
 \end{tz}
 \end{equation*}
 resulting in a more symmetric formula for $\eta(f)$ closely resembling~\eqref{eq:eta1cat}. 

In particular, given a $1$-morphism $x:I \to I$ in a braided monoidal $2$-category $\cC$, and choosing the trivial duality datum $(I^\vee =I, \ldots)$ for the tensor unit, expression~\eqref{eq:etafeasier} immediately implies that $\eta(x)$ agrees with $\eta(x)$ from Definition~\ref{defn:eta1cat} for the symmetric monoidal $1$-category $\Omega \cC$.
\end{remark}

\begin{remark}\label{rem:otherexpressions}
There are a number of equivalent expressions for  $\eta(f)$. For example, in~\eqref{eq:etaf}, the morphism $f$ ``moves around the circle'' while $f^*$ only moves up. Instead, one may work with $f^*$ and $(f^*)^\vee$ and analogously to~\eqref{eq:tildecoev} define the following composites of $\coev_f$ followed by $\rot^{f^*}$, and $\rot_{f^*}$ followed by $\ev_f$, respectively: 
\[
\widetilde{\widetilde{\coev}}_f: \ev_X \To \ev_Y\circ (f \boxtimes (f^*)^\vee) \hspace{1cm}  \widetilde{\widetilde{\ev}}_f: \coev_X \circ ((f^*)^\vee \boxtimes f) \To \coev_Y.
\]
Observe that $(f^*)^\vee$ is a \emph{left adjoint} of $f^\vee$ (in formulas: $(f^*)^\vee \cong {}^*(f^\vee)$ --- notice the change in variance coming from contravariance of $(-)^\vee$ in $1$-morphism direction), and hence that $( f \boxtimes (f^*)^\vee ) \dashv (f^* \boxtimes f^\vee)$ and $((f^*)^\vee \boxtimes f ) \dashv (f^\vee \boxtimes f^*)$. With respect to these adjunctions, $\widetilde{\coev}_f, \widetilde{\ev}_f$, and $\widetilde{\beta}_{f^\vee, f^*}$ are ``$*$-mates'' (i.e.\ mates of 2-morphisms with respect to adjunction of 1-morphisms)
of $\widetilde{\widetilde{\ev}}_f$, $\widetilde{\widetilde{\coev}}_f$ and $\widetilde{\beta}^{-1}_{(f^*)^\vee, f}$, respectively. (More generally, for any pair of $1$-morphisms $a:A\to A'$ and $b:B\to B'$ with right adjoints $a^*$ and $b^*$, the $2$-morphism $\widetilde{\beta}_{a^*,b^*} : \br_{A', B'} \circ (a^* \boxtimes b^*) \To (b^*\boxtimes a^*) \circ \br_{}$ is a $*$-mate of $\widetilde{\beta}^{-1}_{a,b}$ with respect to the induced adjunctions $(a\boxtimes b) \dashv (a^* \boxtimes b^*)$ and $(b \boxtimes a)\dashv (b^* \boxtimes a^*)$.) As the composite of mates is the mate of composites, it follows that we may write: 
\begin{equation}
\label{eq:otherexpression}\eta(f) = \left(\ev_Y \circ \br_{Y^\vee, Y} \circ \widetilde{\widetilde{\ev}}_f \right) \cdot \left( \ev_Y \circ \widetilde{\beta}^{-1}_{(f^*)^\vee, f} \circ \coev_X\right) \cdot  \left( \widetilde{\widetilde{\coev}}_f \circ \br_{X^\vee, X} \circ \coev_X\right).
\end{equation}
\end{remark}

\begin{remark}\label{rem:nonmonoidality2}
  The nonmonoidality in Remark~\ref{rem:nonmonoidal} categorifies in various ways to nonmonoidality of the functor $\eta : \mathrm{h}_1\cC \to \Omega \cC$. Suppose as an example that $x \in \Omega\cC$ and $Y \in \cC$ are both invertible. Then the full braid $\br_{Y,x}\br_{x,Y} : x \boxtimes \id_Y \Rightarrow x \boxtimes \id_Y$ is some ``scalar'' $\langle \br_{Y,x}\br_{x,Y} \rangle \in \Omega^2 \cC$ times $\id_{x \boxtimes \id_Y}$, and we have
  $$ \eta(x \boxtimes \id_Y : Y \to Y) = \langle  \br_{Y,x}\br_{x,Y} \rangle \, \eta(x) \, \id_{\eta(Y)}.$$
\end{remark}

\begin{example}\label{ex:computationofeta}
  The isomorphism classes of the $\eta$-dimensions of the simple objects in $\cS$ and $\cT$ are given by the following objects in $\Omega\cS = \Omega\cT = \sVec$:
  \begin{equation}\label{eqn:etaM}
   \eta(I) \cong I, \qquad \eta(C) \cong e, \qquad \eta(M) \cong I, \qquad \eta(C \boxtimes M) \cong I.\end{equation}
  The first isomorphism holds in any braided monoidal 2-category. The second isomorphism follows from~\eqref{eqn.halfbraidc}, and the third follows from~\eqref{eqn:braidingM}. 
  
  To prove the last isomorphism, note that Remark~\ref{rem:nonmonoidality2} implies that for $e\boxtimes \id_C: C \to C$, $e\boxtimes \id_M :M \to M$, and $e\boxtimes \id_{C\boxtimes M}: C \boxtimes M \to C\boxtimes M$, we have that 
\begin{equation}
\label{eq:etae}
\eta(e \boxtimes \id_C) = (-1) \id_{\eta(C)}, \qquad \eta(e \boxtimes \id_M) = \id_{\eta(M)}, \qquad \eta(e \boxtimes \id_{C\boxtimes M}) = \id_{\eta(C\boxtimes M)}.
\end{equation} 
Now recall from Example~\ref{eg:sVeclin} the (abusive) notation $v:I \to C$ and $v:C \to I$ for the non-invertible simple $1$-morphisms in $\Sigma \sVec$. They satisfy the fusion rules $v^2 \cong \id \oplus (e \boxtimes \id)$ (with $\id$ being either $\id_C$ or $\id_I$ depending on the order of composition). By tensoring with $\id_M$, we find noninvertible 1-morphisms $v \boxtimes \id_M : M \leftrightarrow C \boxtimes M$ for which $(v \boxtimes \id_M)^2 \cong (I \oplus e) \boxtimes \id$. Thus, by functoriality,
\[\eta(v \boxtimes \id_M)^2  = \eta((v\boxtimes \id_M)^2) = \eta((I \oplus e)\boxtimes \id_X) = 2\cdot \id_{\eta(X)}.
\]
where $X$ is either $M$ or $C\boxtimes M$ depending on the order of composition. In particular, $\eta(v \boxtimes \id_M) : \eta(M) \to \eta(C \boxtimes M)$ is an isomorphism, giving the last isomorphism in \eqref{eqn:etaM}.
\end{example}

We will also use a coherent 2-categorical version of the well-known fact that, in a braided monoidal $1$-category, it holds that $\eta(^*x) =\eta(x)$. This equation has the following string-diagrammatic proof:
\begin{equation}\label{eqn:fl1}
 \begin{tz}[std, scale=0.8]
   \draw[string] (0,0) to [out=up, in=down] (1,2); 
  \draw[braid] (0,0) to [out=down, in=down, looseness=2](1,0) to  [out=up, in=down]  (0,2)  to [out=up, in=up, looseness=2]  (1,2);
  \path[arrow data={0}{>}] (1,2) to (1,3);
    \path[arrow data={0}{<}] (0,2) to (0,3);
  \node[label, left] at (0,2) {${}^* x~$};
    \node[label, right] at (1,2) {$~x$};
 \end{tz} 
~~ = ~~
  \begin{tz}[std, scale=0.8]
   \draw[string] (0,0) to [out=up, in=down] (1,2) to [out=up, in=up, looseness=2] (2,2) to [out=down, in=down, looseness=2] (3,2) to [out=up, in=up, looseness=2] (0,2); 
  \draw[braid, arrow data={0.67}{>}] (0,2) to [out=down, in=up] (1,0) to [out=down, in=down, looseness=2] (-2,0) to[out=up, in=up, looseness=2] (-1,0) to [out=down, in=down, looseness=2] (0,0);
      \path[arrow data={0}{<}] (0,2) to (0,3);
      \path[arrow data={0}{<}] (2,2) to (2,3);
 \end{tz} 
~~= ~~
   \begin{tz}[std, scale=0.8]
   \draw[string] (0,0) to [out=up, in=down] (1,2) to [out=up, in=up, looseness=2] (2,2) to (2,-1) to  [out=down, in=down, looseness=2] (3,-1) to (3,2) to [out=up, in=up, looseness=2] (0,2); 
  \draw[braid] (0,2) to [out=down, in=up] (1,0) to [out=down, in=down, looseness=2] (-2,0) to (-2,3) to [out=up, in=up, looseness=2] (-1,3) to (-1,0) to [out=down, in=down, looseness=2] (0,0);
\path[arrow data={0}{>}] (-2,3) to (-2,4); 
   \path[arrow data={0}{<}] (0,2) to (0,3);
   \path[arrow data ={0}{>}] (2,-1) to (2,-2);
   \end{tz} 
   ~~= ~~
    \begin{tz}[std, scale=0.8]
   \draw[string] (0,0) to [out=up, in=down] (1,2); 
  \draw[braid] (0,0) to [out=down, in=down, looseness=2](1,0) to  [out=up, in=down]  (0,2)  to [out=up, in=up, looseness=2]  (1,2);
  \path[arrow data={0}{<}] (1,2) to (1,3);
    \path[arrow data={0}{>}] (0,2) to (0,3);
  \node[label, left] at (0,2) {$x~$};
      \node[label, right] at (1,2) {$~x^*$};
 \end{tz} 
\end{equation}

Consider a 2-category $^\coh \cC ^\coh$ whose objects are an object $X \in \cC$ together with both a choice of right dual $(X^\vee, \dots)$ and a separate choice of left dual $(^\vee X, \dots)$, and whose 1-morphisms similarly come with choices of both right and left mates and adjoints. 
As is the case with $\rh_1 \cC^\coh$, to give a 1-morphism $(X,{^\vee X},X^\vee, \dots) \to (Y,{^\vee Y},Y^\vee, \dots)$  in the homotopy category  is simply to give an isomorphism class $[f]$ of 1-morphisms $f : X \to Y$. Indeed, $\rh_1(^\coh \cC ^\coh)$ arises as a (strict!)\ fibre product
$$ \rh_1(^\coh \cC ^\coh) = \rh_1\cC^\coh \underset{\rh_1\cC}\times \rh_1\cC^\coh $$
where one of the maps $\rh_1\cC^\coh \to \rh_1\cC$ is the forgetful functor $(X,X^\vee,\dots) \mapsto X$, and the other is
$$ (Y,Y^\vee,\dots) \mapsto Y^\vee, \qquad [g] \mapsto [g^{*\vee}].$$
Note that both $(-)^*$ and $(-)^\vee$ are contravariant on 1-morphisms, so that $[g] \mapsto [g^{*\vee}]$ is covariant.
A splitting of this functor is a choice of left dual $(^\vee X,\dots)$ for each object $X$.

In particular, the two canonical functors $h_1{}^\coh \cC ^\coh \to h_1\cC^{\coh}$ are 
\begin{align*}
 (X, {}^\vee X, X^\vee, \ldots) \mapsto (X, X^\vee, \ldots),& \qquad [f] \mapsto [f] ,
\\
\intertext{and}
(X, {}^\vee X, X^\vee, \ldots) \mapsto ({}^\vee X, X, \ldots),& \qquad [f] \mapsto [{}^* {}^\vee f]. \end{align*}

 \begin{lemma} \label{lemma:fl}
 The proof \eqref{eqn:fl1} categorifies to a natural isomorphism 
 $$\fl_{(X,{^\vee X},X^\vee,\dots)}: \eta({}^\vee X ,X,\dots) \Isom \eta(X,X^\vee,\dots)$$
 of functors $\rh_1 (^\coh\cC^\coh) \rightrightarrows \rh_1\cC^{\coh} \to \Omega\cC$.
 After choosing for each object in $\cC$ both a left and right dual, we will write this as a natural isomorphism
 $$\fl_X : \eta(^{\vee}X) \Isom \eta(X).$$
 \end{lemma}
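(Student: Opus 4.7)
My approach is to categorify the string-diagrammatic proof \eqref{eqn:fl1} line by line, replacing each equality by a 2-isomorphism assembled from the chosen duality data. For an object $(X, {}^\vee X, X^\vee, \dots) \in {}^\coh\cC^\coh$, the 2-isomorphism $\fl_X$ will be built as a composite of: (i) the cusp 2-isomorphisms $\cusp_X$ and $\cusp^X$ for the right and left dualities of $X$ (and their mates), which implement the ``snake insertion/deletion'' steps; and (ii) naturality 2-isomorphisms for the braiding $\br_{-,-}$ of $\cC$, which implement the planar-isotopy steps. Concretely, the first equality in \eqref{eqn:fl1} categorifies to a snake insertion built from $\cusp_X$; the two middle equalities categorify to composites of braiding-naturality 2-isomorphisms sliding the inserted cap around the circle; and the last equality categorifies to a snake removal built from $\cusp^X$. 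The final composite lands in the 2-endomorphism of $\id_I$ given by the right duality data for $X$, which is precisely $\eta(X, X^\vee, \dots)$.

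Having constructed $\fl_X$ pointwise, I would next verify that it is well-defined — independent of any auxiliary choices of mates, adjoints, and coherence isomorphisms made (beyond the fixed left and right duality data for $X$). This follows exactly as for $\eta(-)$ itself, from the contractibility of the 2-groupoid of such auxiliary choices and the essential uniqueness of cusp data compatible with fixed right, resp.\ left, duality data for $X$.

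The main obstacle is naturality. For every 1-morphism class $[f] : (X, {}^\vee X, X^\vee, \dots) \to (Y, {}^\vee Y, Y^\vee, \dots)$ in $\rh_1({}^\coh\cC^\coh)$, I must establish the equality
\[
\fl_Y \cdot \eta({}^*{}^\vee f) \;=\; \eta(f) \cdot \fl_X
\]
in $\Omega\cC$. Using the symmetric formula \eqref{eq:etafeasier} for $\eta(f)$ together with its left-handed variant \eqref{eq:otherexpression} applied to ${}^*{}^\vee f$, and the explicit construction of $\fl$, both sides expand into composites of braiding-naturality 2-morphisms and cusp isomorphisms sandwiching $f$, its right mate $f^\vee$, its left mate ${}^\vee f$, and their adjoints. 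The verification is then a string-diagrammatic manipulation using the snake equations for $f \dashv f^*$ and ${}^*f \dashv f$, the defining compatibility between $\cusp_X, \cusp^X$ and the mates of $f$, and the coherence relating naturality of braiding with composition in $\cC$.

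The hard part will be the bookkeeping in this coherence check: one must carefully track how the right-mate data for $f$ (used to define $\eta(f)$) is exchanged with the left-mate data (used to define $\eta({}^*{}^\vee f)$) by the snake manipulations that constitute $\fl$. Once this is done, choosing for each object of $\cC$ both a left and a right dual gives a splitting of the forgetful functor $\rh_1({}^\coh\cC^\coh) \to \rh_1\cC$, along which $\fl$ descends to the stated natural isomorphism $\fl_X : \eta({}^\vee X) \Isom \eta(X)$ of functors $\rh_1\cC \to \Omega\cC$.
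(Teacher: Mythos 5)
Your approach is genuinely different from the paper's, and while it is plausible in outline, I think it undersells the difficulty and misses the clean trick the paper uses.

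The paper does \emph{not} categorify \eqref{eqn:fl1} step by step. Instead it exploits the observation that $\eta({}^\vee X)$, being a $1$-endomorphism of the tensor unit, is trivially its own right mate. The whole of $\fl_X$ is then the single zigzag of canonical mate isomorphisms
\[
\eta({}^\vee X) \cong \bigl(\eta({}^\vee X)\bigr)^\vee
= \bigl(\ev_{{}^\vee X} \circ \br_{X,{}^\vee X} \circ \coev_{{}^\vee X}\bigr)^\vee
\cong (\coev_{{}^\vee X})^\vee \circ (\br_{X,{}^\vee X})^\vee \circ (\ev_{{}^\vee X})^\vee
\cong \eta(X),
\]
where the last isomorphism uses that $\coev_X$, $\ev_X$ and $\br_{X^\vee,X}$ are the canonical right mates of $\ev_{{}^\vee X}$, $\coev_{{}^\vee X}$ and $\br_{X,{}^\vee X}$ with respect to the tensor-product duality data on ${}^\vee X \boxtimes X$ and $X \boxtimes {}^\vee X$. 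The naturality check then reduces to three commutative squares exhibiting the ingredients $\widetilde{\coev}_{{}^*{}^\vee f}$, $\widetilde{\ev}_{{}^*{}^\vee f}$, $\widetilde{\beta}_{f^*,{}^\vee f}$ of $\eta({}^*{}^\vee f)$ as mates of the ingredients $\widetilde{\widetilde{\coev}}_f$, $\widetilde{\widetilde{\ev}}_f$, $\widetilde{\beta}^{-1}_{(f^*)^\vee, f}$ appearing in the alternate formula \eqref{eq:otherexpression} for $\eta(f)$ — uniqueness of mates and ``composites of mates are mates of composites'' do the rest. This is far less bookkeeping than verifying a naturality square between two composites of a dozen-odd cusps, interchangers, and braiding-naturality cells.

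In contrast, your plan to categorify \eqref{eqn:fl1} line by line would require more than cusps and braiding-naturality: the ``slides'' in \eqref{eqn:fl1} move strands past cups and caps, which in a braided monoidal $2$-category need interchanger, triangulator, and cusp/swallowtail-type coherences tying the duality data to the monoidal and braided structure, not just $\widetilde\beta_{-,-}$ as you say. Those additional choices would each need to be shown independent of the arbitrary intermediate lifting, and then reconciled in the naturality check against the mates of $f$. It should all eventually collapse to the same $2$-morphism by uniqueness of mates, but you would essentially be reproving that uniqueness from scratch. There is also a downstream cost: the independence-of-choices packaged into Remark~\ref{rem:complicatedfl} (arbitrary duality data for ${}^\vee X \boxtimes X$, arbitrary equivalences $\omega_{L,R}$, arbitrary mates) is immediate from the mate machinery and is what makes the half-braided-algebra computations in \S\ref{subsec:twistedtrace} feasible; it would be much less transparent in a hand-built categorification. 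I'd recommend looking for the trivial self-mateness of $\eta({}^\vee X)$ early — it is the leverage point that turns this into an argument you can actually finish.
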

  
  \begin{proof}
The choices of left and right duality data for $X$ needed to lift $X$ and $^\vee X$ to $\cC^\coh$ also select right duality data for the objects ${^\vee X} \boxtimes X$ and $X\boxtimes {^\vee X}$: for example, after suppressing associators and unitors, we may choose $\ev_{{^\vee X} \boxtimes X} := \ev_{^\vee X} \circ (\id_{^\vee X} \boxtimes \ev_X \boxtimes \id_X)$ as the evaluation for a duality ${}^\vee X \boxtimes X \dashv X^\vee \boxtimes X$. With respect to these chosen duality data, $\coev_X$ is a right mate of $\ev_{{^\vee X}}$, $\ev_X$ is a right mate of $\coev_{{^\vee X}}$, and $\br_{X^\vee, X}$ is a right mate of $\br_{X,{^\vee X}}$.
    
On objects, we define $\fl_X$ to be the isomorphism
\begin{multline} \label{eqn:fl}
\eta({^\vee X}) \cong \bigl(\eta({^\vee X})\bigr)^\vee = 
 \bigl(\ev_{{^\vee X}} \circ \br_{X, {^\vee X}} \circ \coev_{{^\vee X}}\bigr)^\vee \\
 \cong (\coev_{{^\vee X}})^\vee \circ (\br_{X, {^\vee X}})^\vee \circ (\ev_{{^\vee X}})^\vee 
\cong \ev_X \circ \br_{X^\vee, X} \circ \coev_X = \eta(X)
\end{multline}
  The first isomorphism is the fact that $\eta({^\vee X})$, being an endomorphism of the unit object, is trivially a mate for itself. 
  The second isomorphism is the canonical isomorphism between mates of composites and composites of mates. The last isomorphism uses the matings listed in the previous paragraph.
  
  To prove naturality, let $f:X \to Y$ be a $1$-morphism, and choose a right adjoint $f^*$ and a right mate $f^\vee$ as needed to define $\eta(f)$. Further, choose a left mate ${}^\vee f$ and a left adjoint ${}^*{}^\vee f$. To compute $\eta({}^*{}^\vee f)$, we choose ${}^\vee f$ as a right adjoint of ${}^*{}^\vee f$ and we choose $f^*$ as a right mate of ${}^*{}^\vee f$ (in formulas: $({}^*{}^\vee f)^\vee \cong f^*$, cf.\ Remark~\ref{rem:otherexpressions}).
  With these choices, the expressions~\eqref{eq:tildecoev} unpack to
\[
\widetilde{\coev}_{{}^*{}^\vee f}: \coev_{{}^\vee X} \To (f^* \boxtimes {}^\vee f) \circ \coev_{{}^\vee Y} \hspace{1cm} \widetilde{\ev}_{{}^*{}^\vee f}: \ev_{{}^\vee X} \circ ({}^\vee f \otimes f^*) \To \ev_{{}^\vee Y}
\]
and we find the following commutative diagrams:
      \[\begin{tikzcd}
  \left( \coev_{{}^\vee X}\right)^\vee 
  \arrow[d, Rightarrow, "\cong"'] \arrow[rr,Rightarrow,  "\left(\widetilde{\coev}_{{}^*{}^\vee f}\right)^\vee"]&& \left( (f^*\boxtimes {}^\vee f) \circ \coev_{{}^\vee Y} \right)^\vee  \arrow[d, Rightarrow, "\cong"]\\
  \ev_X  \arrow[rr, Rightarrow, "\widetilde{\widetilde{\coev}}_{f}"]&& \ev_Y \circ \left( f \boxtimes (f^*)^\vee\right)
  \end{tikzcd}
  \hspace{1.cm}
  \begin{tikzcd} 
  \left(\ev_{{}^\vee X} \circ ({}^\vee f \otimes f^*)\right)^\vee 
  \arrow[d,Rightarrow,  "\cong"'] \arrow[rr,Rightarrow, "\left(\widetilde{\ev}_{{}^*{}^\vee f}\right)^\vee"]&& \left( \ev_{{}^\vee Y} \right)^\vee 
  \arrow[d, Rightarrow, "\cong"] \\
  ((f^*)^\vee \boxtimes f) \circ \coev_X  \arrow[rr, Rightarrow,"\widetilde{\widetilde{\ev}}_{f}"]&& \coev_Y
  \end{tikzcd}
  \]
      \[\begin{tikzcd}
  \left( \br_{Y, {}^\vee X} \circ (f^* \boxtimes {}^\vee f) \right)^\vee 
  \arrow[d, Rightarrow, "\cong"'] \arrow[rr,Rightarrow,  "\left(\widetilde{\beta}_{f^*, {}^\vee f}\right)^\vee"]&& \left( ({}^\vee f \boxtimes f^*) \circ \br_{X, {}^\vee Y}\right)^\vee   \arrow[d, Rightarrow, "\cong"]\\
  (f \boxtimes (f^*)^\vee) \circ \br_{Y^\vee, X}  \arrow[rr, Rightarrow, "\widetilde{\beta}^{-1}_{(f^*)^\vee, f}"]&& \br_{X^\vee, Y} \circ  \left( (f^*)^\vee \boxtimes f\right)
  \end{tikzcd}
  \]
  Here, the vertical morphisms are the canonical isomorphisms between mates. Hence, it follows from the equality between~\eqref{eq:tildecoev} and~\eqref{eq:otherexpression} and the definition~\eqref{eqn:fl} of $\fl_X$ in terms of canonical isomorphisms between mates that $\eta(f) \cdot \fl_X = \fl_{{}^\vee Y} \cdot \eta({}^* {}^\vee f)$.
    \end{proof}

\begin{remark}\label{rem:complicatedfl}
We will in fact use a slightly more flexible formula for $\fl$ than the one in~(\ref{eqn:fl}). Given $X \in \cC$, choose right and left duality data, needed to define $\eta({^\vee X})$ and $\eta(X)$. Furthermore, choose:
\begin{enumerate}
  \item arbitrary right duality data $(({^\vee X} \boxtimes X)^\vee, \ldots)$ and $((X\boxtimes {^\vee X})^\vee, \ldots)$ for ${^\vee X} \boxtimes X$ and $X\boxtimes {^\vee X} $. \label{choice:productarbitrary}
  \item equivalences of duality data $\omega_L: ({}^\vee X \boxtimes X)^\vee \to X^\vee \boxtimes X$ and $\omega_R: (X\boxtimes {}^\vee X)^\vee \to X \boxtimes X^\vee$ between the data chosen in \ref{choice:productarbitrary} and the canonical tensor product duality data used in the proof of Lemma~\ref{lemma:fl}. \label{choice:productarbitraryundo}
  \item mates for $\coev_{{}^\vee X}$, $\ev_{{}^\vee X}$, and $\br_{X, {^\vee X}}$ with respect to the chosen duality data \ref{choice:productarbitrary}. \label{choice:mates}
\end{enumerate}
Then we can just as well define $\fl$ as the isomorphism
\begin{multline} \label{eqn:flfull}
\eta({^\vee X}) 
 \cong (\coev_{{^\vee X}})^\vee \circ (\br_{X, {^\vee X}})^\vee \circ (\ev_{{^\vee X}})^\vee 
 \\ 
\cong (\coev_{{^\vee X}})^\vee \circ \omega_L^{-1} \circ \omega_L \circ  (\br_{X, {^\vee X}})^\vee \circ \omega_R^{-1} \circ \omega_R \circ (\ev_{{^\vee X}})^\vee
\\
\cong \ev_X \circ \br_{X^\vee, X} \circ \coev_X = \eta(X)
\end{multline}
It is then straightforward to check that $\fl$ as defined in (\ref{eqn:flfull}) is independent of the choices made in \ref{choice:productarbitrary}--\ref{choice:mates}. The reason we will want this flexibility is that we will have examples where the ``natural'' duals and mates are not the canonical ones given by tensor products.
\end{remark}

\subsection{The Klein invariant \texorpdfstring{$\Inv$}{kappa}}\label{subsec:inv}

With the functor $\eta(-)$ and the natural transformation $\fl: \eta({}^*{}^\vee(-)) \To \eta(-)$ in hand, we may now define our invariant $\Inv$. 

\begin{definition}\label{defn:klein}
Suppose that $X \in \cC$ is a fully-dualizable object equipped with a self-duality $r : X \isom {^\vee X}$. Then, $\fl_X \cdot \eta(r) : \eta(X) \to \eta({}^\vee X) \to \eta(X)$ is an automorphism.
The \define{Klein invariant} $\Inv(\cC, X, r)$ is its trace as defined in equation (\ref{eqn:etatr}):
$$ \kappa(\cC,X,r) := \tr\left(\vphantom{\frac{a}{b}}\fl_X \cdot \eta(r) : \eta(X) \to \eta(X)\right) \in \Omega^2 \cC $$
A priori, our Klein ``invariant'' looks like it depends on choices of duality data for $X, r, \dots$. But functoriality of $\eta$ and naturality of $\fl$ shows that it does not: if $Y $ is any other choice of left dual of $X$, $\fl_{(X, {}^\vee X, \ldots) }: \eta({}^\vee X) \to \eta(X)$ and $\fl_{(X, Y, \ldots )}: \eta(Y) \to \eta(X)$ are the respective coefficients of the flip natural isomorphism, and $h : {^\vee X} \to Y$ is an equivalence of duality data, then  $ \tr \bigl(\fl_{(X, {}^\vee X, \ldots)} \cdot \eta(r) : \eta(X) \to \eta(X)\bigr)= \tr \bigl(\fl_{(X,Y, \ldots)} \cdot \eta(g \circ r) : \eta(X) \to \eta(X)\bigr) $. 
We will occasionally write just $\kappa(X,r)$ when the value of $\cC$ is understood.
\end{definition}

The name ``Klein'' for our invariant $\Inv$ comes from its topological interpretation: the trace of the flip map --- the orientation-reversing diffeomorphism of the circle --- is topologically a Klein bottle.

\begin{remark}
  One could invent the invariant $\Inv$, and in particular discover the importance of the Klein bottle to our problem, via the following topological analysis. (Our actual route to discovery was more circuitous.)

  Our goal is to distinguish the two braided fusion 2-categories $\cS$ and $\cT$. After choosing magnetic objects $M$ and self-duality data, it would suffice to distinguish the symmetric monoidal sub-2-categories $2\Vec[\bZ_2] \subset \cS$ and $2\Vec^\tau[\bZ_2] \subset \cT$ generated by $\cM$, where $\tau = (-1)^{\Sq^2 \Sq^1 t_\bullet} \in \H^{\bullet + 3}(K(\bZ_2,\bullet); \bk^\times)$. We attempt to distinguish these symmetric monoidal $2$-categories $\cC$ by evaluating (various invertible $\cC$-valued $2$-dimensional TFTs induced by) the non-trivial invertible object $M$ on certain closed $2$-manifolds. 
  In fact, as $M$ is invertible, any such field theory factors through the Picard spectrum (i.e. the symmetric monoidal groupal sub-$2$-groupoid of invertible objects, $1$- and $2$-morphisms) of $\cC$, and hence it suffices to distinguish these Picard spectra. By definition, $\mathrm{Pic}(2\Vec[\bZ_2])$ is the product $\Sigma^2 H \bk^\times \times H\bZ_2$ of Eilenberg-Maclane spectra, while $\mathrm{Pic}(2\Vec^\tau[\bZ_2])$ is the non-trivial extension of $H\bZ_2$ by $\Sigma^2 H \bk^\times$ controlled by the Postnikov class $(-1)^{\Sq^2\Sq^1}$. Simplifying further by only considering scalars $\{ \pm 1\} \subseteq \bk^\times$, let $W$ be the spectrum with homotopy groups $\pi_0 W= \pi_2 W = \bZ_2$ connected by the Postnikov class $\Sq^2 \Sq^1$. Hence, our goal is to write down a field-theoretic invariant that sees that this Postnikov class is nontrivial. 
  
  Following the classification of invertible field theories~\cite{SchommerPriesInv}, let us try to do this by mapping  into $W$ from various (for simplicity stable) bordism spectra $\mathrm{MT}G$ and then evaluating the induced map $\pi_2 \mathrm{MT}G \to \pi_2 W$.

 As a first attempt, one could consider mapping into $W$ from the framed bordism aka sphere spectrum $\bS = \mathrm{MT*}$. A map $\bS \to W$ is classified by an object $M$ in $\pi_0 W = \bZ_2$. Unfortunately, whichever object one picks, the induced map on $\pi_2 \bS \to \pi_2 W$ will always be trivial as any map $\bS^2 \to \bS$ factors through $\bS^1$ and $\pi_1 W =0$. Therefore, the Postnikov class cannot be detected by evaluating the field theory induced by our object $M$ on framed $2$-manifolds.
   
  Let us look at the other extreme: the invertible (stable) unoriented bordism spectrum $\mathrm{MTO}$. Fix the non-trivial map $\pi_0 \mathrm{MTO} \isom \pi_0 W = \bZ_2$ (determined by picking the non-trivial object in $\pi_0 W$ as labelling our bordisms) and ask whether this lifts to a map $\mathrm{MTO} \to W$. By the definition of $W$, such lifts are equivalent to trivializations of $\Sq^2 \Sq^1u$, where $u$ is the nontrivial element in reduced cohomology
   $\widetilde{\H}{}^0(\mathrm{MTO};\bZ_2) = \bZ_2$. But $\mathrm{MTO} = \mathrm{MO}$ is a $\bZ_2$-oriented Thom spectrum, so reduced cohomology of $\mathrm{MTO}$ agrees with unreduced cohomology of $\mathrm{BO}$ and $\Sq^2\Sq^1$ sends the Thom class $u$ to the non-trivial class $w_1w_2 + w_3 \in \H^3(\mathrm{B}O, \bZ_2)$. We conclude that $\mathrm{MTO}$ simply does not map nontrivially to $W$.

Therefore, we need to use a spectrum $\mathrm{MT}G$ somewhere between $\bS$ and $\mathrm{MTO}$. In particular, we should ask 
  that the composition $\mathrm{MT}G \to \mathrm{MTO} \to[w_1 w_2 + w_3] \Sigma^3 H \bZ_2$ vanishes, while still asking  for the induced map $\pi_2\mathrm{MT}G \to \pi_2W$ to be nontrivial. A candidate for such a bordism spectrum is $\mathrm{MTPin_+}$. This follows since $w_1 w_2 + w_3 = \Sq^1 w_2$, and $\mathrm{Pin}_+$-manifolds are precisely those for which (tangential) $w_2$ vanishes. 
  
  In fact, the low-dimensional tangential $\mathrm{Pin}_+$ bordism groups are (see~\cite{MR1171915}) \[ \Omega^{\mathrm{Pin}_+}_0 = \bZ_2 \hspace{1cm} \Omega^{\mathrm{Pin}_+}_1= 0 \hspace{1cm} \Omega^{\mathrm{Pin}_+}_2= \bZ_2,\]  and since $\mathrm{MTPin_+} \cong \mathrm{MPin_-}$ is a Thom spectrum, its low-dimensional reduced cohomology groups $ \widetilde{\H}{}^\bullet(\mathrm{MTPin_+}, \bZ_2) \cong \H^{\bullet}(\mathrm{BPin_-}, \bZ_2)$ are
  \[ \H^0 = \bZ_2 \hspace{1cm} \H^1 = \bZ_2 \hspace{1cm} \H^2 = \bZ_2
\] These cohomology groups imply that the first Postnikov invariant of $\mathrm{MTPin}_+$ has to be non-trivial and hence is $\Sq^2 \Sq^1$. In other words, our spectrum $W$ is precisely equivalent to the 2-type of $\mathrm{MTPin}_+$. It is known that $\pi_2 \mathrm{MTPin}_+$ is generated by the Klein bottle with non-bounding $\mathrm{Pin}_+$ structure (see~\cite{MR1171915}). Thus, this Klein bottle is the \emph{universal invariant} to see our nontrivial extension.

This Klein bottle invariant can be extended from symmetric monoidal groupal $2$-groupoids to symmetric monoidal $2$-categories, by interpreting the above computation in terms of the Cobordism Hypothesis:  this is achieved by requiring the point to map to an (appropriately structured) dualizable object, rather than an invertible object. By the Cobordism Hypothesis, to compute the Klein invariant on a fully dualizable object $X$ in a symmetric monoidal $2$-category, one needs to equip this object with a $\mathrm{Pin}_+$-structure, i.e.\ a homotopy fixed point structure for $\rB\mathrm{Pin}_+(2) \to \rB \rO(2)$. In a $(2,2)$-category, this amounts to a self-duality $X\to X^\vee$,  which is ``self-dual'' in the sense that there is an isomorphism between $r$ and its mate which satisfies a further symmetry property, and a trivialization of the square of the Serre equivalence  $S_X: X\to X^{\vee\vee}$ (Remark~\ref{rem:Serre}) fulfilling further compatibility conditions with $r$. 

In fact, computing the Klein invariant on $X$ requires far less structure: The nonbounding $\mathrm{Pin}_+$ structure on the Klein bottle is induced by a \emph{projective framing} --- a trivialization of the projectivization of the tangent bundle of $K$ --- corresponding to the tangential structure $\rB\bZ_2\to \rB \rO(2)$. Moreover, this structure can be further lifted to a $\rB\bZ \to \rB \rO(2)$ structure, corresponding to the fact that the Klein bottle admits an integral lift of its $w_1$. By the Cobordism Hypothesis, a two-dimensional field theory with such a structure $\rB\bZ \to \rB \rO(2)$ is classified by an object $X$ equipped with a self-duality $X\to {}^\vee X$ fulfilling no further coherence condition. This explains why we only needed such a self-duality in Definition~\ref{defn:klein}. 

By embedding our Klein bottle in $\bR^4$ (in a way compatible with its tangential structure), we may further  replace ``symmetric'' with ``braided'' by using the \define{Embedded Cobordism} aka \define{Tangle hypothesis} from \cite[\S4.4]{lurie}.  (That the Klein bottle embeds in $\bR^4$ explains why our invariant will require only a braided monoidal structure on our 2-category and not a sylleptic or symmetric structure. That it does not embed in $\bR^3$ explains why it will not be defined in plain monoidal 2-categories.)
\end{remark}

Another perspective on the Klein invariant $\Inv(M, r)$ follows from an observation of Noah Snyder, who pointed out that $\Inv(M, r)$ appears as the attaching map of the $5$-cell in a minimal cell decomposition of the $4$-type of $\mathrm{BSO}(3)$, constructed in upcoming work of Douglas--Schommer-Pries--Snyder~\cite{DSPS-SO3}. Since the $4$-types of $\mathrm{BSO}(3)$ and $K(\bZ_2, 2)$ only differ by $\pi_4(\mathrm{BSO}(3))\cong \bZ$, this immediately leads to a cell decomposition of the $4$-type of $K(\bZ_2,2)$. Combined with~\cite[Theorem 8.18]{DN} which characterizes $\bZ_2$-extensions of braided fusion $1$-categories in terms of braided monoidal $2$-functors $\bZ_2 \to \BrModr{\cB}$, or equivalently homotopy classes of maps from $K(\bZ_2, 2)$ into the $4$-type corresponding to the braided monoidal groupal $2$-groupoid of invertible objects, $1$- and $2$-morphisms in $\BrModr{\cB}$,   this justifies the appearance of $\Inv(M, r)$ in our proof. Since the following Proposition~\ref{prop:CW} and Corollary~\ref{cor:CW} rely on~\cite{DSPS-SO3} which is not yet publicly available, we will not use them in the rest of the paper.
But see Remark~\ref{rem:Z2ext} for a version of our proof that uses Proposition~\ref{prop:CW}, Corollary~\ref{cor:CW}, and~\cite[Theorem 8.18]{DN} instead of Theorem~\ref{thm.BCs} and the content of~\S\ref{subsec:SandT} and~\S\ref{subsec:KleinST}. 

\begin{prop}\label{prop:CW} Let $\cC$ be a braided monoidal $2$-category. The $2$-groupoid of braided monoidal $2$-functors $\bZ_2 \to\cC$ is equivalent to the $2$-groupoid of triples $(M, r, \phi)$ of 
\begin{itemize}
\item an invertible object $M \in \cC$;
\item a $1$-equivalence $r: M^2 \to I$;
\item a $2$-isomorphism $\phi: \eta(M) \To \id_I$ (or equivalently, a $2$-isomorphism $\br_{M,M} \cong \id_{M \boxtimes M}$);
\end{itemize}
subject to the following conditions:
\begin{itemize}
\item the Klein invariant $\Inv(M, r): \id_I \To \id_I $ is the identity $\id_{\id_I}$;
\item a further obstructing $2$-isomorphism $\lambda(M, r, \phi): \id_I \To \id_I$ is the identity $\id_{\id_I}$. 
\end{itemize}
If $\alpha: \id_I \To \id_I$ is any $2$-isomorphism, then $\lambda(M, r, \alpha\cdot \phi) = \alpha^4\cdot \lambda(M, r, \phi)$. (We will not give an explicit expression for $\lambda$.)
\end{prop}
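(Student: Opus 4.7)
I would first reinterpret the data of a braided monoidal $2$-functor $\bZ_2 \to \cC$ topologically. Such a functor factors through the sub-$2$-groupoid of invertible objects, $1$- and $2$-morphisms of $\cC$, which is an $E_2$-groupal $2$-groupoid whose twofold delooping has nontrivial homotopy groups in degrees $2, 3, 4$ recording, respectively, equivalence classes of invertible objects of $\cC$, of invertible $1$-endomorphisms of $I$, and of invertible $2$-endomorphisms of $\id_I$. Braided monoidal $2$-functors $\bZ_2 \to \cC$ (up to braided monoidal $2$-natural equivalence) then correspond to pointed homotopy classes of maps from the classifying space $\rB^2\bZ_2 = K(\bZ_2, 2)$ into this twofold delooping.

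The plan is to read off the data $(M,r,\phi)$ and the conditions on $\Inv(M,r)$ and $\lambda$ by extending such a pointed map cell by cell, using a minimal CW decomposition of the appropriate truncation of $K(\bZ_2, 2)$. Such a decomposition is provided by the upcoming work of Douglas--Schommer-Pries--Snyder~\cite{DSPS-SO3} on the $4$-type of $\mathrm{BSO}(3)$; since $K(\bZ_2, 2)$ and $\mathrm{BSO}(3)$ agree through dimension three and the $4$-type of $\mathrm{BSO}(3)$ has only an additional $\bZ$ in $\pi_4$, their minimal cell decompositions transfer into one another in the relevant range. Extending across each cell in turn then yields: the $2$-cell picks out an invertible object $M \in \cC$; the $3$-cell, attached via multiplication by $2$, requires a $1$-equivalence $r: M \boxtimes M \cong I$; the $4$-cell is attached by the class $\eta(M)$ in $\pi_3$ of the $3$-skeleton, so an extension across it is a trivialization $\phi: \eta(M) \cong \id_I$, which by Example~\ref{ex:computationofeta} and Remark~\ref{rem:nonmonoidal} is equivalent to a trivialization $\br_{M,M}\cong \id_{M \boxtimes M}$; and the $5$-cell is attached by the Klein bottle class, so extension requires the Klein invariant $\Inv(M, r) \in \Omega^2\cC$ of Definition~\ref{defn:klein} to vanish.

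The last obstruction $\lambda(M, r, \phi)$ arises from extending across a further $6$-cell. The scaling identity $\lambda(M, r, \alpha \cdot \phi) = \alpha^4\, \lambda(M, r, \phi)$ follows from the fact that the attaching map of this $6$-cell depends quartically on the chosen $4$-cell trivialization, so that rescaling $\phi$ by the invertible $2$-morphism $\alpha$ multiplies the obstruction by $\alpha^4$. The main obstacle in carrying this out is the identification of the precise attaching maps of the $5$- and $6$-cells in the decomposition of~\cite{DSPS-SO3}; once these are pinned down, matching the $5$-cell attaching map to our Definition~\ref{defn:klein} of the Klein invariant reduces to the cobordism-hypothesis-style dictionary between topological tangles and categorical structure, with the nonbounding $\mathrm{Pin}_+$-Klein bottle corresponding exactly to taking the $\eta$-trace of $\fl_X \cdot \eta(r)$.
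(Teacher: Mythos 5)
Your overall approach is the same as the paper's: read off the data $(M,r,\phi)$ and the obstructions by extending cell by cell along the minimal CW decomposition of the $4$-type of $\mathrm{BSO}(3)$ from~\cite{DSPS-SO3}, then across one more cell to reach the $4$-type of $K(\bZ_2,2)$. But there are two concrete problems in how you finish.

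First, a dimension error: the obstruction $\lambda$ arises from a further \emph{$5$-cell}, not a $6$-cell. The $4$-type of $\mathrm{BSO}(3)$ has $\pi_2 \cong \bZ_2$, $\pi_3 = 0$, $\pi_4 \cong \bZ$, while the $4$-type of $K(\bZ_2,2)$ has only $\pi_2 \cong \bZ_2$; one passes from the former to the latter by attaching a single $5$-cell along a generator of $\pi_4(\mathrm{BSO}(3)) \cong \bZ$. The obstruction to extending a map $\mathrm{BSO}(3) \to \rB^2\cC^\times$ across that cell is the image of that $\pi_4$ generator, and this is $\lambda(M,r,\phi)^{\pm 1} \in \Omega^2\cC^\times$. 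A $6$-cell would have its attaching map in $\pi_5$, but since $\rB^2\cC^\times$ is a $4$-type, $\pi_5$ of the target vanishes and a $6$-cell can never contribute an obstruction. So your accounting miscounts.

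Second, the quartic scaling identity is asserted rather than argued. Saying that ``the attaching map depends quartically on the chosen $4$-cell trivialization'' is just a restatement of the claim, not a proof. The paper makes this precise in two steps: (i) by connectivity, $\pi_4(\mathrm{BSO}(3) \vee S^4) \cong \pi_4(\mathrm{BSO}(3)) \oplus \bZ$, so there is a universal integer $n$, independent of $\cC$, such that $\lambda(M,r,\alpha\phi) = \alpha^n \lambda(M,r,\phi)$; and (ii) $n$ can then be computed in integral cohomology, using that $\H^4(\mathrm{BSO}(3);\bZ) \cong \bZ$ is generated by the first Pontryagin class $p_1$ (which evaluates to $1$ on the $4$-cell $\phi$), together with the classical computation~\cite[Theorem 4.1]{tamura} that the induced map $\pi_4(\mathrm{BSO}(3)) \to[(p_1)_*] \pi_4(K(\bZ,4))$ is multiplication by $\pm 4$. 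Without some such computation, the exponent $4$ is unjustified in your write-up.
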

\begin{proof}
Let $\mathrm{B}^2\cC^\times$ denote the connected and simply connected $4$-type obtained from the braided monoidal groupal $2$-groupoid of invertible objects, $1$- and $2$-morphisms in $\cC$. 
In upcoming work~\cite{DSPS-SO3}, Douglas--Schommer-Pries--Snyder construct a minimal CW decomposition of the $4$-type of $\rB\mathrm{SO}(3)$. It follows from their description that the $2$-groupoid of maps $\mathrm{BSO}(3) \to \mathrm{B}^2\cC^\times$ is equivalent to the $2$-groupoid of triples $(M,r, \phi)$ as above fulfilling $\Inv(M,r)=\id_{\id_I}$ but \emph{without} the condition on $\lambda$. Since $\pi_2(\mathrm{BSO}(3))\cong \bZ_2$ and $\pi_k(\mathrm{BSO}(3)) = 0$ for $k=1,3$, the $4$-type of $K(\bZ_2, 2)$ can be built by attaching a $5$-cell along a generator of $\pi_4(\mathrm{BSO}(3)) \cong \bZ$. Hence, maps $(M, r, \phi): \mathrm{BSO}(3) \to \rB^2\cC^\times$ lift along $\mathrm{BSO}(3) \to K(\bZ_2, 2)$ if and only if the image of this generator in  $\pi_4(\rB^2\cC^\times) = \Omega^2\cC^\times$ is trivial. Denoting the image of such a generator by $\lambda(M, r, \phi)^{\pm 1} \in \Omega^2\cC^\times$, the first part of the proposition follows. (We will fix the sign ambiguity $\lambda^{\pm 1}$ in choosing a generator of $\pi_4(\mathrm{BSO}(3)) \cong \bZ$ by requiring positive scaling dependence in $\phi$.)

Even though we do not give an explicit expression for $\lambda(M, r, \phi)$ in terms of $(M, r, \phi)$, its scaling dependence in $\phi$ can be understood as follows. Since $\mathrm{BSO}(3)$ is connected and simply connected, there is an integer $n$ such that $\lambda(M, r, \alpha \phi) = \alpha^n \lambda(M, r, \phi)$ and this integer is independent of the target $\cC$. Indeed, connectivity arguments show that $\pi_4(\mathrm{BSO}(3) \vee S^4) \cong \pi_4(\mathrm{BSO}(3)) \oplus \bZ$ and hence that this relation already holds in the $4$-type of $\mathrm{BSO}(3) \vee S^4$ which is freely generated by the cells $M, r, \phi$ fulfilling $\Inv(M,r) =1 $ as above, together with an additional cell $\alpha: \id_I \To \id_I$. 
Hence, the integer $n$ can be computed in integral cohomology. It is well known that $\H^4(\mathrm{BSO}(3); \bZ) \cong \bZ$ is generated by the first Pontryagin class $\mathrm{BSO}(3) \to \mathrm{BSO} \to[p_1] K(\bZ,4)$. On the other hand, it follows from the above cell decomposition that this generator $p_1$ maps the $4$-cell $\phi$ to $1\in \bZ$.   
Hence, the scaling parameter $n$  can be computed as the image of the generator under
\[\bZ \cong \pi_4(\mathrm{BSO}(3)) \to[(p_1)_*] \pi_4(K(\bZ, 4)) = \bZ.
\]
It follows from~\cite[Theorem 4.1]{tamura} that this map is given by multiplication by $\pm 4$. \end{proof}

The following is an immediate corollary of this description.
\begin{cor}\label{cor:CW}
Let $\cC$ be a braided monoidal $2$-category and assume that the group of invertible scalars $A:=\Omega^2\cC^\times$ has fourth roots (i.e. in additive notation $A/4A \cong 0$). Then any choice $(M, r)$ of an invertible object $M\in \cC$
for which $\eta(M)$ is trivializable (or equivalently, for which $\br_{M,M} \cong \id_{M \boxtimes M}$)
 and a $1$-equivalence $r: M^2\to I$ for which $\Inv(M,r)$ is trivial
   lifts to a braided monoidal $2$-functor $\bZ_2 \to \cC$. Up to isomorphism, such lifts correspond to admissible choices of $2$-isomorphism $\phi: \eta(M) \To \id_I$ (or equivalently, of $2$-isomorphisms $\br_{M, M} \To \id_{M \boxtimes M}$) which form a torsor over $A_4:= \{\alpha \in \Omega^2 \cC^\times~|~ \alpha^4 = \id_{\id_I}\}$. \qed
\end{cor}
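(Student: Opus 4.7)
The plan is to deduce this as a direct consequence of Proposition~\ref{prop:CW}. That proposition classifies braided monoidal $2$-functors $\bZ_2 \to \cC$ as triples $(M, r, \phi)$ with $M$ invertible, $r : M^2 \isom I$, and $\phi : \eta(M) \To \id_I$, subject to the two conditions $\Inv(M, r) = \id_{\id_I}$ and $\lambda(M, r, \phi) = \id_{\id_I}$, together with the scaling law $\lambda(M, r, \alpha \cdot \phi) = \alpha^4 \cdot \lambda(M, r, \phi)$ for any $\alpha \in A = \Omega^2\cC^\times$. So only the second condition depends on the auxiliary choice $\phi$, and only through the fourth-power action.

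First I would establish existence of an admissible $\phi$. Starting from the hypothesis that $\eta(M)$ is trivializable, choose any $2$-isomorphism $\phi_0 : \eta(M) \To \id_I$. Set $\mu := \lambda(M, r, \phi_0) \in A$. By hypothesis $A$ has fourth roots, so we may pick $\alpha \in A$ with $\alpha^4 = \mu^{-1}$; the scaling law then gives $\lambda(M, r, \alpha \cdot \phi_0) = \alpha^4 \mu = \id_{\id_I}$, so $\phi := \alpha \cdot \phi_0$ is admissible. Combined with the assumption $\Inv(M, r) = \id$, Proposition~\ref{prop:CW} produces a braided monoidal $2$-functor $\bZ_2 \to \cC$ extending $(M, r)$.

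Next I would describe the torsor structure. The set of all $2$-isomorphisms $\phi : \eta(M) \To \id_I$ is a torsor over the group $\Aut(\id_I) = A$, by postcomposition. Within this torsor, the admissible ones are precisely those $\phi$ with $\lambda(M, r, \phi) = \id$; by the scaling law, if $\phi$ is admissible then $\alpha \cdot \phi$ is admissible iff $\alpha^4 = \id_{\id_I}$, i.e.\ iff $\alpha \in A_4$. Hence admissible $\phi$ form a torsor over $A_4 \subseteq A$ as claimed. Finally, the equivalent reformulation in terms of $2$-isomorphisms $\br_{M,M} \cong \id_{M \boxtimes M}$ follows from the fact that $\eta(M)$ and $\br_{M,M}$ differ only by duality data for $M$, so trivializations of one correspond bijectively and $A_4$-equivariantly to trivializations of the other.

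There is essentially no obstacle beyond invoking Proposition~\ref{prop:CW}; the only small point is verifying that the fourth-root hypothesis on $A$ is exactly what is needed to invert the scaling law, which is immediate from the statement.
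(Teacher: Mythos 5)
Your argument is correct and matches what the paper intends: the paper declares the corollary an immediate consequence of Proposition~\ref{prop:CW} and leaves the details implicit, and you have filled in exactly those details (using fourth roots to adjust an arbitrary trivialization of $\eta(M)$ to an admissible one via the scaling law, and observing that the admissible trivializations then form an $A_4$-torsor). Nothing more is needed.
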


\subsection{The Klein invariant of magnetic objects in \texorpdfstring{$\cS$}{S} and \texorpdfstring{$\cT$}{T}}\label{subsec:KleinST}

We now compute the values of our invariant $\Inv$ in the categories $\cS$ and $\cT$ from Theorem~\ref{thm:SandT}.

\begin{example}\label{eg:invI}
  Unit objects are always canonically self-dual: we may simply identify $^\vee\one = \one$, and write the canonical self-duality as ``$\id$.'' With respect to this canonical self-duality, $\fl_I$ becomes the identity map. In particular, for any braided fusion 2-category $\cC$,
  $$ \kappa(\cC, I, \id) = 1 \in \bk = \Omega^2 \cC.$$
  
  For $\cC = \Sigma\sVec$, the unit object $\one$ has a second self-duality, namely $e : \one \to {^\vee \one}$, and:
  $$ \kappa(\Sigma\sVec, I, e) = \eta(e) = -1.$$
\end{example}

\begin{example}\label{eg:inv} 
  Take the braided fusion 2-category $\cS$, and choose a magnetic simple object $M$ and a self-duality $r : M \to {^\vee M}$, and hence a copy of $\Vec[\bZ_2] \subset \cS$. Lemma~\ref{lemma:super2fibre} provides a map $\Vec[\bZ_2] \to \Sigma\sVec$ sending $(M,r) \mapsto (I,\id)$. We therefore find
  \begin{equation} 
  \label{eqn:InvV} 
  \kappa(\cS,M,r) = \kappa(\Vec[\bZ_2], M,r)= \kappa(\Sigma\sVec, I, \id) = 1.
  \end{equation}
  In the same fashion, if we start with $\cT$ and choose $(M,r)$, then our functor $\Vec^{\tau}[\bZ_2] \to \Sigma \sVec$ sends $(M, r) \mapsto (I, e)$ and hence  \begin{equation} 
  \label{eqn:InvW} 
  \kappa(\cT,M,r) = \kappa(\Vec^\tau[\bZ_2], M,r)= \kappa(\Sigma\sVec, I, e) = -1.
  \end{equation}
  
How does this depend on the choices of $M$ and $r$? It certainly cannot depend on the choice of self-duality. Indeed, given a self-duality $r :M \to {}^\vee M$, the only other choice (up to isomorphism) is $e \boxtimes r$.  
Recall from Example~\ref{ex:computationofeta} that $\eta(e \boxtimes \id_M) = \id_{\eta(M)}$ and hence that
 \begin{equation} \label{eqn:InvM=InvMe}
  \kappa(\cS \text{ or } \cT, M, e \boxtimes r) = \tr(\fl \cdot \eta(e \boxtimes r)) = \tr(\fl \cdot \eta(r) \cdot \eta(e \boxtimes \id_M)) = \kappa(\cS\text{ or }\cT, M, r).
  \end{equation}
  We are therefore justified in writing plain ``$\kappa(\cS,M)$'' and ``$\kappa(\cT,M)$'' without declaring the self-duality.
  
  In addition to $M$, the other magnetic simple object is $C \boxtimes M$. Equation~(\ref{eqn:InvM=InvMe}) applies equally well to $C \boxtimes M$ in place of $M$, and so we are justified in writing ``$\kappa(\cS,C \boxtimes M)$'' and ``$\kappa(\cT,C \boxtimes M)$'' without declaring the self-duality. 
\end{example}

  \begin{lemma}\label{lem:invariantsSandT} In both $\cS$ and $\cT$, the invariants of both simple magnetic objects agree: 
\[
  \kappa(\cS, M) = \kappa(\cS, C \boxtimes M), \qquad \kappa(\cT, M) = \kappa(\cT, C \boxtimes M)
\]
  \end{lemma}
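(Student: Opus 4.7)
The plan is to prove the equality for $\cC \in \{\cS, \cT\}$ by a direct calculation that relates the Klein invariant of $C \boxtimes M$ to that of $M$ via the identity-component object $C$. Since $C$ is not in the M\"uger centre of $\cC$ (by \eqref{eqn.halfbraidc} its self-braid $\br_{C,C}$ is nontrivial), tensoring with $C$ is not a braided monoidal autoequivalence, so the equality cannot be deduced by pulling back along an autoequivalence; a graphical computation seems unavoidable.

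First, fix self-dualities $r : M \to {^\vee M}$ and $s : C \to {^\vee C}$, and use the canonical identification ${^\vee C} \boxtimes {^\vee M} \cong {^\vee(C \boxtimes M)}$ available in any braided monoidal 2-category to form the self-duality $r' := s \boxtimes r : C \boxtimes M \to {^\vee(C \boxtimes M)}$. By Example~\ref{eg:inv}, both $\Inv$'s are independent of the choice of self-duality within an isomorphism class, so it suffices to compute $\Inv(\cC, C \boxtimes M, r')$ and compare with $\Inv(\cC, M, r)$.

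Next, expand $\Inv(\cC, C \boxtimes M, r') = \tr\bigl(\fl_{CM} \cdot \eta(r')\bigr)$ by decomposing each ingredient using the tensor-product factorization. The flexible version of $\fl$ given in Remark~\ref{rem:complicatedfl} allows one to choose duality data on $C \boxtimes M$ adapted to the tensor product $s \boxtimes r$, so that $\fl_{CM}$ splits as (a product of) $\fl_C$ and $\fl_M$ together with braiding coefficients. Similarly, $\eta$ applied to $s \boxtimes r$ decomposes as $\eta(s)$ and $\eta(r)$ up to a correction given by the full braid $\widetilde{\beta}_{r, s}$, as in the generalized form of Remark~\ref{rem:nonmonoidality2} for morphisms; and the trace on $\eta(CM) \cong \eta(C) \otimes \eta(M) \otimes (\text{braiding}) \cong e \otimes I \cong e$ factorizes analogously. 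Collecting everything, one arrives at a formula of the shape
\[\Inv(\cC, C \boxtimes M, r') \;=\; \xi \cdot \Inv(\cC, C, s) \cdot \Inv(\cC, M, r),\]
where $\xi \in \bk^\times$ is a scalar built entirely from the full-braids $\br_{C,C}$ (determined by \eqref{eqn.halfbraidc}), $\br_{M,M} \cong \id$ (from the proof of Theorem~\ref{thm:SandT}), and $\br_{M, C} \br_{C, M}$, all of which are universal in $\cS$ and $\cT$.

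Finally, compute $\Inv(\cC, C, s)$ and $\xi$ directly. Since $C$ and $s$ lie in the identity component $\Sigma \sVec \hookrightarrow \cC$, the factor $\Inv(\cC, C, s) = \Inv(\Sigma\sVec, C, s)$ can be computed inside the concrete model of $\Sigma \sVec$ as superalgebras (Example~\ref{eg:sVeclin}); and the braiding corrections making up $\xi$ can be read off from the same model, since they involve only the objects $C, M$ and their full-braids. The goal is to verify the cancellation $\xi \cdot \Inv(\cC, C, s) = 1$. The hard part of the proof is precisely this graphical bookkeeping: correctly tracking all braiding corrections that arise when $\eta$ and $\fl$ are distributed across $s \boxtimes r$ in the braided (non-symmetric) 2-category, and then showing that the single scalar $\xi \cdot \Inv(\cC, C, s)$ produced by that bookkeeping is trivial. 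The crucial point is that neither $\xi$ nor $\Inv(\cC, C, s)$ depends on whether $\cC = \cS$ or $\cC = \cT$ (both are determined by data on the identity component and on the fusion of $C$ and $M$ as invertible objects), so the same cancellation proves the lemma in both cases.
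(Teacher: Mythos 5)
Your proposal is a plan, not a proof: you explicitly defer the crucial step ("the hard part of the proof is precisely this graphical bookkeeping," "the goal is to verify the cancellation $\xi \cdot \Inv(\cC, C, s) = 1$"). The factorization formula
\[\Inv(\cC, C \boxtimes M, r') \;=\; \xi \cdot \Inv(\cC, C, s) \cdot \Inv(\cC, M, r)\]
is not established anywhere: it would require a careful analysis of how $\eta$, $\fl$, and the $\eta$-trace distribute over $\boxtimes$ in a \emph{braided} (non-sylleptic) monoidal 2-category, which is not obviously governed by the naturality corrections alone; and even once one has some formula of this shape, you still need to actually compute $\Inv(\cC, C, s)$ in the superalgebra model of $\Sigma\sVec$ and identify all the braiding corrections in $\xi$. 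Neither is done, so as written there is no proof.

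The paper's argument is essentially orthogonal to yours, and considerably lighter. Rather than tensoring by $C$, it passes to the unique (up to isomorphism) \emph{simple noninvertible} 1-morphism $v \boxtimes \id_M : M \to C \boxtimes M$. The key observations are: (i) $\eta(v \boxtimes \id_M) : \eta(M) \to \eta(C \boxtimes M)$ is an iso (as shown in Example~\ref{ex:computationofeta}); (ii) for any self-dualities $r : M \to {}^\vee M$ and $s : C \boxtimes M \to {}^\vee(C \boxtimes M)$, the two composites $s \circ (v \boxtimes \id_M)$ and $\bigl({}^*{}^\vee(v \boxtimes \id_M)\bigr) \circ r : M \to {}^\vee(C \boxtimes M)$ are simple and hence isomorphic, since $\Hom(M, {}^\vee(C \boxtimes M)) \cong \Vec$; (iii) functoriality of $\eta$ and naturality of $\fl$ then give a commutative square conjugating $\fl_M \cdot \eta(r)$ into $\fl_{C\boxtimes M} \cdot \eta(s)$ by the iso $\eta(v \boxtimes \id_M)$, and the $\eta$-trace is conjugation-invariant. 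This sidesteps all the bookkeeping you were bracing for, and it is manifestly uniform in $\cS$ versus $\cT$. If you do want to pursue your route, you would first need to prove a genuine multiplicativity statement for $\Inv$ under $\boxtimes$ of dualizable objects in a braided monoidal 2-category (with the precise braiding correction), which is a nontrivial lemma in its own right, and then do the explicit Clifford-algebra computation of $\Inv(\Sigma\sVec, C, s)$.
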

  \begin{proof}
  Let $v \boxtimes \id_M : M \to C \boxtimes M$ denote the unique simple $1$-morphism. 
 Recall from Example~\ref{ex:computationofeta} that $\eta(v \boxtimes \id_M): \eta(M) \to \eta(C \boxtimes M)$ is a $2$-isomorphism. Choose $1$-isomorphisms $r:M \to  {}^\vee M $ and $s:C\boxtimes M \to {}^\vee(C\boxtimes M) $. The $1$-morphisms $s \circ (v\boxtimes \id_M)$ and $ \left({}^*{}^\vee(v\boxtimes \id_M)\right) \circ r$ are isomorphic since they are both simple (being the composite of an invertible $1$-morphism and a simple $1$-morphism) and $\Hom(M, {}^\vee(C\boxtimes M)) \cong \Vec$. 
 Hence, it follows from functoriality of 
 $\eta(-)$ and naturality of $\fl$
  that the following diagram commutes:
 \[
 \begin{tikzcd}
 \eta(M) \arrow[d, Rightarrow,  "\eta(v\boxtimes \id_M)"'] \arrow[r, Rightarrow, "\eta(r)"]& \eta({}^\vee M) \arrow[r, Rightarrow, "\fl_M"] \arrow[d, Rightarrow,  "\eta({}^*{}^\vee(v\boxtimes \id_M))"']& \eta(M) \arrow[d, Rightarrow, "\eta(v\boxtimes \id_M)"]\\
 \eta(C\boxtimes M) \arrow[r, Rightarrow, "\eta(s)"'] & \eta({}^\vee (C\boxtimes M))  \arrow[r, Rightarrow, "\fl_{C\boxtimes M}"']& \eta(C \boxtimes M) 
 \end{tikzcd}
 \]
 As both $\Inv(\cS\text{ or } \cT, M)$ and $\Inv(\cS\text{ or }\cT, C\boxtimes M)$ are independent of the chosen self-duality, the assertion follows.
   \end{proof}

\begin{corollary} \label{cor:SneqT}
  $\cS$ and $\cT$ are inequivalent as braided fusion 2-categories. Each one admits an automorphism exchanging $M$ with $C \boxtimes M$.
\end{corollary}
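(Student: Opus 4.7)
The plan is to deduce both assertions directly from the Klein invariant computation in Example~\ref{eg:inv} together with the preceding lemma $\kappa(\cS,M) = \kappa(\cS, C\boxtimes M)$ and $\kappa(\cT,M) = \kappa(\cT, C\boxtimes M)$.

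For inequivalence, I would argue as follows. Suppose $F : \cS \to \cT$ is a braided monoidal equivalence. It induces a braided monoidal equivalence $\Omega F : \Omega\cS \to \Omega\cT$; under the canonical identifications $\Omega\cS \cong \sVec \cong \Omega\cT$ this forces $F(e) \cong e$, since $e$ is the unique non-trivial simple object of $\sVec$. Condition~\ref{condition:charges} from \S\ref{subsec:SandT} says that the magnetic component is distinguished by the full braid with $e$ being $-\id$; since this full braid is preserved under $F$, the functor $F$ must send magnetic simples to magnetic simples. Now the Klein invariant $\kappa$ is, by its construction (Definition~\ref{defn:klein}) out of $\eta$-traces, flip isomorphisms and the braiding, manifestly a braided-monoidal invariant of an object equipped with a self-duality: for any self-duality $r : M \to {^\vee M}$, the composite $F(r) : F(M) \to F({^\vee M}) \cong {^\vee F(M)}$ is a self-duality of $F(M)$, and $\kappa(\cS, M, r) = \kappa(\cT, F(M), F(r))$. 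By the preceding lemma, the right-hand side equals $\kappa(\cT, M) = -1$ whichever magnetic simple $F(M)$ happens to be, while the left-hand side equals $\kappa(\cS, M) = 1$ by Example~\ref{eg:inv}. This contradiction yields $\cS \not\simeq \cT$.

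For the automorphism, I would run the construction of Theorem~\ref{thm:SandT} twice, once with each magnetic simple. A choice of invertible magnetic simple $N$ in $\cS$, together with a trivialization $N \boxtimes N \cong \one$, gives a presentation of $\cS$ as the linearization $2\Vec^{\alpha_N}[\cG]$ for some cocycle $\alpha_N \in \H^5(\rB^2\cG;\bk^\times)$; by the classification in the proof of Theorem~\ref{thm:SandT}, $\alpha_N$ differs from $\sigma$ by at most the term $(-1)^{\Sq^2 \Sq^1 t_2}$, so $\alpha_N \in \{\sigma, \tau\}$ (modulo coboundaries). The two possibilities are distinguished precisely by the value of the Klein invariant on the generator, since this is exactly how $\kappa$ was used in Example~\ref{eg:inv} to produce the values $\pm 1$. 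Because $\kappa(\cS, M) = \kappa(\cS, C\boxtimes M) = 1$ by the preceding lemma, both choices $N = M$ and $N = C \boxtimes M$ yield the same class $\alpha_N = \sigma$, hence two braided monoidal equivalences
\[
F_1, F_2 : \cS \isom 2\Vec^\sigma[\cG]
\]
with $F_1(M)$ and $F_2(C \boxtimes M)$ both equal to the chosen generator. The composite $F_2^{-1} \circ F_1$ is then a braided monoidal automorphism of $\cS$ that sends $M$ to $C \boxtimes M$. The argument for $\cT$ is identical, using the value $\kappa(\cT, M) = \kappa(\cT, C \boxtimes M) = -1$ and the cocycle $\tau$ in place of $\sigma$.

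The main technical obstacle is verifying that the Klein invariant, as a function of the chosen magnetic generator and its trivialization, really does detect the restriction of the cocycle to $K(\bZ_2,2)$ cleanly enough to promote equality of values to equality of linearization classes. This is precisely what Example~\ref{eg:inv} accomplishes, via the braided monoidal sub-2-categories in \eqref{eq:subcategories} and the functors of Lemma~\ref{lemma:super2fibre}: it reduces the computation of $\kappa$ for any magnetic generator to the universal computations $\kappa(\Sigma\sVec, I, \id) = 1$ and $\kappa(\Sigma\sVec, I, e) = -1$ of Example~\ref{eg:invI}, and these in turn are intrinsic to $\Sigma\sVec$ and thus independent of presentation. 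Once this detection is granted, both parts of the corollary are formal.
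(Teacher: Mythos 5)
Your proposal is correct and follows essentially the same route as the paper. For the inequivalence you spell out more explicitly what the paper implicitly invokes---that an equivalence $F : \cS \to \cT$ must fix $e$ and hence preserve the magnetic component, and that $\kappa$ is invariant under braided monoidal equivalences---before applying Example~\ref{eg:inv} and the lemma $\kappa(\cS, M) = \kappa(\cS, C \boxtimes M)$, $\kappa(\cT, M) = \kappa(\cT, C \boxtimes M)$ to reach the contradiction. For the automorphism, where the paper argues by dichotomy (re-running Theorem~\ref{thm:SandT} with $C \boxtimes M$ must land back on $\cS$ because $\cS \not\simeq \cT$), you instead use the Klein invariant directly to pin down the resulting cocycle class as $\sigma$; these two deductions are logically equivalent given Example~\ref{eg:inv}, so this is a presentational variant rather than a genuinely different argument.
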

\begin{proof}
 An equivalence $\cS \simeq \cT$ would have to take $M \in \cS$ to a magnetic simple object in~$\cT$, but this would contradict the calculations in Example~\ref{eg:inv}. To see that $\cS$ and $\cT$ admit automorphisms exchanging $M$ and $C\boxtimes M$, suppose that one were to run the proof of Theorem~\ref{thm:SandT} starting with $\cS$ or $\cT$ but selecting $C \boxtimes M$ in place of $M$. One would find a linearization description making the category equivalent to either $\cS$ or $\cT$, but since $\cS \simeq \cT$ is impossible, one would have to construct equivalences $\cS \simeq \cS$ and $\cT \simeq \cT$ exchanging the two magnetic simple objects.
\end{proof}
Note that Corollary~\ref{cor:SneqT}, and in particular the existence of an automorphism exchanging $M$ and $C\boxtimes M$, implies Lemma~\ref{lem:invariantsSandT}. 
A different proof of Corollary~\ref{cor:SneqT} (and hence also of Lemma~\ref{lem:invariantsSandT}) is given in \cite{2011.11165}. Using higher Morita categories, that paper furthermore classifies the automorphisms of $\cS$ and $\cT$: each has a $\bZ_{16}$ worth of automorphisms (matching the $\bZ_{16}$ studied in \cite{foldway}), and the odd elements in $\bZ_{16}$ (corresponding to the Ising categories) are the ones exchanging $M$ with $C \boxtimes M$.

Of course, by Example~\ref{ex:computationofeta} both $\eta(M)$ and $\eta(C \boxtimes M)$ are trivial, in which case our invariant reduces to the following simpler variant: 

\begin{definition}\label{defn:underlineversions}
Given an object $X$ in a braided fusion $2$-category $\cC$ equipped with a self-duality $r: X\to {}^\vee X$, we define $\underline{\eta}(X)$ to be the vector space $ \Hom_{\Omega\cC}(\eta(X), I)$ and define $\underline{\Inv}(X, r) \in \bk$ to be the linear trace of the automorphism $\underline{\eta}(r)\cdot \underline{\fl}_X: \underline{\eta}(X) \to \underline{\eta}(X)$, where $\underline{\eta}(r) = (-) \cdot \eta(r) : \underline{\eta}({}^\vee X) \to \underline{\eta}(X) $ and $\underline{\fl}_X = (-) \cdot \fl_X: \underline{\eta}(X) \to \underline{\eta}({}^\vee X) $.\end{definition}

\begin{proposition}\label{prop:fundamental}
  Suppose $X \in \cS$ is an object which is ``purely magnetic'' in the sense that the full braiding of $e$ with $X$ is $-\id$ and suppose that $X$ is equipped with a self-duality $r : X \isom {^\vee X}$. Then, $\Inv(\cS,X,r) = \underline{\Inv}(\cS, X, r) \in \bZ_{\geq 0} \subset \bk$. (Recall that our field $\bk$ is of characteristic zero.)
  If $X \in \cT$ is purely magnetic and self dual, then $\Inv(\cT, X, r) = \underline{\kappa}(\cT, X, r) \in \bZ_{\leq 0} \subset\bk$.\end{proposition}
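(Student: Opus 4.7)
The plan proceeds in two main steps. First, I would establish that $\eta(X)$ is concentrated in copies of the unit $I$ of $\sVec = \Omega\cS$ (and equally $\Omega\cT$), so that the $\eta$-trace on $\End_{\sVec}(\eta(X))$ coincides with the ordinary linear trace on $\underline{\eta}(X)$. Since the magnetic components of $\cS$ and $\cT$ contain only the two simples $M$ and $C \boxtimes M$, any purely magnetic $X$ decomposes as $X \cong M^{\boxplus a} \boxplus (C \boxtimes M)^{\boxplus b}$; choosing duality data compatible with this decomposition (so that $\ev_X, \coev_X$ factor through the diagonal inclusions $X_i^\vee \boxtimes X_i \hookrightarrow X^\vee \boxtimes X$ and $\br_{X^\vee, X}$ respects the block structure), one checks that $\eta(-)$ is additive under direct sums of objects and hence $\eta(X) \cong \eta(M)^{\oplus a} \oplus \eta(C \boxtimes M)^{\oplus b} \cong I^{\oplus(a+b)}$ by Example~\ref{ex:computationofeta}. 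This immediately yields $\Inv(X,r) = \underline{\Inv}(X,r)$ since the super-trace agrees with the linear trace on purely even objects.

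Second, I would classify the self-dualities up to isomorphism. Because the simples $M$ and $C \boxtimes M$ are inequivalent and connected only by the non-invertible $1$-morphism $v \boxtimes \id_M$, any equivalence $r : X \to X^\vee$ is, up to $2$-isomorphism, block-diagonal with respect to the splitting $X = M^a \boxplus (C \boxtimes M)^b$: an off-diagonal block between $M^a$ and $(C \boxtimes M)^b$ would be a matrix built from multiples of $v \boxtimes \id_M$ and cannot participate in an invertible $1$-morphism. On each block, say $r|_{M^a}$, the self-duality is an invertible object in $\End_{\cS}(M^a) \cong \mathrm{Mat}_a(\sVec)$, and the group of its Picard classes of $1$-automorphisms is $(\bZ_2)^a \rtimes S_a$ (generated by the $\bZ_2 = \langle e \rangle$-twists on each copy and permutations of the $a$ copies of $M$); conjugation by automorphisms of $M^a$ brings $r|_{M^a}$ into cyclic normal form, i.e.\ a direct sum of cyclic self-dualities on various $M^\ell$, each possibly twisted by $e$ along the cycle. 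The same analysis applies to $r|_{(C \boxtimes M)^b}$.

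Finally, I would compute $\underline{\Inv}$ on each cyclic piece and sum the contributions using additivity of $\eta(-)$ and of $\fl$ under direct sums of self-dual pairs. A length-$1$ summand contributes $+1$ in $\cS$ and $-1$ in $\cT$ by Example~\ref{eg:inv} (independently of the $\{I,e\}$-twist). For a length-$\ell$ cyclic summand with $\ell \geq 2$, the operator $\fl_{M^\ell} \cdot \eta(r_\ell)$ on $\underline{\eta}(M^\ell) \cong \bk^\ell$ equals a uniform scalar (from the naturality of $\fl$, which forces it to be block-diagonal with the same entry $\fl_M$ on each summand) times the cyclic permutation matrix $\eta(r_\ell)$, whose ordinary trace vanishes. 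Assembling all contributions, $\underline{\Inv}(\cS, X, r)$ equals the number of length-$1$ cyclic summands, lying in $\bZ_{\geq 0}$, while $\underline{\Inv}(\cT, X, r)$ is its negative in $\bZ_{\leq 0}$. The main obstacle is the rigorous classification of invertible $1$-morphisms in $\mathrm{Mat}_a(\sVec)$ up to conjugation, and the careful verification that $\eta(r_\ell)$ for a cyclic $r_\ell$ really acts as a cyclic permutation on $\underline{\eta}(M^\ell)$ with no spurious diagonal contributions arising from the mate-and-adjoint data needed to define $\eta(-)$.
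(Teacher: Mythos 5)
Your proposal is essentially correct and takes the same overall route as the paper: decompose $X$ into copies of $M$ and $C\boxtimes M$, identify $\underline{\eta}(X)$ with $\bk^{a+b}$, reduce $r$ to a permutation (possibly twisted by $e$) on these summands, and observe the resulting trace is a nonnegative count. The one place where your route diverges, and where you rightly flag a potential headache, is the ``cyclic normal form'' detour: you want to conjugate $r$ into blocks of cyclic self-dualities, and then separately check that $\eta(r_\ell)$ on a length-$\ell$ cycle really is a cyclic permutation matrix with ``no spurious diagonal contributions'' from the choice of adjoints and mates. The paper sidesteps this entirely. Rather than conjugating $r$, it writes $r = r_{\text{ref}}\circ g$ for a fixed reference self-duality $r_{\text{ref}}$ (block-diagonal with the obvious entries) and an automorphism $g$ of $X$. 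By Remark~\ref{remark.2catDirectSums}, $g$ is a permutation of simple summands followed by a diagonal automorphism; by functoriality of $\eta$, $\eta(r)=\eta(r_{\text{ref}})\cdot\eta(g)$; and by Example~\ref{ex:computationofeta}, $\eta(e\boxtimes\id_M)=\id$, so the diagonal part of $g$ contributes nothing and $\eta(g)$ is literally the permutation matrix. Then $\fl\cdot\eta(r_{\text{ref}})=\pm\id$ by the single-simple calculation of Example~\ref{eg:inv}, so $\fl\cdot\eta(r)=\pm(\text{permutation matrix})$, whose trace counts (minus) fixed points. This is the same count your cycle decomposition would give, but the factorization through $r_{\text{ref}}$ and functoriality eliminates exactly the ``spurious diagonal'' worry: the off-reference part of $r$ never interacts with the mate-and-adjoint bookkeeping, it just passes through $\eta$ as a morphism. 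Your conjugation-invariance of $\Inv$ also needs justifying (it follows from functoriality of $\eta$ and naturality of $\fl$, as in Lemma~\ref{lemma:fl}), another small step the paper's factorization avoids.
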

\begin{proof}
  An object $X$ is purely magnetic if and only if it decomposes as a direct sum of copies of $M$ and $C \boxtimes M$.  
  Fix such a decomposition. This choice selects a direct sum decomposition of $\eta(X)$ as a sum of copies of $\eta(M)$ and $\eta(C \boxtimes M)$.
  It follows from Example~\ref{ex:computationofeta} that $\eta(X)$ is a multiple of $\id_I$ and therefore $\Inv(X, r) = \underline{\Inv}(X, r)$. 
  
Fix reference self-dualities $M \isom {^\vee M}$ and $C \boxtimes M \isom {^\vee(C \boxtimes M)}$. These choices select a reference self-duality $r_{\text{ref}}$ for $X$. 
For this reference self-duality, the matrix $\underline{\eta}(r_{\text{ref}}) \cdot\underline{\fl} $ whose trace gives $\underline{\Inv}(X, r_{\text{ref}})$ is diagonal with respect to our fixed direct sum decomposition of $\underline{\eta}(X)$ into $\underline{\eta}(M) \cong \bk$ and $\underline{\eta}(C\boxtimes M) \cong \bk$. 
Indeed, \eqref{eqn:InvV}  implies that in the $\cS$ case, $\underline{\eta}(r_{\text{ref}}) \cdot\underline{\fl} $ is the identity matrix, and  \eqref{eqn:InvW}  implies that in the $\cT$ case it is minus the identity matrix.

  Using our chosen self-duality $r: X \isom {^\vee X}$, note that $r_{\text{ref}}^{-1} \circ r$ is an automorphism of $X$. Following the last paragraph of Remark~\ref{remark.2catDirectSums}, with respect to our fixed decomposition of $X$ into simples, this automorphism is the composite of a permutation of summands followed by a diagonal matrix of automorphisms. As any automorphism of a simple object in $\cS$ or $\cT$ is either the identity or $e \boxtimes \id$ and since $\eta(e \boxtimes_{\id_M}): \eta(M) \to \eta(M)$ and $\eta(e \boxtimes \id_{C\boxtimes M}): \eta(C\boxtimes M) \to \eta(C \boxtimes M)$ are trivial (see Example~\ref{ex:computationofeta}), it follows that $\fl \cdot \eta(r)$ is given by a permutation of summands followed by $\fl \cdot \eta(r_{\text{ref}})$. In other words, for $\cS$ it is a permutation matrix and for $\cT$ it is minus a permutation matrix.

  But the trace of a permutation matrix is always nonnegative (as it simply counts the number of fixed points of the permutation).
\end{proof}

Note that the case $\Inv(X, r) =0 $ can occur both in $\cS$ and $\cT$, and it occurs precisely if no indecomposable summand of $X$ is fixed under the self-duality $r:X \to {^\vee X}$. Therefore, only strictly positive or strictly negative Klein invariant on a purely magnetic object can be used to distinguish $\cS$ and $\cT$.

\subsection{Twisted traces and \texorpdfstring{$\eta$}{eta} of a half-braided algebra}\label{subsec:twistedtrace}

Our next goal is to study our invariant $\Inv$ in the braided monoidal 2-category $\cZ(\Sigma\cB)$. We will do so by using Theorem~\ref{thm:HBADrinfeld}, which translates the problem to a computation for an arbitrary half-braided algebra $(A, \gamma)$. The advantage of this approach is that half-braided algebras allow quite explicit formulas for the values of $\eta(-)$ and $\fl$.

\begin{definition}\label{def:hbtwist}
Given a half-braided object $(b,\gamma) \in \cZ(\cB)$, we define the \define{half-braided twist} as the following isomorphism $\theta_{b, \gamma}: b\to b$ in $\cB$:
\[\begin{tz}[std,scale=0.8]
\draw[braid, arrow data={0.7}{<}] (-1, 2) to [out=-60,in=135] (0,1) to [out=-45, in=up]  (1,0) to [out=down, in=down, looseness=2 ] ( 0.25,0);
\draw[braid =main, arrow data={0.1}{<}] (0.25,0) to [out=up, in=-135] (1,1);
\draw[string] (-1, -1) to [in=-135] (0,1) to [out=45, in=45, out looseness=3,in looseness=0.5] (1,1);
\node[smalldot] at(0,1){};
\node[label, below] at (-1,-1){$b$};
\end{tz}
\]
\end{definition}

\begin{definition}\label{def:twistedcocenter}
A \emph{twisted trace} on a half-braided algebra $(A,\gamma)$, with multiplication $\mu_A : A \otimes A \to A$ and unit $\nu_A : I \to A$,
 is a morphism $\epsilon: A\to I$ in $\cB$ such that  the following holds:
\begin{equation}\label{eq:twistedtrace}
\epsilon \cdot \mu_A \cdot (\theta_{A, \gamma} \otimes \id_A) = \epsilon \cdot \mu_A \cdot \beta^{-1}_{A,A}
\end{equation}
\end{definition}

\begin{example}
  Equation~\eqref{eq:twistedtrace} is the assertion that the trace $\epsilon$ is cyclic, but with cyclicity twisted by $\theta_{A, \gamma}$. If the object $A$ is in the M\"uger centre $\cZ_{2}(\cB)$ with canonical half-braiding $\gamma_b = \beta_{b, A}= \beta_{A,b}^{-1}$, then $\theta_{A, \gamma} = \id_A$ and a twisted trace on $(A, \gamma)$ is just an ordinary cyclic trace.
\end{example}

By definition, a twisted trace is a morphism from the ``universal trace''
\begin{equation}\label{eq:coeq}
\coeq\left( 
\begin{tikzcd}A \otimes A \arrow[rrrr, shift left = 1, "\mu_A\cdot (\theta_{A, \gamma} \otimes \id_A) = \mu_A\cdot \beta_{A,A}^{-1} \cdot \gamma_A"]\arrow[rrrr, shift right=1, "\mu_A \cdot \beta_{A,A}^{-1}"'] &&&& A
\end{tikzcd}\right)
\end{equation}
into the tensor unit $I$. The equality $ \mu_A\cdot (\theta_{A, \gamma} \otimes \id_A)= \mu_A \cdot \beta^{-1}_{A,A} \cdot \gamma_A $ follows directly from the defining property~\eqref{eq:hb} of a half-braided algebra and will be used repeatedly in this section. 

The first goal of this section is to prove that this coequalizer~\eqref{eq:coeq} is canonically isomorphic to $\eta(A,\gamma)$ (and is in particular an object of $\cZ_{2}(\cB)$). The vector space of twisted traces can therefore be identified with $\underline{\eta}(A, \gamma) = \Hom_{\cZ_{2}(\cB)}(\eta(A, \gamma), I)$ from Definition~\ref{defn:underlineversions}.

To this end, we carefully choose convenient duality data for $(A, \gamma)$ as an object of $\sHBA(\cB)\cong \cZ(\Sigma \cB)$. Given an algebra object $A$ in a braided monoidal category $\cB$, we define its \emph{right opposite} $A^\op$  and its \emph{left opposite} ${{}^\op A}$ as the algebra objects with the same underlying object and unit $\nu_A$ as $A$, and multiplication defined as follows, respectively:
\begin{equation}
\label{eq:defmop} \mu_{A^\op} := \mu_A \cdot \beta^{-1}_{A,A}\hspace{1.5cm}
\mu_{{{}^\op A}} := \mu_A \cdot \beta_{A,A}
\end{equation}
If $(A, \gamma)$ is a half-braided algebra, then the following isomorphisms $ x\otimes A\to A \otimes x$, natural in $x\in \cB$,  define half-braidings on the algebras $A^\op$ and ${}^\op A$, respectively: 
\[(\gamma^\op)_x := \beta_{x, A}\cdot \gamma_x^{-1} \cdot  \beta^{-1}_{A,x} \hspace{1cm} (^{\op} \gamma)_x:= \beta^{-1}_{A,x} \cdot \gamma^{-1}_x \cdot \beta_{x, A}
\] 

The half-braided twist $\theta_{A, \gamma} :A \to A$ defines an isomorphism of half-braided algebras $\theta_{A, \gamma}:  (A^\op, \gamma^\op) \to ({}^\op A, {}^\op \gamma) $. This can be verified directly but also follows from Remark~\ref{rem:Serre} below.

\begin{lemma}\label{lem:dualityHBA}

The half-braided bimodules $A_{A\otimes A^{\op}}$ and $_{A^\op \otimes A}A$ with action 
\[
\begin{tz}[std, xscale=-1]
\draw[string, on layer=front] (0,1) to (0,2);
\draw[string] (-1,0) to [out=up, in=-45, in looseness=2] (0,1.5);
\draw[braid=main] (-0.5, 0) to [out=up, in=-135] (0, 1.5);
\draw[braid=main] (0,0) to (0,1);
\node[dot] at (0,1.5) {};
\node[label, below] at (0,0) {$A$};
\node[label, below] at (-0.5,0) {$A$};
\node[label, below] at (-1,0) {$A^{\op}$};
\end{tz}
\hspace{3cm}
\begin{tz}[std]
\draw[string] (0,0) to (0,1);
\draw[string, on layer=front] (0,1) to (0,2);
\draw[string] (-0.5, 0) to [out=up, in=-135] (0, 1.5);
\draw[braid=main] (-1,0) to [out=up, in=-45, in looseness=2] (0,1.5);
\node[dot] at (0,1.5) {};
\node[label, below] at (0,0) {$A$};
\node[label, below] at (-0.5,0) {$A$};
\node[label, below] at (-1,0) {$A^{\op}$};
\end{tz}
\]
define evaluation $\ev_A: A\boxtimes A^{\op} \to I$  and coevaluation $\coev_A:I \to A^{\op} \boxtimes A$ $1$-morphisms which together with the cusp $2$-isomorphism $\cusp_A: \id_A \Rightarrow (\ev_A \boxtimes \id_{A}) \circ (\id_A \boxtimes \coev_A)$
 given by the following $A$--$A$ bimodule isomorphism (with inverse $\mu_A\cdot \beta^{-1}_{A,A}$) 
 \begin{equation}\label{eq:zigzagalgebra} \id_A \otimes \nu_A = \nu_A \otimes \id_A: A \to  \coeq\left(~\begin{tz}[std]
\draw[string] (0,0) to (0,1);
\draw[string] (-1,0) to (-1,2);
\draw[string, on layer=front] (0,1) to (0,2);
\draw[braid=main] (-0.5,0) to [out=up, in=-45, in looseness=2] (0,1.5);
\node[dot] at (0,1.5) {};
\node[label, below] at (0,0) {$A$};
\node[label, below] at (-0.5,0) {$A^{\op}$};
\end{tz}
,
\begin{tz}[std, xscale=-1]
\draw[string, on layer=front] (0,1) to (0,2);
\draw[string] (-1, 0) to(-1, 2);
\draw[string] (-0.5,0) to [out=up, in=-45, in looseness=2] (0,1.5);
\draw[braid=main] (0,0) to (0,1);
\node[dot] at (0,1.5) {};
\node[label, below] at (0,0) {$A$};
\node[label, below] at (-0.5,0) {$A^{\op}$};
\end{tz}~
\right)
\end{equation}
exhibit the half-braided algebra $(A^{\op}, \gamma^{\op})$ as a right dual to $(A, \gamma)$ in $\sHBA(\cB)$. Analogous choices exhibit $({}^\op A, {}^\op \gamma)$ as a left dual. \end{lemma}
\begin{proof} After using the canonical isomorphism $A\otimes_A A \to A$ where necessary to identify $(\ev_A \boxtimes \id_{A}) \circ (\id_A \boxtimes \coev_A)$ with the coequalizer in~\eqref{eq:zigzagalgebra}, the proof is immediate. \end{proof}

\begin{remark}
Recall that any (not necessarily braided) left $\cB$-module category $({}_{\cB}\cM, *)$ may be turned into a right $\cB$-module category $(\cM_{\cB}, *')$ with $\cB$-action $m *' b := b* m$ and $\cB$-module associator 
\begin{equation} \label{eq:moduleturnaround}(m*'b)*'c = c *(b*m) \cong (c\otimes b) * m \to[\beta_{c,b}*m] (b\otimes c) * m = m*' (b\otimes c).
\end{equation}
If $_{\cB} \cM \cong \Modr[\cB]{A}$ for a (not necessarily half-braided) algebra $A$ in $\cB$, one may show that $\cM_{\cB} \cong \Modl[\cB]{A^\op}$. 
In fact, in~\eqref{eq:moduleturnaround} we could have used $\beta^{-1}_{b,c}$ instead of $\beta_{c,b}$ (or more generally any odd integral power of the braiding) to obtain the category $\Modl[\cB]{{}^\op A}$ from the category $\Modr[\cB]{A}$ (or more generally left modules for an algebra structure on $A$ given by $m_A$ twisted by an odd number of braidings).

It follows from this observation and Lemma~\ref{lem:dualityHBA} (or alternatively, by direct verification), that a right dual of a braided $\cB$-module category $(\cM_{\cB}, *, \sigma)$ is given by the category $\mathrm{Fun}_{\cB}(\cM_{\cB}, \cB_{\cB})$ of $\bk$-linear $\cB$-module functors with an appropriate right $\cB$-module structure and module braiding. Indeed, $\mathrm{Fun}_{\cB}(\cM_{\cB}, \cB_{\cB})$ inherits an evident \emph{left} $\cB$-module structure (mapping an object $b \in \cB$ and a module functor $F$ to the module functor $b\otimes F(-)$) which one turns into a right module structure using~\eqref{eq:moduleturnaround}. The $\cB$-module braiding $F*' b \to F*' b$ is inherited from $\cM$ as follows:
\begin{equation}\label{eq:modulebraidingop}
 b\otimes F(m) \to[\br_{b, F(m)}]F(m) \otimes b \cong F(m*b) \to[F(\sigma_{m, b}^{-1})] F(m *b) \cong F(m) \otimes b \to[\br_{F(m), b}] b\otimes F(m)
\end{equation}
If $\cM_{\cB} = \Modl[\cB]{A}$ with module braiding induced by a half-braiding $\gamma$ on $A$ (as in Lemma \ref{lem:hbvsBmodbraiding}), then the left $\cB$-module category $\mathrm{Fun}_{\cB}(\Modl[\cB]{A}, \cB_{\cB})$ is equivalent to $\Modr[\cB]{A}$ (see \cite[Proposition 7.11.1]{EGNO}), so that $\mathrm{Fun}_{\cB}(\Modl[\cB]{A}, \cB_{\cB})$ with the above right $\cB$-module structure is equivalent to $\Modl[\cB]{A^\op}$ with module braiding induced by $\gamma^\op$. 

Similarly, the left dual of $(\cM_{\cB}, *, \sigma)$ is equivalent to the category $\mathrm{Fun}_{\cB}(\cM_{\cB}, \cB_{\cB})$ with right module structure obtained from the canonical left module structure by replacing $\beta_{c,b}$ by $\beta^{-1}_{b,c}$ in~\eqref{eq:moduleturnaround} and with the same expression~\eqref{eq:modulebraidingop} for the module braiding. 
\end{remark}

For the following proofs, we adopt the following notation. Given a right $B$ module $m_B$ (a left $A$ module $_{A}n$) and an algebra isomorphism $\phi:A \to B$, we write $m_{\phi}$ (or $_{\phi}n$) for the resulting $A$ module ($B$ module). There are canonical isomorphisms $m\otimes_B \phi_* \cong m_\phi$ and $(\phi^{-1})_* \otimes _A n \cong {}_{\phi}n$, where $\phi_* := {_BB_{\phi A}}$ from Example~\ref{eg:phistar} is the canonical bimodule built from $\phi$.

Recall that the braiding $\br^{\sHBA}_{(A, \gamma), (B, \zeta)}: (A,\gamma) \boxtimes (B, \zeta) \to (B, \zeta) \boxtimes (A,\gamma)$ of half-braided algebras is the bimodule represented by the half-braided algebra isomorphism $\zeta_A: A\otimes B \to B \otimes A$. 
Hence, using the duality choices from Lemma~\ref{lem:dualityHBA}, we can simplify 
\[\eta(A, \gamma) := \ev_{(A,\gamma)} \circ \br^{\sHBA}_{({}^\op A, {}^\op \gamma), (A, \gamma)} \circ \coev_{(A,\gamma)} \cong A_{\gamma_A} \otimes_{(A^\op \otimes A)}A.
\]

\begin{prop}\label{prop:etaiscenter}
Let $(A,\gamma)$ be a half-braided algebra. Then, the morphisms 
\[A \to[\nu_A \otimes \id_A] A\otimes A \twoheadrightarrow A_{\gamma_A} \otimes_{(A^\op \otimes A)} A \cong \eta(A,\gamma)
\]
\[A \to[ \id_A \otimes \nu_A] A\otimes A \twoheadrightarrow A_{\gamma_A} \otimes_{(A^\op \otimes A)} A \cong \eta(A,\gamma)
\]
agree and exhibit $\eta(A, \gamma) \in \cZ_{2}(\cB)$ as the coequalizer~\eqref{eq:coeq}.

In particular, the induced morphism $ \underline{\eta}(A,\gamma) \to \Hom_{\cB}(A, I)$ may be identified with the inclusion of the vector space of twisted traces as in Definition~\ref{def:twistedcocenter}.\end{prop}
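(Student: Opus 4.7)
The plan is to unpack the composition of $1$-morphisms defining $\eta(A,\gamma)$ in the bicategory $\sHBA(\cB)$ as an explicit coequalizer in $\cB$, and then identify this coequalizer with the one in~\eqref{eq:coeq}. By Lemma~\ref{lem:dualityHBA} together with the description of the braiding in $\sHBA(\cB)$ (from Lemma~\ref{lem:algebraiso} and the surrounding discussion), the composite $\ev_{(A,\gamma)} \circ \br^{\sHBA}_{(A^\op,\gamma^\op),(A,\gamma)} \circ \coev_{(A,\gamma)}$ reduces to the relative tensor product $A_{\gamma_A} \otimes_{A^\op \otimes A} A$, and this relative tensor product is, by the usual description of bimodule tensor products in a monoidal category, the reflexive coequalizer in $\cB$
\[
\coeq\bigl(A \otimes (A^\op \otimes A) \otimes A \rightrightarrows A \otimes A\bigr),
\]
where the top map applies the right $(A^\op \otimes A)$-action on $A_{\gamma_A}$ (i.e.\ the standard $A$-bimodule action on the left-hand $A$ pre-composed with $\gamma_A$) tensored with $\id_A$, and the bottom map applies $\id_A$ tensored with the standard left $(A^\op \otimes A)$-module structure on the right-hand $A$.

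Next, I would use the unit $\nu_A$ to collapse one factor of $A$ from the middle $A^\op \otimes A$ on both sides of this coequalizer diagram, reducing it to a reflexive coequalizer of two maps $A \otimes A \otimes A \rightrightarrows A \otimes A$. After tensoring with $\id_A$ and one of the two multiplications, these two maps become exactly $\mu_A \cdot (\theta_{A,\gamma} \otimes \id_A)$ and $\mu_A \cdot \beta_{A,A}^{-1}$ after further composition with $\mu_A$, using the identity $\mu_A \cdot (\theta_{A,\gamma} \otimes \id_A) = \mu_A \cdot \beta_{A,A}^{-1} \cdot \gamma_A$, which is immediate from~\eqref{eq:hb} and Definition~\ref{def:hbtwist}. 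Thus the coequalizer $A_{\gamma_A} \otimes_{A^\op \otimes A} A$ is canonically isomorphic to the coequalizer~\eqref{eq:coeq} in $\cB$, and the induced morphism $A \to \eta(A,\gamma)$ is represented by either of the two composites $A \to A \otimes A \twoheadrightarrow \eta(A,\gamma)$. Their agreement is just the statement that the right action of $A^\op$ on $A_{\gamma_A}$ and the left action of $A^\op$ on $A$ are intertwined by the tensor product, applied to the unit $\nu_A$.

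For the final assertion, I would apply $\Hom_{\cB}(-, I)$ to turn the coequalizer into an equalizer, immediately identifying $\underline{\eta}(A,\gamma) = \Hom_{\cZ_2(\cB)}(\eta(A,\gamma), I) = \Hom_{\cB}(\eta(A,\gamma), I)$ (using that $\cZ_2(\cB) \hookrightarrow \cB$ is fully faithful) with the set of morphisms $\epsilon : A \to I$ satisfying~\eqref{eq:twistedtrace}. The fact that the object $\eta(A,\gamma)$ lies in $\cZ_2(\cB)$ is automatic from Lemma~\ref{lemma.centres}, since $\eta(A,\gamma)$ is by construction a $1$-endomorphism of the unit object in $\cZ(\Sigma\cB)$.

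The main obstacle will be bookkeeping the numerous conventions: left vs.\ right opposite algebras, the precise direction of the half-braiding isomorphism $\gamma_A$, the formula~\eqref{eq:tensor} for the tensor product of half-braided algebras, and the formula from Lemma~\ref{lem:algebraiso} for the braiding in $\sHBA(\cB)$. Any sign/conjugation error in these conventions would flip $\beta_{A,A}^{-1}$ to $\beta_{A,A}$ or replace $\theta_{A,\gamma}$ by its inverse, so the careful bookkeeping of these conventions — best done in string-diagrammatic form — is what turns a heuristic identification into an actual proof.
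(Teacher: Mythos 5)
Your proposal follows the same strategy as the paper's proof—identify $A_{\gamma_A}\otimes_{A^\op\otimes A} A$ with the coequalizer of two maps $A\otimes A\to A$ and match them to~\eqref{eq:coeq}—and the conclusion is correct, but the middle of the argument is where all the content lives and you pass over it too quickly. The step ``use the unit $\nu_A$ to collapse one factor of $A$ from the middle $A^\op\otimes A$ on both sides'' is not a legitimate reduction of a coequalizer: inserting the unit gives a comparison map, but you still owe a proof that it is an isomorphism. The paper handles this cleanly by factoring the relative tensor product in two stages. First recall the general principle: for algebras $B,C$ in a braided category, a right $B\otimes C$-module is the same as a $B^\op$--$C$ bimodule and a left $B\otimes C$-module is a $C$--${}^\op B$ bimodule, so $m\otimes_{B\otimes C}n$ is computed by first forming $m\otimes_C n$ and then coequalizing the residual $B$-action, where the right ${}^\op B$-action on $n$ has to be \emph{turned around} by $\beta^{-1}_{m\otimes_C n,\,B}$ before it can be compared with the left $B^\op$-action on $m$. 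Applied with $B=A^\op$, $C=A$, the first stage collapses to $A\otimes_A A\cong A$ via $\mu_A$, and the second stage produces exactly the two maps $\mu_A\cdot\beta^{-1}_{A,A}\cdot\gamma_A=\mu_A\cdot(\theta_{A,\gamma}\otimes\id_A)$ and $\mu_A\cdot\beta^{-1}_{A,A}$ of~\eqref{eq:coeq}. This factorization is also what makes the agreement of the two morphisms $\nu_A\otimes\id_A$ and $\id_A\otimes\nu_A$ after passing to the quotient immediate (both are sections of $\mu_A$, hence agree in $A\otimes_A A$), and it pins down precisely where the $\beta^{-1}_{A,A}$ in~\eqref{eq:coeq} comes from—exactly the kind of convention-sensitive point your last paragraph flags as a danger. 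The final ``In particular'' you treat correctly: apply $\Hom_\cB(-,I)$ to the coequalizer presentation.
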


Even though the object $\eta(A, \gamma) \in \cZ_{2}(\cB)$ only depends on the Morita class of $A$, the morphism $A \twoheadrightarrow \eta(A, \gamma)$ depends on $A$ as an algebra. 

\begin{proof}  
Recall that for algebra objects $B$ and $C$ in a braided monoidal $1$-category $\cB$, a right $B\otimes C$ module $m_{B \otimes C}$ is the same as an object $m$ with commuting right $B$ and $C$ actions, or equivalently an $B^{\op}$--$C$ bimodule. Similarly, a left $B\otimes C$ module is equivalently a $C$--${}^\op B$ bimodule. In particular, given a left and a right $B\otimes C$ module, their relative tensor product $m\otimes_{(B\otimes C)}n$ is isomorphic to the coequalizer of the morphism $B \otimes (m\otimes_C n) \to m\otimes_C n$ induced by the left $B^{\op}$-action on $m$ and the composite $B \otimes (m\otimes_C n)  \cong (m\otimes_C n) \otimes B \to m\otimes_C n$ where the first morphism is the braiding $\beta^{-1}_{m\otimes_C n, B}$ in $\cB$ and the second morphism is induced by the right ${}^\op B $ action on $n$.

In our case, the multiplication map $\mu_A:A \otimes A \to A$ descends to an isomorphism $A\otimes_A A \cong A$ and the induced left $(A^\op)^\op$ and right ${}^\op(A^\op) = A$ actions on $A$ become
\[\mu_A \cdot \beta_{A,A}^{-1} \cdot \gamma_A: (A^{\op})^{\op} \otimes A \to A \hspace{1cm} \mu_A: A\otimes A \to A.
\]
By the above discussion, this exhibits $A_{\gamma_A} \otimes_{(A^\op \otimes A)}A $ as the coequalizer of $\mu_A\cdot \beta_{A,A}^{-1} \cdot \gamma_A$ and $\mu_A \cdot \beta_{A,A}^{-1}$. 
\end{proof}
\begin{remark}\label{rem:Serre}Form the perspective of the braided monoidal $2$-category $\sHBA(\cB)$, the (invertible bimodule induced by the) half-braided twist $\theta_{A,\gamma}$, seen as a half-braided algebra isomorphism $(A^\op, \gamma^\op)  \to ({}^\op A, {}^\op\gamma)$, is isomorphic to the \emph{Serre isomorphism}, defined for any dualizable object $X$ in a braided monoidal $2$-category as the following $1$-equivalence $S_X: X^\vee \to {}^\vee X$: 
\[
\begin{tz}[std]
\draw[string] (0,0) to (0,0.5) to [out=up, in=up, in looseness=2.5] (-1, 1);
\draw[braid=main] (-1,1)  to [out=down, in=down, out looseness=2.5] (0, 1.5) to (0,2);
\node[label, below] at (0,0) {$ X^\vee$};
\node[label, above] at (0, 2) {${}^\vee X$};
\node[label, left] at (-1.05,1) {$X$};
\node[label, above] at (-0.75,1.6) {$\ev_X$};
\node[label, below] at (-0.75,0.4) {$\coev_{{}^\vee X}$};
\path[arrow data ={0.1}{>}] (-1, 1) to (-1, 2);
\end{tz}
\]
In particular, the presentation of $\eta(A, \gamma)$ as the coequalizer~\eqref{eq:coeq} therefore follows more closely from the decomposition $\eta(X) \cong  \ev_{{}^\vee X} \circ (S_{X} \boxtimes \id_X)  \circ \coev_X$ of our framed circle.
\end{remark}

\begin{remark}\label{rem:invmate} 
For bimodules $\phi_*$ induced by  isomorphisms $\phi: A \to B$ of half-braided algebras, there is a convenient choice of mate given by the bimodule $_{A^\op}((\phi^{-1})_*)_{B^\op}$ induced by $\phi^{-1}$ thought of as an algebra isomorphism $\phi^{-1}: B^\op \to A^\op$. The rotation isomorphisms  $\rot^{\phi_*}$ and $\rot_{\phi_*}$ characterizing the mate (see Definition~\ref{defn:duals}) are given by the following two compositions:
\begin{gather*}
A \otimes_{(A\otimes A^{\op})} (\id_A \otimes \phi^{-1})_* \cong A_{(\id_A \otimes \phi^{-1}) }\overset{\phi}{\cong} B_{(\phi \otimes \id_{B^\op}) }
\cong B \otimes_{B\otimes B^{\op}} (\phi \otimes \id_{B^\op})_*  \\
 (\id_{A^\op} \otimes \phi)_* \otimes_{A^{\op} \otimes A}A  \cong _{( \id_{A^\op} \otimes \phi^{-1})}A \overset{\phi}{\cong} _{( \phi \otimes \id_B)}B \cong (\phi^{-1} \otimes \id_B)_* \otimes_{B^{\op} \otimes B}B 
\end{gather*}
\end{remark}

\begin{lemma} Let $\phi:(A, \gamma) \to (B, \zeta)$ be an isomorphism of half-braided algebras, equipped with the duality data from Lemma~\ref{lem:dualityHBA} and let $A \twoheadrightarrow \eta(A, \gamma)$ and $B \twoheadrightarrow \eta(B, \zeta)$ be the universal morphisms from Proposition~\ref{prop:etaiscenter}. Then, the following diagram commutes: 
\[\begin{tikzcd}
A \arrow[r, "\phi"]\arrow[d, twoheadrightarrow]& B \arrow[d, twoheadrightarrow] \\
\eta(A, \gamma) \arrow[r, "\eta(\phi_*)"] & \eta(B, \zeta) 
\end{tikzcd}
\]
\end{lemma}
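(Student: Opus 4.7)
The plan is to identify both sides of the diagram explicitly and then match them directly. By Proposition~\ref{prop:etaiscenter} and Lemma~\ref{lem:dualityHBA}, the universal morphism $q_A: A \twoheadrightarrow \eta(A,\gamma)$ arises from the identification $\eta(A,\gamma) \cong A_{\gamma_A} \otimes_{A^{\op} \otimes A} A$ and is induced by $a \mapsto [1 \otimes a] = [a \otimes 1]$ (the two expressions coincide by the proposition), and similarly for $q_B$. The main content of the lemma is therefore the assertion that $\eta(\phi_*) : \eta(A,\gamma) \to \eta(B,\zeta)$ is the map induced by $\phi \otimes \phi : A \otimes A \to B \otimes B$ across the coequalizer presentations. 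Once this is known, commutativity is immediate: $\eta(\phi_*)(q_A(a)) = [\phi(1) \otimes \phi(a)] = [1 \otimes \phi(a)] = q_B(\phi(a))$, using $\phi(1_A) = 1_B$.

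To establish this description of $\eta(\phi_*)$, I would apply the explicit formula~\eqref{eq:etaf} (equivalently the more symmetric~\eqref{eq:etafeasier}) from the proof of Lemma~\ref{lemma:etafunctor}, making the canonical choices of adjoint and mate dictated by Remark~\ref{rem:invmate}: as right adjoint of $\phi_* = {}_B B_{\phi A}$ take $(\phi^{-1})_* = {}_A A_{\phi^{-1} B}$, and as right mate take the bimodule ${}_{A^{\op}}((\phi^{-1})_*)_{B^{\op}}$ induced by $\phi^{-1} : B^{\op} \to A^{\op}$, with $\rot_{\phi_*}$ and $\rot^{\phi_*}$ as spelled out in that remark. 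With these choices, each of the 2-morphisms $\coev_{\phi_*}, \rot_{\phi_*}, \rot^{\phi_*}, \ev_{\phi_*}$ appearing in the formula for $\eta(\phi_*)$ reduces to applying $\phi$ or $\phi^{-1}$ on a single tensor factor; composing them and using that $\phi$ intertwines both the algebra structures and the half-braidings (so every $\gamma$ gets sent to $\zeta$ correctly), the resulting map $A_{\gamma_A} \otimes_{A^{\op} \otimes A} A \to B_{\zeta_B} \otimes_{B^{\op} \otimes B} B$ is visibly $[a \otimes a'] \mapsto [\phi(a) \otimes \phi(a')]$.

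The main obstacle is just the bookkeeping in this unpacking: each intermediate 2-isomorphism has to be identified inside the appropriate relative tensor product, and the composite has to be tracked down to the quotients. No new ideas are needed, since all the mating and rotation data are canonical and the half-braided algebra isomorphism hypothesis makes every step compatible. A conceptually cleaner alternative would be to observe that the coequalizer presentation of Proposition~\ref{prop:etaiscenter} is manifestly functorial in isomorphisms of half-braided algebras, and to verify by direct comparison that this functoriality agrees with the one inherited from the abstract 2-functor $\eta$ of Lemma~\ref{lemma:etafunctor}; this sidesteps some of the rotation-isomorphism calculus at the cost of one extra compatibility check.
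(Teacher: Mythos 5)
Your proposal is correct and follows essentially the same route as the paper's own proof: make the adjoint and mate choices from Remark~\ref{rem:invmate}, unwind the formula for $\eta(\phi_*)$ to see it descends from $\phi\otimes\phi$ on the coequalizer presentations, and then conclude by unitality of $\phi$. The alternative you sketch at the end (functoriality of the coequalizer construction plus a compatibility check against the abstract $\eta$) is a reasonable shortcut, but the paper simply carries out the mate/rotation bookkeeping directly.
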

\begin{proof}
According to Lemma~\ref{lemma:etafunctor}, to compute $\eta(\phi_*)$ we may choose a mate and a right adjoint for $_{B}(\phi_*)_A$. As a mate we choose the bimodule $_{A^\op}((\phi^{-1})_*)_{B^\op}$ induced by the half-braided algebra isomorphism $\phi^{-1}: B^\op \to A^\op$ defined in Remark~\ref{rem:invmate}. 
As a right adjoint (equivalence) we choose the bimodule $_{A}((\phi^{-1})_*)_B$, again induced by $\phi^{-1}$ but this time thought of as a half-braided algebra isomorphism $\phi^{-1}: B \to A$. 

Tracing through \eqref{eq:etaf}, we compute $\eta(\phi_*)$ as follows:
\begin{align*}
A_{\gamma_A}\otimes_{(A^\op \otimes A)} A 
& =A_{\gamma_A}\otimes_{(A^\op \otimes A)} {}_{(\id_{A^\op} \otimes \phi \phi^{-1})}A  \\
&  \cong\hspace*{-4ex}\overset{\id_A \otimes \phi}{\phantom\cong} A_{\gamma_A}\otimes_{(A^\op \otimes A)} {}_{(\phi \otimes \phi )}B\\
&  \cong A_{  \gamma_A (\phi^{-1} \otimes \phi^{-1})} \otimes_{B^\op \otimes B} B\\
& = A_{(\phi^{-1} \otimes \phi^{-1}) \zeta_B}\otimes_{(B^\op \otimes B)}B \\
& \cong\hspace*{-4ex}\overset{ \phi \otimes \id_B}{\phantom\cong} B_{\zeta_B} \otimes_{(B^\op \otimes B)} B
\end{align*}

 Except for the labelled isomorphisms $\id_A \otimes \phi$ and $\phi \otimes \id_A$, all isomorphisms in the above sequence lie under the trivial maps $\id_{A\otimes A}$ and $\id_{A\otimes B}$, respectively. 
Hence, $\eta(\phi_*)$ is given by the map $\phi \otimes \phi:A \otimes A \to B \otimes B $ descended to the coequalizers $A_{\gamma_A} \otimes_{(A^\op \otimes A)}A \to B_{\zeta_B} \otimes_{(B^\op \otimes B)} B$. In particular, this lifts further along $\nu_A \otimes \id_A:A \to A\otimes A$ to the map $\phi:A \to B$.
\end{proof}

\begin{remark}\label{rem:opmate}
Using the choices of right dual from Lemma~\ref{lem:dualityHBA}, a convenient choice of mate  for a half-braided bimodule $_{B}m_A : A \to B$ is the half-braided bimodule $_{A^\op\vphantom{B^\op}} (m^{\op})_{B^\op} :B^\op \to A^\op$ with the same underlying object $m$ in $\cB$ and ``rotated'' action $\mathrm{act}_m\cdot \beta^{-1}_{B, m\otimes A} \cdot (\beta^{-1}_{m, A} \otimes \id_B): A^\op \otimes m \otimes B^\op \to m$. 
With these choices, the canonical isomorphism from the mate of a composite $_{A^\op}(n\otimes_B m)^\op_{C^\op}$ to the composite of mates $_{A^\op} (m^\op) \otimes_{B^\op} (n^\op)_{C^\op}$ descends from the isomorphism $\beta_{n, m}: n \otimes m \to m \otimes n$.

In Remark~\ref{rem:invmate} we  discussed another choice of mate for bimodules $\phi_*$ arising from algebra isomorphisms $\phi:A \to B$ : The bimodule $(\phi^{-1})_*$ induced by the algebra-isomorphism $\phi^{-1}: B^\op \to A^\op$. 
The canonical isomorphism between these two choices of mate from  $(\phi^{-1})_*$ to $(\phi_*)^{\op}$  is given by $\phi$. 
\end{remark}

By Proposition~\ref{prop:etaiscenter}, both $\eta(A, \gamma)$ and $\eta({}^\op A, {}^\op \gamma)$ may be expressed as a quotient of the same object $A$ of $\cB$.

\begin{lemma}
Let $(A, \gamma)$ be a half-braided algebra equipped with the choice of left and right dual from Lemma~\ref{lem:dualityHBA} and let $A \twoheadrightarrow \eta(A, \gamma)$ and $A \twoheadrightarrow \eta({}^\op A,{}^\op\gamma)$ be the universal morphisms from Proposition~\ref{prop:etaiscenter}.
Then, the following diagram commutes: 
\[\begin{tikzcd}
&A \arrow[dl, twoheadrightarrow] \arrow[dr, twoheadrightarrow]&\\ \eta({}^\op A, {}^\op \gamma) \arrow[rr, "\fl_{A,\gamma}"]& & \eta(A, \gamma)
\end{tikzcd}
\] \end{lemma}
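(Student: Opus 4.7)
The plan is to apply the flexible description of the flip natural isomorphism given in Remark~\ref{rem:complicatedfl}, choosing duality data adapted to the half-braided algebra presentation. By Lemma~\ref{lem:dualityHBA}, the right dual of $(A,\gamma)$ is $(A^\op, \gamma^\op)$ with explicit evaluation, coevaluation, and cusp data; the left dual is $({}^\op A, {}^\op\gamma)$ with analogous data, since a direct check shows $(({}^\op A)^\op, ({}^\op \gamma)^\op) = (A,\gamma)$ as half-braided algebras, so applying Lemma~\ref{lem:dualityHBA} to $({}^\op A, {}^\op\gamma)$ provides all required structure.

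Next, I will compute the mates $(\coev_{({}^\op A,{}^\op\gamma)})^\vee$, $(\ev_{({}^\op A,{}^\op\gamma)})^\vee$, and $(\br_{(A,\gamma),({}^\op A,{}^\op\gamma)})^\vee$ using Remark~\ref{rem:opmate}, which says that the mate of a half-braided bimodule has the same underlying object in $\cB$ with action rotated by braidings, and using Remark~\ref{rem:invmate} to identify the mate of the braiding bimodule $({}^\op \gamma_A)_*$ with the bimodule induced by its inverse algebra isomorphism. Since the underlying object of each mate agrees with the underlying object of the corresponding half-braided bimodule appearing in the definition of $\eta(A,\gamma) = \ev_{(A,\gamma)} \circ \br_{(A^\op,\gamma^\op),(A,\gamma)} \circ \coev_{(A,\gamma)}$, the composite of mates will, at the level of underlying objects in $\cB$, produce the same object $A \otimes_A A \cong A$ that Proposition~\ref{prop:etaiscenter} identifies with (a presentation of) $\eta(A,\gamma)$.

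The heart of the argument is then to track all the intermediate canonical isomorphisms in (\ref{eqn:flfull})---the canonical identifications of mates of composites with composites of mates, together with the auxiliary equivalences of duality data $\omega_L$ and $\omega_R$---and verify that after descent to the coequalizer presentations $A \twoheadrightarrow \eta({}^\op A, {}^\op \gamma)$ and $A \twoheadrightarrow \eta(A,\gamma)$ from Proposition~\ref{prop:etaiscenter}, the composite induces the identity $A \to A$. The essential point is that each rotation of action by braidings contributed by an individual mate in Remark~\ref{rem:opmate} is precisely cancelled by the corresponding braiding appearing in the opposite algebra structures on ${}^\op A$ versus $A^\op$ (equations~\eqref{eq:defmop}) and in their half-braidings ${}^\op\gamma$ versus $\gamma^\op$, so that the aggregate map on the underlying object $A$ is trivial.

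The main obstacle will be the bookkeeping: keeping track of all the opposite conventions, the left/right mate conventions, and the signs of braidings introduced by Remarks~\ref{rem:opmate} and~\ref{rem:invmate}, while ensuring that the flexibility afforded by Remark~\ref{rem:complicatedfl} is used consistently. Once this is carried out, the commutativity of the triangle follows immediately: both legs send the generator of $A$ (viewed via $\nu_A : I \to A$ in the quotient of $A \otimes A$) to its class in the respective coequalizer, and the mate-composite identifies these classes via the identity on $A$.
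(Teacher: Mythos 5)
Your overall strategy is the same as the paper's: invoke the flexible formula for $\fl$ from Remark~\ref{rem:complicatedfl}, identify the left and right duals via Lemma~\ref{lem:dualityHBA}, compute the mates of the evaluation, coevaluation, and braiding bimodules via Remarks~\ref{rem:opmate} and~\ref{rem:invmate}, and descend to the coequalizer presentations of Proposition~\ref{prop:etaiscenter}. Your observation that $(({}^\op A)^\op, ({}^\op\gamma)^\op) = (A,\gamma)$ and that applying Lemma~\ref{lem:dualityHBA} to $({}^\op A, {}^\op\gamma)$ provides the left duality data for $(A,\gamma)$ is correct and is implicitly used in the paper.

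The gap is in your asserted mechanism. You claim that ``each rotation of action by braidings contributed by an individual mate in Remark~\ref{rem:opmate} is precisely cancelled by the corresponding braiding appearing in the opposite algebra structures \dots so that the aggregate map on the underlying object $A$ is trivial.'' This is not what happens: when you carefully chain the canonical mate-of-composite isomorphisms, the equivalences of duality data $\omega_{L}, \omega_{R}$, and the isomorphism between the two choices of mate for the half-braided algebra isomorphism, the resulting identification at the level of $A \otimes A$ is \emph{not} the identity but rather the braiding $\beta_{A,A}$ (this is the ``labelled'' isomorphism in the paper's chain of identifications). No pairwise cancellation of braidings eliminates this. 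Rather, the triangle closes only because the universal maps from $A$ are given by $\nu_A \otimes \id_A$ (equivalently $\id_A \otimes \nu_A$), and $\beta_{A,A} \cdot (\id_A \otimes \nu_A) = \nu_A \otimes \id_A$, so the $\beta_{A,A}$ becomes the identity on $A$ only after restricting along the unit. You hint at this correctly in your final sentence, but if you attempted the bookkeeping expecting a pointwise cancellation you would get stuck; you need the $\beta_{A,A}$-versus-$\nu_A$ compatibility, not a cancellation, to finish.
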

\begin{proof}
Besides the choice of right and left dual $(A^\op, \gamma^\op)$ and $({}^\op A, {}^\op \gamma)$ determined by Lemma~\ref{lem:dualityHBA}, we make the following three additional choices, precisely corresponding to three choices required in Remark~\ref{rem:complicatedfl}. For better readability, we make these choices more generally for half-braided algebras $(B, \zeta)$ and $(C, \xi)$ and specialize to the relevant case involving $(A, \gamma)$, $(A^\op, \gamma^\op)$ and $({}^\op A, {}^\op \gamma)$ in the computation below.
\begin{enumerate}
\item  \label{nucommutes1}
For the tensor product of half braided algebras $(B, \zeta) \boxtimes (C, \xi) = (B\otimes C, \zeta \boxtimes \xi)$, see~\eqref{eq:tensor}, we choose the right dual half-braided algebra $\left((B\otimes C)^{\op} ,(\zeta \boxtimes \xi)^{\op}\right)$ as described in Lemma~\ref{lem:dualityHBA}. 
\item In contrast to our choice in \ref{nucommutes1}., the canonical tensor product dual for $(B, \zeta) \boxtimes (C, \xi)$ is given by the half-braided algebra $(C^\op, \xi^\op) \boxtimes (B^\op ,\zeta^\op)$. As equivalence of duality data $((B \otimes C)^\op, (\zeta \boxtimes\xi)^\op) \to (C^\op \otimes B^\op, \xi^\op \otimes \zeta^\op)$ we choose the bimodule $(\beta_{B,C})_*$ induced by the half-braided algebra isomorphism $\beta_{B, C}: (B \otimes C)^\op \to C^\op \otimes B^\op$ given by the braiding of our underlying braided monoidal $1$-category $\cB$. 
\item For evaluation and coevaluation bimodules we choose as mate the ``rotated bimodules'' of Remark~\ref{rem:opmate}, for $\br^{\sHBA}_{(B, \zeta), (C, \xi)} = (\xi_B)_*$, we choose the bimodule $(\xi^{-1}_B)_*$. These mates are all with respect to the duality data on the tensor product as in \ref{nucommutes1}.\end{enumerate} 
With these choices, and using the notation from Remark~\ref{rem:complicatedfl} for $X=(A, \gamma)$,  the canonical isomorphisms relevant for the computation of $\fl$ are given by:
\begin{multline}\label{mateA1}
(\coev_{{}^\vee X})^\vee \circ \omega_L^{-1} = \left(_{A\otimes {}^\op A}{}^\op A\right)^{\op}\otimes_{(A\otimes {}^\op A)^\op}(\beta_{A,A}^{-1})_* \\
\cong \left(_{A\otimes {}^\op A}{}^\op A\right)^{\op}_{ \beta_{A,A}^{-1}} = A_{A\otimes A^{\op}} = \ev_{X}
\end{multline}
\begin{multline}\label{mateA2}
\omega_R \circ (\ev_{{}^\vee X})^\vee = (\beta_{A,A})_* \otimes_{({}^\op A \otimes A)^\op } \left( {}^\op A_{{}^\op A \otimes A} \right)^{\op}
\\
\overset{\beta_{A,A}^{-1}}\cong \left( {}^\op A_{{}^\op A \otimes A} \right)^{\op} = {_{A^\op \otimes A} A} = \coev_X
\end{multline}
\begin{multline}\label{mateA3}
\omega_L \circ (\br^{\sHBA(\cB)}_{X, {}^\vee X})^{\vee} \circ \omega_R^{-1}  = (\beta_{A,A})_* \otimes_{(A\otimes {}^\op A)^\op} ({}^\op\gamma_A^{-1})_* \otimes_{({}^\op A \otimes A)^\op} (\beta_{A,A}^{-1})_*
\\
\cong \left( \beta_{A,A} \cdot {}^{\op} \gamma_A^{-1} \cdot \beta^{-1}_{A,A}\right)_* = (\gamma_A)_* = \br^{\sHBA}_{X^\vee, X}
\end{multline}
We then compute 
\begin{align*}
  {}^\op A_{{}^\op \gamma} \otimes_{A \otimes{}^\op A} {}^\op A 
  & =   \left({}^\op A _{{}^\op \gamma}\otimes_{A \otimes {}^\op A} {}^\op A \right)^{\op} \\
  & \cong\hspace{-3.75ex}\overset{\beta_{A,A}}{\phantom\cong} \left( {}^\op A \right)^{\op} \otimes_{(A\otimes {}^\op A)^\op} \left( {}^\op A_{{}^\op \gamma_A}\right)^{\op} \\
& \cong  ({}^\op A)^\op_{({}^\op \gamma_A)^{-1}}  \otimes_{({}^\op A \otimes A)^\op}  ({}^\op A )^\op \\
& = ({}^\op A)^\op_{\beta_{A,A}^{-1} \beta^{\phantom{1}}_{A,A}({}^\op \gamma_A)^{-1}}  \otimes_{({}^\op A \otimes A)^{\op}}  {}_{\beta^{\phantom{1}}_{A, A}\beta^{-1}_{A,A}}({}^\op A )^\op \\
& \cong ({}^\op A)^\op_{\beta_{A,A}^{-1} (\beta^{\phantom{1}}_{A,A}({}^\op \gamma_A)^{-1} \beta_{A,A}^{-1})}  \otimes_{A^\op \otimes A}  {}_{\beta^{-1}_{A,A}}({}^\op A )^\op \\
& \cong A_{\gamma_A} \otimes_{A^\op \otimes A} A
\end{align*}
Here, the last isomorphism implements the canonical isomorphisms between choices of mates from (\ref{mateA1}--\ref{mateA3}), while $\beta_{A,A}$ implements the isomorphism between the mate of composites and the composite of mates (see Remark~\ref{rem:opmate}). Except for the labelled isomorphism $\beta_{A,A}$, all isomorphisms lie below the trivial map $\id_{A\otimes A}: A\otimes A \to A\otimes A$. In particular, the following square commutes 
\[\begin{tikzcd}
A\otimes A\arrow[d, twoheadrightarrow] \arrow[r, "\beta_{A,A}"] & A\otimes A\arrow[d, twoheadrightarrow] \\ 
\eta({}^\op A, {}^\op \gamma) \arrow[r, "\fl_{A,\gamma}"] & \eta(A, \gamma) 
\end{tikzcd}
\]
Using the  maps $A\twoheadrightarrow \eta({}^\op A, {}^\op \gamma)$ and $A\twoheadrightarrow \eta(A, \gamma)$ from Proposition~\ref{prop:etaiscenter}  and noting that $\beta_{A,A}\cdot (\id_A \otimes \nu_A) = \nu_A\otimes \id_A $ (where as above $\nu_A : I \to A$ denotes the multiplicative unit), it follows that $\fl$ lies under the identity map $\id_A:A\to A$. 
\end{proof}

\begin{corollary}\label{cor:mainHBA} Let $(A, \gamma)$ be a half-braided algebra equipped with the choice of left and right dual $({}^\op A, {}^\op \gamma)$ and $(A^\op, \gamma^\op)$ from Lemma~\ref{lem:dualityHBA},  and let $\phi: (A, \gamma) \to ({}^\op A, {}^\op\gamma) $ be a half-braided algebra isomorphism. Then, the following diagram commutes 
\[
\begin{tikzcd} 
A \arrow[rr, " \phi"] \arrow[d, twoheadrightarrow]  && A \arrow[d, twoheadrightarrow]\\
\eta(A, \gamma) \arrow[rr, "\fl_{A, \gamma} \cdot \eta(\phi)"] && \eta(A, \gamma) 
\end{tikzcd}\]
where $A\twoheadrightarrow \eta(A, \gamma)$ denotes the universal map from Proposition~\ref{prop:etaiscenter} identifying $\eta(A, \gamma)$ with the coequalizer~\eqref{eq:coeq}.

In particular, this identifies the induced automorphism $\underline{\eta}(\phi)\cdot \underline{\fl}_{A, \gamma}: \underline{\eta}(A, \gamma) \to \underline{\eta}(A, \gamma)$ from Definition~\ref{defn:underlineversions} with the automorphism of the vector space of twisted traces (see Definition~\ref{def:twistedcocenter}) which maps a twisted trace $\epsilon:A \to I$ to the twisted trace $\epsilon \cdot \phi$. 
\end{corollary}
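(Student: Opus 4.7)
The plan is to derive this corollary directly from the two lemmas that immediately precede it in the paper. The first of those lemmas establishes that for any half-braided algebra isomorphism $\phi: (A,\gamma) \to (B,\zeta)$, the induced $2$-morphism $\eta(\phi_*): \eta(A,\gamma) \to \eta(B,\zeta)$ lies under the original map $\phi: A \to B$ (that is, the obvious square with the universal quotients $A \twoheadrightarrow \eta(A,\gamma)$ and $B \twoheadrightarrow \eta(B,\zeta)$ commutes). The second asserts that $\fl_{A,\gamma}: \eta({}^\op A, {}^\op\gamma) \to \eta(A,\gamma)$ lies under the identity $\id_A: A \to A$. Applying the first lemma to the given isomorphism $\phi: (A,\gamma) \to ({}^\op A, {}^\op\gamma)$ yields the left-hand square in
\[
\begin{tikzcd}
A \arrow[r, "\phi"] \arrow[d, twoheadrightarrow] & A \arrow[r, "\id_A"] \arrow[d, twoheadrightarrow] & A \arrow[d, twoheadrightarrow]\\
\eta(A, \gamma) \arrow[r, "\eta(\phi)"] & \eta({}^\op A, {}^\op\gamma) \arrow[r, "\fl_{A,\gamma}"] & \eta(A, \gamma)
\end{tikzcd}
\]
while applying the second yields the right-hand square. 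Pasting the two squares and composing along the top row gives exactly the square asserted in the corollary.

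For the second assertion, Proposition~\ref{prop:etaiscenter} identifies $\eta(A,\gamma)$ with the coequalizer~\eqref{eq:coeq} via the universal morphism $A \twoheadrightarrow \eta(A,\gamma)$, and correspondingly identifies the vector space of twisted traces on $(A,\gamma)$ with $\underline{\eta}(A,\gamma) = \Hom_{\cZ_2(\cB)}(\eta(A,\gamma), I)$ by precomposition: a twisted trace $\epsilon: A \to I$ corresponds to the unique morphism $\tilde\epsilon: \eta(A,\gamma) \to I$ whose composite with $A \twoheadrightarrow \eta(A,\gamma)$ is $\epsilon$. Since the automorphism $\fl_{A,\gamma} \cdot \eta(\phi)$ lies under $\phi$ by the commutative square just proved, the dual automorphism $\underline{\eta}(\phi)\cdot\underline{\fl}_{A,\gamma} = (-)\cdot(\fl_{A,\gamma}\cdot\eta(\phi))$ on $\underline{\eta}(A,\gamma)$ corresponds under this identification to the map sending $\epsilon$ to $\epsilon \cdot \phi$, as claimed.

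There is no real obstacle here: all the genuine work was carried out in the two preceding lemmas, where the tensor-product duality data from Lemma~\ref{lem:dualityHBA} and the coherent choices of mates from Remarks~\ref{rem:invmate} and~\ref{rem:opmate} were used to trace through the definitions~\eqref{eq:etaf} and~\eqref{eqn:flfull} of $\eta(-)$ and $\fl$ in $\sHBA(\cB)$. The only point worth checking explicitly is that the identification with twisted traces used in Definition~\ref{def:twistedcocenter} is indeed precomposition with the universal morphism $A \twoheadrightarrow \eta(A,\gamma)$, so that ``lying under $\phi$'' translates precisely to $\epsilon \mapsto \epsilon \cdot \phi$; this is immediate from Proposition~\ref{prop:etaiscenter}.
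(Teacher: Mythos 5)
Your proof is correct and takes exactly the approach the paper intends: the corollary is stated in the paper without a written proof precisely because it follows immediately by pasting the two preceding lemmas (the one showing $\eta(\phi_*)$ lies under $\phi$, and the one showing $\fl_{A,\gamma}$ lies under $\id_A$), and then dualizing via the twisted-trace identification from Proposition~\ref{prop:etaiscenter}. Nothing is missing.
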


\subsection{Twisted traces and \texorpdfstring{$\eta$}{eta} for the Lagrangian object \texorpdfstring{$\cL$}{L} in \texorpdfstring{$\cZ(\Sigma \cB)$}{Z(Sigma B)}}\label{subsec:etaL}

Recall from \S\ref{subsec.canonicalLagHBA} that $\cZ(\Sigma \cB)$ has a \emph{distinguished Lagrangian object} (in fact, Lagrangian braided monoidal object) $\cL$ represented by the braided $\cB$-module category $\cZ(\cB)$ or the half-braided algebra $L = \int^{b \in \cB} b \otimes b^*$ with half-braiding $\lambda$, as given in Definition~\ref{defn.lagHBA}. As in Section~\ref{subsec.canonicalLagHBA} we will define morphisms out of this object $L$ in terms of dinatural transformations. 

We first compute the half-braided twist of $(L, \lambda)$, as defined in Definition~\ref{def:hbtwist}.
\begin{lemma}The half-braided twist $\theta_{L,\lambda}:L \to L$ of the half-braided algebra $(L, \lambda)$ is defined in terms of the following dinatural transformation $b\otimes b^* \to L$:
\begin{equation}\label{eq:fulltwist}
\begin{tz}[std, scale=1,yscale=0.8]
\draw[string, dinat] (-0.75, 2) to [out=down, in=down, out looseness=1, in looseness=3] (0, 4);
\draw[string, dinat] (-1.25, 2) to [out=down, in=down, out looseness=1.5, in looseness=3] (0.5, 4);
\draw[braid, dinat] (0,0) to [out=up, in=up, in looseness=1, out looseness=3] (-0.75, 2);
\draw[braid, dinat] (0.5,0) to [out=up, in=up, in looseness=1.5, out looseness=3] (-1.25, 2);
\draw[string] (0.25, 4.3) to (0.25, 5.5);
\node[box=1cm] at (0.25,4.2) {$\iota_{{}^{**}b}$};
\node[label, below] at (0, 0) {$b$};
\node[label, above] at (0.25, 5.5){$L$};
\node[label, below] at (0.5, 0) {$b^*$};
\end{tz}
\end{equation}
\end{lemma}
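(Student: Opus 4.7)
My plan is to exploit the universal property of the coend $L = \int^{b \in \cB} b \otimes b^{*}$: any morphism $L \to L$ is determined by its composite with the dinatural transformation $\iota : b \otimes b^{*} \to L$. So it suffices to compute the composite $\theta_{L,\lambda} \cdot \iota_{b}$ as a dinatural transformation $b \otimes b^{*} \to L$ and to verify that it agrees with the formula~\eqref{eq:fulltwist}.

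First I would unpack Definition~\ref{def:hbtwist} for $(A,\gamma) = (L,\lambda)$. Expanding the diagram, $\theta_{L,\lambda}$ is built from the half-braiding $\lambda$ applied to an auxiliary copy of $L$ closed up into a loop using the left-handed evaluation and coevaluation on $L$. Because $\lambda_{x}$ itself is only defined in terms of the dinatural components $x \otimes (b \otimes b^{*}) \to L \otimes x$ displayed in~\eqref{eq:Laghb}, I would next use the universal property of the coend in the \emph{auxiliary} copy of $L$ as well: the loop around $L$ is computed by its dinatural components indexed by some $c$, and the desired equality can then be checked before pushing into the coend in both the $b$ and $c$ variables.

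Once both copies of $L$ are replaced by their coend components, the diagram becomes a pure string diagram in $\cB$, built from braidings, cups and caps. The substantive step is a diagram chase: using naturality of the braiding, one slides the auxiliary strand $c$ through the cup/cap pair closing the loop, and dinaturality allows one to identify the resulting composite with $\iota_{c}$ precomposed with a map $c \otimes c^{*} \to$ (something involving $b, b^{*}$, and a ``double twist''). Coend-dinaturality of the inclusions $\iota$ then lets one transport the two right-duals involved in the loop across the coend, turning the strand labeled $b$ into one labeled $^{**}b$ and producing exactly the dinatural morphism $\iota_{{}^{**}b}$ in~\eqref{eq:fulltwist}. The specific over/under crossing pattern displayed in~\eqref{eq:fulltwist} is forced by the signs of the braidings appearing both in $\lambda$ (one overcrossing) and in the definition of $\theta$ (one further full braid), so that after closing the loop with the duality on $L$ one is left with precisely two nested ``belt-like'' crossings.

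The main obstacle is bookkeeping: the computation itself is purely diagrammatic, but one must correctly juggle two separate coend variables, two pairs of duals, and several applications of the braiding without losing track of over/under-crossing data. Once this bookkeeping is set up carefully --- essentially by drawing the full diagram with one layer showing $\lambda$ as in~\eqref{eq:Laghb} and another showing the loop from Definition~\ref{def:hbtwist} --- the identification with~\eqref{eq:fulltwist} follows by straightforward isotopy in the plane together with dinaturality of $\iota$. No further input is required beyond the basic coend calculus and the explicit formulas for $\lambda$ and $\theta$.
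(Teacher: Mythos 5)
Your first step — reducing to the computation of $\theta_{L,\lambda} \cdot \iota_{b}$ via the coend universal property of $L$ — is exactly what the paper does, and your final step (using dinaturality of $\iota$ to convert the index $b$ into ${}^{**}b$) also matches the paper's last move. The gap is in the middle, and it comes from a misreading of the structure of $\theta_{L,\lambda}$.

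You describe $\theta_{L,\lambda}$ as ``$\lambda$ applied to an auxiliary copy of $L$ closed up into a loop'' and propose to unpack that auxiliary $L$ with a second coend variable $c$, checking the equality ``before pushing into the coend in both the $b$ and $c$ variables.'' But the twist diagram of Definition~\ref{def:hbtwist} contains no independent closed $L$-loop: it is a \emph{single} strand that curls back on itself, so the naturality-variable output of $\lambda$ feeds directly into the algebra input of the \emph{same} instance of $\lambda$. There is therefore nowhere to insert a second inclusion $\iota_{c}$: the curl is (up to braiding) just $\id_{L}$, and the coend universal property says nothing about decomposing $\id_{L}$ into $\iota_{c}$'s (a direct-sum splitting of $\id_L$ would introduce an unwanted sum over simple objects that never appears in the target formula). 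The paper's actual computation never sees a second coend index. It slides the one $\iota_{b}$ along the single strand using naturality of $\lambda$ in its naturality variable together with naturality of the braiding $\beta$; after sliding, both the naturality variable and the algebra index of the half-braiding become $b \otimes b^{*}$ and $b$ respectively — the \emph{same} $b$ throughout. Unpacking $\lambda_{b\otimes b^{*}}$ via Definition~\ref{defn.lagHBA} then produces $\iota_{b\otimes b^{*}\otimes b}$, and dinaturality of $\iota$ (culminating in dinaturality with respect to $\coev_{b} = (\ev_{{}^{*}b})^{*}$) yields the asserted $\iota_{{}^{**}b}$. So the substance of your proposal is salvageable once you drop the phantom $c$; as written, the plan stalls at the point where $\iota_{c}$ would need to be inserted.
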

Note that the full twist of~\eqref{eq:fulltwist} is not an endomorphism of $b\otimes b^*$, but rather an isomorphism $b\otimes b^* \to {}^{**}b \otimes {}^*b$. However, after composing with $\iota_{{}^{**}b}: {}^{**}b \otimes {}^*b \to L $, it defines a dinatural transformation $\{b\otimes b^* \to L\}_{b \in \cB}$ and hence induces an endomorphism $L\to L$. 
\begin{proof}Starting from Definition~\ref{def:hbtwist}, we compute $\theta_{L,\lambda} \cdot \iota_b : b\otimes b^* \to L$ as follows: 
\[
\begin{tz}[std,scale=0.8]
\clip (-1.5, -3.75) rectangle (1.5, 2.5);
\draw[braid] (-1, 2) to [out=-60,in=135] (0,1) to [out=-45, in=up]  (1,0) to [out=down, in=down, looseness=2 ] ( 0.25,0);
\draw[braid =main] (0.25,0) to [out=up, in=-135] (1,1);
\draw[string] (-1, -1.5) to [out =up, in=-135] (0,1) to [out=45, in=45, out looseness=3,in looseness=0.5] (1,1);
\node[smalldot] at(0,1){};
\draw[string, dinat] (-1.25, -1.5) to ++(0, -1.5);
\draw[string, dinat] (-0.75, -1.5) to ++(0, -1.5);
\node[box=0.7cm] at (-1,-1.5) {$\iota_{b}$};
\node[label, below] at (-1.25, -3) {$b$};
\node[label, below] at (-0.75, -3) {$b^*$};
\node[label, above] at (-1,2){$L$};
\end{tz}
\quad= \quad
\begin{tz}[std,scale=0.8]
\clip (-2.2, -3.75) rectangle (2.5, 2.5);
\draw[string, dinat] (2,-2) to [out=45, in=-45] (0.1,0.1);
\draw[string, dinat] (1.75,-1.75) to [out=45, in=-45, out looseness=0.75] (-0.1,-0.1);
\draw[braid, dinat] (-0.5,0.5) to [out=45, in=-135, looseness=2, out looseness=3, in looseness=3] (2,-2);
\draw[braid, dinat] (-0.75,0.75) to [out=45, in=-135, looseness=2, out looseness=4, in looseness=2] (1.75,-1.75);
\draw[string, dinat] (-1.5, -3) to [out=up, in=-135] (-0.5, 0.5) ;
\draw[string, dinat] (-1.85, -3) to [out=up, in=-135] (-0.75, 0.75) ;
\draw[string] (0,0) to (-2, 2);
\node[smalldot] at (-0.5, 0.5) {};
\node[smalldot] at (-0.75, 0.75){};
\node[box=0.7cm, rotate=45] at (0,0) {$\iota_{b}$};
\node[label, below] at (-1.5, -3) {$b^*$};
\node[label, below] at (-1.85, -3) {$b$};
\node[label, above] at (-2,2){$L$};
\end{tz}
\quad= \quad
\begin{tz}[std,scale=0.8]
\draw[string, dinat] (0.5, -2) to (0.5, 1.5);
\draw[string, dinat] (1, -2) to (1, 1.5);
\draw[string, dinat] (1.5,1.5) to [out=down, in=0] (2.25, -1);
\draw[string, dinat] (2,1.5) to [out=down, in=0, in looseness=2] (2.25, -1.5);
\draw[ string] (1.75, 1.7) to (1.75, 3);
\draw[ braid, dinat] (2.25, -1) to [out=left, in=down] (3, 1.5);
\draw[braid, dinat] (2.25, -1.5) to [out=left, in=down, out looseness=2] (2.5, 1.5);
\node[box=2.8cm] at (1.75,1.6) {$\iota_{b\otimes b^* \otimes b}$};
\node[label, below] at (0.5, -2) {$b$};
\node[label, below] at (1, -2) {$b^*$};
\node[label, above] at (1.75,3){$L$};
\end{tz}
=\quad
\begin{tz}[std, scale=1, scale=0.8, yscale=0.9]
\draw[string, dinat] (-0.75, 2) to [out=down, in=down, out looseness=1, in looseness=3] (0, 4);
\draw[string, dinat] (-1.25, 2) to [out=down, in=down, out looseness=1.5, in looseness=3] (0.5, 4);
\draw[braid, dinat] (0,0) to [out=up, in=up, in looseness=1, out looseness=3] (-0.75, 2);
\draw[braid, dinat] (0.5,0) to [out=up, in=up, in looseness=1.5, out looseness=3] (-1.25, 2);
\draw[string] (1.25, 4.3) to (1.25, 5.5);
\draw[string, dinat] (1,4 ) to[out=down, in=down, looseness=4] (2.5, 4);
\draw[string, dinat] (1.5, 4) to [out=down, in=down, looseness=6] (2, 4);
\node[box=2.8cm] at (1.25,4.2) {$\iota_{{}^{**}b \otimes {}^*b \otimes b}$};
\node[label, below] at (0, 0) {$b$};
\node[label, below] at (0.5, 0) {$b^*$};
\node[label, above] at (1.25, 5.5){$L$};
\end{tz}
\]
The first equation is naturality of the half-braiding $\lambda$ on $L$ (denoted as a black square) and of the braiding of $\cB$, the second equation follows from unpacking the definition of the half-braiding $\lambda$ in terms of a dinatural transformation (see Definition~\ref{defn.lagHBA}), and the last equation uses dinaturality of $\iota:(-)\otimes (-)^{*} \to L$ to move the twist-on-two-strands over to the left. Lastly, we may use dinaturality with respect to $\coev_b = (\ev_{{}^*b})^* :I \to b^* \otimes b$ to arrive at the expression~\eqref{eq:fulltwist}.\end{proof}

Recall from Remark~\ref{rem:Serre} that the half-braided twist is an algebra isomorphism $\theta_{L,\lambda} : (L^\op, \lambda^\op) \to ({}^\op L, {}^\op \lambda)$  representing the Serre equivalence in $\cZ(\Sigma \cB)$ (here $(L^\op, \lambda^\op)$ and $({}^\op L, {}^\op \lambda)$ are the left and right opposite half-braided algebras as defined before Lemma~\ref{lem:dualityHBA}). As our half-braided twist on $(L,\lambda)$ looks like a ``full twist on two strands,'' it admits a canonical ``square root'' resulting in a self-duality $\cL \to {}^\op \cL$ of the Lagrangian object $\cL$ in $\cZ(\Sigma \cB)$ as follows. 

 \begin{prop}\label{prop:defineh} The following morphism $\psi:L \to L$ in $\cB$ defines a half-braided algebra isomorphism $ \psi: (L, \lambda) \to ({}^\op L, {}^\op \lambda) $:
\[ \begin{tz}[std]
\draw[string, dinat] (0.5,0) to [out=up, in=down] (1,1.5);
\draw[braid, dinat] (0, 1)  to [out=down, in=down, looseness=2] (0.5, 1.5);
\draw[braid, dinat] (1,0) to [out=up, in=up] (-0.0,1.);
\draw[string] (0.75, 1.8) to (0.75, 2.5);
\node[box=0.9cm] at (0.75, 1.75) {$\iota_{{{}^*b}}$};
\node[label, below] at (0.5, 0) {$b$};
\node[label, below] at (1,0) {$b^*$};
\node[label, above] at (0.75, 2.5) {$L$};
\end{tz}
\]
\end{prop}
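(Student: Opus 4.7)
The plan is to verify, using the universal property of the coend $L = \int^{b \in \cB} b \otimes b^*$, the three conditions that make $\psi$ a half-braided algebra isomorphism $(L,\lambda) \to ({}^\op L, {}^\op \lambda)$, and then to deduce invertibility of $\psi$ from its relationship to the half-braided twist $\theta_{L,\lambda}$.

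First, I would establish the key identity $\psi \circ \psi = \theta_{L, \lambda}$ as morphisms $L \to L$ in $\cB$. Composing the ``half twist'' diagram defining $\psi$ with itself, the two resulting over-crossings assemble (after an isotopy and one application of dinaturality, using the canonical identification $({}^*b)^* \cong b$) into exactly the ``full twist on two strands'' picture~\eqref{eq:fulltwist} that defines $\theta_{L, \lambda}$. Since $\theta_{L, \lambda}$ is known to be a half-braided algebra isomorphism $(L^\op, \lambda^\op) \to ({}^\op L, {}^\op \lambda)$ (it represents the Serre equivalence, cf.\ Remark~\ref{rem:Serre}), it is in particular invertible as a morphism in $\cB$. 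This immediately implies that $\psi$ has both a left and a right inverse in $\cB$, hence is itself invertible.

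Next, I would verify compatibility of $\psi$ with unit and multiplication. Unit preservation reduces to evaluating $\psi$ on the $b = I$ component of $\iota$, which is a trivial isotopy. For multiplication, using that $L \otimes L$ is itself a coend over $\cB \times \cB$, it suffices to check the equality $\psi \cdot \mu_L \cdot (\iota_b \otimes \iota_c) = \mu_L \cdot \beta_{L,L} \cdot (\psi \otimes \psi) \cdot (\iota_b \otimes \iota_c)$ of dinatural transformations out of $(b \otimes b^*) \otimes (c \otimes c^*)$. Expanding both sides graphically using the formula for $\mu_L$ from Definition~\ref{defn.lagHBA} and using dinaturality to coalesce the $\iota$'s, both sides become dinatural transformations into $\iota_{{}^*(b\otimes c)}$ which agree by a standard braid-isotopy argument in $\cB$ together with naturality of the braiding.

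The main obstacle will be verifying compatibility with the half-braidings, i.e.\ checking
\[ (\psi \otimes \id_x) \cdot \lambda_x  \;=\;  ({}^\op\lambda)_x \cdot (\id_x \otimes \psi) \;=\;  \beta^{-1}_{L,x} \cdot \lambda_x^{-1} \cdot \beta_{x,L} \cdot (\id_x \otimes \psi) \]
for every $x \in \cB$. Again by the universal property, it is enough to precompose with $\id_x \otimes \iota_b$; both sides become dinatural transformations $x \otimes (b \otimes b^*) \to L \otimes x$. Substituting the explicit formula~\eqref{eq:Laghb} for $\lambda$ on one side, and expanding ${}^\op \lambda$ on the other, the desired equality becomes a braid-isotopy identity between two string diagrams in $\cB$ relating $x$, $b$, $b^*$ and the ``half twist'' of $\psi$. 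This identity is the heart of the proposition: the apparent asymmetry between $\lambda$ and $\lambda^{-1}$ on the two sides is compensated by the fact that $\psi$ itself encodes a half twist, so that each side represents the same topological configuration (strand $x$ passing around ``half'' of the full twist defining $\theta_{L,\lambda}$). Once this picture is verified by a careful graphical computation using dinaturality of $\iota$ and naturality of the braiding, the proof is complete.
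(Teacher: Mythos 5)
Your proposal is correct and follows essentially the same strategy as the paper's proof: unitality is immediate, and multiplicativity and half-braiding compatibility are verified on the coend legs $\iota_b$ by graphical isotopy using dinaturality of $\iota$ and naturality of the braiding. The one point you make explicit that the paper leaves implicit is invertibility via the observation $\psi^2 = \theta_{L,\lambda}$ (the paper only gestures at this in the lead-in sentence ``...it admits a canonical `square root'...''), and your argument there is sound: since $\theta_{L,\lambda}$ is invertible, $\psi\cdot(\psi\theta_{L,\lambda}^{-1}) = \id$ and $(\theta_{L,\lambda}^{-1}\psi)\cdot\psi = \id$, so $\psi$ is an isomorphism.
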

\begin{proof} Unitality of $\psi$ is immediate. It follows from dinaturality of $\iota_x: x\otimes x^* \to L$  that $\iota_{c\otimes b} \cdot (\beta_{b,c} \otimes \id_{b^* \otimes c^*}) = \iota_{b\otimes c} \cdot (\id_{b \otimes c} \otimes \beta_{b^*, c^*})$ and hence that the multiplication $\mu_L$ can be rewritten as:
\begin{equation}
\label{eq:twomult}
\begin{tz}[std]
\clip (-0.5, -0.5) rectangle (2.5, 3);
\draw[string, dinat] (0,0) to (0,1.5);
\draw[string, dinat] (0.5, 0) to [out=up, in=down] (2,1.5);
\draw[braid, dinat] (1.5,0) to [out=up, in=down] (0.5,1.5);
\draw[braid, dinat] (2,0) to [out=up, in=down] (1.5,1.5);
\draw[string] (1, 1.7) to (1,2.5);
\node[box=2.4cm] at (1, 1.7) {$\iota_{b\otimes c}$};
\node[label, below] at (0,0) {$b$};
\node[label, below] at (0.5,0) {$b^*$};
\node[label, below] at (1.5, 0) {$c\vphantom{c^*}$};
\node[label, below] at (2,0) {$c^*$};
\node[label, above] at (1,2.5) {$L$};
\end{tz}
=
\begin{tz}[std, xscale=-1]
\clip (-0.5, -0.5) rectangle (2.5, 3);
\draw[braid, dinat] (1.5,0) to [out=up, in=down] (0.5,1.5);
\draw[braid, dinat] (2,0) to [out=up, in=down] (1.5,1.5);
\draw[braid, dinat] (0,0) to (0,1.5);
\draw[braid, dinat] (0.5, 0) to [out=up, in=down] (2,1.5);
\draw[string] (1, 1.7) to (1,2.5);
\node[box=2.4cm] at (1, 1.7) {$\iota_{c\otimes b}$};
\node[label, below] at (0,0) {$c^*$};
\node[label, below] at (0.5,0) {$c\vphantom{c^*}$};
\node[label, below] at (1.5, 0) {$b^*$};
\node[label, below] at (2,0) {$b$};
\node[label, above] at (1,2.5) {$L$};
\end{tz}
\end{equation}
Therefore, isotopy of the following two tangles implies that $\psi:L\to {}^\op L$ is indeed an algebra isomorphism, i.e.\ fulfills $\mu_L \cdot  \beta_{L, L} \cdot (\psi\otimes \psi) = \psi\cdot \mu_L $.
\def\tcut{3}
\[
\begin{tz}[std]
\draw[string, dinat] (0,0) to [out=up, in=down] (0.5,1.5);
\draw[braid, dinat] (-0.3, 1)  to [out=down, in=down, looseness=2] (0, 1.5);
\draw[braid, dinat] (0.5,0) to [out=up, in=up] (-0.3,1.);
\draw[string, dinat] (1.5,0) to [out=up, in=down] (2,1.5);
\draw[braid, dinat] (1.2, 1)  to [out=down, in=down, looseness=2] (1.5, 1.5);
\draw[braid, dinat] (2,0) to [out=up, in=up] (1.2,1.);
\draw[string, dinat] (0,1.5) to [out=up, in=down] (1.5,\tcut);
\draw[string, dinat] (0.5, 1.5) to [out=up, in=down] (2, \tcut);
\draw[braid, dinat] (1.5, 1.5) to [out=up, in=down] (0, \tcut);
\draw[braid, dinat] (2, 1.5) to [out=up, in=down, out looseness=1.3, in looseness=0.5] (0.5, \tcut);
\draw[string, dinat] (0, \tcut) to [out=up, in=down] (0.5, 4);
\draw[string, dinat] (0.5, \tcut) to [out=up, in=down] (1.5, 4);
\draw[braid, dinat] (1.5, \tcut) to [out=up, in=down] (0, 4);
\draw[braid, dinat] (2, \tcut) to [out=up, in=down] (2, 4);
\draw [decorate,line width =0.5pt,decoration={brace,amplitude=3pt},xshift=-4pt,yshift=0pt]
(-0.5,0) -- (-0.5,1.4) node [black,label, midway,left, xshift=-0.4cm] 
{$\psi \otimes \psi$};
\draw [decorate,line width =0.5pt,decoration={brace,amplitude=3pt},xshift=-4pt,yshift=0pt]
(-0.5,1.6) -- (-0.5,2.9) node [black,label, midway,left, xshift=-0.4cm] 
{$\beta_{L,L}$};
\draw [decorate,line width =0.5pt,decoration={brace,amplitude=3pt},xshift=-4pt,yshift=0pt]
(-0.5,3.1) -- (-0.5,4) node [black,label, midway,left, xshift=-0.4cm] 
{$\mu_L$};
\node[label, below] at(0,0) {$b$};
\node[label, below] at(0.5,0) {$b^*$};
\node[label, below] at(1.5,0) {$c\vphantom{b^*}$};
\node[label, below] at(2,0) {$c^*$};
\draw[string, dinat] (0,4) to ++(0, 0.5);
\draw[string, dinat] (0.5,4) to ++(0, 0.5);
\draw[string,dinat] (1.5,4) to ++(0, 0.5);
\draw[string, dinat] (2,4) to ++(0, 0.5);
\draw[string] (1, 4.5) to (1,5.5) ;
\node[label, above] at (1,5.5) {$L$};
\node[box=2.4cm] at (1, 4.5) {$\iota_{{}^*b\otimes {}^* c}$};
\end{tz}
\qquad = \qquad
\begin{tz}[std]
\draw[string, dinat] (0,0) to [out=up, in=down] (0.5,1.5);
\draw[string, dinat] (0,1.5) to [out=up, in=down] (1.5, 3.5) to (1.5, 4);
\draw[string, dinat] (0.5, 0) to [out=up, in=down] (1.5,1.5);
\draw[braid, dinat] (1.5,0) to [out=up, in=down] (0, 1.5);
\draw[braid, dinat] (2,0) to (2,1.5);
\draw[braid, dinat] (0.5,1.5) to [out=up, in=down] (2, 3.5) to (2,4);
\draw[string, dinat] (-0.5, 3) to [out=down, in=down, looseness=2] (0.5, 3.5) to (0.5,4);
\draw[string, dinat] (-0.25, 2.9) to [out=down, in=down, in looseness=2] (0.25, 3.5) to [out=up, in=down] (0, 4);
\draw[braid, dinat] (2,1.5)  to [out=up, in=up] (-0.5, 3);
\draw[braid, dinat] (1.5, 1.5) to [out=up, in=up] (-0.25, 2.9);
\node[label, below] at (0,0) {$b$};
\node[label, below] at (0.5,0) {$b^*$};
\node[label, below] at (1.5, 0) {$c\vphantom{c^*}$};
\node[label, below] at (2,0) {$c^*$};
\draw [decorate,line width =0.5pt,decoration={brace,amplitude=3pt,mirror, raise=4pt},xshift=-4pt,yshift=0pt]
(2.5,0) -- (2.5,1.4) node [black, midway, label, right,xshift=0.6cm] 
{$\mu_L$};
\draw [decorate,line width =0.5pt,decoration={brace,amplitude=3pt,mirror, raise=4pt},xshift=-4pt,yshift=0pt]
(2.5,1.6) -- (2.5,4) node [black, midway, label, right,xshift=0.6cm] 
{$\psi$};
\node[label, below] at(0,0) {$b$};
\node[label, below] at(0.5,0) {$b^*$};
\node[label, below] at(1.5,0) {$c\vphantom{b^*}$};
\node[label, below] at(2,0) {$c^*$};
\draw[string, dinat] (0,4) to ++(0, 0.5);
\draw[string, dinat] (0.5,4) to ++(0, 0.5);
\draw[string, dinat] (1.5,4) to ++(0, 0.5);
\draw[string, dinat] (2,4) to ++(0, 0.5);
\draw[string] (1, 4.5) to (1,5.5) ;
\node[label, above] at (1,5.5) {$L$};
\node[box=2.4cm] at (1, 4.5) {$\iota_{{}^*b\otimes {}^* c}$};
\end{tz}
\]
A similar computation shows that $\psi$ is also compatible with the half-braiding, i.e.\ $\lambda_x \cdot (\id_x \otimes \psi) = (\psi \otimes \id_x) \cdot {}^\op \lambda_x$ for all $x\in \cB$. 
\end{proof}

In Proposition~\ref{prop:etaiscenter}, we defined for any half-braided algebra $(A,\gamma)$ a map $A\to \eta(A,\gamma)$ identifying the object $\eta(A,\gamma) \in \cZ_{2}(\cB)$ with the coequalizer of $\mu_A\cdot (\theta_{A,\gamma}\otimes \id_A)$ and $\mu_A \cdot \beta_{A,A}^{-1}$.  In particular, the induced map $\underline{\eta}(A, \gamma) :=\Hom_{\cB}(\eta(A,\gamma), I) \to \Hom_{\cB}(A, I)$ identifies $\underline{\eta}(A,\gamma)$ with the subspace of $\Hom_{\cB}(A, I)$ of twisted traces on $A$ (see Definition~\ref{def:twistedcocenter}). 

\begin{lemma}\label{lem:everythingistwistedtrace} For the half-braided algebra $(L, \lambda)$, any morphism $\epsilon: L \to I$ is a twisted trace. 
In other words, the inclusion $\underline{\eta}(A, \gamma) \to \Hom_{\cB}(L, I)$ is an isomorphism. 
\end{lemma}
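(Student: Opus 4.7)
The plan is to parameterize morphisms $\epsilon : L \to I$ using the universal property of the coend defining $L$, and then to verify the twisted trace condition by a direct string-diagram calculation.

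By the universal property of $L = \int^b b \otimes b^*$, a morphism $\epsilon : L \to I$ in $\cB$ is the same data as a dinatural family $\{\tilde\epsilon_b := \epsilon \cdot \iota_b : b \otimes b^* \to I\}_{b \in \cB}$. Using right rigidity of $\cB$, such a dinatural family is in natural bijection with a natural endomorphism $\varphi \in \End(\id_\cB)$ of the identity functor of $\cB$, via $\tilde\epsilon_b = \ev_b \cdot (\varphi_b \otimes \id_{b^*})$: dinaturality of $\tilde\epsilon$ in $b$ is equivalent to naturality of $\varphi$, as one verifies directly from the defining property of duals of morphisms.

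To establish the twisted trace condition, I would note that both sides of \eqref{eq:twistedtrace} are morphisms $L \otimes L \to I$ and that $L \otimes L \cong \int^{b,c}(b \otimes b^*) \otimes (c \otimes c^*)$ is itself a coend in each variable. Hence it suffices to check
\[ \epsilon \cdot \mu_L \cdot (\theta_{L,\lambda} \otimes \id_L) \cdot (\iota_b \otimes \iota_c) = \epsilon \cdot \mu_L \cdot \beta^{-1}_{L,L} \cdot (\iota_b \otimes \iota_c) \]
for arbitrary $b, c \in \cB$. Using formula \eqref{eq:twomult} for $\mu_L$, formula \eqref{eq:fulltwist} for $\theta_{L,\lambda}$, and the naturality of the braiding with respect to $\iota$, the left hand side factors through $\iota_{{}^{**}b \otimes c}$, while the right hand side factors through $\iota_{c \otimes b}$, each composed with an explicit arrangement of braidings on the four strands $b, b^*, c, c^*$. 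Composing with $\epsilon$ and substituting $\tilde\epsilon_x = \ev_x \cdot (\varphi_x \otimes \id_{x^*})$ yields two string diagrams $b \otimes b^* \otimes c \otimes c^* \to I$ built from evaluation/coevaluation cups, braidings, and a single instance of $\varphi$ at some object of $\cB$.

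The concluding step is to show these two diagrams agree, which will follow from naturality of $\varphi$ (used to slide $\varphi$ along the strings) combined with dinaturality of $\iota$ and the standard snake identities for duality in $\cB$. The main obstacle is precisely this graphical bookkeeping: reconciling the full twist in $\theta_{L,\lambda}$ (which introduces the double dual ${}^{**}b$ and the dual ${}^{*}b$) with the simpler strand reordering coming from $\beta^{-1}_{L,L}$ requires a careful application of dinaturality at the coend level. Conceptually, this identity is exactly the statement that the Serre isomorphism represented by $\theta_{L,\lambda}$ (cf.\ Remark~\ref{rem:Serre}) is what is needed to make every linear functional on $L$ automatically cyclic, so no further constraint on $\epsilon$ is imposed.
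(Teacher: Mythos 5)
Your plan matches the paper's proof strategy very closely: the paper also parameterizes $\epsilon : L \to I$ by the induced dinatural family $\{\epsilon_b := \epsilon \cdot \iota_b : b \otimes b^* \to I\}$, uses the alternate expression~\eqref{eq:twomult} for $\mu_L$ to rewrite $\mu_L \cdot \beta_{L,L}^{-1}$ as the dinatural family~\eqref{eq:twomultop}, substitutes~\eqref{eq:fulltwist} for $\theta_{L,\lambda}$, and then cancels the resulting twist by appealing to dinaturality of $\epsilon$ in the coend variable. Your reformulation in terms of $\varphi \in \End(\id_\cB)$ is harmless but an unnecessary detour; the dinaturality of the family $\epsilon_b$ already carries exactly the information you would extract from naturality of $\varphi$, and the paper works directly with it.

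The gap in your proposal is that the actual graphical computation is not carried out. You correctly identify that after substituting~\eqref{eq:twomult} and~\eqref{eq:fulltwist}, one obtains two diagrams $b \otimes b^* \otimes c \otimes c^* \to I$ that must be shown equal, and you identify the right tools --- dinaturality of the family $\epsilon_x$ (equivalently naturality of $\varphi$), dinaturality of $\iota$, and snake identities --- but you explicitly defer the verification as ``graphical bookkeeping'' that ``requires a careful application.'' That bookkeeping is the entire content of the lemma. The one nontrivial step is to observe that the full twist $b \otimes b^* \to {}^{**}b \otimes {}^*b$ appearing inside $\theta_{L,\lambda}$, once composed with $\epsilon_{{}^{**}b\otimes c}$, can be absorbed by applying dinaturality of $\epsilon$ at the morphism implementing the double-dual identification on the first factor (and the braiding on the second), which directly reorganizes the picture into the form~\eqref{eq:twomultop}. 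You should actually write out this slide of strings; as stated, your argument is a well-structured proof outline rather than a proof.
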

\begin{proof}
Using the two expressions for $\mu_L$ from equation~\eqref{eq:twomult}, it follows that $\mu_L \cdot \beta_{L,L}^{-1} $ is represented by the following dinatural transformation
\begin{equation}\label{eq:twomultop}
\begin{tz}[std, xscale=-1]
\clip (-0.5, -0.5) rectangle (2.5, 3);
\draw[braid, dinat] (0,0) to (0,1.5);
\draw[braid, dinat] (0.5, 0) to [out=up, in=down] (2,1.5);
\draw[braid, dinat] (1.5,0) to [out=up, in=down] (0.5,1.5);
\draw[braid, dinat] (2,0) to [out=up, in=down] (1.5,1.5);
\draw[string] (1, 1.7) to (1,2.5);
\node[box=2.4cm] at (1, 1.7) {$\iota_{c\otimes b}$};
\node[label, below] at (0,0) {$c^*$};
\node[label, below] at (0.5,0) {$c\vphantom{c^*}$};
\node[label, below] at (1.5, 0) {$b^*$};
\node[label, below] at (2,0) {$b$};
\node[label, above] at (1,2.5) {$L$};
\end{tz}
=
\begin{tz}[std]
\clip (-0.5, -0.5) rectangle (2.5, 3);
\draw[braid, dinat] (1.5,0) to [out=up, in=down] (0.5,1.5);
\draw[braid, dinat] (2,0) to [out=up, in=down] (1.5,1.5);
\draw[braid, dinat] (0,0) to (0,1.5);
\draw[braid, dinat] (0.5, 0) to [out=up, in=down] (2,1.5);
\draw[string] (1, 1.7) to (1,2.5);
\node[box=2.4cm] at (1, 1.7) {$\iota_{b\otimes c}$};
\node[label, below] at (0,0) {$b$};
\node[label, below] at (0.5,0) {$b^*$};
\node[label, below] at (1.5, 0) {$c\vphantom{c^*}$};
\node[label, below] at (2,0) {$c^*$};
\node[label, above] at (1,2.5) {$L$};
\end{tz}
\end{equation}
Let $\epsilon: L \to I$ be a morphism represented by a dinatural transformation $\{\epsilon_b: b\otimes b^* \to I\}_{b \in \cB}$. 
Using the second expression from~\eqref{eq:twomult} for $\mu_L$ and the expression~\eqref{eq:fulltwist} for $\theta_{L,\lambda}$, the morphism $\epsilon \cdot \mu_L\cdot (\theta_{L,\lambda} \otimes \id_L): L \otimes L \to I$ is represented by the following dinatural transformation $(b\otimes b^*) \otimes (c\otimes c^*) \to I$. 
\[
\begin{tz}[std, scale=1,scale=0.8]
\clip (-2, -0.5) rectangle (2.5,7);
\draw[string, dinat] (-0.75, 2) to [out=down, in=down, out looseness=1, in looseness=3] (0, 5);
\draw[string, dinat] (-1.25, 2) to [out=down, in=down, out looseness=2, in looseness=1] (1.5, 5);
\draw[braid, dinat] (0,0) to [out=up, in=up, in looseness=1, out looseness=3] (-0.75, 2);
\draw[braid, dinat] (0.5,0) to [out=up, in=up, in looseness=1.5, out looseness=3] (-1.25, 2);
\draw[braid, dinat] (1.5,  0) to [out=up, in=down](0.5, 5);
\draw[braid, dinat] (2,0) to [out=up, in=down] (1, 5);
\node[label, below] at (0, 0) {$b$};
\node[label, below] at (0.5, 0) {$b^*$};
\node[box=1.8cm] at (0.75,5.1) {$\vphantom{i}\epsilon_{{}^{**}b\otimes c}$};
\node[label, below] at (1.5, 0) {$c\vphantom{b^*}$};
\node[label, below] at (2, 0) {$c^*$};
\end{tz}
\quad = \quad 
\begin{tz}[std, scale=1,scale=0.8]
\clip (-2, -0.5) rectangle (3.5,7);
\draw[string, dinat] (-1.5,5) to [out=down, in=down, looseness=2, out looseness=6] (1.5, 5); 
\draw[string, dinat] (-0.75, 3) to [out=down, in=down, out looseness=1, in looseness=3] (0, 5);
\draw[braid, dinat] (0,0) to (0,1) to [out=up, in=up, in looseness=1, out looseness=3] (-0.75, 3);
\draw[braid, dinat] (1.5,  0) to [out=up, in=down](0.5, 5);
\draw[braid, dinat] (2,0) to [out=up, in=down] (1, 5);
\draw[braid, dinat] (0.5,0) to [out=up, in=down] (3, 5) to [out=up, in=up, looseness=1.25] (-1.5,5);
\node[box=1.8cm] at (0.75,5.1) {$\vphantom{i}\epsilon_{{}^{**}b\otimes c}$};
\node[label, below] at (0, 0) {$b$};
\node[label, below] at (0.5, 0) {$b^*$};
\node[label, below] at (1.5, 0) {$c\vphantom{b^*}$};
\node[label, below] at (2, 0) {$c^*$};
\end{tz}
\quad = \quad 
\begin{tz}[std, scale=1,scale=0.8]
\clip (-1., -0.5) rectangle (3,7);
\draw[string, dinat] (-0.75, 3) to [out=down, in=down, out looseness=1, in looseness=3] (0, 5);
\draw[braid, dinat] (0,0) to (0,1) to [out=up, in=up, in looseness=1, out looseness=3] (-0.75, 3);
\draw[braid, dinat] (1.5,  0) to [out=up, in=down](0.5, 5);
\draw[braid, dinat] (2,0) to [out=up, in=down] (1, 5);
\draw[braid, dinat] (0.5,0) to [out=up, in=down] (2.5, 2.5) to [out=up, in=up, looseness=3] (2, 3);
\draw[braid, dinat] (2,3) to [out=down, in=down, looseness=3] (2.5, 3.5) to [out=up, in=down] (1.5, 5);
\node[box=1.8cm] at (0.75,5.1) {$\vphantom{i}\epsilon_{{}^{**}b\otimes c}$};
\node[label, below] at (0, 0) {$b$};
\node[label, below] at (0.5, 0) {$b^*$};
\node[label, below] at (1.5, 0) {$c\vphantom{b^*}$};
\node[label, below] at (2, 0) {$c^*$};
\end{tz}
\]
Using dinaturality of $\epsilon$ to cancel the twists results in the second expression~\eqref{eq:twomultop} for $m_L \cdot \beta_{L,L}^{-1}$ proving that $\epsilon:L \to I$ is a twisted trace. \end{proof}

By the universal property of the coend $L= \int^{b\in \cB} b \otimes b^*$, the vector space $\Hom_{\cB}(L, I)$ may be identified with the vector space  $\End(\id_{\cB})$ of (not necessarily monoidal) natural endomorphisms of the identity functor of $\cB$; any natural transformation $\{\alpha_b: b \to b\}_{b \in \cB}$ defines a dinatural transformation $\{\ev_b \cdot (\alpha_b \otimes \id_{b^*}): b \otimes b^*\to I\}_{b \in \cB}$ and vice versa.  By semisimplicity, this vector space may further be identified with the linear dual $\left(K_0(\cB) \otimes_{\mathbb{Z}} \bk \right)^*$ of the Grothendieck ring of $\cB$; any natural transformation $\{\alpha_b: b \to b\}_{b \in \cB}$ defines a linear function $K_0(\cB) \otimes_{\mathbb{Z}}\bk \to \bk$ mapping a simple object $b_i$ to the proportionality factor $\langle \alpha_{b_i} \rangle$ such that $\alpha_{b_i} = \langle \alpha_{b_i} \rangle \id_{b_i}$, and conversely any such linear function determines a natural transformation with these coefficients at simple objects.

Hence, it follows from Lemma~\ref{lem:everythingistwistedtrace} that the vector space  $\underline{\eta}(L,\lambda) = \Hom(\eta(L,\lambda), I) $ is itself identified with $ \Hom_{\cB}(L, I) \cong \End(\id_{\cB}) \cong \left(K_0(\cB)\otimes_{\mathbb{Z}}\bk\right)^*$.

\begin{cor} \label{cor:kappaforL}
Under the identification $\underline{\eta}(\cL) \cong  \Hom_{\cB}(L, I) \cong \End(\id_{\cB})$
the automorphism \[
 \underline{\eta}(\cL)\overset{\underline\fl_{\cL}}\longto  \underline{\eta}({}^\vee \cL) \overset{\underline\eta(\psi_*)}\longto \underline{\eta}(\cL)\] becomes the following automorphism on the vector space $\End(\id_{\cB})$ of natural endomorphisms of $\id_{\cB}$:
 \[ \{ \alpha_b :b \to b\}_{b \in \cB}~~ \mapsto~~ \{ (\alpha_{{}^* b})^*: b \to b \}_{b \in \cB} 
 \]
 Here, $(\alpha_{{}^*b})^*:= (\id_b \otimes \ev_{{}^*b})\cdot (\id_b \otimes \alpha_{{}^*b} \otimes \id_b) \cdot(\coev_{{}^*b} \otimes \id_b)$ is the mate of $\alpha_{{}^*b}:{}^*b \to {}^*b$.

 Further identifying $\End(\id_{\cB})$ with the linear dual $(K_0(\cB) \otimes _\mathbb{Z} \bk)^*$, this automorphism is the linear dual of the automorphism  
\[K_0(\cB) \otimes_{\mathbb{Z}}\bk \to K_0(\cB)\otimes_{\mathbb{Z}} \bk \hspace{1cm} b \mapsto {}^*b
\]
\end{cor}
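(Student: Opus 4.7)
The plan is to combine Corollary~\ref{cor:mainHBA} with the explicit dinatural formula for $\psi$ from Proposition~\ref{prop:defineh}. Since $\psi : (L,\lambda) \to ({}^\op L, {}^\op \lambda)$ is a half-braided algebra isomorphism, Corollary~\ref{cor:mainHBA} identifies the automorphism $\underline\eta(\psi_*) \cdot \underline\fl_\cL$ on $\underline\eta(\cL)$ with the ``precomposition with $\psi$'' map on the subspace of twisted traces inside $\Hom_\cB(L,I)$. By Lemma~\ref{lem:everythingistwistedtrace}, this subspace is all of $\Hom_\cB(L,I)$, so no further restriction is needed. Thus the problem reduces to computing the linear map $\epsilon \mapsto \epsilon \cdot \psi$ on $\Hom_\cB(L,I)$, after transporting it along the coend identification $\Hom_\cB(L,I) \cong \End(\id_\cB)$.

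The next step is to unpack this identification. By the universal property of the coend $L = \int^{b \in \cB} b \otimes b^*$, a morphism $\epsilon : L \to I$ corresponds bijectively to a natural transformation $\{\alpha_b : b \to b\}_{b \in \cB}$ via the rule $\epsilon \cdot \iota_b = \ev_b \cdot (\alpha_b \otimes \id_{b^*})$. Given such an $\alpha$, I would compute the dinatural components $(\epsilon \cdot \psi) \cdot \iota_b : b \otimes b^* \to I$ using the explicit description of $\psi \cdot \iota_b$ from Proposition~\ref{prop:defineh}, which factors as $\iota_{{}^*b}$ precomposed with a braided isomorphism $b \otimes b^* \cong {}^*b \otimes ({}^*b)^*$ built from $\coev$ and the braiding. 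Slotting $\epsilon \cdot \iota_{{}^*b} = \ev_{{}^*b} \cdot (\alpha_{{}^*b} \otimes \id_{({}^*b)^*})$ into this composite and applying a short string-diagrammatic manipulation (using the snake equations for ${}^*b \dashv b$ and the definition of the mate), I would identify the result with $\ev_b \cdot \bigl( (\alpha_{{}^*b})^* \otimes \id_{b^*}\bigr)$. This is exactly the dinatural transformation representing the natural endomorphism $\{(\alpha_{{}^*b})^*\}$ of $\id_\cB$, which gives the first claim.

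For the second claim, the identification $\End(\id_\cB) \cong (K(\cB) \otimes_\bZ \bk)^*$ sends a natural endomorphism $\{\alpha_b\}$ to the linear functional $b \mapsto \langle \alpha_b \rangle$ on simple objects. Since the mate of a scalar multiple of the identity is the same scalar multiple of the identity on the dual object, we have $\langle (\alpha_{{}^*b})^* \rangle = \langle \alpha_{{}^*b} \rangle$. Hence under this identification the map $\{\alpha_b\} \mapsto \{(\alpha_{{}^*b})^*\}$ becomes the map $f \mapsto (b \mapsto f({}^*b))$, which is visibly the linear dual of $b \mapsto {}^*b : K(\cB) \otimes_\bZ \bk \to K(\cB) \otimes_\bZ \bk$.

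The main obstacle will be the diagrammatic identification in the second paragraph: one must carefully track that the braided isomorphism appearing in $\psi$ does implement the matrix-transpose/mate construction after being paired against $\alpha_{{}^*b}$ and $\ev_{{}^*b}$. This is routine isotopy of string diagrams once one chooses the duality data $(\coev_{{}^*b}, \ev_{{}^*b})$ consistently with the duality data implicit in the coend presentation of $L$, but the bookkeeping between $b^*$ and ${}^*b$ (and their associated evaluations) must be done with care so that the left dual -- rather than the right dual -- appears in the final answer, consistent with the fact that $\psi$ is a square root of the Serre-type half-braided twist $\theta_{L,\lambda}$ of~\eqref{eq:fulltwist}.
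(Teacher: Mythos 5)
Your proposal is correct and takes essentially the same route as the paper's proof: invoke Corollary~\ref{cor:mainHBA} to reduce to precomposition with $\psi$, use Lemma~\ref{lem:everythingistwistedtrace} to see that the twisted-trace condition is vacuous for $(L,\lambda)$, transport along the coend identification $\Hom_\cB(L,I)\cong\End(\id_\cB)$, compute $\epsilon\cdot\psi$ componentwise using the explicit dinatural form of $\psi$ from Proposition~\ref{prop:defineh}, and conclude the second claim from $\langle(\alpha_{{}^*b})^*\rangle=\langle\alpha_{{}^*b}\rangle$.
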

\begin{proof}
By Corollary~\ref{cor:mainHBA}, the automorphism $\underline{\eta}(\psi_*) \cdot \underline{\fl}$ may be identified with the automorphism on the vector space of twisted traces mapping a twisted trace $\epsilon: L \to I$ to the twisted trace $\epsilon \cdot \psi$. 
By Lemma~\ref{lem:everythingistwistedtrace}, any morphism $\epsilon: L \to I$ is a twisted trace for $(L, \lambda)$ and so it suffices to compute the action of pre-composition with $\psi:L \to L$ from Proposition~\ref{prop:defineh} on $\Hom_{\cB}(L, I)$. 
Identifying $\Hom_{\cB}(L, I)$ with the vector space $\End(\id_{\cB})$ of (non-monoidal) natural transformations of the identity functor, this precomposition with $\psi$ maps a natural transformation $\{\alpha_b: b\to b\}_{b \in \cB}$ (corresponding to the dinatural transformation $\{\coev_b \cdot (\alpha_b \otimes \id_{b^*}):b \otimes b^* \to I \}_{b \in \cB}$) to the natural transformation corresponding to the following dinatural transformation

\[\begin{tz}[std]
\clip (-0.3, -0.5) rectangle (1.6, 3);
\draw[string, dinat] (0.5,0) to [out=up, in=down] (1,1.5);
\draw[braid, dinat] (0, 1)  to [out=down, in=down, looseness=2] (0.5, 1.5);
\draw[braid, dinat] (1,0) to [out=up, in=up] (-0.0,1.);
\draw[string, dinat] (0.5, 1.8) to (0.5, 2) to [out=up, in=up, looseness=2] (1,2) to (1,1.5);
\node[box=0.5cm] at (0.5, 1.73) {$\alpha_{{}^*b}$};
\node[label, below] at (0.5, 0) {$b$};
\node[label, below] at (1,0) {$b^*$};
\end{tz}
~~=~~
\begin{tz}[std]
\clip (-0.3, -0.5) rectangle (1.6, 3);
\draw[string, dinat] (0.5,0) to [out=up, in=down] (1,1.5);
\draw[braid, dinat] (1,0) to [out=up, in=down] (1.5, 1.5) to (1.5, 2) to [out=up, in=up, looseness=2] (0, 2) to (0, 1.4) to [out=down, in=down, looseness=2] (0.5, 1.4) to (0.5, 1.5) ;
\draw[string, dinat] (0.5, 1.8) to (0.5, 2) to [out=up, in=up, looseness=2] (1,2) to (1,1.5);
\node[box=0.5cm] at (0.5, 1.73) {$\alpha_{{}^*b}$};
\node[label, below] at (0.5, 0) {$b$};
\node[label, below] at (1,0) {$b^*$};
\end{tz}
~~=~~ \coev_{b} \cdot \left(( \alpha_{{}^*b})^* \otimes \id_{b^*}\right)
\]
which indeed corresponds to the natural transformation $(\alpha_{{}^*b})^*$. 

After identifying $\End(\id_{\cB})$ with the linear dual $(K_0(\cB) \otimes_{\mathbb{Z}}\bk)^*$ the above assignment maps a linear function $K_0(\cB)\ni b \mapsto \lambda_b$ to the function $b \mapsto \lambda_{{}^*b}$ (as $\langle (\alpha_{{}^*b})^* \rangle = \langle \alpha_{{}^*b}\rangle$).
\end{proof}

\subsection{The last step}
\label{subsec:laststep}

From now on, assume that $\cB$ is a slightly degenerate braided fusion category with $\cZ_{2}(\cB) \cong \sVec$. By Theorem~\ref{thm.BCs}, to prove our \MainTheorem, it suffices to show that the braided monoidal equivalence class of $\cZ(\Sigma\cB)$ is independent of $\cB$: indeed, $\cZ_{2}(\cB) \cong \sVec$ is a particular example of a slightly degenerate braided fusion category, and so we would find that $\cZ(\Sigma\cB)$ and $\cZ(\Sigma\sVec)$ are equivalent. By Theorem~\ref{thm:SandT}, $\cZ(\Sigma\cB)$ is equivalent to one of $\cS$ or $\cT$. We will show that it is always equivalent to~$\cS$.

As explained at the start of \S\ref{subsec:SandT},
$\cZ(\Sigma \cB)$ has precisely two components: the identity component  and the other, magnetic, component. In particular, any object $X$ in $\cZ(\Sigma \cB)$ uniquely decomposes into a direct sum $X_+ \boxplus X_-$ where $X_+$ is in the identity component and $X_-$ is in the magnetic component. As the Lagrangian object $\cL$ generates $\cZ(\Sigma \cB)$, both its summands $\cL_+$ and $\cL_-$ are non-zero and so, by Proposition~\ref{prop:fundamental},
to prove that $\cZ(\Sigma \cB) \cong \cS$,
 it suffices
 to show that $\underline{\Inv}(\cL_-)>0 $.  

As explained in Remark~\ref{rem:Rinvertible}, an object $Y \in \cZ(\Sigma \cB)$ is (a sum of simple objects all contained) in the identity component if and only if its full braiding $\br^{\cZ(\Sigma \cB)}_{e,Y}\cdot \br^{\cZ(\Sigma \cB)}_{Y,e}: \id_Y \boxtimes e \To \id_Y \boxtimes e$ with the non-trivial simple object $e\in \cZ_{2}(\cB) = \sVec$ is trivial. Analogously, it is in the magnetic component if and only if this full braiding is $(-1) \id_{\id_Y \boxtimes e}$.

Unpacked in terms of half-braided algebras, a half-braided algebra $(A,\gamma)$ is in the identity component if and only if $\gamma_{e} = \beta_{e,A}$ and it is in the magnetic component if and only if $\gamma_e =  - \beta_{e, A}$. 
In particular, recall the bimodule endomorphism $\widetilde{R}_{(A, \gamma),e} :{}_AA_A\To {}_AA_A$ from Example~\ref{ex:Rhba} and Remark~\ref{rem:Rinvertible}:  \[\widetilde{R}_{(A, \gamma), e}:=    (\id_A \otimes \ev_e) \cdot ((\gamma_e \cdot\beta_{A, e})\otimes \id_{e})\cdot (\id_A \otimes (\ev_e)^{-1}) ~~ = ~~
\begin{tz}[std]
\clip (-0.75, -0.5) rectangle (1.75, 2.5);
\draw[string] (0,0) to (0,2);
\draw[braid, hb] (1,1) to [out=down, in=-45, looseness=2] (0, 0.5) to [out=135, in=down] (-0.4, 1);
\draw[string, hb, arrow data={0.01}{>}, arrow data={0.8}{>} ] (-0.4,1)  to [out=up, in =-135] (0, 1.5)  to [out=45,  in=up, looseness=2] (1, 1);
\node[smalldot] at (0,1.5) {};
\node[label,left] at (-0.5,0.9) {$e$};
\node[label,below] at (0,-0.) {$A$};
\node[label, above] at (0.65, 1.8) {$\ev_e$};
\node[label, below] at (0.85, 0.2) {$(\ev_e)^{-1}$};
\end{tz}
 \]
As $e$ is invertible, it follows that $\widetilde{R}_{(A, \gamma), e} \cdot \widetilde{R}_{(A, \gamma), e} = \id_A$ (see Remark~\ref{rem:Rinvertible}) and hence that the splitting $(A, \gamma) \cong (A_+, \gamma_+) \boxplus (A_-, \gamma_-)$ of a half-braided algebra is induced by the following idempotents:
\[p_{A, \gamma}^\pm =\frac{1}{2}( \id_A \pm \widetilde{R}_{(A, \gamma), e})
\]
These idempotents are ``central orthogonal'' in the sense that they are $A$--$A$ bimodule morphisms $A\to A$ and fulfill 
\[
\bigl(p^+_{A, \gamma}\bigr)^2 = p^+_{A, \gamma}, \quad 
\bigl(p^-_{A, \gamma}\bigr)^2= p^-_{A, \gamma}, \quad
p_{A, \gamma}^+ p_{A, \gamma}^- = p_{A, \gamma}^- p_{A,\gamma}^+ = 0, \quad
p^+_{A, \gamma} + p^-_{A, \gamma} = \id_{A}.
\]

 \begin{remark}More abstractly, using the $2$-morphism from Remark~\ref{rem:Rinvertible}
 \[\widetilde{R}_{X, e}:= \left(\id_X \boxtimes \ev_e\right) \cdot \left(\left(\br^{\cZ(\Sigma \cB)}_{e,X}\cdot \br^{\cZ(\Sigma \cB)}_{X,e}\right) \boxtimes \id_e\right) \cdot \left(\id_X \boxtimes(\ev_e)^{-1}\right) :  \id_X \To \id_X,\]  the idempotent $2$-morphisms $p^\pm_X = \frac{1}{2}(\id_{\id_X} \pm \widetilde{R}_{X, e}) : \id_X \To \id_X$ can defined for any object $X$ in any model of $\cZ(\Sigma \cB)$ and precisely correspond to the direct sum decomposition $X\cong X_+ \boxplus X_-$. (The correspondence between families of idempotents in $\Omega^2_X\cC$ and direct sum decompositions of $X$ is explained in Remark~\ref{remark.2catDirectSums}.)  
 \end{remark}
 
 For a half-braided algebra $(A,\gamma)$, the idempotents $p^\pm_{A, \gamma}$  descend to the coequalizer~\eqref{eq:coeq} and induce the direct sum decomposition $\eta(A, \gamma) \cong \eta(A_+, \gamma_+) \oplus \eta(A_-, \gamma_-)$. This further induces a decomposition $\underline{\eta}(A, \gamma)  \cong \underline{\eta}(A_+, \gamma_+) \oplus \underline{\eta}(A_-, \gamma_-)$ of the vector space of twisted traces of $A$. Explicitly, the idempotents inducing this decomposition map a twisted trace $\epsilon:A \to I$ to the twisted trace $\epsilon \cdot p^\pm_{A, \gamma}$. For the canonical Lagrangian object $(L, \lambda)$ in $\sHBA(\cB)$, these idempotents on the space of twisted traces may be explicitly computed as follows. 
  \begin{lemma}\label{lem:almostdoneidempotent}
 Identifying the space of twisted traces of $(L, \lambda)$ with $(K_0(\cB)\otimes_\bZ \bk)^*$ as in Corollary~\ref{cor:kappaforL}, the induced idempotents on $(K_0(\cB) \otimes_{\bZ} \bk)^*$ are given by the linear dual of the idempotents 
 \[ K_0(\cB) \otimes_\bZ \bk \to K_0(\cB) \otimes_\bZ\bk, \hspace{1cm} b \mapsto  \left(\frac{1\pm e}{2}\right) b,
 \]
where the multiplication on the right  is in the Grothendieck ring $K_0(\cB) \otimes_\bZ \bk$.
 \end{lemma}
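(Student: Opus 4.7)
The plan is to transport the idempotents $p^\pm_{L,\lambda} = \tfrac{1}{2}(\id_L \pm \widetilde R)$, with $\widetilde R := \widetilde R_{(L,\lambda), e}: L \to L$, through the chain of identifications
\[
\underline\eta(\cL) \cong \Hom_\cB(L, I) \cong \End(\id_\cB) \cong (K(\cB) \otimes_\bZ \bk)^*
\]
from Corollary~\ref{cor:kappaforL}, and to show that precomposition with $\widetilde R$ on $\Hom_\cB(L, I)$ corresponds, under these identifications, to the linear dual of the multiplication-by-$e$ endomorphism of $K(\cB) \otimes_\bZ \bk$. The statement about $p^\pm$ then follows by taking $\tfrac{1}{2}(\id \pm \cdot)$.

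Unpacking the identifications, a morphism $\epsilon: L \to I$ corresponds by the coend universal property of $L = \int^{b} b \otimes b^*$ to a natural endomorphism $\alpha = \{\alpha_c : c \to c\}_{c \in \cB}$ of $\id_\cB$ via $\epsilon \cdot \iota_c = \ev_c \cdot (\alpha_c \otimes \id_{c^*})$, and hence to the functional $\lambda_\alpha : [c] \mapsto \langle \alpha_c \rangle$ on $K(\cB) \otimes_\bZ \bk$. Since $e \in \cZ_2(\cB)$ is invertible, $e \otimes c$ is simple whenever $c$ is, so $\alpha_{e \otimes c} = \langle \alpha_{e \otimes c}\rangle \cdot \id_{e \otimes c}$. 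The claim therefore reduces to showing that for every simple $c \in \cB$,
\[
\epsilon \cdot \widetilde R \cdot \iota_c \;=\; \langle \alpha_{e \otimes c}\rangle \cdot \ev_c : c \otimes c^* \to I,
\]
since this directly identifies $\lambda_{\widetilde R^* \alpha}$ with $\lambda_\alpha \circ ([e] \cdot -)$.

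The key step --- and the main technical obstacle --- is to identify the composite $\widetilde R \cdot \iota_c: c \otimes c^* \to L$ with $\iota_{e \otimes c}$ precomposed with the canonical isomorphism $c \otimes c^* \cong (e \otimes c) \otimes (e \otimes c)^*$ induced by the duality data of $e$ (i.e.\ by inserting $\coev_e$ and rearranging via associators and the canonical $(e \otimes c)^* \cong c^* \otimes e^*$). I would establish this by expanding $\widetilde R$ via the explicit formula from Example~\ref{ex:Rhba} specialized to $b = e$, substituting the dinatural description~\eqref{eq:Laghb} of the half-braiding $\lambda_e$ on $L$, and then simplifying the resulting string diagram using dinaturality of $\iota$, naturality of the braiding, transparency of $e$ (which trivializes every full braid $\beta_{-, e} \cdot \beta_{e, -}$), and the zigzag identities for $e$'s duality. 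The reduction is conceptually straightforward but requires careful bookkeeping of braidings and associators; the decisive input is that transparency of $e$ ensures every monodromy involving $e$ and a $c$- or $c^*$-strand cancels, leaving only the canonical duality data.

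Once this identification is in hand, the displayed identity falls out: precomposing $\epsilon$ with $\iota_{e \otimes c}$ yields $\ev_{e \otimes c} \cdot (\alpha_{e \otimes c} \otimes \id_{(e \otimes c)^*}) = \langle \alpha_{e \otimes c}\rangle \cdot \ev_{e \otimes c}$ for simple $c$, and composing $\ev_{e \otimes c}$ with the canonical duality isomorphism $c \otimes c^* \cong (e \otimes c) \otimes (e \otimes c)^*$ collapses to $\ev_c$ by a routine application of the tensor-product factorization $\ev_{e \otimes c} = \ev_e \cdot (\id_e \otimes \ev_c \otimes \id_{e^*})$ together with the zigzag for $e$. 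This completes the identification of $\widetilde R^*$ with the linear dual of multiplication by $e$, from which the lemma follows.
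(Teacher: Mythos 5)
Your proposal is correct and follows essentially the same route as the paper's proof: both transport the idempotents through the identifications of Corollary~\ref{cor:kappaforL}, unpack $\epsilon\cdot\widetilde R_{(L,\lambda),e}$ using the explicit formula for $\lambda$ from Definition~\ref{defn.lagHBA}, and conclude by noting that the proportionality factors on simples agree with those of $\alpha_{e\otimes b}$. The only cosmetic difference is in how the intermediate computation is packaged: the paper phrases the result as the natural endomorphism $\tr_e(\alpha_{e\otimes b}) = (\ev_e\otimes\id_b)\cdot(\id_e\otimes\alpha_{e\otimes b})\cdot(\ev_e^{-1}\otimes\id_b)$, whereas you express it as precomposition of $\iota_{e\otimes c}$ with a canonical duality isomorphism $c\otimes c^*\cong(e\otimes c)\otimes(e\otimes c)^*$ — these are the same diagram in two notations, and both collapse to the same scalar $\langle\alpha_{e\otimes c}\rangle$ at each simple $c$.
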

 \begin{proof}Recall from Lemma~\ref{lem:everythingistwistedtrace} that any morphism $\epsilon: L \to I$ is a twisted trace. Identify $\Hom_{\cB}(L, I)$ with the space  $\End(\id_{\cB})$  of natural endomorphisms of the identity functor on $\cB$, and let $\epsilon_b =\coev_b \cdot ( \alpha_b \otimes \id_{b^*}): b\otimes b^*\to I$ be the dinatural transformation associated to a natural endomorphism $\alpha$. Using Definition~\ref{defn.lagHBA} of the half-braiding $\lambda$,  $\epsilon \cdot \widetilde{R}_{(L, \lambda), e} $ unpacks to the dinatural transformation associated to the natural endomorphism  \[\{ \tr_e(\alpha_{e\otimes b}):= (\ev_e \otimes \id_b)\cdot (\id_e \otimes \alpha_{e \otimes b}) \cdot (\ev_e^{-1} \otimes \id_b): b \to b\}_{b \in \cB}.\]
 For simple objects $b$, the proportionality factors  $\langle \tr_e(\alpha_{e\otimes b}) \rangle = \langle \alpha_{e\otimes b}\rangle$ agree, and hence, if we further identify $\End(\id_{\cB})$ with $(K_0(\cB)\otimes_\bZ\bk)^*$, the assignment $\{\alpha_b\}_{b \in \cB} \mapsto \{\tr_e(\alpha_{e\otimes b})\}_{b \in \cB}$ turns into the linear dual of the endomorphism $b\mapsto e\otimes b$ on $K_0(\cB) \otimes_{\bZ} \bk$.   \end{proof}

 Suppose now that $(A,\gamma)$ is a half-braided algebra equipped with an algebra isomorphism $\phi: (A,\gamma) \to ({}^\op A, {}^\op \gamma)$. Observe that $\phi$ restricts to algebra isomorphisms $\phi_\pm: (A_\pm, \gamma_\pm) \to ({}^\op A_\pm, {}^\op \gamma_\pm)$ as $A_+$ and ${}^\op A_+$ are in a different component of $\sHBA(\cB)$ from $A_-$ and ${}^\op A_-$.
 Recall from Corollary~\ref{cor:mainHBA} that our invariant $\underline{\Inv}((A,\gamma), \phi)$ can be computed as the linear trace of the endomorphism of the vector space of twisted traces which maps a twisted trace $\epsilon:A \to I$ to $\epsilon\cdot \phi$. Therefore, the previous discussion can be summarized as follows:
\begin{lemma}\label{lem:Hpm} Given a half-braided algebra $(A,\gamma)$ with a half-braided algebra isomorphism $\phi: (A,\gamma) \to ({}^\op A, {}^\op \gamma)$, 
the invariant $\underline{\Inv}((A_\pm, \gamma_\pm), \phi_\pm)\in \bk$ is the linear trace of the following endomorphism of the vector space of twisted traces on $A$:
\\[6pt]
\mbox{} \hfill $ \displaystyle \epsilon:A\to I \hspace{1cm} \mapsto \hspace{1cm} \epsilon \cdot \phi \cdot p^\pm_{A, \gamma}$ \hfill \qed
\end{lemma}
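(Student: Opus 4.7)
The plan is to combine Corollary~\ref{cor:mainHBA}, applied to the sub-algebras $(A_\pm, \gamma_\pm, \phi_\pm)$, with the direct sum decomposition $\underline{\eta}(A,\gamma) \cong \underline{\eta}(A_+, \gamma_+) \oplus \underline{\eta}(A_-, \gamma_-)$ recalled in the paragraph preceding the lemma. Writing $E_\pm : \epsilon \mapsto \epsilon \cdot p^\pm_{A, \gamma}$ for the orthogonal idempotents realizing this decomposition and $S_\phi : \epsilon \mapsto \epsilon \cdot \phi$ for the operator from Corollary~\ref{cor:mainHBA} applied to the full $(A,\gamma,\phi)$, the endomorphism of $\underline{\eta}(A, \gamma)$ appearing in the lemma is $\tilde T_\pm := E_\pm \circ S_\phi$. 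By Corollary~\ref{cor:mainHBA} applied to $(A_\pm, \gamma_\pm, \phi_\pm)$, $\underline{\Inv}((A_\pm, \gamma_\pm), \phi_\pm)$ is the linear trace on $\underline{\eta}(A_\pm, \gamma_\pm)$ of the endomorphism $S_{\phi_\pm}: \epsilon_\pm \mapsto \epsilon_\pm \cdot \phi_\pm$.

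A direct block-matrix computation shows that, for any orthogonal idempotent $E_\pm$ on a vector space $V$ with image $V_\pm \subseteq V$ and any endomorphism $T$ of $V$, the trace of $E_\pm \circ T$ on $V$ equals the trace of the $(\pm,\pm)$-block of $T$ on $V_\pm$. Applied to $T = S_\phi$, it therefore suffices to show that, under the canonical identification $\underline{\eta}(A_\pm, \gamma_\pm) \cong \mathrm{image}(E_\pm)$ given by precomposition with the quotient $q_\pm : A \twoheadrightarrow A_\pm$ splitting $p^\pm_{A, \gamma} = i_\pm \circ q_\pm$, the $(\pm,\pm)$-block of $S_\phi$ coincides with $S_{\phi_\pm}$. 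Unpacking definitions, this reduces to the identity $\phi_\pm = q_\pm \circ \phi \circ i_\pm$ as morphisms $A_\pm \to A_\pm$ in $\cB$.

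This last identity is the only real technical point in the argument. It requires that the source $(A_\pm, \gamma_\pm)$ and target $(({}^\op A)_\pm, ({}^\op \gamma)_\pm)$ of $\phi_\pm$ have the same underlying $\cB$-object $A_\pm$, which in turn reduces to the coincidence of idempotents $p^\pm_{A, \gamma} = p^\pm_{{}^\op A, {}^\op \gamma}$ as endomorphisms of $A$ in $\cB$. Via the explicit formula for $\widetilde R_{-, e}$, this further reduces to the identity $\gamma_e \cdot \beta_{A, e} = \beta_{A, e}^{-1} \cdot \gamma_e^{-1}$, which follows from the involutivity $\widetilde R^2_{(A, \gamma), e} = \id_A$ of Remark~\ref{rem:Rinvertible} together with the transparency $\beta_{e, A} \cdot \beta_{A, e} = \id_{A \otimes e}$ of $e \in \cZ_{2}(\cB)$. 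Once the idempotents are identified, the intertwining $\phi \cdot p^\pm_{A,\gamma} = p^\pm_{{}^\op A, {}^\op \gamma}\cdot \phi$ (a consequence of $\phi$ being a $1$-isomorphism in $\cZ(\Sigma \cB) \cong \sHBA(\cB)$ and hence preserving components) simplifies to $\phi \cdot p^\pm_{A,\gamma} = p^\pm_{A,\gamma} \cdot \phi$, which yields $\phi_\pm = q_\pm \circ \phi \circ i_\pm$ and completes the argument.
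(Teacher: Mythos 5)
Your proof is correct and follows essentially the same route as the paper, which treats the lemma as an immediate consolidation of the preceding paragraph (the identification of the idempotents $\epsilon \mapsto \epsilon \cdot p^\pm_{A,\gamma}$ with the decomposition $\underline\eta(A,\gamma)\cong\underline\eta(A_+,\gamma_+)\oplus\underline\eta(A_-,\gamma_-)$, the restriction $\phi_\pm$, and Corollary~\ref{cor:mainHBA}). You have filled in the one detail the paper leaves implicit --- that $p^\pm_{A,\gamma}=p^\pm_{{}^\op A,{}^\op\gamma}$ and that $\phi$ intertwines them, so that $\phi_\pm = q_\pm\circ\phi\circ i_\pm$ --- and your verification via the involutivity $\widetilde R_{(A,\gamma),e}^2=\id_A$ and transparency of $e$ is sound.
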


 \begin{corollary} \label{cor:computingkappaL} Let $\cL_+$ and $\cL_-$ denote the trivial and magnetic summand of the canonical Lagrangian object $\cL$ in $\cZ(\Sigma \cB)$, respectively, and let $\psi_{\pm}: \cL_{\pm} \to {}^\vee \cL_{\pm}$ denote the restrictions of the self-duality $\psi_*:\cL \to {}^\vee \cL$ of $\cL$ to $\cL_{\pm}$.  Then, 
 \[
 \underline{\Inv}( \cL_{\pm}, \psi_\pm) = \frac{1}{2} \# \{b \in \pi_0 \cB~|~ b \cong {}^*b\} >0. 
 \]
 \end{corollary}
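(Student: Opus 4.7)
The plan is to chain the preceding reductions to express $\underline{\Inv}(\cL_\pm, \psi_\pm)$ as the trace of a very concrete operator on the Grothendieck ring $K(\cB) \otimes_\bZ \bk$, and then compute that trace by hand. By Corollary \ref{cor:mainHBA} and Lemma \ref{lem:Hpm}, $\underline{\Inv}(\cL_\pm, \psi_\pm)$ equals the linear trace of the endomorphism $\epsilon \mapsto \epsilon \cdot \psi \cdot p^\pm_{L, \lambda}$ on the space of twisted traces of the half-braided algebra $(L, \lambda)$. Lemma \ref{lem:everythingistwistedtrace} identifies this space with all of $\Hom_\cB(L, I)$, and the universal property of the coend $L = \int^{b \in \cB} b \otimes b^*$ then identifies $\Hom_\cB(L, I) \cong \End(\id_\cB) \cong (K(\cB) \otimes_\bZ \bk)^*$.

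Under these identifications, Corollary \ref{cor:kappaforL} turns precomposition with $\psi$ into the linear dual of the endomorphism $b \mapsto {}^*b$ of $K(\cB) \otimes_\bZ \bk$, and Lemma \ref{lem:almostdoneidempotent} turns precomposition with $p^\pm$ into the linear dual of $b \mapsto \tfrac{1 \pm e}{2} \cdot b$. Composing these and using that a linear operator and its transpose have the same trace, I get
\[
\underline{\Inv}(\cL_\pm, \psi_\pm) \;=\; \tr\!\left(\, b \mapsto \tfrac{1 \pm e}{2} \cdot {}^*b \;:\; K(\cB) \otimes_\bZ \bk \to K(\cB) \otimes_\bZ \bk \,\right).
\]
Expanding in the basis of simple objects of $\cB$, this trace decomposes as
\[
\tfrac{1}{2}\#\{b \in \pi_0 \cB : b \cong {}^*b\} \;\pm\; \tfrac{1}{2}\#\{b \in \pi_0 \cB : b \cong e \otimes {}^*b\}.
\]

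The hard part will be to show that the second count vanishes, so that both signs agree. This is where the fermionic nature of $e$ must enter in an essential way: $e$ is not merely transparent in $\cB$ but also satisfies $\beta_{e,e} = -\id$ coming from the identification $\cZ_2(\cB) \cong \sVec$. In the ribbon case the argument is immediate: if $b \cong e \otimes {}^*b$, then multiplicativity of the twist together with transparency of $e$ and $\theta_e = -1$ would force $\theta_b = \theta_{{}^*b} = \theta_e \cdot \theta_{{}^*b} = -\theta_b$, contradicting $\theta_b \in \bk^\times$. For general slightly degenerate (not necessarily ribbon) $\cB$, I expect the same conclusion can be reached either by exhibiting a compatible pivotal trace restricted from the standard super-pivotal on $\sVec$, or by running a direct half-braided version of this argument on $(L, \lambda)$ itself using the half-braided twist of Definition \ref{def:hbtwist}.

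Granting the vanishing of the second count, $\underline{\Inv}(\cL_\pm, \psi_\pm) = \frac{1}{2}\#\{b \in \pi_0 \cB : b \cong {}^*b\}$ on the nose, and strict positivity is then immediate because the unit $I$ is always self-dual (and so is $e$), so the counted set has cardinality at least $2$.
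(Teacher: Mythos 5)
Your reduction to the trace of $b \mapsto \tfrac{1\pm e}{2} \cdot {}^*b$ on $K(\cB)\otimes_\bZ\bk$ is exactly what the paper does, via the same chain (Corollary~\ref{cor:mainHBA}, Lemma~\ref{lem:Hpm}, Lemma~\ref{lem:everythingistwistedtrace}, Corollary~\ref{cor:kappaforL}, Lemma~\ref{lem:almostdoneidempotent}), and your basis expansion into $\tfrac{1}{2}\#\{b : b\cong {}^*b\} \pm \tfrac{1}{2}\#\{b : b\cong e\otimes {}^*b\}$ is the same computation. Your positivity observation ($I$ and $e$ are always self-dual) is also fine.

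The genuine gap is exactly where you flag it: you only have a complete argument for the vanishing of $\#\{b : b\cong e\otimes{}^*b\}$ in the ribbon case (via $\theta_b = \theta_e\theta_{{}^*b} = -\theta_b$), and for the general case you only ``expect'' it can be pushed through. Proposing to produce a compatible pivotal/ribbon structure is more structure than exists in general; the half-braided twist of Definition~\ref{def:hbtwist} lives on $(L,\lambda)$ rather than on $\cB$ and isn't what you want here. The paper's fix is simpler than either of your proposed routes, and is available without any pivotal or ribbon data: use the $1$-categorical $\eta$-dimension $\eta(x) = \ev_x\cdot\beta_{x^*,x}\cdot\coev_x \in \bk$ from Definition~\ref{defn:eta1cat}. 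Since $e$ is transparent, $\eta$ is multiplicative against $e$ (the full-braid correction in Remark~\ref{rem:nonmonoidal} is trivial), so $\eta(e\otimes{}^*b) = \eta(e)\,\eta({}^*b)$. Using $\eta(e) = -1$ (from $\beta_{e,e}=-\id$) and the $1$-categorical $\fl$-identity $\eta({}^*b)=\eta(b)$, an isomorphism $b\cong e\otimes{}^*b$ would force $\eta(b) = -\eta(b)$, hence $\eta(b)=0$ --- impossible for a simple object of a fusion category. Your twist-based argument is the ribbon shadow of this; the $\eta$-based version is the one that works in the generality you need.
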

 \begin{proof}
Identify the space of twisted traces of the half-braided algebra $(L, \lambda)$ representing $\cL$ with the linear dual $(K_0(\cB) \otimes_{\bZ} \bk)^*$ as in Corollary~\ref{cor:kappaforL}, under which $\epsilon \mapsto \epsilon \cdot \psi$ becomes the linear dual to $b\mapsto {}^* b$.  By Lemma~\ref{lem:almostdoneidempotent}, $\epsilon \mapsto \epsilon \cdot p^{\pm}_{L, l}$ becomes the linear dual to $b\mapsto \frac{1\pm e}{2} b$. Thus we conclude from Lemma~\ref{lem:Hpm} that $\underline{\Inv}(\cL_{\pm}, h_\pm)$ is computed as the linear trace of the endomorphism 
\[K_0(\cB) \otimes_\bZ\bk \to K_0(\cB)\otimes_\bZ \bk\hspace{1cm} b \mapsto \frac{1\pm e}{2} {}^* b.
\]
Explicitly, this trace is
\[
\frac{1}{2} \#\{b\in \pi_0 \cB ~|~ b \cong {}^*b\} \pm \frac{1}{2} \# \{ b \in \pi_0 \cB~|~ b \cong e\otimes {}^* b\}.
\]
The second summand vanishes: a simple object $b$ can never be isomorphic to $e \otimes {}^* b$. This can for example be seen by computing (the $1$-categorical version of) $\eta(-)$ on both sides. As $e$ is transparent, $\eta(e \otimes {}^* b) = \eta(e) \eta({}^* b)$. As further $\eta(b) = \eta({}^*b) \neq 0$ (by the $1$-categorical version of $\fl$) and $\eta(e)=-1$, it follows that $b\not \cong e\otimes {}^*b$. 
\end{proof}

As $\cL_-$ is magnetic and $\underline{\Inv}(\cL_-) >0$, it follows from Proposition~\ref{prop:fundamental} that $\cZ(\Sigma \cB) \cong \cS$ which completes the proof of our \MainTheorem.

\begin{remark}\label{rem:refrequest}
  Corollary~\ref{cor:computingkappaL} does more than complete the proof of our main theorem: translated into the language of module categories, it provides information about the decomposition of $\cZ(\cB)$ into indecomposable $\cB$-module categories. Unpacking the proof of Proposition~\ref{prop:fundamental}, it shows that the number of indecomposable summands, in each of the two submodules $\cZ(\cB)_+$ and $\cZ(\cB)_-$, which are sent to themselves under the self-duality $(-)^*$ of $\cZ(\cB)$, is equal to half of the number of self-dual simple objects in $\cB$. We emphasize that, \emph{a priori}, $\underline\kappa$ reproduces this count only up to a sign (and would not reproduce this count at all if $\cZ(\cB)$ had summands equivalent to $\Mod_\cB(\mathrm{Cliff}(1))$), and it is a consequence of our proof that this sign is $+1$.
  
  This count can also be derived from \cite[Theorem 3.4 and Corollary 3.6]{DGNO}, using arguments similar to the ones we give in \S\ref{sec:Smatrix}.
  Let us write $\pi_0(\cZ(\cB)/\cB)$ for the set of indecomposable $\cB$-module summands of $\cZ(\cB)$. Note that the M\"uger centralizer of $\cB \subset \cZ(\cB)$ is $\cB^\rev$. Applied to this centralizer pair, \cite[Theorem 3.4 and Corollary 3.6]{DGNO} provides a perfect S-matrix pairing between $\pi_0(\cZ(\cB)/\cB)$ and $\pi_0(\cB^\rev) = \pi_0(\cB)$.
  This pairing intertwines the automorphism $b \mapsto b^*$ of $\pi_0 \cB^\rev$ with the automorphism of $\pi_0(\cZ(\cB)/\cB)$ which sends the summand of $\cZ(\cB)$ containing a simple object $x$ to the summand containing $x^*$. (This automorphism of  $\pi_0(\cZ(\cB)/\cB)$ is well-defined: if simple objects $x,y \in \cZ(\cB)$ are in the same $\cB$-module summand, then $x^*$ and $y^*$ are in the same $\cB$-module summand.) The pairing furthermore intertwines the automorphism $b \mapsto e\otimes b$ of $\pi_0 \cB^\rev$ with the automorphism of the vector space spanned by $\pi_0(\cZ(\cB)/\cB)$ which acts diagonally by $+1$ on the summands inside $\cZ(\cB)_+$ and by $-1$ on the summands inside $\cZ(\cB)_-$.
  
  The count of self-dual indecomposable $\cB$-module summands of $\cZ(\cB)_\pm$ follows from comparing traces.
\end{remark}

\section{Outlook: a complete obstruction theory for nondegenerate extensions} \label{subsec.completeobstructions}

This paper provides the ingredients necessary for a complete obstruction theory for minimal nondegenerate extensions, similar to the obstruction theory developed in \cite{DmitriMSRI}. In this section, we outline such an obstruction theory, but will leave out many details whose justification would require a more fully developed higher Morita theory of fusion higher categories and their semisimple higher-categorical modules. In particular, the results in~\S\ref{sec:Tan} and~\S\ref{sec:superTan} which rely on these unproven assumptions will be labelled ``Conjectures''.

\subsection{Obstruction theories via Witt groups of braided fusion $1$-categories}
Fix a symmetric fusion 1-category $\cE$, and let $\NBFC(\cE)$ denote the set of equivalence classes of braided fusion $1$-categories $\cB$ equipped with a symmetric monoidal equivalence $\cE \isom \cZ_2(\cB)$. By \cite[Proposition 4.3]{MR3022755}, $\NBFC(\cE)$ is a commutative monoid under the balanced tensor product $\boxtimes_\cE$. It contains a submonoid of categories of the form $\cZ_\cE(\cF)$, where $\cF$ is a fusion 1-category equipped with a braided monoidal inclusion $\cE \subset \cZ(\cF)$ and $\cZ_\cE(\cF) := \cZ_2(\cE \subset \cZ(\cF))$. The quotient of $\NBFC(\cE)$ by this submonoid is a group $\Witt(\cE)$, defined in~\cite[Definition 5.1, Lemma 5.2]{MR3022755} and known as the \define{$\cE$-Witt group}. By convention, $\Witt := \Witt(\Vec)$.

Let us say that an \define{obstruction theory for minimal nondegenerate extensions} is a commutative monoid $\Obs(\cE)$ together with a monoid homomorphism $O(-) : \NBFC(\cE) \to \Obs(\cE)$ such that $\cB \in \NBFC(\cE)$ admits a minimal nondegenerate extension if and only if $O(\cB)=0$. We will say that an obstruction theory is \define{complete} if additionally $O(-)$ is surjective. 

The following proposition was probably known to the authors of \cite{MR3022755} (see in particular Question 5.15 therein), and is essentially Theorem~3.2 of~\cite{2105.01814}.
  Our proof is essentially the same as the proof used in \cite[Proposition 5.15]{MR3613518} to produce a surjective homomorphism onto $\ker\bigl([-\boxtimes\cE] : \Witt \to \Witt(\cE)\bigr)$ from the group $\Mext(\cE)$ of minimal nondegenerate extensions of $\cE$.

\begin{proposition}\label{prop:Wittobstruction}
A braided fusion category $\cB \in \NBFC(\cE)$ admits a minimal nondegenerate extension if and only if its Witt class $[\cB] \in \Witt(\cE)$ is in the image of the map $[-\boxtimes\cE] : \Witt \to \Witt(\cE)$ constructed in~\cite[Proposition 5.13]{MR3022755}. In particular, $\coker\bigl([-\boxtimes\cE] : \Witt \to \Witt(\cE)\bigr)$, with its canonical map from $\NBFC(\cE)$, is a complete obstruction theory for minimal nondegenerate extensions.
\end{proposition}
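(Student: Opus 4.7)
My strategy is to prove the biconditional ``$\cB$ admits a minimal nondegenerate extension iff $[\cB] \in \operatorname{im}([-\boxtimes\cE])$'' directly. Completeness of the cokernel obstruction theory is then immediate, since the composite $\NBFC(\cE) \twoheadrightarrow \Witt(\cE) \twoheadrightarrow \coker([-\boxtimes\cE])$ is surjective by the universal property of the cokernel.

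For the \emph{``only if''} direction, I start from a minimal nondegenerate extension $\cB \subset \cM$ and aim to show $[\cB] = [\cM \boxtimes \cE]$ in $\Witt(\cE)$, thus placing $[\cB]$ in the image of $[-\boxtimes\cE]$ via the class $[\cM] \in \Witt$. Equivalently, I would show that
\[
\cB \boxtimes_\cE (\cM \boxtimes \cE)^{\rev} \;\simeq\; \cB \boxtimes \cM^{\rev}
\]
is $\cE$-Witt trivial, by exhibiting a braided equivalence $\cB \boxtimes \cM^{\rev} \simeq \cZ_\cE(\cM)$, where $\cM$ is viewed as a fusion 1-category with the induced central embedding $\cE \subset \cM \hookrightarrow \cZ(\cM)$. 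Nondegeneracy of $\cM$ gives $\cZ(\cM) \simeq \cM \boxtimes \cM^{\rev}$; the minimality condition $\cZ_{2}(\cB \subset \cM) = \cE$ together with M\"uger's double-centralizer theorem yields $\cZ_{2}(\cE \subset \cM) = \cB$; combining these identifies $\cZ_\cE(\cM) = \cZ_{2}(\cE \subset \cM) \boxtimes \cM^{\rev} = \cB \boxtimes \cM^{\rev}$. Since $\cZ_\cE(\cM)$ is $\cE$-Witt trivial by definition, this witnesses $[\cB] = [\cM \boxtimes \cE]$.

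For the \emph{``if''} direction, assume $[\cB] = [\cC \boxtimes \cE]$ in $\Witt(\cE)$ with $\cC$ nondegenerate. Unpacking the Witt relation gives a fusion 1-category $\cF$ with $\cE \subset \cZ(\cF)$ and a braided equivalence $\cB \boxtimes \cC^{\rev} \simeq \cZ_\cE(\cF)$. The nondegenerate $\cZ(\cF)$ is a minimal nondegenerate extension of $\cZ_\cE(\cF)$; tensoring with $\cC$ produces a nondegenerate $\cZ(\cF) \boxtimes \cC$ containing $\cB$ through the chain $\cB \hookrightarrow \cB \boxtimes \cC^{\rev} \boxtimes \cC \simeq \cZ_\cE(\cF) \boxtimes \cC \subset \cZ(\cF) \boxtimes \cC$. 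I would then extract a minimal nondegenerate extension of $\cB$ by condensing the canonical Lagrangian algebra $L \subset \cC^{\rev} \boxtimes \cC \subset \cZ(\cF) \boxtimes \cC$ (coming from $\cC^{\rev} \boxtimes \cC \simeq \cZ(\cC)$ and its canonical Lagrangian). Since $L$ is \'etale and local with respect to the copy of $\cB$ sitting in the first factor, the resulting category of local modules contains $\cB$ and is nondegenerate; a Frobenius--Perron dimension count combined with M\"uger's centralizer formulas inside $\cZ(\cF) \boxtimes \cC$ then verifies $\cZ_{2}(\cB \subset (\cZ(\cF) \boxtimes \cC)_L^{\mathrm{loc}}) = \cE$, giving the sought minimal nondegenerate extension.

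The main technical obstacle is this descent step in the ``if'' direction: making the condensation precise and verifying that the resulting nondegenerate category realizes \emph{exactly} a minimal extension of $\cB$, rather than an intermediate category still carrying $\cE$- or $\cC$-residues. The analogous construction in \cite[Proposition~5.15]{MR3613518} yielded only a surjective homomorphism $\Mext(\cE) \to \ker([-\boxtimes\cE])$ precisely because slightly degenerate intermediate categories arising in the descent were not known to admit minimal nondegenerate extensions. Our \MainTheorem\ closes this gap decisively: every slightly degenerate piece is now guaranteed to admit a minimal nondegenerate extension, so the descent terminates and produces a genuine minimal nondegenerate extension of $\cB$.
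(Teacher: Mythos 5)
Your ``only if'' direction matches the paper's argument essentially step for step. Your ``if'' direction arrives at the same category $\cM$ but by a longer route: where the paper applies \cite[Theorem~3.13(i)]{DGNO} directly to the nondegenerate inclusion $\cN^\rev \subset \cZ(\cA)$ to split $\cZ(\cA) \cong \cN^\rev \boxtimes \cM$ with $\cM := \cZ_{2}(\cN^\rev \subset \cZ(\cA))$ and then observes that $\cB$ lands in $\cM$ because it centralizes $\cN^\rev$ inside $\cZ_\cE(\cA)$, you tensor externally with $\cC$ and condense the canonical Lagrangian of $\cC^\rev\boxtimes\cC\cong\cZ(\cC)$. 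These two routes are equivalent --- condensing a Lagrangian of a nondegenerate factor is essentially the content of the DGNO decomposition theorem --- but the paper's version keeps the entire argument in the language of centralizers and a Frobenius--Perron dimension count, without importing local-module technology. Your sketch would work once the condensation is made precise, but it is not the lighter path.

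The genuine error is in your closing paragraph. You claim the ``if'' direction requires our \MainTheorem\ to close a gap left open by \cite[Proposition~5.15]{MR3613518}, because ``slightly degenerate intermediate categories arising in the descent were not known to admit minimal nondegenerate extensions.'' This misreads the logical structure: no slightly degenerate intermediate category appears in the descent. In the paper's proof (and in a correctly completed version of your own condensation argument) one passes in a single step from the nondegenerate $\cZ(\cA)$ (respectively $\cZ(\cF)\boxtimes\cC$) to the nondegenerate $\cM$, with minimality settled by a dimension count. Proposition~\ref{prop:Wittobstruction} is a self-contained Witt-theoretic statement that does not invoke the \MainTheorem\ anywhere. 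The dependence runs the other way: the \MainTheorem\ is the input that shows the obstruction in $\coker\bigl([-\boxtimes\sVec]: \Witt \to \Witt(\sVec)\bigr)$ vanishes, and Proposition~\ref{prop:Wittobstruction} is the framework that interprets that vanishing cohomologically --- not a result whose proof needs rescuing by the \MainTheorem.
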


\begin{proof}
  To prove the ``only if'' direction, suppose that $\cB \in \NBFC(\cE)$ admits a minimal nondegenerate extension $\cM$. Recall that the Drinfeld centre $\cZ(\cM)$ is equivalent to $\cM \boxtimes \cM^\rev$, and the inclusion of $\cM$ to the first factor is the one coming from the braiding on $\cM$. Consider the inclusion $\cE \subset \cB \subset \cM \subset \cM \boxtimes \cM^\rev$.  Then there are canonical equivalences
  $$ \cZ_\cE(\cM) \cong \cZ_2(\cE \subset \cM \boxtimes \cM^\rev) \cong \cZ_2(\cE \subset \cM) \boxtimes \cM^\rev \cong \cB \boxtimes \cM^\rev \cong \cB \boxtimes_\cE (\cE \boxtimes \cM^\rev).$$
  It follows that $[\cB] = [\cM \boxtimes \cE]$ in $\Witt(\cE)$.

  To prove the ``if'' direction, suppose that $[\cB] = [\cN \boxtimes \cE]$ for some nondegenerate braided fusion category $\cN$. Then there exists a fusion category $\cA$ with an inclusion $\cE \subset \cZ(\cA)$ such that
  \begin{equation*} \label{eqn:characterizationofA}
   \cB \boxtimes \cN^\rev \cong \cB \boxtimes_\cE (\cE \boxtimes \cN^\rev) \cong \cZ_\cE(\cA).
  \end{equation*}
  There is therefore an inclusion of nondegenerate braided fusion categories $\cN^\rev \subset \cB \boxtimes \cN^{\rev} \cong \cZ_\cE(\cA) \subset \cZ(\cA)$. Hence, by  \cite[Theorem 3.13(i)]{DGNO} there is a braided equivalence $\cZ(\cA) \cong \cN^\rev \boxtimes \cM$ for the nondegenerate braided fusion category $\cM:= \cZ_2(\cN^\rev \subset \cZ(\cA))$. But $\cB$ and $\cN^\rev$ commute in $\cZ_\cE(\cA) \subset \cZ(\cA)$, providing an inclusion $\cB \subset \cM$. Counting Frobenius--Perron dimensions confirms that this inclusion is a minimal nondegenerate extension.
\end{proof}

\begin{corollary}
  The complete obstruction theory for minimal nondegenerate extensions is unique: any complete obstruction theory $(\Obs(\cE), O: \NBFC(\cE) \to \Obs(\cE))$ is canonically isomorphic to $\coker\bigl([-\boxtimes\cE] : \Witt \to \Witt(\cE)\bigr)$ with its canonical map from $\NBFC(\cE)$.\end{corollary}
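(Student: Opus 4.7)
The plan is to exploit Proposition~\ref{prop:Wittobstruction} together with the universal/initial nature of the quotient $\Witt(\cE) \to \coker([-\boxtimes\cE])$ among group-valued homomorphisms out of $\NBFC(\cE)$ that vanish on categories admitting minimal nondegenerate extensions. Given any complete obstruction theory $(\Obs(\cE), O)$, I would construct a canonical factorization $O : \NBFC(\cE) \to \Witt(\cE) \to \coker([-\boxtimes\cE]) \isom \Obs(\cE)$, with the last map a well-defined isomorphism.

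First I would show that $O$ factors through the projection $\NBFC(\cE) \to \Witt(\cE)$. By definition of the Witt equivalence relation, this amounts to showing that $O$ vanishes on every category of the form $\cZ_\cE(\cF)$ for $\cF$ a fusion category with $\cE \subset \cZ(\cF)$. But $\cZ(\cF)$ is nondegenerate, and by the double-centralizer theorem in $\cZ(\cF)$ we compute $\cZ_2\bigl(\cZ_\cE(\cF) \subset \cZ(\cF)\bigr) = \cZ_2\bigl(\cZ_2(\cE \subset \cZ(\cF)) \subset \cZ(\cF)\bigr) = \cE = \cZ_2(\cZ_\cE(\cF))$, so $\cZ(\cF)$ is already a minimal nondegenerate extension of $\cZ_\cE(\cF)$. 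Completeness then forces $O(\cZ_\cE(\cF))=0$, and since $O$ is a monoid map this (together with the fact that $[\cB]=[\cB']$ in $\Witt(\cE)$ means $\cB \boxtimes_\cE \cZ_\cE(\cF) \cong \cB' \boxtimes_\cE \cZ_\cE(\cF')$ for some $\cF, \cF'$) gives a unique monoid homomorphism $\tilde O : \Witt(\cE) \to \Obs(\cE)$ extending $O$.

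Next I would verify that $\tilde O$ is surjective (from surjectivity of $O$ and of $\NBFC(\cE) \to \Witt(\cE)$) and that its kernel is exactly the image of $[-\boxtimes\cE]$. Surjectivity forces $\Obs(\cE)$ to actually be a group: for any $\tilde O(x) \in \Obs(\cE)$, the element $\tilde O(-x)$ provides an inverse, since $\Witt(\cE)$ is a group (with $-[\cB] = [\cB^\rev]$, using $\cB \boxtimes_\cE \cB^\rev \cong \cZ_\cE(\cB)$). Every class in $\Witt(\cE)$ admits a representative in $\NBFC(\cE)$ because $\NBFC(\cE) \subset \Witt(\cE)$ is closed under reversal. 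For such a representative, $\tilde O([\cB]) = O(\cB) = 0$ iff $\cB$ admits a minimal nondegenerate extension iff $[\cB] \in \mathrm{image}([-\boxtimes\cE])$ by Proposition~\ref{prop:Wittobstruction}. The first isomorphism theorem then yields the desired group isomorphism $\coker([-\boxtimes\cE]) \isom \Obs(\cE)$, compatible by construction with the maps from $\NBFC(\cE)$.

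No step here looks like a substantive obstacle: the nontrivial input is already bundled into Proposition~\ref{prop:Wittobstruction}, and the rest is essentially universal algebra. The only point requiring mild care is the verification that $O$ descends past the Witt equivalence relation (and not merely past the relation generated by individual $\cZ_\cE(\cF)$), but this follows immediately from $O$ being a monoid homomorphism once one knows $O(\cZ_\cE(\cF))=0$ for all eligible $\cF$. Canonicity of the resulting isomorphism is automatic since every factorization in the chain is uniquely determined by a universal property.
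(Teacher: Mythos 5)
Your proof is correct, and it does the job, but it takes a more hands-on route than the paper. The paper's proof is a single abstract observation: a surjection of commutative monoids with prescribed zero-set $K = O^{-1}(0)$ is determined up to canonical isomorphism whenever $K$ is realizable as the zero-set of a surjection onto a group --- and Proposition~\ref{prop:Wittobstruction} produces exactly such a group surjection. You instead build the isomorphism explicitly: you first check $O$ vanishes on Witt-trivial categories $\cZ_\cE(\cF)$ by a double-centralizer computation, then descend $O$ to $\Witt(\cE)$, then argue the target is forced to be a group, identify the kernel via Proposition~\ref{prop:Wittobstruction}, and finish by the first isomorphism theorem. Both arguments are sound; the paper's is shorter but buries a (true, slightly nontrivial) lemma about commutative monoids, while yours makes the universal algebra fully explicit and uses an extra input (the double-centralizer theorem) that the paper's approach avoids --- though you could have sidestepped it by observing $[\cZ_\cE(\cF)]=0$ in $\Witt(\cE)$ and citing Proposition~\ref{prop:Wittobstruction} directly. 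One small terminological slip: the implication ``$\cZ_\cE(\cF)$ has a minimal nondegenerate extension $\Rightarrow O(\cZ_\cE(\cF))=0$'' is the defining property of an obstruction theory, not a consequence of completeness (which is merely surjectivity of $O$).
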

\begin{proof}
  The defining property of a complete obstruction theory is that it is a surjection of commutative monoids with prescribed kernel $K \subseteq M$. In general, prescribing the kernel does not determine (up to unique isomorphism) a surjection of monoids, but it does if there exists a surjection $M \twoheadrightarrow G$ with kernel $K$ and image a group $G$. \end{proof}

\subsection{Obstruction theory via braided fusion $2$-categories}\label{subsec:obstructionviaBF2C}

It remains to actually describe the group $\Obs(\cE) \cong \coker\bigl([-\boxtimes\cE] : \Witt \to \Witt(\cE)\bigr)$ and the homomorphism $O(-)$ in a computationally useful way.

Given $\cB \in \NBFC(\cE)$, we can compare the braided fusion 2-categories $\cZ(\Sigma \cB)$ and $\cZ(\Sigma \cE)$ which are both nondegenerate in the sense of Theorem~\ref{thm:invertibleSmatrix}. Theorem~\ref{thm.BCs} implies that if they are braided equivalent under $\cE$ (i.e.\ via an equivalence which restricts to the canonical equivalence $\Omega\cZ(\Sigma\cB) = \cZ_2(\cB) \cong \cE = \Omega\cZ(\Sigma\cE)$), then $\cB$ admits a minimal nondegenerate extension. As in Remark~\ref{rem:BCs}, we expect that Theorem~\ref{thm.BCs} extends to an equivalence between the $2$-groupoid of equivalences $\cZ(\Sigma \cB) \cong\cZ(\Sigma \cE)$ under~$\cE$ and the $2$-groupoid of minimal nondegenerate extensions of $\cB$. This in particular produces a canonical bijection between the set $\Mext(\cB)$ of isomorphism classes of minimal nondegenerate extensions of $\cB$ and the set of isomorphism classes of braided equivalences $\cZ(\Sigma \cB) \cong\cZ(\Sigma \cE)$ under~$\cE$.

This perspective reveals yet another description of $\Obs(\cE)$.
Let $\mathrm{NBF2C}(\cE)$ denote the set of (braided equivalence classes under $ \cE$ of) nondegenerate braided fusion $2$-categories $\cA$ equipped with a symmetric equivalence $\cE \cong \Omega \cA$. This set has an abelian group structure, defined analogously to the abelian group structure on $\Mext(\cE)$ in~\cite{MR3613518}. 
The previous paragraph implies that the obstruction group $\Obs(\cE)$ is precisely the subgroup of $\mathrm{NBF2C}(\cE)$ on those $\cA$ which happen to be of the form $\cZ(\Sigma \cB)$ for a braided fusion $1$-category $\cB$. The monoid homomorphism $O: \NBFC(\cE) \to \Obs(\cE) \subseteq \mathrm{NBF2C}(\cE)$ takes a braided fusion $1$-category $\cB$ to $\cZ(\Sigma \cB)$.

In other words, the question of minimal nondegenerate extensions reduces to the question of classifying nondegenerate braided fusion 2-categories, a classification of which is outlined in \cite{1704.04221,PhysRevX.9.021005}, see also \cite{2003.06663}. At its heart, this classification relies on a categorification of Section 4.4 of~\cite{DGNO}.  Given any braided fusion 2-category $\cX$, there  is a higher-categorical group $\Inn(\cX)$ of ``inner automorphisms'' of $\cX$, equivalent to the $4$-group (i.e. monoidal $3$-groupoid which is ``groupal'' in the sense that all objects are $\otimes$-invertible) $(\Sigma \cX)^\times$ of invertible $\cX$-module $2$-categories (and their higher isomorphisms). An ``inner action'' of a finite group $G$ on a nondegenerate braided fusion 2-category $\cX$ is a monoidal $3$-functor $\mu : G \to \Inn(\cX)$, and there is a way to ``gauge'' an inner action  $G\to \Inn(\cX)$ to produce a nondegenerate braided fusion 2-category $\cX\quot G$ with a symmetric embedding $\Rep(G) \hookrightarrow \Omega(\cX\quot G)$, analogous to the construction for braided fusion $1$-categories developed in~\cite{MR3555361}. Categorifying Theorem 4.44 of~\cite{DGNO}, we expect that gauging induces a bijection between the set of (equivalence classes of) nondegenerate braided fusion $2$-categories equipped with an inner $G$-action and the set of (equivalence classes under $\Rep(G)$) of nondegenerate braided fusion $2$-categories $\cA$ with a symmetric embedding $\Rep(G) \hookrightarrow \Omega \cA$.
The inverse of gauging is known as ``condensing'' the symmetric fusion category $\Rep(G) \hookrightarrow \Omega \cA$.  

\subsection{The Tannakian case}\label{sec:Tan}
Under this gauging/condensing equivalence,  $\mathrm{NBF2C}(\Rep(G))$ becomes the set of nondegenerate braided fusion $2$-categories $\cX$ with $\Omega \cX \cong \Vec$ and equipped with an inner $G$-action. (The induced abelian group structure on this set is given by the ordinary tensor product $\boxtimes$ with diagonal inner $G$ action.)  By nondegeneracy, any such $\cX$ is equivalent to the unit $2$-category $\Sigma \Vec$ for which the classifying space $\rB \Inn(\Sigma \Vec)$ is a $K(\bk^\times, 4)$. Therefore, an inner action on $\Sigma \Vec$ amounts to a class in $\H^4(\rB G; \bk^\times)$. Hence, $\mathrm{NBF2C}(\Rep(G)) \cong \H^4(\rB G; \bk^\times)$ and it turns out that the map $O: \NBFC(\Rep(G)) \to \mathrm{NBF2C}(\Rep(G)) \cong \H^4(\rB G; \bk^\times)$ is precisely the obstruction class $O_4(\cB)$ constructed in \cite{MR2677836}. We therefore recover the Tannakian part of Theorem~4.8 of \cite{1712.07097};  $O_4$ induces an injection $ \Obs(\Rep(G)) \hookrightarrow \H^4(\rB G; \bk^\times)$.

In fact, this injection is an isomorphism, providing a complete obstruction theory. To see this one must produce for every $\alpha \in \H^4(\rB G; \bk^\times)$ a braided fusion category $\cB$ with $\alpha = O_4(\cB)$. This can be done following ideas in \cite{PhysRevX.8.031048,EvansGannon}. First, find a surjection of finite groups $f : \widetilde{G} \to G$ so that $f^* \alpha$ may be trivialized. The data of such a trivialization, when restricted to $K=\ker(f)$, provides the data of a $K$-graded extension of the fusion category $\Vec$.  Explicitly, a trivialization of $f^*\alpha$ is a 3-cochain $\beta$ on $\widetilde{G}$ solving $\d\beta = f^*\alpha$; restricted to $K$ this gives a 3-cocycle which is used as the associator. The remaining data on the rest of $\widetilde{G}$ provides an action of $G$ on the Drinfeld centre $\cZ(\Vec^\beta[K])$. The categorical fixed points of this $G$-action is the desired braided fusion category with obstruction $\alpha$. 

We have therefore outlined a proof of the following conjecture, subject to the assumptions made above.
\begin{conjecture}The obstruction class $O_4(\cB)$ induces an isomorphism
\begin{equation} \label{eq:obsbos2}
\Obs(\Rep(G)) \cong \coker\bigl(\Witt \to \Witt(\Rep(G))\bigr) \cong \H^4(\rB G; \bk^\times). 
\end{equation}
\end{conjecture}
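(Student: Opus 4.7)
The plan is as follows. The first isomorphism $\Obs(\Rep(G)) \cong \coker(\Witt \to \Witt(\Rep(G)))$ is already Proposition~\ref{prop:Wittobstruction}, and the injectivity of the induced map $\Obs(\Rep(G)) \hookrightarrow \H^4(\rB G; \bk^\times)$ via $O_4$ has been outlined in the discussion just preceding the Conjecture. So the remaining task is surjectivity: realize every class $\alpha \in \H^4(\rB G; \bk^\times)$ as $O_4(\cB)$ for some braided fusion category $\cB$ with $\cZ_2(\cB) \cong \Rep(G)$. The strategy, following~\cite{PhysRevX.8.031048,EvansGannon}, is to trivialize $\alpha$ after pullback to a finite cover and then condense.

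The first step is to choose a finite group extension $1 \to K \to \widetilde{G} \to[f] G \to 1$ such that the pullback $f^*\alpha \in \H^4(\rB\widetilde{G}; \bk^\times)$ is cohomologically trivial. Such a cover always exists: since $\bk^\times$ is divisible and $G$ finite, $\alpha$ has some finite order, and a standard argument in finite-group cohomology (e.g.\ pulling back along a large enough wreath-product or central extension) suffices to kill any fixed torsion class. Fix a $3$-cochain $\beta \in C^3(\widetilde{G}; \bk^\times)$ with $\d\beta = f^*\alpha$. Its restriction $\beta|_K$ is then a $3$-cocycle on $K$, determining a pointed fusion category $\Vec^{\beta|_K}[K]$, whose Drinfel'd centre $\cZ(\Vec^{\beta|_K}[K])$ is nondegenerate by construction.

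The second step is to unpack the remaining data of $\beta$ as a lift of the outer action of $G = \widetilde{G}/K$ on $K$ to an action by braided monoidal autoequivalences on $\cZ(\Vec^{\beta|_K}[K])$. The obstruction to such a lift is classically $\d\beta$, which vanishes on all of $\widetilde{G}$ by the choice of $\beta$. Then $\cB := \cZ(\Vec^{\beta|_K}[K])^G$ is a braided fusion category by equivariantization in the sense of~\cite{MR3555361}; nondegeneracy of $\cZ(\Vec^{\beta|_K}[K])$ combined with the standard equivariantization computation identifies $\cZ_2(\cB) \cong \Rep(G)$, placing $\cB$ in $\NBFC(\Rep(G))$.

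The hard part will be the final step: verifying that $O_4(\cB) = \alpha$. The plan is to compute $\cZ(\Sigma\cB)$ directly from the equivariantization presentation, then condense the canonical inclusion $\Rep(G) \subset \Omega\cZ(\Sigma\cB)$ to obtain a nondegenerate braided fusion $2$-category equipped with an inner $G$-action, and to track the cocycle $\beta$ through to the class in $\H^4(\rB G; \bk^\times)$ classifying this action. A cocycle-level computation should recover exactly $\alpha$, independently of the auxiliary choices of $(\widetilde{G}, K, \beta)$ --- different trivializations differ by $3$-cocycles on $\widetilde{G}$, which push forward trivially to $G$. This verification is the main obstacle because it requires both the conjectural higher Morita-theoretic gauging/condensing equivalence described in~\S\ref{subsec:obstructionviaBF2C} and the full version of Theorem~\ref{thm.BCs} flagged in Remark~\ref{rem:BCs}; modulo these inputs, the argument closes and yields $\Obs(\Rep(G)) \cong \H^4(\rB G; \bk^\times)$.
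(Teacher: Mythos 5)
Your proposal follows essentially the same route as the paper's sketch: Proposition~\ref{prop:Wittobstruction} gives the first isomorphism, the gauging/condensing identification of $O$ with $O_4$ (from the discussion preceding the conjecture) gives injectivity, and surjectivity is handled by the same Evans--Gannon-style construction --- pull $\alpha$ back along a finite cover $\widetilde{G}\twoheadrightarrow G$ where it trivializes, use the trivializing cochain to build $\Vec^{\beta|_K}[K]$ with a $G$-action on its Drinfel'd centre, and equivariantize. Your added step of explicitly verifying $O_4(\cB)=\alpha$ by tracking the cocycle through $\cZ(\Sigma\cB)$ and the condensation is a reasonable elaboration of what the paper leaves implicit, and you correctly flag that it rests on the same unproven higher Morita-theoretic assumptions that make this a conjecture rather than a theorem.
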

Since trivializations of a cohomology class form a torsor for the cohomology one degree lower, we also recover the calculation $\Mext(\Rep(G)) \cong \H^3(\rB G; \bk^\times)$ from \cite[Theorem~4.22]{MR3613518}.

\begin{remark}
As a corollary of the isomorphism~\eqref{eq:obsbos2} it follows that all nondegenerate braided fusion 2-categories $\cA$ with $\Omega \cA \cong \Rep(G)$ are of the form $\cZ(\Sigma \cB)$, since $\Obs(\Rep(G)) \subseteq \mathrm{NBF2C}(\Rep(G)) \cong \H^4(\rB G; \bk^\times)$ is by definition the subgroup on those $\cA$ of this form. 
\end{remark}

\subsection{The super Tannakian case}\label{sec:superTan}

By Deligne's theorem on the existence of fibre functors~\cite{MR1944506}, any symmetric fusion $1$-category is either Tannakian, i.e.\ equivalent to $\Rep(G)$ for a finite group $G$, or \define{super-Tannakian}, i.e.\ equivalent to $\sRep(G, \varpi)$ for a finite group $G$ together with a $2$-cocycle $\varpi \in \H^2(\rB G; \bZ_2)$. (The latter data $(G, \varpi)$ is often alternatively encoded in terms of the finite \define{super group} --- a finite group $H$ together with a non-trivial central element $z$ of order two --- given by the central extension of $G$ associated to $\varpi$.) Explicitly, the classifying space of the $2$-group of symmetric monoidal automorphisms of $\sVec$ is $\rB \Aut(\sVec) \cong K(\bZ_2,2)$ and hence the class $\varpi$ encodes a monoidal functor $G\to \Aut(\sVec)$, or equivalently an action of $G$ on $\sVec$. The category $\sRep(G, \varpi)$ arises as the categorical fixed points of this action. 

In the super-Tannakian case, we may condense the (maximal Tannakian) subcategory $\Rep(G)$ inducing an isomorphism between $\mathrm{NBF2C}(\sRep(G,\varpi))$ and the abelian group of nondegenerate braided fusion $2$-categories $\cX$ equipped with a symmetric equivalence $\Omega \cX \cong \sVec$ and an inner $G$-action $G \to  \Inn(\cX)$ together with an identification of the composite $G \to \Inn(\cX) \to \Aut(\cX) \to \Aut(\Omega \cX) \cong \Aut(\sVec)$ with $\varpi$. Equivalently, and somewhat more efficiently, the latter two pieces of data may be described as a lift of $\varpi: \rB G \to K(\bZ_2, 2)$ along $\rB \Inn(\cX) \to K(\bZ_2, 2)$. 

Our Theorem~\ref{thm:SandT} precisely classifies nondegenerate braided fusion $2$-categories $\cX$ with $\Omega \cX \cong \sVec$:
There are exactly two such 2-categories up to braided equivalence, ``$\cS$'' and ``$\cT$,'' with $\cS \cong \cZ(\Sigma\sVec)$. Starting with a braided fusion $1$-category $\cB$ with $\cZ_2(\cB) \cong \sRep(G, \varpi)$ and condensing the maximal Tannakian subcategory $\Rep(G)$ in $\cZ(\Sigma \cB)$ results in the braided fusion $2$-category $\cZ(\Sigma \cB_G)$, where $\cB_G$ is the slightly degenerate braided fusion 1-category from~\cite[Theorem~4.8]{1712.07097} of the same name. The heart of the proof of our \MainTheorem\ shows that $\cZ(\Sigma\cB_G)$ is (noncanonically) equivalent to $\cS$, and that such equivalences correspond to minimal nondegenerate extensions of the slightly degenerate braided fusion category $\cB_G$.
Hence, any braided fusion $2$-category $\cZ(\Sigma \cB)$ with $\Omega \cZ(\Sigma \cB) \cong \cZ_2(\cB) \cong \sRep(G, \varpi)$ must arise from gauging an inner $G$-action on $\cS$ and $\Obs(\sRep(G, \varpi)) $ must be contained in the (proper) subgroup of $\mathrm{NBF2C}(\sRep(G, \varpi))$ on those nondegenerate braided fusion $2$-categories $\cX$ (with $\Omega \cX \cong \sVec$ and lift of $\varpi: \rB G \to K(\bZ_2,2)$ along $\rB \Inn(\cX) \to K(\bZ_2, 2)$) which \emph{happen to be equivalent to $\cS$}.

This subgroup of $\mathrm{NBF2C}(\sRep(G, \varpi))$ is precisely the cokernel of the group homomorphism from the group $\pi_0 \Aut_{\sVec}(\cS) \cong \Mext(\sVec) \cong \bZ_{16}$ of equivalence classes of braided automorphisms of $\cS$ over $\sVec$ (see the second paragraph of \S\ref{subsec:obstructionviaBF2C})  to the group of lifts of $\rB G \to K(\bZ_2, 2) $ along $\rB \Inn(\cS) \to K(\bZ_2, 2)$. This cokernel is most efficiently described and computed in terms of a certain generalized cohomology theory $\SH^\bullet$ called ``extended supercohomology'' which was introduced in the physics literature, and given its name, in~\cite{WangGu2017}. 
The underlying spectrum of $\SH^\bullet$ is (a shift of) the Picard spectrum $(\Sigma \sVec)^\times$ of the symmetric monoidal $2$-category $\Sigma \sVec$; its nonvanishing homotopy groups are  $\SH^{-2}(\mathrm{pt}) \cong \SH^{-1}(\mathrm{pt}) \cong \bZ_2$ and $\SH^0(\mathrm{pt}) \cong \bk^\times$. 
Being built from $\sVec$, the Picard spectrum $(\Sigma \sVec)^\times$ has a (nontrivial) action by $\Aut(\sVec) = \rB \bZ_2$.
Hence, supercohomology of a space $X$ may be twisted by classes $\varpi \in \H^2(X; \bZ_2)$, and we will denote the $\varpi$-twisted supercohomology of $X$ by $\SH^{\varpi + \bullet}(X)$. In terms of $\SH$, the group $\pi_0 \Aut_{\sVec}(\cS) \cong \Mext(\cS) \cong \bZ_{16}$ turns out to be isomorphic to $\SH^{\id+4}(K(\bZ_2, 2))$ and the above cokernel  turns out to agree with the cokernel of the map $\varpi^*: \SH^{\id+4}(K(\bZ_2, 2)) \to \SH^{\varpi + 4} (\rB G)$.

 A super variant of the construction in~\cite{PhysRevX.8.031048,EvansGannon} shows that $\Obs(\sRep(G, \varpi))$ is in fact isomorphic to this subgroup of $\mathrm{NBF2C}(\cE)$.
 
Subject to our assumptions, this yields a sketch of a proof of the following conjecture. 
 \begin{conjecture}The obstruction group $\Obs(\sRep(G, \varpi))$ is isomorphic to 
\begin{equation} \label{eqn.superobstruction} \coker\left(\vphantom{\SH^{\id }}\Witt \to \Witt\left(\sRep(G, \varpi)\right)\right)\cong \coker \left( \varpi^*:  \SH^{\id+4}(K(\bZ_2, 2)) \to \SH^{\varpi + 4} (\rB G) \right).
\end{equation}
Moreover, $\Mext(\sVec) \cong \SH^{\id+4}(K(\bZ_2, 2))$ and there is a left exact sequence
\begin{equation} \label{eqn.supermext}
0 \to  \SH^{\varpi+3}(\rB G) \to \Mext(\sRep(G, \varpi)) \to \Mext(\sVec) \to[\varpi^*] \SH^{\varpi+4}(\rB G).
\end{equation}
\end{conjecture}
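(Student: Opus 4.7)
The plan is to follow the outline already sketched in the section and fill in the three ingredients that are still missing: a 2-categorical reformulation of $\Obs$, a computation of the relevant automorphism/action data of $\cS$ in terms of extended supercohomology, and a super-Tannakian analogue of the construction in~\cite{PhysRevX.8.031048,EvansGannon} producing braided fusion 1-categories with prescribed obstruction.

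First, I would use the (expected) enhancement of Theorem~\ref{thm.BCs} mentioned in Remark~\ref{rem:BCs}: for any $\cB \in \NBFC(\cE)$ with $\cE \cong \sRep(G,\varpi)$, the 2-groupoid of minimal nondegenerate extensions of $\cB$ is equivalent to the 2-groupoid of braided equivalences $\cZ(\Sigma\cB) \cong \cZ(\Sigma\cE)$ \emph{under $\cE$}. Combined with Proposition~\ref{prop:Wittobstruction}, this identifies $\Obs(\cE)$ with the subgroup of $\mathrm{NBF2C}(\cE)$ consisting of those $\cA$ of the form $\cZ(\Sigma\cB)$. Condensing the maximal Tannakian subcategory $\Rep(G) \subset \cE$ and gauging back (a super analogue of \cite[Theorem 4.44]{DGNO}) identifies $\mathrm{NBF2C}(\sRep(G,\varpi))$ with the abelian group of nondegenerate braided fusion 2-categories $\cX$ with a symmetric equivalence $\Omega\cX \cong \sVec$ together with a lift of $\varpi : \rB G \to K(\bZ_2,2) = \rB\Aut(\sVec)$ to a monoidal $3$-functor $\rB G \to \rB\Inn(\cX)$. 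Under $\cB \leadsto \cZ(\Sigma\cB)$ this corresponds to first passing to the slightly degenerate $\cB_G$ of \cite[Theorem 4.8]{1712.07097}, then to $\cZ(\Sigma\cB_G)$, which by our \MainTheorem\ is equivalent to $\cS$. Hence $\Obs(\sRep(G,\varpi))$ is precisely the subgroup of such $\cX$ which happen to be equivalent to $\cS$ (and not $\cT$, which is ruled out by Theorem~\ref{thm:SandT} and the \MainTheorem).

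Next, I would identify this subgroup with the claimed cokernel. The classifying space $\rB\Inn(\cS)$ is (a truncation of) the delooping of the Picard $4$-group $(\Sigma\cS)^\times$, which, after recognising $\cS = \cZ(\Sigma\sVec)$, is controlled by the Picard spectrum of $\Sigma\sVec$ together with its residual $\Aut(\sVec) = \rB\bZ_2$ action. By definition, this is (a shift of) the extended supercohomology spectrum $\SH$, and a lift of $\varpi$ to $\rB\Inn(\cS)$ is a $\varpi$-twisted supercohomology class of $\rB G$ in degree $\varpi + 4$. Two lifts yield equivalent pairs $(\cX,\mu)$ iff they differ by an automorphism of $\cS$ over $\sVec$; this group is $\pi_0\Aut_{\sVec}(\cS) \cong \Mext(\sVec) \cong \bZ_{16}$, which I would identify with $\SH^{\id + 4}(K(\bZ_2,2))$ by a direct Postnikov computation (the relevant homotopy groups of $\SH$ are $\bZ_2, \bZ_2, \bk^\times$). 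Pulling back along $\varpi$ gives the map $\varpi^* : \SH^{\id+4}(K(\bZ_2,2)) \to \SH^{\varpi+4}(\rB G)$, and $\Obs(\sRep(G,\varpi))$ injects into its cokernel.

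To see that this injection is an isomorphism --- the crux of the conjecture --- I would construct, for every class $\alpha \in \SH^{\varpi+4}(\rB G)$, a braided fusion 1-category $\cB$ with $\cZ_2(\cB) \cong \sRep(G,\varpi)$ whose 2-categorical obstruction lifts~$\alpha$. Following the bosonic Tannakian template recalled in \S\ref{sec:Tan}, I would first choose a finite group extension $f : \widetilde G \to G$ killing $f^*\alpha$ in extended supercohomology, use a trivializing cochain to build a super fusion $1$-category $\cF$ together with a $G$-action on $\cZ(\cF)$, and then take categorical fixed points; the construction must be carried out in the ``super/spin'' setting, treating $\sVec \subset \cZ(\cF)$ throughout. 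The left exact sequence~\eqref{eqn.supermext} for $\Mext$ then follows formally from the fact that the 2-groupoid of minimal nondegenerate extensions of $\cB$ is a torsor for the 2-groupoid of automorphisms of $\cZ(\Sigma\cB)$ over $\sRep(G,\varpi)$, combined with the identification $\Mext(\sVec) \cong \SH^{\id+4}(K(\bZ_2,2))$ and the fact that trivializations of a (super)cohomology class form a torsor for the one-degree-lower cohomology, namely $\SH^{\varpi+3}(\rB G)$.

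The main obstacle is the super-Tannakian realization step: producing, constructively and with full control of the supercohomological obstruction, a braided fusion $1$-category realizing a prescribed class in $\SH^{\varpi+4}(\rB G)$. In particular, showing that the naive ``super gauging'' construction lands in the correct component of $\mathrm{NBF2C}(\sRep(G,\varpi))$ --- i.e.\ produces a $\cZ(\Sigma\cB) \cong \cS$ rather than $\cT$ --- requires a super version of the equivariantization technology of \cite{MR3555361}, together with a Klein-bottle-invariant calculation analogous to \S\ref{subsec:KleinST} to rule out the magnetic component of $\cT$. Secondary obstacles are the foundational assumptions on higher Morita categories underlying the gauging/condensing equivalence and the enhancement of Theorem~\ref{thm.BCs}, which is why the result must be stated as a conjecture rather than a theorem.
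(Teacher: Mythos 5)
Your proposal follows essentially the same outline as the paper's sketch: condense $\Rep(G)$ to reduce to $\sVec$, invoke Theorem~\ref{thm:SandT} and the \MainTheorem\ to identify the ambient nondegenerate braided fusion $2$-category as $\cS$, express the lift/torsor data via extended supercohomology, and realize by a super variant of \cite{PhysRevX.8.031048,EvansGannon}; like the paper, you correctly flag that this can only be stated as a conjecture because the condensing/gauging equivalence and the upgraded Theorem~\ref{thm.BCs} rest on higher Morita-categorical foundations not yet established in the literature. One minor overreach: the separate Klein-bottle-invariant check you propose for the realization step is already subsumed by the \MainTheorem, since any $\cB$ with $\cZ_2(\cB)\cong\sRep(G,\varpi)$ automatically has $\cZ(\Sigma\cB_G)\cong\cS$ rather than $\cT$ after condensing the Tannakian part.
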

This exact sequence~\eqref{eqn.supermext} is the (relative, twisted) super-cohomological analogue of the recognition $\Mext(\Rep(G)) \cong \H^3(\rB G)$ as the set of trivializations of a degree-4 ordinary cohomology class.

\bibliographystyle{initalpha}
\bibliography{MME}

\end{document}